\documentclass[12pt,absolute]{mymathart}
\usepackage[theorems]{mymathmacros}
\usepackage{ogonek,amscd,graphicx}
\usepackage[all,cmtip]{xy}

\usepackage{color}

\usepackage{hyperref}
\numberwithin{equation}{section}
\newcommand{\raequiv}{\simeq}
\newcommand{\dom}{\operatorname{dom}}
\DeclareMathOperator{\Mob}{M\ddot{o}b}
\newcommand{\MobR}{\Mob_{\R}}

\newcommand{\HomeoR}{\operatorname{Homeo}_{\R}}
\newcommand{\id}{\operatorname{id}}
\newcommand{\MfR}{M_{f}^{\R}}
\newcommand{\MfRt}{\widetilde{\MfR}}
\newcommand{\MfC}{M_{f}^{S^1}}
\newcommand{\MfCt}{\widetilde{M_{f}^{S^1}}}
\newcommand{\BR}{\B_{\operatorname{real}}}
\newcommand{\CV}{\operatorname{CV}}
\newcommand{\Part}{\operatorname{Part}}

\newcommand{\dens}{\operatorname{dens}}
\newcommand{\meas}{\operatorname{meas}}

\newcommand{\CS}{\C^*}

\newcommand{\SR}{\mathcal{S}_{\R}}
\newcommand{\SC}{\mathcal{S}_{S^1}}

\newcommand{\Crit}{\operatorname{Crit}}

\newcommand{\s}{\underline{s}}
\newcommand{\B}{\mathcal{B}}
\newcommand{\IR}{I_{\R}}
\newcommand{\HH}{\mathbb H}

\renewcommand{\P}{\mathcal{P}}
\renewcommand{\S}{\mathcal{S}}

\title[Density of Hyperbolicity for Transcendental Maps]{%
Density of hyperbolicity
for classes \\ of real 
transcendental entire functions \\ and circle maps}
\author{Lasse Rempe-Gillen}
\address{Dept.\ of Math.\ Sciences, University of Liverpool, Liverpool L69 7ZL, United Kingdom}
\email{l.rempe@liverpool.ac.uk}
\author{Sebastian van Strien}
\address{Sebastian van Strien,  Department of Mathematics, Imperial College, 180 Queen's Gate, London SW7 2AZ, UK.}
\email{s.van-strien@imperial.ac.uk}
\thanks{The first author was supported by EPSRC Grant EP/E017886/1, Fellowship EP/E052851/1 and a Philip Leverhulme Prize.}
\date{\today}

\subjclass[2010]{Primary 37F10; Secondary 37E05, 37E10, 37F15, 30D05}

\begin{document}

\begin{abstract} 
We prove density of hyperbolicity in spaces of 
(i)  real transcendental entire functions, bounded on the real line,
  whose singular set is 
 finite and real and 
 (ii) transcendental functions 
 $f\colon \C\setminus \{0\}\to \C\setminus \{0\}$ that preserve the circle
 and whose singular set (apart from $0,\infty$) is finite and contained in the circle.
 In particular, we prove density of hyperbolicity in the famous Arnol'd family
 of circle maps and its generalizations, and solve a number of other
 open problems for these functions, including three conjectures of
 de Melo, Salom\~ao and Vargas \cite{melo-etal}. 
 
We also prove density of (real) 
 hyperbolicity for certain families as in (i) but
 without the boundedness condition. Our results apply, in particular,
 when the functions
 in question have only finitely many critical points and asymptotic
 singularities, or when there are no asymptotic values and the degree of critical
   points is uniformly bounded.
\end{abstract}

\maketitle

\section{Introduction}

 Among dynamical systems, those that are \emph{hyperbolic}, a property also called \emph{Axiom A}, have particularly simple behaviour and are the easiest to understand (for a definition in our context see below).    For this reason, \emph{density of hyperbolicity}\ -- the question whether any system in a given parameter space
   can be perturbed to a hyperbolic one\ -- is one of the central problems of one-dimensional dynamics.   
 (It has been known for about 50 years that the answer is negative in higher dimensions;  
 for references and recent results, see for example \cite{MR1779562} and \cite{MR2818683}.)

Recently, there has been major progress on this problem in the real setting. 
Lyubich \cite{Lyubich_dynI+II} and, independently,
Graczyk and \'Swi\k{a}tek \cite{MR1469316} solved the problem for the
real quadratic family $x\mapsto x^2 + c$, while it was solved 
by Kozlovski, Shen and the second author for
real polynomials with real critical points 
in \cite{KSS1} and for general interval maps and circle maps in  
\cite{densityaxiomadimensionone}.
For a discussion of related results, see \cite{strien-survey}.

 As an example of questions that are left open by these theorems, 
  let us consider the most famous family of circle maps: the 
  \emph{Arnol'd family}
\[ F_{\mu_1,\mu_2}(t)= t+ \mu_1 + \mu_{2}\sin(2 \pi t); \quad \mu_1\in\R, \mu_2>0. \]
This family describes the behaviour of a periodically forced nonlinear oscillator, and has been
 used to model a variety of physical and biological systems.
    
 It is well-known that hyperbolicity is dense in the region
 where the map is a circle diffeomorphism, i.e.\ for $\mu_2<1/(2\pi)$. 
 In the non-invertible case, $\mu_2>1/(2\pi)$, 
 \cite[Theorem 2]{densityaxiomadimensionone} implies that
 $F_{\mu_1,\mu_2}(t)$ can be perturbed to a hyperbolic circle map, 
 and indeed to
 a hyperbolic trigonometric polynomial of high degree. However, we would 
 like the perturbation 
       to remain within
  the same family; that is, we ask whether the set of
 parameters   $(\mu_1,\mu_2)$  for which both critical points belong to the basins of periodic
 attractors is dense in the region $\mu_2>1/(2\pi)$ (Figure \ref{fig:arnold}). This question, and in fact even
  the density of \emph{structural stability} (see below), had remained open prior
  to our work. 

\begin{figure}
\includegraphics[width=\textwidth]{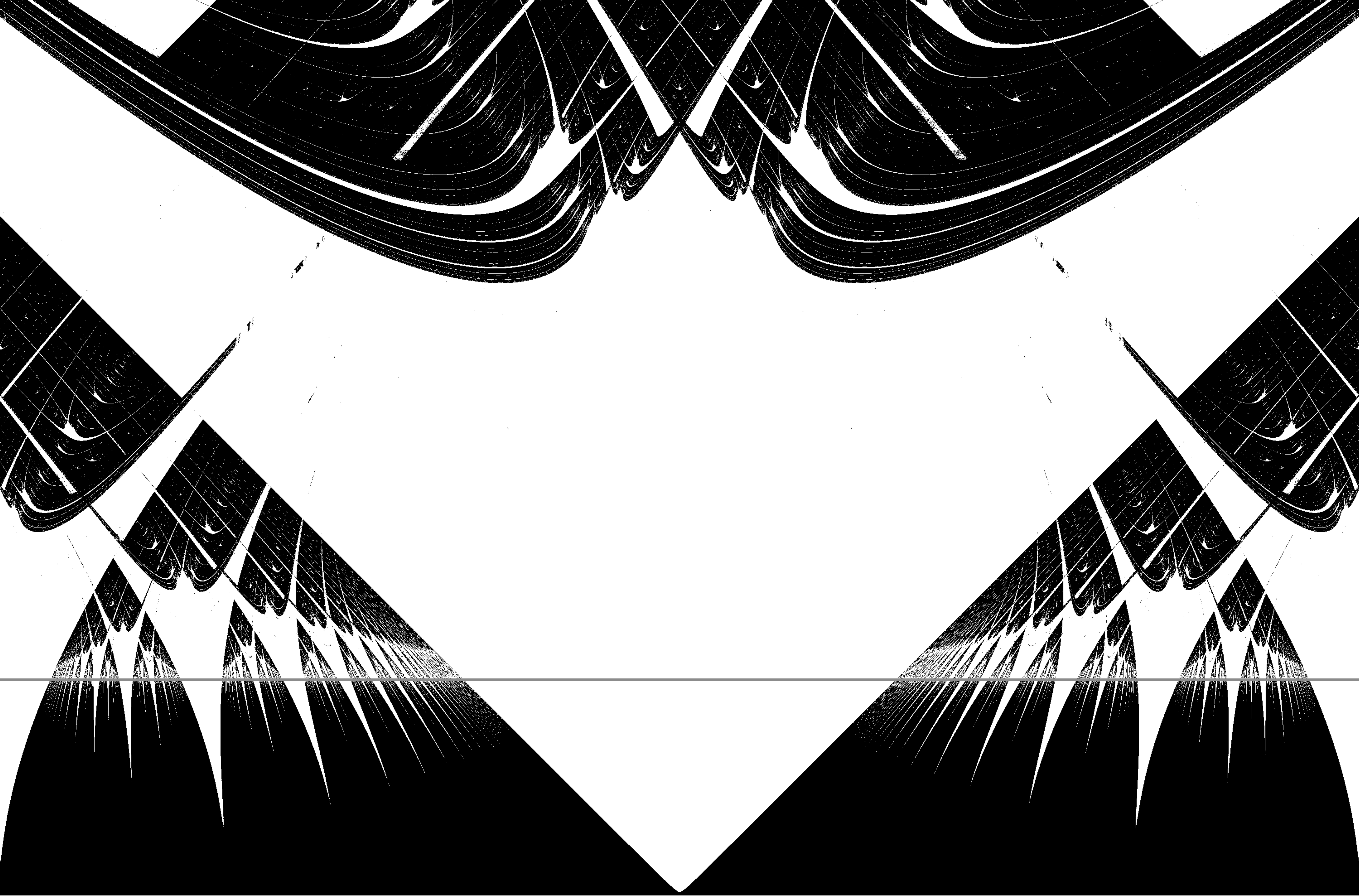}
\caption{Parameter space for the Arnol'd family in the region $(\mu_1,\mu_2)\in
   (-1/2,1/2)\times (0,1/2+1/(2\pi))$. \label{fig:arnold}White regions correspond to points where a numerical experiment indicates 
  that both
  critical points belong to attracting basins. 
  The critical line $\mu_2=1/(2\pi)$ is indicated in grey; note that the Arnol'd tongues
  in the invertible region lead to hyperbolic components above the critical line, but that
 other hyperbolic regions exist also.}
\end{figure}

As a further example, we discuss the families of \emph{real cosine maps}
  and \emph{degenerate standard maps}:
  \begin{align*}
     C_{a,b}(x) &:= a\sin(x) + b\cos(x), \quad 
     (a,b)\in\R^2\setminus\{(0,0)\}, \\
     S_{a,b}(x) &:= ax e^{x} + b, \quad
              a\in\R\setminus\{0\}, b\in\R. 
  \end{align*}
 These are natural families of transcendental entire functions. (The study of
  transcendental dynamics, which goes back to Fatou, has received increasing
  attention recently, partly due to the discovery of deep connections with
  polynomial and rational dynamics. We refer e.g.\ to \cite{strahlen} for
  some examples.) 
 Again, \cite{densityaxiomadimensionone} implies that each of the functions above
  can be approximated by a hyperbolic \emph{polynomial} of high degree, but
  we should look for hyperbolic perturbations
  within the families; see
  Figure \ref{fig:cosine}.  
 (In the case of maps such as $S_{a,b}$, which are unbounded on the real axis,
  we shall need to take care to use the right notion of
  hyperbolicity; see Definition 
   \ref{defn:realhyperbolic} below.)

\begin{figure}
\includegraphics[width=\textwidth]{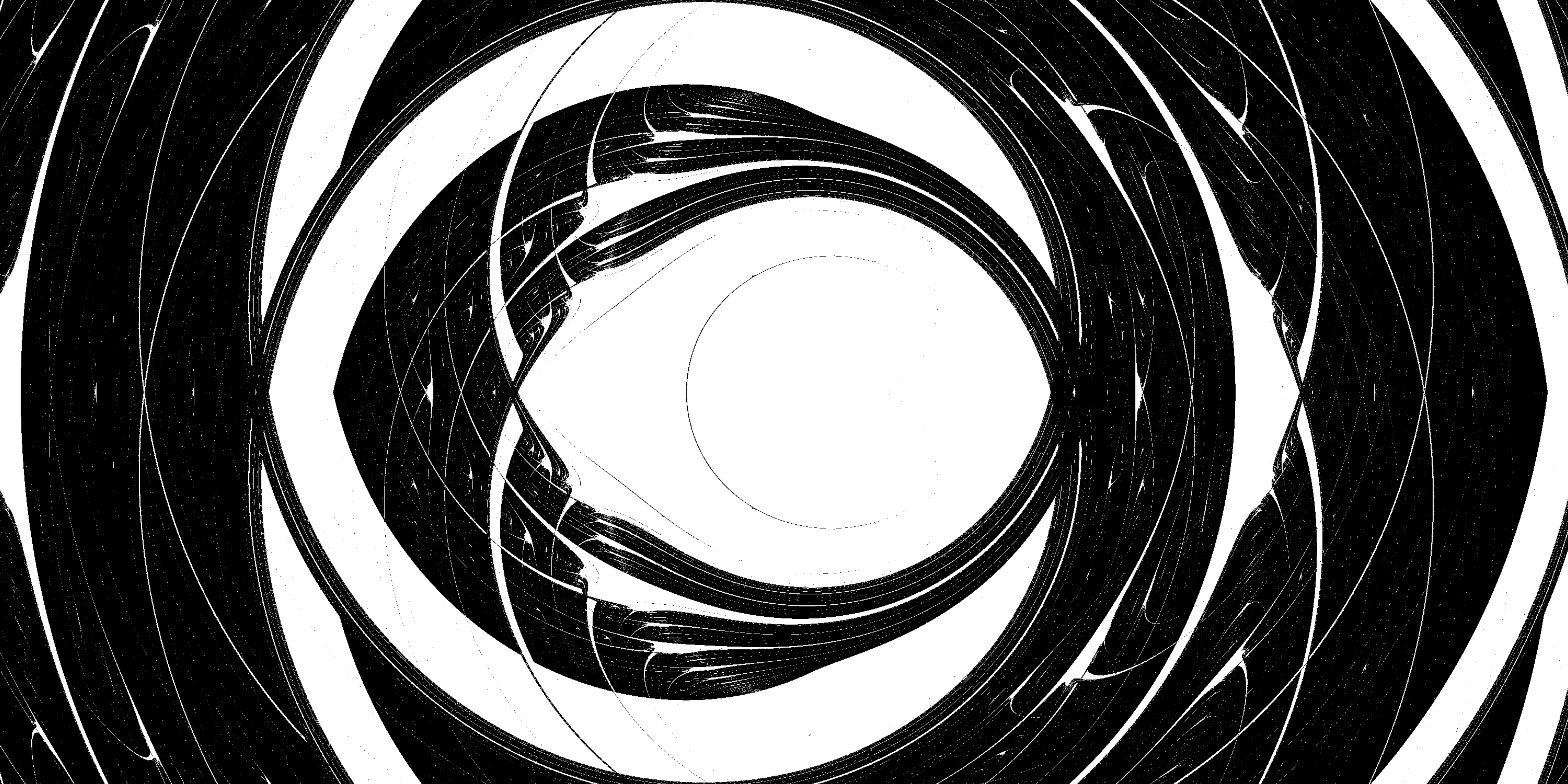}  
\caption{The real cosine family\label{fig:cosine}. As in Figure \ref{fig:arnold}, white regions correspond to hyperbolic maps.}
\end{figure}

 We give positive answers to
  all of the above questions and in fact establish density of
  hyperbolicity 
for  a large general class of parameter spaces of transcendental
   entire functions and circle maps. 
  To do so, we must abandon the proof strategy of
  \cite{densityaxiomadimensionone}, which relies heavily on the use of
  \emph{polynomial-like mappings}. Instead, we return to the methods
  of \cite{KSS1}. The difficulty here is that we require global 
  rigidity statements, in particular regarding the
  \emph{absence of invariant line fields} on the complex plane, which 
  become more difficult to establish given that our functions are
  transcendental.  In order to prove our results, we shall need to combine and
  adapt
  a number of ingredients:
  \begin{enumerate}
   \item rigidity results for maps of the interval and circle maps; 
   \item rigidity results for the dynamics of transcendental entire functions near infinity;
   \item an argument to establish the absence of invariant line fields
     on certain subsets of the complex plane; 
   \item the function-theoretic construction of natural parameter spaces.
  \end{enumerate}
As far as we know, 
density of hyperbolicity
had not previously 
been established in any nontrivial family of transcendental 
functions. 
 
\subsection*{Statement of results for real transcendental functions}

If $f:\C\to\C$ is a transcendental entire function, we denote by
 $S(f)$ the \emph{set of (finite) singular values} of $f$. 
 That is, $S(f)\subset\C$ is the smallest closed set such that
   \[ f: f^{-1}(\C\setminus S(f))\to \C\setminus S(f) \]
 is an unbranched covering. 

 Let $f$ belong to the \emph{Speiser class} $\S$ of transcendental entire
  functions for which $S(f)$ is finite. We say that 
  $f$ is \emph{hyperbolic}
   if every singular value belongs to a basin of attraction; as in the rational
  case, this definition implies uniform expansion on the
  Julia set (see \cite[Theorem C]{ripponstallardhyperbolic} or \cite[Lemma 5.1]{boettcher}).
  When studying density of hyperbolicity, it is reasonable to restrict
  to the class $\S$.  Indeed, for maps with infinite sets
 of singular values, the associated natural parameter spaces
 will be infinite-dimensional, the number of periodic attractors may
 become infinite, there might exist wandering domains, and even
 density of structural stability may fail. In fact, it is not entirely
 clear whether ``hyperbolicity'' is a notion that makes sense when
 the set $S(f)$ is unbounded.

Since we are interested in real dynamics, we 
consider only
\emph{real} transcendental entire functions; i.e.\ those
that satisfy $f(\R)\subset\R$. Furthermore, we assume that
all singular values are also real; i.e. we study the class 
 \[ \SR := \{f:\C\to\C \text{ real transcendental entire}:
                   S(f)\text{ is finite and contained in $\R$}\}. \]
This is a reasonable restriction if our goal is to study 
 hyperbolicity in the complex sense. It seems 
 sensible to expect that density of hyperbolicity
 \emph{on the real line} also holds without the assumption that
 $S(f)\subset\R$, but our current methods will not yield this. 
 We note that a function $f\in\SR$ may have 
 non-real critical \emph{points}, but only real critical \emph{values}. 

To study density of hyperbolicity we must first clarify what perturbations
we allow. It is natural to require these to preserve the global
properties of the original map: for example, if a function is bounded on the real 
line, the approximating map should have the same property. 
It turns out that the correct notion is to seek perturbations of a map $f$
that are entire functions of the form $\psi\circ f \circ \phi^{-1}$, where  $\psi$ and $\phi$ belong
to the class $\HomeoR$ of orientation-preserving homeomorphisms of the complex plane that commute with complex conjugation 
 and restrict to order-preserving homeomorphisms of the real line. 
Our first result concerns
maps $f\in\SR$ that are bounded on the real axis.

 \begin{thm}[Perturbation of bounded functions] \label{thm:perturbbounded}
  Suppose that $f\in\SR$ is bounded on the real axis. Then there exist
   $\phi,\psi\in\HomeoR$ arbitrarily close to the identity such that
   $g := \psi\circ f \circ \phi^{-1}$ is entire and hyperbolic. 
 \end{thm}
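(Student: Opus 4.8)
The plan is to follow the general strategy of \cite{KSS1}, adapted to the transcendental setting, proving rigidity in a suitable finite-dimensional parameter space and then transferring density of hyperbolicity from structural stability. Let me think about how the pieces fit together.

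First I would set up the natural parameter space. Since $f\in\SR$ is bounded on the real line and $S(f)$ is finite, the singular set is a bounded finite subset of $\R$. Using the function-theoretic construction (ingredient (4) mentioned in the introduction), I would embed $f$ into a finite-dimensional family of entire functions in $\SR$ having the same ``combinatorial type'' (same cyclic/combinatorial arrangement of singular values, same asymptotic value structure near infinity), where moving in the parameter space corresponds exactly to quasiconformal deformations of $f$. Concretely, this family should be parametrized by the positions of the singular values together with the local data near $\infty$; the boundedness hypothesis guarantees that the asymptotic behavior is of a controlled type so that the parameter space is finite-dimensional and the maps stay bounded.

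Next, within this parameter space I would prove rigidity: two maps in the family that are topologically conjugate on the real line (respecting the combinatorics) are actually quasiconformally conjugate on $\C$, and then --- this is the heart --- the conjugacy is conformal, hence the maps coincide up to the allowed normalization. The conformality step requires showing the absence of invariant line fields. Here I would combine ingredient (1), the rigidity theory for interval and circle maps (which handles the line field on the real trace and its ``thickening'' via complex bounds / a priori bounds in the renormalization sense), with ingredient (2), rigidity of the transcendental dynamics near $\infty$ (to rule out an invariant line field supported near the essential singularity, where the classical polynomial arguments break down), and ingredient (3), the argument ruling out invariant line fields on the remaining subsets of $\C$. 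The boundedness on $\R$ is again used to control the geometry near $\infty$. With rigidity in hand, structural stability is open and dense in the parameter space by a standard transversality/$\lambda$-lemma argument (as in \cite{KSS1}), and hence structurally stable maps are dense.

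Finally I would upgrade density of structural stability to density of hyperbolicity. A structurally stable map in $\SR$ with finite real singular set has no indifferent cycles and no singular value on the boundary of a cycle's basin or on the Julia set; a short argument (again following \cite{KSS1}, using that each singular orbit must be attracted to an attracting cycle since there are no wandering domains for Speiser-class maps and the only other possibility, a Cremer point or Siegel disk or parabolic cycle, is excluded by structural stability) shows it is in fact hyperbolic. Pulling everything back through the parametrization, the perturbation $g=\psi\circ f\circ\phi^{-1}$ is realized with $\phi,\psi\in\HomeoR$ close to the identity, since the quasiconformal deformations defining nearby parameters can be taken close to the identity.

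The main obstacle I expect is ingredient (3) together with the behavior near $\infty$: in the polynomial case one has a super-attracting fixed point at infinity and a compact Julia set, whereas here $\infty$ is an essential singularity and the Julia set is unbounded, so the standard Ma\~n\'e--Sad--Sullivan / Ahlfors-type arguments for the absence of invariant line fields do not apply directly. Controlling the line field on the ``transcendental tract'' near $\infty$ --- and ensuring the finite-dimensional parameter space genuinely captures \emph{all} quasiconformal deformations --- is where the boundedness hypothesis and the new rigidity results near infinity must do the essential work.
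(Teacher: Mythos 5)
Your first two steps (embedding $f$ in the natural finite-dimensional family $\MfR$, and proving that topological/combinatorial conjugacy promotes to quasiconformal and then, via absence of invariant line fields, to conformal conjugacy) match the architecture of the paper: this is exactly the content of Section~\ref{sec:parameter}, Theorem~\ref{thm:qcrigidity} and Section~\ref{sec:absenceline} (for $f|_\R$ bounded the delicate set $L_I(f)$ is empty, so only the radial, escaping and bounded-postsingular pieces are needed). The genuine gap is in your final step. You claim that density of structural stability follows from rigidity ``by a standard transversality/$\lambda$-lemma argument as in \cite{KSS1}'' and that a structurally stable map is hyperbolic because ``the only other possibility, a Cremer point or Siegel disk or parabolic cycle, is excluded by structural stability'' and there are no wandering domains. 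This last implication is false as stated: excluding wandering domains and indifferent cycles does not force a singular value into an attracting basin. A singular value can have a bounded, non-convergent (e.g.\ recurrent, or dense in an invariant interval) orbit inside the Julia set --- think of $x\mapsto a\sin x$ with dense singular orbit --- and nothing in your list of alternatives rules this out for a putative structurally stable map. In fact ``structurally stable $\Rightarrow$ hyperbolic'' is precisely the Fatou-type statement one is trying to prove; it only follows after one combines the no-invariant-line-fields rigidity with a dimension count in parameter space showing that an open set of pairwise (quasi)conformally conjugate non-hyperbolic maps cannot exist. Your proposal invokes rigidity for the conjugacy step but never feeds it into this counting argument, so the chain ``rigidity $\Rightarrow$ structural stability dense $\Rightarrow$ hyperbolicity dense'' does not close.

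Moreover, the intermediate step itself is problematic: a $\lambda$-lemma/transversality argument is exactly what is \emph{not} available here (the paper remarks that Epstein-type transversality is unclear in this setting). The paper instead proceeds softly: it locally maximizes the number of singular values in attracting basins, removes parabolics by Proposition~\ref{prop:Shi}, restricts to a submanifold on which the attracting (and, in the unbounded case, escaping) coordinates are constant using the real-analytic level-set statement of Theorem~\ref{thm:lojasiewicz}, and then uses the rigidity of Theorem~\ref{thm:QC+con-rigidity} (no two maps with the same kneading data) to manufacture, via Theorem~\ref{thm:density1}, a full set of non-persistent critical relations $f(v)=c$; this forces every remaining singular value into a superattracting cycle and yields a hyperbolic map arbitrarily close to $f$ in $\MfRt$, whence the perturbation $g=\psi\circ f\circ\phi^{-1}$ with $\phi,\psi\in\HomeoR$ near the identity. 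To repair your argument you would either have to supply this critical-relation/dimension-counting mechanism, or give an actual proof (not available by the cited shortcuts) that structural stability in your family excludes singular values with bounded orbits in the Julia set.
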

 
Another (more practical) point of view is to study perturbations that belong to natural families 
of functions in $\SR$. Using Theorem \ref{thm:perturbbounded}, we can deduce the following
result in this spirit. Let $\MobR\subset\HomeoR$ denote the set of all affine maps $M(z)=az+b$, $a>0$, 
$b\in\R$. 

\begin{thm}[Density of hyperbolicity in families of bounded functions] \label{thm:densitybounded}
 Let $n\geq 1$ and let $N$ be an $n$-dimensional (topological)
  manifold. Suppose that
  $(f_{\lambda})_{\lambda\in N}$ is a continuous family of functions
  $f_{\lambda}\in \SR$ such that
 \begin{enumerate} 
  \item $f_{\lambda}|_\R$ is bounded for all $\lambda\in N$;
  \item $\# S(f_{\lambda})\leq n$ for all $\lambda\in N$;\label{item:numbersingular}
  \item no two maps $f_{\lambda_1}$ and $f_{\lambda_2}$ are conjugate
   by a map $M\in \MobR$. \label{item:nontrivial}
 \end{enumerate}
 Then the set $\{\lambda\in N: f_{\lambda}\text{ is hyperbolic}\}$ is 
  open and dense in $N$. 
\end{thm}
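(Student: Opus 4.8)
The plan is to reduce the statement, via a soft topological dimension count, to Theorem~\ref{thm:perturbbounded} applied inside a \emph{natural parameter space} $\mathcal M=\mathcal M(f)$ attached to each $f\in\SR$ that is bounded on the real axis. This parameter space is ingredient~(4) of the introduction, and the properties I need from it are: (a) $\mathcal M(f)$ is a manifold of dimension $\#S(f)$, coordinatised by the positions of the (real) singular values, parametrising the entire functions $\HomeoR$--equivalent to $f$ (those of the form $\psi\circ f\circ\phi^{-1}$ with $\phi,\psi\in\HomeoR$), with $f$ corresponding to a distinguished point, and such that the passage from a deforming pair $(\phi,\psi)$ to the corresponding point of $\mathcal M$ is continuous; (b) hyperbolicity is an open condition on $\mathcal M$; and (c) two functions corresponding to the same point of $\mathcal M(f)$ are $\MobR$--conjugate. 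Since $\psi\circ f\circ\phi^{-1}$ is bounded on $\R$ whenever $f$ is, every function parametrised by $\mathcal M(f)$ again belongs to $\SR$ and is bounded on $\R$, so Theorem~\ref{thm:perturbbounded} applies to it. Openness of hyperbolicity in $N$ is routine --- an attracting cycle and its immediate basin persist under perturbation, the singular set varies continuously, and the family is continuous --- so the content is density, which I establish near each $\lambda_0\in N$ in two steps.

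\emph{Step 1: density inside $\mathcal M(f_{\lambda_0})$.} Let $V$ be any open neighbourhood of $f_{\lambda_0}$ in $\mathcal M:=\mathcal M(f_{\lambda_0})$. By~(a), for $\phi,\psi\in\HomeoR$ close enough to $\id$ the function $g:=\psi\circ f_{\lambda_0}\circ\phi^{-1}$, when it is entire, is represented by a point of $\mathcal M$ lying in $V$; Theorem~\ref{thm:perturbbounded} supplies such a pair for which $g$ is moreover hyperbolic. Hence $V$ contains a point representing a hyperbolic map and, by~(b), a nonempty open set of such points. Thus the hyperbolic maps are dense in $\mathcal M$ near $f_{\lambda_0}$ (indeed, by the same argument at each point, dense in all of $\mathcal M$).

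\emph{Step 2: transfer to the family.} It suffices to prove density on an open dense subset of $N$, since such a subset is dense and the hyperbolic parameters are open; so we may first pass to the subset on which $\#S(f_\lambda)$, and with it the combinatorial type of the covering $f_\lambda\colon f_\lambda^{-1}(\C\setminus S(f_\lambda))\to\C\setminus S(f_\lambda)$, is locally constant. Fix $\lambda_0$ there, put $d:=\#S(f_{\lambda_0})\le n$ by hypothesis~\ref{item:numbersingular}, and take a connected neighbourhood $U$ of $\lambda_0$ on which every $f_\lambda$ is $\HomeoR$--equivalent to $f_{\lambda_0}$. This gives a continuous map $\iota\colon U\to\mathcal M(f_{\lambda_0})$, $\lambda\mapsto f_\lambda$, which is injective: if $\iota(\lambda_1)=\iota(\lambda_2)$ then $f_{\lambda_1}$ and $f_{\lambda_2}$ are $\MobR$--conjugate by~(c), hence $\lambda_1=\lambda_2$ by hypothesis~\ref{item:nontrivial}. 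Now $\dim U=n\ge d=\dim\mathcal M(f_{\lambda_0})$; since an open subset of $\R^n$ admits no continuous injection into a manifold of dimension $<n$ (compose with a chart $\mathcal M(f_{\lambda_0})\hookrightarrow\R^d\subset\R^n$ and apply Brouwer's invariance of domain), we get $d=n$, and then invariance of domain makes $\iota$ a homeomorphism onto an open subset of $\mathcal M(f_{\lambda_0})$ containing $f_{\lambda_0}$. By Step~1 this open set contains points representing hyperbolic maps arbitrarily close to $f_{\lambda_0}$, so the $\iota$--preimages of such points are hyperbolic parameters arbitrarily close to $\lambda_0$. This proves density, and hence the theorem.

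\emph{Main obstacle.} Theorem~\ref{thm:perturbbounded} is the deep input and is already available; the work proper to this statement is twofold. First, constructing the natural parameter space with properties~(a)--(c): the function-theoretic realisation of all $\HomeoR$--deformations of a bounded $f\in\SR$ as a finite-dimensional family, coordinatised by the singular values and depending continuously on the deformation. Second, the combinatorial-stability input used in Step~2, namely that along a continuous family the combinatorial type of $f_\lambda$ is locally constant off a nowhere dense set, so that $\iota$ really is a continuous map into a \emph{fixed} finite-dimensional manifold and invariance of domain can be applied to it. I expect these two points --- above all the construction (a)--(c) --- to absorb essentially all of the effort; once they are in place, the dimension count is immediate.
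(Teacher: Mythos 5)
Your proposal is correct and follows essentially the same route as the paper's own deduction (the proof of Theorem~\ref{thm:densityunbounded}, of which this is the bounded special case): reduce to the locus where $\# S(f_\lambda)$ is locally maximal, use Proposition~\ref{prop:continuity} and Lemma~\ref{lem:singulardependence} to get a continuous map of $N$ into the quotient $\MfRt$ of the Eremenko--Lyubich parameter space by $\MobR$, use hypothesis~(\ref{item:nontrivial}) for injectivity and invariance of domain to force $m=n$ and local surjectivity, and then invoke density of hyperbolicity in $\MfRt$ (which, exactly as you say, is equivalent to Theorem~\ref{thm:perturbbounded} via the characterization of the manifold topology; the paper records it as Corollary~\ref{cor:denboundedfunction}). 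The infrastructure you flag as the main work --- the manifold structure of $\MfR$/$\MfRt$ and the combinatorial stability along continuous families --- is precisely what the paper supplies in Section~\ref{sec:parameter} and Appendix~\ref{app:parameter}.
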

  Assumption (\ref{item:numbersingular}) is needed: as in \cite{strien-robustchaos}
  it is not hard to construct  $d$-parameter families with $d< n$ so that
  {\em no} map within this family is hyperbolic.

 We note that it is possible to embed every $f\in\SR$ with
   $\# S(f)=n$ in an
  $n$-dimensional family 
  $f_{\lambda}$ satisfying (\ref{item:numbersingular}) and (\ref{item:nontrivial}) in a natural fashion
  (see Section \ref{sec:parameter}). Furthermore, if $f|_{\R}$ is bounded, then all
  elements of this family will also be bounded. 
    
As a particular case, the above theorems imply density of hyperbolicity in the 
real cosine family mentioned above. It also holds for general
real trigonometric polynomials for which all critical values are real.
  (See also Corollary~\ref{cor:trigopol} below for a more general statement regarding
   circle maps.)

\begin{cor}[Density of hyperbolicity for trigonometric polynomials]\label{cor:trigopolreal}
The set of parameters $(a,b)$ for which the cosine map $C_{a,b}$ is hyperbolic forms
an open and dense subset of $\R^2$.

More generally, let $n\geq 1$. Then hyperbolicity is dense in the space
 of real trigonometric polynomials
   \begin{equation} \label{eqn:trig}
 f(x) = a_0 + \sum_{j=1}^n\bigl(a_j \cos(jx)+b_j\sin(jx)\bigr) 
  \end{equation}
 for which all critical values are real.  
\end{cor} 
\begin{proof}
All functions $C_{a,b}$ belong to the class $\SR$, with exactly
two critical values and no asymptotic values. Furthermore, no two different
maps $C_{a,b}$ are conjugate by a M\"obius transformation
  $z\mapsto \alpha z + \beta$, $\alpha>0$, $\beta\in\R$
  (Lemma \ref{lem:cos}). 

We note that if $f$ is a trigonometric polynomial and 
 $g=\psi\circ f \circ \phi^{-1}$ is entire with $\psi$ and $\phi$
 close to the identity, then $g$ is conformally conjugate to a
 trigonometric polynomial of the same degree whose coefficients 
 are close to those of $f$ (Lemma \ref{lem:trig}). 

Hence the corollary indeed follows from Theorems \ref{thm:densitybounded} and 
    \ref{thm:perturbbounded}.
\end{proof}

For functions that are unbounded along the real axis, such as the family
  $S_{a,b}$, we need to relax
our notion of hyperbolicity somewhat. The reason is that here
 some singular values may ``escape to infinity'' (i.e., converge
to infinity under iteration). In this case, the function is not 
hyperbolic
in the complex sense, as $\infty$ is not a hyperbolic attractor.
However, such a singular value cannot be perturbed into an attracting
basin by a \emph{real} perturbation. For example, consider the
real exponential family, $E_{a}(x) = \exp(x)+ a$, $a \in\R$.
For $a<1$, $E_a(z)$ is hyperbolic, but for $a>1$, the singular value
$a$, and indeed  every real starting value $x$, converges to $\infty$
under iteration. These maps are not hyperbolic in the
complex plane~-- indeed, the Julia set is the whole complex plane, 
the maps are far from uniformly expanding,
and their topological dynamics 
is still not completely understood~-- but it seems reasonable to describe their action on $\R$
as hyperbolic, motivating the following definition.

\begin{defn}[Real-hyperbolicity of maps in $\SR$]\label{defn:realhyperbolic}
A function $f\in \SR$ is called \emph{real-hyperbolic} if every
 singular value either belongs to a basin of attraction or tends to infinity under
 iteration.  
\end{defn}

When $f|_{\R}$ is  bounded, this corresponds
 to the usual definition of hyperbolicity. 
We should note that, if $f_{\lambda}$ is a family
 of functions in $\SR$ for which the number of singular
 values is constant, then any real-hyperbolic parameter
 $\lambda_0$ for which there are {\em no critical relations}
 is {\em real-structurally stable} within the family. By this we mean that
 any nearby map $f_{\lambda}$ is conjugate
 to $f_{\lambda_0}$ on the real line. (However, they are not necessarily conjugate
 in the complex plane; indeed $\exp(x)+a$ and $\exp(x)+b$ are not
topologically  conjugate for $a>b>-1$, see \cite{douadygoldberg}.)
  Here we say that $f$ has no critical relations if
  no critical point or asymptotic value of $f$ is eventually mapped
 onto a critical point.
 Indeed, real-structural stability follows from the fact that $f_{\lambda_0}$ and a nearby map
 will be combinatorially and hence topologically
 conjugate on the real line (see Lemma \ref{lem:comb-conj}).

 \medskip

It is reasonable to conjecture that real-hyperbolicity is dense 
in every full parameter space in $\SR$. In this paper, we establish this
conjecture for functions satisfying quite general additional ``geometrical'' properties, 
  by which we mean that these properties depend on the function-theoretic behaviour of the maps
  (rather than their dynamics). There are two such conditions, each of which will allow us 
  to establish density of real-hyerbolicity. The first of these relates to the 
  geometry of the finite singular values of $f$: We shall say that a function $f\in\S$ has
  \emph{bounded criticality} if $f$ has no asymptotic values and the degree of critical points of
   $f$ is uniformly bounded. This class of entire functions appears in 
   work of Mihaljevi\'c-Brandt \cite{helenaorbifolds}, which shows that these maps often
   have particularly nice dynamical properties.   Bishop \cite{bishopclassS} has recently presented methods that allow the 
   construction of a vast array of 
   entire functions with bounded criticality.
   
When our functions do have asymptotic values, 
  or critical points of unbounded multiplicity,
  we will impose some geometric conditions concerning the singular value at $\infty$.
  Essentially, the following condition says 
 that the set of points where $f$ is large is 
 sufficiently
 thick near the real axis. 

\begin{defn}[Sector condition]
\label{def:sector}
Let $f$ be a real transcendental entire function and define 
  \[ \Sigma := \{\sigma\in\{+,-\}: 
     \text{there is some $x\in\R$ whose orbit accumulates on $\sigma\infty$}\}
           \]
We say that $f$ satisfies the \emph{sector condition} if, for every
 $M>0$ and $\sigma\in\Sigma$, there exist $\theta>0$ and $x_0>0$ such that
    \[ |f(\sigma x+iy)| > M \]
 whenever $x\geq x_0$ and $|y|\leq \theta x$. 
\end{defn} 

For $f\in\SR$, the sector condition is equivalent to requiring that there are
   constants $r,K>0$ such that
\begin{equation}\label{eqn:log_der}
 \frac{\vert f'(\sigma x)\vert}{\vert f(\sigma x)\vert}
\leq K\cdot\frac{\log\vert f(\sigma x)\vert}{x}
\end{equation}
for all $x\geq r$ and all $\sigma\in\Sigma$ \cite[Theorem 6.1]{wanderingdomains}. 
 It is 
satisfied for most explicit transcendental
entire functions that have finite order of growth, such as 
$z\mapsto ze^z$. In
\cite{wanderingdomains}, this condition is used to exclude the existence
of wandering domains for certain real transcendental functions. 

\begin{thm}[Density of real-hyperbolicity] \label{thm:densityunbounded}
 Let $n\geq 1$ and let $N$ be an $n$-dimensional (topological)
  manifold. Suppose that
  $(f_{\lambda})_{\lambda\in N}$ is a continuous family of functions
  $f_{\lambda}\in \SR$ such that the following three conditions hold:
 \begin{enumerate} 
  \item $\# S(f_{\lambda})\leq n$ for all $\lambda\in N$;
  \item no two maps $f_{\lambda_1}$ and $f_{\lambda_2}$ are conjugate
   by a map $M\in \MobR$;
  \item $f_{\lambda}$ has bounded criticality for every $\lambda\in N$, \emph{or}
      $f_{\lambda}$ satisfies the sector condition for every $\lambda\in N$.
 \end{enumerate}
 Then the set $\{\lambda\in N: f_{\lambda}\text{ is real-hyperbolic}\}$ is 
  open and dense in $N$. 
\end{thm}
\begin{remark}[Remark 1]
If $f$ is bounded along the real axis, then it trivially   
 satisfies the sector condition, so Theorem \ref{thm:densityunbounded}
 contains Theorem \ref{thm:densitybounded} as a special case.  
\end{remark}
\begin{remark}[Remark 2]
 Again, there is an analogous statement to Theorem \ref{thm:perturbbounded}:
  any map $f\in\SR$ that has bounded criticality or satisfies the sector condition can be 
  perturbed to a real-hyperbolic function $g\in\SR$ by pre-\ and post-composition
  with some $\phi,\psi\in\HomeoR$ close to the identity. 
\end{remark}

To describe some families to which the preceding result applies, let
$f\in\SR$, choose $\eps>0$ smaller than
one-half the minimal distance between two different singular values
of $f$, and set
  \[ W := \{z\in\C: \dist(z,S(f))<\eps\}. \]
Every component of $f^{-1}(W)$ is mapped either as a finite-degree
 branched covering or as an infinite-degree covering map by $f$. 
 We say that $f$ has \emph{$k$ singularities} if there are exactly
 $k$ components of $f^{-1}(W)$ on which $f$ is not one-to-one.
 (In particular, $f$ has at most $k$ critical points.)

If $f\in\SR$ has only a finite number of singularities, then $f$
 is of the form
  \[ f(z) = \int P(w)e^{Q(w)}dw, \]
where $P$ and $Q$ are real polynomials with $P\not\equiv 0$ and
$\deg Q \geq 1$. It is well-known that
such functions satisfy the sector condition; see
Lemma~\ref{lem:finitesingularities}.

\begin{cor}[Density of real-hyperbolicity]
\label{cor:thm1point7}
\begin{enumerate}
\item For each $k$, real-hyperbolicity is dense in the space of functions
 $f\in\SR$ that have $k$ singularities. 
\item
Real-hyperbolicity is dense in the family
   \[ S_{a,b}: x\mapsto ax e^x + b, \quad
              a\in\R\setminus\{0\}, b\in\R. \]
\end{enumerate}
\end{cor}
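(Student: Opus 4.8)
The plan is to deduce both statements from Theorem \ref{thm:densityunbounded}, verifying the sector condition by means of Lemma \ref{lem:finitesingularities} and producing the required families through the parameter-space construction of Section \ref{sec:parameter}. For (1), recall from Lemma \ref{lem:finitesingularities} that a function $f\in\SR$ has finitely many singularities precisely when $f(z)=c+\int_0^z P(w)e^{Q(w)}\,dw$ for real polynomials $P\not\equiv0$ and $Q$ with $\deg Q\geq1$, and that every such $f$ then satisfies the sector condition. So let $f\in\SR$ have exactly $k$ singularities. By the construction of Section \ref{sec:parameter}, $f$ embeds in a continuous family $(f_\lambda)_{\lambda\in N}$ of functions in $\SR$ over an $n$-dimensional manifold $N$ with $n=\#S(f)$, satisfying conditions (2) and (3) of Theorem \ref{thm:densityunbounded}; since the number of singularities is invariant under the deformations of that construction, every $f_\lambda$ again has exactly $k$ singularities. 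Condition (1) of Theorem \ref{thm:densityunbounded} holds by the first sentence, so the theorem supplies real-hyperbolic parameters arbitrarily close to the parameter of $f$; the corresponding functions have $k$ singularities and approximate $f$, which proves (1).

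For (2), note that $(S_{a,b})'(x)=a(1+x)e^{x}$, so that $S_{a,b}(x)=b+\int_0^x a(1+w)e^{w}\,dw$ has the above form with $P(w)=a(1+w)$ and $Q(w)=w$. Hence $S_{a,b}$ has a single critical point, at $x=-1$, with critical value $b-a/e$, and a single finite asymptotic value $b$, approached as $x\to-\infty$; these are distinct because $a\neq0$, so $\#S(S_{a,b})=2$, and $S_{a,b}$ satisfies the sector condition by Lemma \ref{lem:finitesingularities}. Thus $(S_{a,b})_{(a,b)\in N}$ with $N=(\R\setminus\{0\})\times\R$ is a continuous family over a $2$-dimensional manifold satisfying conditions (1) and (2) of Theorem \ref{thm:densityunbounded}, and it remains to check (3). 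If $M(z)=\alpha z+\beta$ with $\alpha>0$ satisfies $M^{-1}\circ S_{a_1,b_1}\circ M=S_{a_2,b_2}$, then comparing the two sides forces $\alpha=1$ (the left side grows like $e^{\alpha x}$ at $+\infty$), after which $M^{-1}\circ S_{a_1,b_1}\circ M(x)=a_1 e^{\beta}xe^{x}+a_1\beta e^{\beta}e^{x}+(b_1-\beta)$; the vanishing of the term $a_1\beta e^{\beta}e^{x}$ forces $\beta=0$, whence $M=\id$ and $(a_1,b_1)=(a_2,b_2)$. Therefore Theorem \ref{thm:densityunbounded} applies with $n=2$ and gives the claim. (Part (2) does not formally follow from (1): the family $(S_{a,b})_{(a,b)\in N}$ is only a $2$-dimensional slice of the space of all functions with two singularities.)

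In this way the corollary is an application of Theorem \ref{thm:densityunbounded}, and the real work resides in that theorem and in the parameter-space construction of Section \ref{sec:parameter}. Within the corollary itself, the points deserving attention are that in (1) the natural family of Section \ref{sec:parameter} genuinely consists of functions in $\SR$ with $k$ singularities — so that the conclusion is density within the stated space — and the elementary computation in (2) excluding affine conjugacies between the maps $S_{a,b}$; neither should present a serious difficulty, and the remaining ingredient, the integral representation $f=\int P e^{Q}$ together with the sector condition it entails, is supplied by Lemma \ref{lem:finitesingularities}.
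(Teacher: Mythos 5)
Your proposal is correct and takes essentially the same route as the paper: real-hyperbolicity is dense in the natural parameter space (Corollary~\ref{cor:denboundedfunction}, equivalently Theorem~\ref{thm:densityunbounded} applied to the family of Section~\ref{sec:parameter}), the sector condition holds because finitely many singularities forces the form $f=\int Pe^Q$, the number of singularities is preserved under topological equivalence, and for $S_{a,b}$ the absence of affine conjugacies is exactly the paper's Lemma~\ref{lem:cos}, which you reprove by direct computation. The only cosmetic slip is attributing the integral-form characterization to Lemma~\ref{lem:finitesingularities}; it is Lemma~\ref{lem:finitesingularitiesform}, with Lemma~\ref{lem:finitesingularities} supplying the sector condition.
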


\subsection*{QC-rigidity for maps in $\SR$}

As is usual, our proof of the above results proceeds along three steps:
\begin{enumerate}
 \item \emph{QC rigidity:} Two functions that are topologically 
   (or combinatorially) conjugate
   are in fact quasiconformally conjugate;
 \item \emph{Absence of line fields}: The functions under consideration
   support no nontrivial quasiconformal deformations on the Julia set;
   \label{item:linefields}
 \item \emph{Parameter space arguments}: Density of hyperbolicity
  is deduced from the first two statements by performing suitable
  perturbations in parameter space. 
\end{enumerate}

Traditionally, the first step of this program has been the hardest to
achieve. In our context, we are able to solve it completely, i.e. {\em without}
assuming the sector condition or bounded criticality. This is accomplished by
combining the solution of the rigidity problem by the second author in joint work 
with Trevor Clark, see \cite{CS},
with recent results by the first author on  the dynamics of entire functions
near infinity \cite{boettcher}. 

\begin{thm}[QC Rigidity for maps in $\SR$]
\label{thm:qcrigidity}
Suppose that $f,g\in\SR$ are topologically conjugate on the complex plane,
and that the conjugacy takes the real axis to itself. 
Then $f$ and $g$ are quasiconformally conjugate.
\end{thm}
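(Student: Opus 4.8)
The plan is to reduce the complex rigidity statement to two separate rigidity results — one ``in the bulk'' and one ``near infinity'' — and then glue the resulting quasiconformal conjugacies. Let $h\colon\C\to\C$ be the topological conjugacy between $f$ and $g$, with $h(\R)=\R$; replacing $h$ by its complex conjugate if necessary we may assume $h$ is orientation-preserving. First I would invoke the interval/circle rigidity machinery that underlies \cite{strien-qsrigidity}: since $h|_\R$ conjugates $f|_\R$ to $g|_\R$ and both maps lie in $\SR$ (so have finitely many, real, singular values), the real conjugacy can be upgraded to a quasisymmetric conjugacy on $\R$ — this is precisely the content of the solution of the rigidity problem by the second author. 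The essential point here is that a topological conjugacy on $\R$ between two such maps is automatically a \emph{combinatorial} conjugacy matching up the (finitely many) singular orbits, so the Koebe-space / complex-bounds technology of \cite{strien-qsrigidity} applies and yields a quasisymmetric $h_\R\colon\R\to\R$ conjugating $f$ to $g$.

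Next I would control the dynamics near infinity. A function $f\in\SR$ has finitely many tracts over each of its finitely many singular values, and over each logarithmic singularity the behaviour is modelled — after passing to the logarithmic change of coordinates — by an exponential-type map; this is exactly the regime analysed in \cite{boettcher}, where a B\"ottcher-type coordinate near infinity is constructed for entire (and more general) functions. The topological conjugacy $h$ must respect the escaping dynamics and the arrangement of tracts (which is a combinatorial datum preserved by any topological conjugacy), so the results of \cite{boettcher} provide a quasiconformal conjugacy $h_\infty$ between $f$ and $g$ on a neighbourhood of infinity — more precisely on the union of the tracts over all singular values outside some large disc — which moreover agrees with $h$ up to isotopy and, on the real slices of the tracts, is compatible with the quasisymmetric map $h_\R$ from the previous step (both are forced by the same combinatorics). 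Here one uses that $f\in\SR$ implies all singular values are real, so the tracts are symmetric with respect to $\R$ and the near-infinity model is itself real.

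It then remains to promote the pair $(h_\R, h_\infty)$ to a global quasiconformal conjugacy. I would first build, by a standard Ahlfors–Beurling / interpolation argument, a quasiconformal map $H_0$ of the plane that extends $h_\R$ on $\R$ and equals $h_\infty$ near infinity (possible because the two are compatible on the overlap); then lift through the dynamics. Concretely, pull $H_0$ back by the iterates: the maps $f^{-n}\circ g\circ H_0\circ f^n$ (suitably interpreted on the complement of the singular orbits, where $f$ and $g$ are coverings) form a sequence of $K$-quasiconformal maps with a uniform dilatation bound, because each lift across an unbranched covering does not increase the maximal dilatation, and the ``seam'' created by $H_0$ being only approximately equivariant is pushed into smaller and smaller regions; a normal-families argument then extracts a limit $H$ which is quasiconformal and genuinely satisfies $H\circ f = g\circ H$. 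One checks $H$ is a bijection (it is a limit of homeomorphisms with controlled dilatation, agreeing with the given conjugacy on the real line and near infinity, hence properly homotopic to $h$), completing the proof.

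The main obstacle I anticipate is the compatibility and gluing in the last two steps: one must verify that the quasisymmetric conjugacy on $\R$ and the near-infinity quasiconformal conjugacy can be chosen to match on their common domain (the real parts of the tracts), and that the subsequent pull-back/normalisation converges — i.e., that the non-equivariance of the initial extension $H_0$ is supported on a set whose forward images shrink, so that no dilatation accumulates in the limit. Making this precise requires careful bookkeeping of the tract combinatorics and of the behaviour of the Koebe space near the finitely many singular orbits, and is where the hypotheses ``$S(f)$ finite and real'' (i.e., $f\in\SR$) are doing real work.
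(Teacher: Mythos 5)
Your overall skeleton---quasisymmetric rigidity on the real line from \cite{strien-qsrigidity}, rigidity of the dynamics near infinity from \cite{boettcher}, then a pullback argument---is indeed the strategy of the paper, but two of your steps contain genuine gaps. First, \cite{strien-qsrigidity} does not by itself give a quasisymmetric conjugacy on all of $\R$: it applies to (anchored) real-analytic maps of a compact interval, so it only controls the bounded part of the real dynamics. When $f|_{\R}$ is unbounded one must (a) reduce to an anchored interval map, which requires a case analysis and a quasiregular modification outside the dynamically relevant interval followed by straightening (this is the content of Theorem \ref{thm:realrigidity}), and (b) deal with real orbits escaping to $\pm\infty$, in particular escaping singular orbits, about which interval rigidity says nothing; this is precisely what the notion of escaping conjugacy and Theorem \ref{thm:boettcher} (i.e.\ the real-line consequence of \cite{boettcher}) are for, supplemented by the quasisymmetric extension of $\phi$ on critical points when an unbounded component of $\R\setminus J$ contains infinitely many critical points. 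These pieces are then interpolated along $\R$ into a single quasisymmetric map matching all singular and critical data. Also, your reduction to the order-preserving case by replacing $h$ with its complex conjugate does not work: complex conjugation fixes $\R$ pointwise, so it cannot repair an order-reversing $h|_{\R}$; the paper instead replaces $g$ by $\tilde g(z)=-g(-z)$.

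Second, the gluing and pullback. Demanding an exact match between a planar conjugacy $h_\infty$ on the tracts and $h_{\R}$ on their real slices is both difficult to arrange and unnecessary, and your convergence mechanism---that the ``seam'' where $H_0$ fails to be equivariant is pushed into ever smaller regions---is not available here: forward images of that seam need not shrink for points with bounded orbits accumulating on the postsingular set. The argument that works (Theorem \ref{thm:pullback}) needs only that the initial quasiconformal extension $\theta_0$ of the quasisymmetric real conjugacy agree with the combinatorial data on $S(f)$ (be isotopic to $\psi$ rel $S(f)\cup\{\infty\}$) and be an honest conjugacy on linearizing neighborhoods of attracting cycles and on parabolic petals; one then lifts repeatedly via the covering homotopy theorem to get $\theta_j\circ f=g\circ\theta_{j+1}$ with non-increasing dilatation, and the $\theta_j$ stabilize on the increasing preimages of $\P(f)$ union these neighborhoods, which yields convergence to a quasiconformal conjugacy. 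Your sketch omits the attracting/parabolic normalization entirely (and the expression $f^{-n}\circ g\circ H_0\circ f^n$ is not the right lift), and without it the limit need not exist, or need not conjugate $f$ to $g$ on the Fatou set.
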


An immediate corollary is:

\begin{cor}[Connected conjugacy classes]\label{cor:conjconnected}
Take $f\in \SR$. Then the conjugacy class of $f$ (i.e. the set of maps 
that are topologically conjugate to $f$ on the complex plane) is connected with respect to the
topology of locally uniform convergence. 
\end{cor}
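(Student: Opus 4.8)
The plan is to deduce the statement directly from Theorem~\ref{thm:qcrigidity} by the standard device of interpolating between quasiconformal structures; this is precisely what makes the corollary ``immediate''. Concretely, I would show that the conjugacy class of $f$ is path-connected. So let $g$ be any map topologically conjugate to $f$, with the conjugacy respecting $\R$ (which, for maps in $\SR$, may always be assumed). By Theorem~\ref{thm:qcrigidity} there is a quasiconformal homeomorphism $\phi$ of $\C$, which we may take to fix $\infty$, with $g=\phi\circ f\circ\phi^{-1}$. Let $\mu:=\overline{\partial}\phi/\partial\phi$ be its Beltrami coefficient. Since $g$ is holomorphic, $\phi$ carries the standard conformal structure to an $f$-invariant measurable conformal structure; equivalently, $\mu$ is $f$-invariant.

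Next I would interpolate. The set of $f$-invariant Beltrami coefficients on $\C$ of sup-norm $<1$ is convex, so each $\mu_t:=t\mu$ (for $t\in[0,1]$) is again $f$-invariant of norm $<1$. Solving the Beltrami equation with a fixed normalization (say, fixing $\infty$ and two prescribed finite base points, after first replacing $\phi$ by $A\circ\phi$ for a suitable affine $A$ so that $\phi$ itself obeys this normalization; note that $A\circ g\circ A^{-1}$ still lies in the conjugacy class) yields quasiconformal maps $\phi_t$ with $\phi_0=\id$ and $\phi_1=\phi$, depending continuously — indeed real-analytically — on $t$ by the Ahlfors--Bers theorem with parameters. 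Set $g_t:=\phi_t\circ f\circ\phi_t^{-1}$. Then $f$-invariance of $\mu_t$ forces $g_t$ to be holomorphic on $\C$; it has an essential singularity at $\infty$, since it is topologically conjugate (via $\phi_t$) to the transcendental function $f$ and hence cannot be a polynomial; and it lies in the conjugacy class of $f$. Thus $(g_t)_{t\in[0,1]}$ is a path inside the conjugacy class, continuous for locally uniform convergence, running from $g_0=f$ to $g_1=A\circ g\circ A^{-1}$. Concatenating with the path $s\mapsto A_s\circ g\circ A_s^{-1}$, where $A_s$ runs over affine maps from $\id$ to $A$ (the affine group being connected), joins $g$ to $f$ within the conjugacy class.

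Since $g$ was arbitrary, the conjugacy class is path-connected, hence connected. I do not anticipate any genuine obstacle here; the one step deserving attention is the claim that the interpolants $\mu_t$ stay $f$-invariant, and that is exactly the convexity observation above. Everything else — existence, uniqueness and parameter-dependence of the solution of the Beltrami equation, and the fact that conjugating a transcendental entire map by a homeomorphism of the plane produces again a transcendental entire map — is routine. (If one wants the members of the conjugacy class to lie in $\SR$, one additionally takes $\phi$, and hence every $\phi_t$, to commute with $z\mapsto\overline{z}$, which is possible because the conjugacy respects $\R$, so that $S(g_t)=\phi_t(S(f))\subset\R$.)
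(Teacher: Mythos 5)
Your argument is correct and is essentially the paper's own proof (see Corollary~\ref{cor:conjconnected2}): upgrade the topological conjugacy to a quasiconformal one via Theorem~\ref{thm:qcrigidity}, interpolate its Beltrami coefficient linearly by $t\mu$, and solve the Beltrami equation with a suitable normalization to obtain a continuous path of conjugate maps joining $f$ to $g$. Your additional care about the affine renormalization, the convexity/invariance of $t\mu$, and symmetry with respect to $\R$ merely fills in details the paper leaves implicit (including the assumption, implicit there as well, that the topological conjugacy preserves the real axis).
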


\begin{remark}
This statement is true even if one takes the topology coming from the
natural parameter space $\MfR$. (For the definition of this 
space see Section~\ref{sec:parameter}.)
\end{remark}

Step (\ref{item:linefields}) contains an additional
complication in the case of transcendental maps: it is necessary
to rule out the existence of invariant line fields on the set of 
escaping points as well as the set of points that tend to
escaping singular orbits under iteration. (Both sets are contained in the
Julia set.) While the first issue was resolved in \cite{boettcher}, 
we can deal with the second only by assuming either the
sector condition or bounded criticality.

\subsection*{Statement of corresponding results for circle maps and trigonometric polynomials}\label{subsec:trigo}

As usual in one-dimensional real dynamics, our results for real functions
have analogues for circle maps. Here it is natural to 
consider transcendental (non-rational) analytic self-maps of the
punctured plane $\CS := \C\setminus\{0\}$ that preserve the
unit circle. For such a function $f$, we can define the set of
singular values $S(f)\subset\CS$ analogously to the case
of entire functions. The natural class to consider for our purposes is 
\[ \SC := \{f:\CS\to\CS \text{ transcendental:} 
        \ f(S^1) \subset S^1, S(f)\subset S^1, \# S(f) < \infty \}. \] 
 We note that every map $f\in\SC$ has at least one critical
  point on the circle; see Lemma~\ref{lem:circlemapcrit}.
Again, $f\in \SC$ is called hyperbolic if every singular value belongs to a basin of attraction
of a periodic point in $S^1$.

\begin{thm}[Density of hyperbolicity for circle maps] \label{thm:densitycircle}
 Let $n\geq 1$ and let $N$ be an $n$-dimensional (topological) manifold. Suppose that
  $(f_{\lambda})_{\lambda\in N}$ is a continuous family of functions 
  $f_{\lambda}\in\SC$ such that
\begin{enumerate}
 \item $\# S(f_{\lambda})\leq n$ for all $\lambda\in N$ (recall that
  $S(f_{\lambda})\subset \C^*$ by definition; i.e.\ this count does
  not include $0$ or $\infty$);
 \item no two maps $f_{\lambda_1}$ and $f_{\lambda_2}$ are conjugate by a rotation.
\end{enumerate}
 Then the set $\{\lambda\in N: f_{\lambda}\text{ is hyperbolic}\}$ is open and dense in
  $N$.
\end{thm}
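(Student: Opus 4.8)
The plan is to follow the same three-step strategy (QC rigidity, absence of invariant line fields, parameter space argument) used for the real case in Theorem~\ref{thm:densitybounded}, transferring everything to the circle setting via the logarithmic lift. First I would pass from $f\in\SC$ to a \emph{real} transcendental entire function by lifting under the exponential map $\exp\colon\C\to\C^*$: since $f$ preserves $S^1$ and is analytic on $\C^*$, the composition lifts to a function $F\colon\C\to\C$ satisfying $F(\R)\subset\R$, $F(z+2\pi i)=F(z)+2\pi i k$ for the degree $k$, and with all singular values of $F$ lying over $S(f)\subset S^1$, hence real modulo $2\pi$. This lift $F$ is no longer in $\SR$ (it is $2\pi i$-periodic rather than entire with finitely many singular values in the plane), but it belongs to a natural class of ``entire functions commuting with a translation'' for which all the tools we have developed apply essentially verbatim. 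In particular the hyperbolicity of $f$ on $S^1$ corresponds exactly to each singular value of $F$ lying in a basin of an attracting cycle (periodic points of $f$ on $S^1$ lift to periodic-mod-$2\pi$ points of $F$), and the sector condition is automatic because $F$ is bounded in the imaginary direction is false --- rather, one checks directly that a circle map's lift has the required growth: near infinity $f(z)\to 0$ or $\infty$ along the relevant ends of $\C^*$, so $F$ behaves like $z\mapsto$ (bounded) $\pm$ large real part, giving the sector condition for free.

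The key steps, in order, are: (1) establish the lifting correspondence and the dictionary between dynamical and combinatorial notions for $f$ and $F$ (hyperbolicity, attracting cycles, critical relations, conjugacy-by-rotation $\leftrightarrow$ conjugacy-by-real-translation); (2) invoke the QC-rigidity theorem (the analog of Theorem~\ref{thm:qcrigidity}, proved for circle maps via the same combination of \cite{strien-qsrigidity} and \cite{boettcher}, cf.\ the discussion preceding Theorem~\ref{thm:densitycircle}) to conclude that two combinatorially conjugate maps in $\SC$ are quasiconformally conjugate; (3) prove absence of invariant line fields on the Julia set for $F$ --- here the periodicity actually \emph{helps}, since the Julia set and the line field descend to $\C^*$ and one can argue on the quotient; and (4) run the parameter-space argument: given $f_{\lambda_0}$ not hyperbolic, use that the QC-conjugacy class of $f_{\lambda_0}$ is (by rigidity plus absence of line fields) a single point in the appropriate moduli space, so the conjugacy class in the $n$-dimensional family $N$ has positive codimension unless $f_{\lambda_0}$ has a critical relation; a small perturbation then either removes the critical relation or pushes a free critical value toward an attracting basin, exactly as in the proof of Theorem~\ref{thm:densitybounded}. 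Openness of the hyperbolic set is the easy direction and follows from stability of attracting cycles and continuity of singular values in the family.

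The main obstacle I expect is \textbf{step (3)}: ruling out invariant line fields for the lifted map $F$. Unlike in the real entire case, $F$ is not in $\SR$, so one cannot cite Theorem~\ref{thm:qcrigidity} and its supporting line-field arguments off the shelf; one must verify that the techniques of \cite{boettcher} for the escaping set and the ``thick trap'' argument (item (3) in the list in the introduction) genuinely go through for functions commuting with a translation. The potentially delicate point is the escaping set near the two ends $0$ and $\infty$ of $\C^*$: these correspond to $\operatorname{Re} z\to\mp\infty$ for $F$, and one needs the analog of rigidity near infinity simultaneously at both ends, respecting the translation symmetry. I would handle this by working on the cylinder $\C/2\pi i\Z\cong\C^*$ directly and adapting the Böttcher-type coordinates near each end; the periodicity means there is genuinely only a ``one-ended'' analysis to do at each puncture, which should make it tractable, but checking that no line field survives the pullback requires care with the measurable dynamics on the (now cylinder-quotiented) escaping set. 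Once step (3) is in place, steps (1), (2), and (4) are routine adaptations of the real-case arguments, and the theorem follows.
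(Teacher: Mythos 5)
Your three-step skeleton (rigidity, absence of invariant line fields, parameter counting) is the same as the paper's, but the device you hang it on --- lifting $f$ to an entire map $F$ commuting with a translation and asserting that the $\SR$ machinery applies ``essentially verbatim'' --- does not work as stated, and it is not what the paper does. The lift has an unbounded, infinite singular set (all integer translates of the lifts of $S(f)$), so $F$ lies in neither $\SR$ nor even $\BR$, and none of the real-line results (Theorems \ref{thm:realrigidity}, \ref{thm:boettcher}, \ref{thm:realqcconj}) or the results of \cite{boettcher} can be quoted for it. Moreover the semiconjugacy destroys the dictionary you rely on: an attracting cycle of $f$ on $S^1$ lifts to an orbit with $F^q(P)=P+p$, $p\in\Z$, which is a cycle of $F$ only when $p=0$; and (e.g.\ for degree $D\geq 2$, or $D=1$ with $p\neq 0$) the lifted singular orbits march off to real infinity even when $f$ is hyperbolic, so ``$f$ hyperbolic'' is not ``every singular value of $F$ in an attracting basin'', and $F$ acquires escaping singular orbits --- hence a nonempty $L_I$-type set and a genuine sector-condition question --- that have no counterpart in the dynamics of $f$. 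Your sector-condition paragraph is symptomatic of this confusion; the correct observation is simpler and different: since $S(f)\subset S^1$ and $f(S^1)\subset S^1$, the postsingular set of $f$ is a compact subset of $\CS$, so no singular orbit approaches $0$ or $\infty$, no analog of $L_I(f)$ or of the sector condition arises, and the circle case runs parallel to the case of \emph{bounded} maps in $\SR$.

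Accordingly the paper works on $\CS$ directly (which is where you land anyway once you ``work on the cylinder''): quasisymmetric rigidity on the circle comes from \cite{strien-qsrigidity} plus a pullback (Theorem \ref{thm:qsrigidcircle}); absence of line fields on the radial Julia set requires the Ahlfors-islands generalization \cite{conicalrigidity} of \cite{linefields}, because $f$ is not meromorphic on $\C$ --- a point your proposal never addresses and which the lift does not repair for the escaping part; and the one genuinely new ingredient is exactly the step you flag as your ``main obstacle'': absence of invariant line fields on $\{z:\omega(z)\subset\{0,\infty\}\}$, which the paper supplies as a punctured-plane analogue of \cite{boettcher} (Theorem \ref{thm:nolinefieldifcircle}) rather than by adapting B\"ottcher-type coordinates to a translation-equivariant lift. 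Since your proposal leaves that step at ``requires care'', it is a gap, not a proof. Two smaller points: the quotient of $\MfC$ by rotations is an orbifold, not a manifold (Remark after Lemma \ref{lem:trigoaffinerigid}), though this does not affect the local perturbation argument; and the passage from the abstract parameter space to the given family $N$ (injectivity of the induced map into $\MfCt$ plus Invariance of Domain, forcing $\# S(f_\lambda)=n$) needs to be spelled out as in the proof of Theorem \ref{thm:densityunbounded}, which you do gesture at correctly.
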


As before, there is an associated rigidity statement: 

\begin{thm}[QC rigidity for maps in $\SC$]
\label{thm:qcrigcircle}
Suppose that $f,g\in\SC$ are topologically conjugate on $\C^*$, and that the 
conjugacy 
preserves the unit circle. Then $f$ and $g$ are quasiconformally conjugate.
Furthermore, the dilatation of the map is supported on the Fatou set.
\end{thm}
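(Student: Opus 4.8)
The plan is to reduce Theorem~\ref{thm:qcrigcircle} to its entire-function analog, Theorem~\ref{thm:qcrigidity}, by passing to a suitable covering. Concretely, given $f\in\SC$, the exponential map $\exp\colon\C\to\CS$ is a universal covering, and since $f$ preserves $S^1$ and $\exp$ maps $\R$ onto $S^1$ (after the standard identification $t\mapsto e^{2\pi i t}$), one can lift $f$ to a map $F\colon\C\to\C$ satisfying $\exp\circ F=f\circ\exp$. The first step is to check that this lift $F$ belongs to $\SR$: it is a real transcendental entire function (the singular values of $F$ lie over the singular values of $f$ together with possibly $0$ and $\infty$ contributing ramification of $\exp$, but $\exp$ is unbranched, so $S(F)\subset\exp^{-1}(S(f))\cap(\text{finite values})$, which is real and can be taken finite modulo the periodicity — here one must be slightly careful, as $\exp^{-1}(S(f))$ is infinite, but $F$ is $2\pi i$-periodic, so $S(F)$ is still finite up to translation; in fact one should replace $\C\setminus\{0\}$-dynamics by $\R/\Z$-dynamics and work with the lift of $f$ to a map of $\R$ commuting with $t\mapsto t+1$, analogous to the passage between $\MfC$ and $\MfCt$ alluded to in the excerpt).

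The second step is to transport the topological conjugacy $h$ between $f$ and $g$ (preserving $S^1$) to a topological conjugacy $H$ between the lifts $F$ and $G$. Since $h$ preserves $S^1$ and is a homeomorphism of $\CS$ homotopic to the identity (any orientation-preserving self-homeomorphism of $\CS$ preserving the two ends and the circle is), it lifts through $\exp$ to a homeomorphism $H$ of $\C$ taking $\R$ to $\R$ and commuting with the $2\pi i$-translation; the conjugacy relation $h\circ f=g\circ h$ lifts to $H\circ F=G\circ H$ (after adjusting $H$ by an integer translation if necessary so that the lifted identities match, which is possible because both sides project to the same map). Now Theorem~\ref{thm:qcrigidity} applies to the periodic entire functions $F,G\in\SR$ with the conjugacy $H$ taking $\R$ to itself, yielding a quasiconformal conjugacy $\Phi$ between $F$ and $G$. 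One then needs $\Phi$ to again be compatible with the translation symmetry so that it descends to $\CS$: this can be arranged either by a symmetrization/averaging argument, or more cleanly by noting that the quasiconformal conjugacy produced in the proof of Theorem~\ref{thm:qcrigidity} is canonical enough (built from the combinatorial data, which is translation-invariant) to be equivariant automatically; the descended map $\varphi\colon\CS\to\CS$ is then the desired quasiconformal conjugacy between $f$ and $g$. Finally, the statement that the dilatation is supported on the Fatou set follows because the same is asserted (implicitly, via the line-field / no-invariant-line-fields discussion) for the entire case, or can be obtained by a pullback argument: the Beltrami coefficient of $\varphi$ is $f$-invariant, hence by the absence of invariant line fields on the Julia set (which for $\SC$ maps follows from the corresponding fact for $\SR$ maps via the covering, using that $J(F)=\exp^{-1}(J(f))$) it must vanish a.e.\ on $J(f)$.

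The step I expect to be the main obstacle is the careful bookkeeping in the covering correspondence — specifically, ensuring that the lifts $F,G$ genuinely land in $\SR$ with \emph{finitely many} singular values in the appropriate quotient sense, and that the conjugacy and the resulting quasiconformal map are \emph{equivariant} under the deck transformation $z\mapsto z+2\pi i$, so that everything descends. A subtlety is that $F$ has infinitely many singular values in $\C$ (a full $2\pi i\mathbb{Z}$-orbit of each singular value of $f$), so one cannot apply Theorem~\ref{thm:qcrigidity} verbatim; instead one works in the cylinder $\C/2\pi i\mathbb{Z}$, or re-runs the argument of Theorem~\ref{thm:qcrigidity} keeping track of the symmetry throughout. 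Provided this equivariance is maintained at each stage (lifting $h$, applying rigidity, descending $\Phi$), the rest is a routine translation of the entire-function statement into the circle-map setting; the "dilatation supported on the Fatou set" clause is then essentially free, as it is exactly the content of step~(2) in the three-step program described in the excerpt, transported along the covering.
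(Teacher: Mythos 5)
There is a genuine gap: the reduction to Theorem~\ref{thm:qcrigidity} via the covering $z\mapsto e^{2\pi i z}$ does not go through, because the lift $F$ of $f\in\SC$ is \emph{not} in $\SR$, and not even in $\BR$. Each singular value of $f$ on $S^1$ gives rise to a full (infinite, unbounded) orbit of real singular values of $F$ under the translation symmetry, so every hypothesis of the machinery you want to quote fails for $F$: Theorem~\ref{thm:qcrigidity} requires $\#S(F)<\infty$; the real quasisymmetric rigidity and absence-of-line-fields input of Theorem~\ref{thm:realrigidity} requires $F\in\BR$ (bounded singular set); and the escaping rigidity / no-line-fields-on-$I(F)$ results of \cite{boettcher} (Theorem~\ref{thm:boettcher}) again require a bounded singular set. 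You flag this yourself at the end, but your proposed fixes do not repair it: ``working in the cylinder'' $\C/\Z$ just returns you to $\CS$, i.e.\ you must redo the argument in the punctured-plane setting rather than cite the entire-function theorems; and the equivariance of the quasiconformal conjugacy is not automatic and cannot be obtained by ``averaging'' (the conjugacy produced by the pullback argument is not canonical, and an average of quasiconformal conjugacies is in general neither quasiconformal nor a conjugacy). A further bookkeeping error: $I(F)$ does not correspond to $\{z:\omega_f(z)\subset\{0,\infty\}\}$ under the covering -- a lifted orbit can escape ``horizontally'' (e.g.\ orbits of positive rotation number for an Arnol'd-type map, whose lift satisfies $F(x+1)=F(x)+D$ and grows only linearly on $\R$, consistent with $F\notin\BR$ in view of Lemma~\ref{lem:ahlfors}) while its projection stays in a compact annulus, so even the combinatorial/escaping data does not transfer as claimed. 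Similarly, the final claim that absence of line fields on $J(f)$ ``follows from the corresponding fact for $\SR$ maps via the covering'' is unsupported for the same reason.

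The paper's route is different and is the one that actually works: it stays in $\CS$ throughout. Quasisymmetric rigidity on the circle is taken directly from \cite{strien-qsrigidity} (whose setting covers multimodal circle maps; Lemma~\ref{lem:circlemapcrit} guarantees a critical point on $S^1$ unless $f(z)=z^d$), and is promoted to a quasiconformal conjugacy on $\CS$ by the pullback argument (Theorem~\ref{thm:qsrigidcircle}). For the dilatation statement one needs absence of invariant line fields on $J(f)$ (Theorem~\ref{thm:nolinefieldcircle}): the bounded-orbit part again comes from \cite{strien-qsrigidity}; the radial Julia set is \emph{not} covered by \cite{linefields} (since $f$ is not meromorphic in the plane) and is handled either by repeating that proof or by the Ahlfors-islands generalization \cite{conicalrigidity}; and for the set $\{z:\omega(z)\subset\{0,\infty\}\}$ the paper proves a new punctured-plane analogue (Theorem~\ref{thm:nolinefieldifcircle}) of the result of \cite{boettcher}, rather than transporting the entire-function statement along a covering. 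Note also that, because $S(f)\subset S^1$ is forward invariant, no singular orbit escapes, so the delicate set $L_I$ and the sector condition play no role here -- the circle case parallels the \emph{bounded} real case, which is the structural point your reduction obscures. To salvage your write-up you would have to re-prove the three ingredients equivariantly on the cylinder, which is exactly the content of Section~\ref{sec:circlemaps}, not a corollary of Theorem~\ref{thm:qcrigidity}.
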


A natural family of  degree $D$ circle maps that are $2m$-multimodal
can be described as follows. For $\mu \in  \mathbb{R}^{2m}$, 
consider the generalized trigonometric polynomial
\begin{equation}\label{trigonrm}
F_{\mu}(t)=D\cdot t+ \mu_1 + \mu_{2m}\sin(2 \pi mt)+ \sum_{j=1}^{m-1}( \mu_{2j}\sin(2\pi
jt) + \mu_{2j+1}\cos(2\pi jt)).
\end{equation} 
$F_\mu$ induces a circle map
$f_{\mu}:\mathbb{S}^1 \to \mathbb{S}^1$  (via the covering map 
$\P(t)=e^{2\pi i t}$). 
Note that if $\mu,\mu'\in \mathbb{R}^{2m}$
with $\mu_1-\mu_1'\in \Z$ and $\mu_j=\mu_j'$ for $j\neq 1$, then $f_\mu=f_{\mu'}$.
So it is natural to consider $f_\mu$ as parametrized by
$\mu =(\mu_1,\ldots,\mu_{2m}) \in \Delta$, 
where 
\begin{equation}\label{eqn:Delta}\Delta := 
\{\mu \in  \R/\Z\times \mathbb{R}^{2m-1} \ : \  \mu_{2m} > 0 \ \hbox{and}\  f_{\mu} \   \hbox{is } 
\   2m-\hbox{multimodal} \ \}.\end{equation}
More generally 
we could require that $f_\mu$ has precisely $2m$ critical points on the circle (counting multiplicities). Under these assumptions, 
  the map $f_\mu$ belongs 
  to the class $\SC$; see Lemma~\ref{lem:classical}. 
\begin{cor}[Density of hyperbolicity and rigidity in the trigonometric family]%
\label{cor:trigopol}%
 The set of parameters in $\Delta$ for which  $f_\mu$ is hyperbolic is dense. Furthermore, 
   let $\mu_0\in\Delta$.
 \begin{enumerate}
\item  Consider the set $[\mu_0]$ of parameters $\mu$ for which $f_{\mu}$ is topologically
 conjugate to $f_{\mu_0}$ by an order-preserving homeomorphism of the circle. Then 
 $[\mu_0]$ has at most $m$ components. 

\item If $f_{\mu_0}$ has no periodic attractors on the circle, then each component of 
$[\mu_0]$ is equal to a point. 
\end{enumerate}
\end{cor}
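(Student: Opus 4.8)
The plan is to derive Corollary~\ref{cor:trigopol} by feeding the family $(f_\mu)_{\mu\in\Delta}$ into Theorems~\ref{thm:densitycircle} and~\ref{thm:qcrigcircle}, after first checking the combinatorial/group-theoretic bookkeeping that these theorems require. First I would verify the structural hypotheses: by Lemma~\ref{lem:classical} each $f_\mu$ lies in $\SC$, and a degree-$D$ trigonometric polynomial of the form \eqref{trigonrm} that is $2m$-multimodal has exactly $2m$ critical values on $S^1$ (a critical point and its image, generically distinct), so $\#S(f_\mu)\le 2m$; since $\Delta$ is $2m$-dimensional, hypothesis~(1) of Theorem~\ref{thm:densitycircle} holds. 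For hypothesis~(2), I would show that two parameters $\mu,\mu'\in\Delta$ giving rotation-conjugate circle maps must coincide: a rotation $R_\alpha$ conjugating $f_\mu$ to $f_{\mu'}$ lifts to $t\mapsto t+\alpha$, and conjugating \eqref{trigonrm} by this translation changes $F_\mu$ to $F_{\mu'}(t)=F_\mu(t+\alpha)-D\alpha+\alpha$; comparing Fourier coefficients forces $(D-1)\alpha\in\Z$ together with a rotation of the coefficient vector, and on the reduced parameter space $\Delta$ (where $\mu_1\in\R/\Z$) only finitely many such $\alpha$ arise — this is exactly the source of the ``at most $m$'' count in part~(1). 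With these checks in place, Theorem~\ref{thm:densitycircle} gives density (and openness) of the hyperbolic set in $\Delta$, which is the first assertion.

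Next, for part~(1), I would apply Theorem~\ref{thm:qcrigcircle}: if $f_{\mu_0}$ and $f_\mu$ are topologically conjugate by an orientation-preserving circle homeomorphism, then they are quasiconformally conjugate, with the Beltrami coefficient supported on the Fatou set. One then argues, exactly as in the real case (cf.\ the passage following Definition~\ref{defn:realhyperbolic} and Corollary~\ref{cor:conjconnected}), that the set $[\mu_0]$ is connected modulo the rotation action: the qc conjugacy can be used to move along a path in the space of maps qc-conjugate to $f_{\mu_0}$, and by the analog of Lemma~\ref{lem:trig}/\ref{lem:classical} every map obtained from $f_{\mu_0}$ by pre- and post-composition with near-identity homeomorphisms of $\C^*$ preserving $S^1$ is conformally (hence, by a rotation of the circle) conjugate to a generalized trigonometric polynomial of the same degree $D$ with nearby coefficients, i.e.\ to some $f_{\mu}$. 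Thus the conjugacy class of $f_{\mu_0}$ inside the full space of such trigonometric polynomials is connected; intersecting with the finite-to-one reduction to $\Delta$ (the rotation $\mu_1\in\R/\Z$ identification, combined with the finitely many symmetries found above, each producing one component) yields at most $m$ components of $[\mu_0]$ in $\Delta$.

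For part~(2), I would observe that if $f_{\mu_0}$ has no attracting (or parabolic/indifferent) periodic point on $S^1$, then by the rigidity results for circle maps cited in the introduction (the real-analytic rigidity machinery underlying Theorem~\ref{thm:qcrigcircle}) the absence of line fields on the Julia set is available, so any qc conjugacy to a nearby $f_\mu$ in the same class is in fact a M\"obius conjugacy; since the only M\"obius maps preserving $S^1$ and conjugating within our family of trigonometric polynomials are rotations, and by the hypothesis~(2) check these identify only finitely many parameters in $\Delta$, each component of $[\mu_0]$ reduces to a single point. Concretely one runs the step-(2)/step-(3) argument of the paper's general scheme: no critical relations plus no periodic attractors gives that the conjugacy class supports no nontrivial Beltrami deformation, so the topological conjugacy class equals the qc-conjugacy class equals a rotation-orbit, which is a point in $\Delta$.

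The main obstacle I expect is the bookkeeping in the second hypothesis and the component count: one must carefully track the rotation action on the Fourier data of \eqref{trigonrm}, confirm that the degree-$D$ normalization is rigid under near-identity homeomorphisms (so that the approximating maps genuinely lie in the same family $\Delta$ rather than in a larger space), and pin down why exactly $m$ (rather than some other function of $D$ and $m$) symmetries survive — this requires understanding which reparametrizations $t\mapsto t+\alpha$ with $(D-1)\alpha\in\Z$ actually preserve the shape of \eqref{trigonrm} modulo the $\mu_1\in\R/\Z$ identification and the sign condition $\mu_{2m}>0$. Everything else is a direct transcription of the real-line arguments (QC rigidity $\Rightarrow$ connectedness of conjugacy classes; no line fields $\Rightarrow$ rigidity of structurally stable parameters) to the circle setting, using Lemma~\ref{lem:classical} to stay inside $\SC$.
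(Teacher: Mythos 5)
Your overall route --- feed $(f_\mu)_{\mu\in\Delta}$ into Theorems~\ref{thm:densitycircle} and~\ref{thm:qcrigcircle}, get connectedness of the topological conjugacy class by a Beltrami-path argument as in Corollary~\ref{cor:conjconnected2}, and use absence of line fields to collapse components to points --- is essentially the paper's route. But there is a genuine gap in your verification of hypothesis~(2) of Theorem~\ref{thm:densitycircle}: it is \emph{false} that rotation-conjugate parameters in $\Delta$ must coincide when $m\ge 2$. By Lemma~\ref{lem:trigoaffinerigid}, conjugating $f_\mu$ by $z\mapsto e^{2\pi i p/m}z$ produces another member $f_{\mu'}$ of the same family, and generically $\mu'\neq\mu$ (the lower-frequency coefficient pairs are rotated), so $\Delta$ contains many distinct rotation-conjugate pairs and Theorem~\ref{thm:densitycircle} cannot be applied to $\Delta$ verbatim; your text is in fact internally inconsistent, since these same finitely many symmetries are what you invoke for the ``at most $m$'' count in part~(1). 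The paper avoids this by proving density of hyperbolicity directly in the quotient $\MfCt$ (which is only an orbifold, precisely because of these symmetries) and transferring it to $\Delta$ through the at-most-$m$-to-one map $\mu\mapsto f_\mu$ modulo rotations; alternatively, your argument can be repaired by applying Theorem~\ref{thm:densitycircle} on small neighborhoods of parameters not fixed by any nontrivial symmetry (such parameters are dense, the fixed locus of each symmetry being a proper affine subset of $\Delta$). Relatedly, your derivation of which rotations preserve the family is wrong: the condition is $m\alpha\in\Z$, forced by requiring the top-frequency term to remain a pure sine with positive coefficient, not $(D-1)\alpha\in\Z$ --- the shift $(D-1)\alpha$ only moves $\mu_1$, which lives in $\R/\Z$ and so imposes no constraint. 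This is exactly Lemma~\ref{lem:trigoaffinerigid}, and it is where ``at most $m$'' comes from; you flag this as an unresolved obstacle, but it is essential to both parts (1) and (2).

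A smaller inaccuracy concerns part~(2): Theorem~\ref{thm:qcrigcircle} gives a quasiconformal conjugacy whose dilatation is supported on the \emph{Fatou} set, and when $f_{\mu_0}$ has no periodic attractors on the circle the relevant Fatou components are the basins of attraction of $0$ and $\infty$. So the ingredient that makes the conjugacy conformal is the line-field theorem for the set of points attracted to $\{0,\infty\}$ (Theorem~\ref{thm:nolinefieldifcircle}), together with Theorem~\ref{thm:nolinefieldcircle} on the Julia set --- not the interval-map rigidity machinery you cite, which only controls the bounded circle dynamics. With that input, your conclusion (the conjugacy is a M\"obius map preserving $S^1$ and $\{0,\infty\}$, hence a rotation; each rotation class in $\Delta$ is finite; hence each component of $[\mu_0]$ is a point) does agree with the paper's proof of part~(2).
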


This answers Conjectures 1, 2 and 3 
posed by de Melo, Salom\~ao and Vargas in \cite{melo-etal};
 in particular, it establishes density of hyperbolicity in the Arnol'd family mentioned at the beginning of this introduction
 (for $D=1$ and $m=1$). 
In \cite{MR2576261} the family $F_{a,b}(x)=2x +a+b\sin(2\pi x)$, $a\in \R$, $b=1/\pi$,
was discussed.  In this case, the corresponding circle map
  $f_{a,1/\pi}$ has a single cubic critical point and belongs to
  $\SC$; see 
  Lemma~\ref{lem:trigoaffinerigid}. Thus Theorem~\ref{thm:densitycircle}
  implies that the set of values for which $f_{a,1/\pi}$ is hyperbolic
  is dense; this fact also follows already 
  from \cite[Theorem C]{LevStr:invent}. 
  When $b<1/\pi$, the critical points do not
  belong to the circle and $f_{a,b}\notin \SC$ is a covering
  map of degree $2$. In this case, by Ma{\~n}\'e's theorem
  there is a dense set of parameters for which $f_{a,b}$ 
  is hyperbolic as a map of
  the circle (i.e., expanding on the complement of the---potentially
  empty---union of attracting basins on the circle). 
  For
  $b>1/\pi$, the map $f_{a,b}$ has two critical points on the circle,
  and our results imply density of hyperbolicity as
  well as 
  various conjectures
  stated in \cite{MR844701} and \cite{MR1355627}, as we will 
  discuss in \cite{RS3}.

\medskip

We remark that the proofs can also be applied to obtain the corresponding
results for families of finite Blaschke 
products 
$$B(z)=e^{2\pi i a_0} z^{k_0} \prod_{j=1}^n \left( \dfrac{z-a_j}{1-\bar a_j z} \right)^{k_j}, \quad |a_1|,\dots,|a_n|<1,a_0\in \R,k_0\ne 0, k_j\in \Z$$
for which all critical values, apart from
 $0$ and $\infty$ (which have period $\le 2$), lie on the circle. Similarly, although we stated the results for entire functions only in the
  transcendental case for simplicity, they can also be applied verbatim to polynomial families~-- e.g.~for 
  two-dimensional families of quartic polynomials having two distinct critical values, both of which are real. 
 Of course, here there is no need to use tools
 from transcendental dynamics~-- the only new ingredients compared to  \cite{KSS1} are our discussion of parameter spaces and
 the rigidity theorem from
 \cite{CS}.

\subsection*{Further directions}
The rigidity results in this paper can also be used, similarly as in  \cite{BvS_mono}, to prove monotonicity of entropy in families of real
transcendental functions. For example, it can be deduced that 
 the topological entropy of maps within the family
$$\R\ni x\mapsto a\sin(2\pi x)\in \R$$
increases with $a\ge 0$. Similar results hold for families of trigonometric polynomials; these 
questions will be discussed in a sequel to this paper, \cite{RS3}.

Similarly, Theorem~\ref{thm:qcrigcircle} implies Conjecture B in \cite{MR1355627} for the family $f_{a,b}(x)=x +a+b\sin(2\pi x)$, $a\in [0,1)$. This conjecture states that the set of parameters $(a,b)\in (0,1)\times \R$ so that 
the rotation interval of $f_{a,b}$ is equal to a given interval with irrational boundary points, 
is equal to a single point. (It was already shown in \cite{MR1355627} that this set is contractible.)
We will discuss how this follows  in \cite{RS3}.  A similar kind 
of question was raised in \cite[Section 5]{MR2402913} for the family of double standard maps: $x\mapsto 2x+a+b\sin(2\pi x)$, $a\in (0,1)$ and will also be discussed in \cite{RS3}.  

\subsection*{Acknowledgment} We thank the referees
  for their careful reading of our paper, and a large number of helpful comments.  We are also
 grateful to all of those with whom we had interesting conversations related to this work, particularly Adam Epstein and Alexandre Eremenko for
 a discussion connected to the material in Appendix \ref{app:parameter}. 

\section{Preparatory definitions and remarks}
\label{sec:prep}

\subsection*{Organisation of the paper}
In the remainder of this section we will collect notation and
 some simple facts. 
 In Sections~\ref{sec:qsrigidity}--\ref{sec:rigidity}
 we  prove Theorem \ref{thm:qcrigidity},
 that  topologically conjugate entire functions in $\SR$ 
 are quasiconformally conjugate.
This relies on two deep results.
The first ingredient (Theorem~\ref{thm:realgidityvS}) is a 
theorem on real 
 analytic interval maps $f\colon [0,1]\to [0,1]$.
Assume that two such maps are topologically conjugate and that the conjugacy 
maps hyperbolic periodic points to hyperbolic points, and critical points to 
 critical points
 of the same order.  Then these maps are quasisymmetrically conjugate.
This follows from results of
Trevor Clark and the second author of the current paper \cite{CS} that apply 
in fact to much more general functions (even $C^3$ mappings). This work builds on earlier results of Kozlovski, Shen and van Strien 
 \cite{KSS1}.
The second ingredient (Theorem~\ref{thm:boettcher})
 uses rigidity of escaping dynamics for transcendental entire functions,
 a result which was proved by the first author in \cite{boettcher}. 
 In order to prove Theorem \ref{thm:qcrigidity}, we will show how to
 apply and combine these two ingredients in our setting.

We then show in Section~\ref{sec:absenceline} that two maps that
 are quasiconformally conjugate, via a conjugacy that is conformal on the
 Fatou set, are in fact
 affinely conjugate. 
 To do this, we show that the maps we consider cannot 
 carry measurable invariant line fields on their Julia sets.

In Section~\ref{sec:parameter}, we introduce a natural parameter 
 space $\MfR$, 
 and discuss kneading sequences and analytic invariants. Using our 
 rigidity results, these can be used to
 characterize conformal conjugacy classes within the family. In 
 Section~\ref{sec:densityhyperbolicity} 
we then derive density of hyperbolicity for the families  in $\SR$. 
In Section~\ref{sec:circlemaps} we  discuss how to adapt our results to circle maps.
In an appendix, we further clarify the structure of the parameter space $\MfR$.  

\subsection*{Definitions}
Throughout this article, with the exception of Section \ref{sec:circlemaps}, 
$f:\C\to\C$ will
be a transcendental entire function that maps the real line to itself. 
 We recall that $S(f)$ denotes the set of \emph{singular values} of $f$.
 
Let $\Crit_{\R}(f)$ denote the set of real critical points of $f$, and 
$\CV_\R(f):= f(\Crit_\R(f))$.
We say that $\alpha$ is a {\em real-asymptotic value} if  
$f(x)\to \alpha$ 
as $x\to \infty$ or as $x\to -\infty$. 
Let  $S_\R(f)$ be the {\em set of real-singular values} of $f|\R$, 
i.e. the union  of $\CV_\R(f)$ and the real-asymptotic values. 
For any $X\subset\C$, we define the orbit
  \[ O_f^+(X) := \bigcup_{n\geq 0}f^n(X). \]
 The \emph{postsingular set} of $f$ is defined as
   \[ \P(f) := \cl{O_f^+(S(f))}. \]
 We also denote the escaping set of $f$ by
  $I(f)=\{z:|f^n(z)|\to \infty\mbox{ as }n\to \infty\}$
  and set $\IR(f)=I(f)\cap \R$.

 Recall that $\SR$ denotes the class of real transcendental entire 
  functions for which $S(f)$ is a finite subset of the real axis.
  Also recall that $\HomeoR$ denotes the set of all homeomorphisms
  $\psi:\C\to\C$ that commute with complex conjugation and are increasing on the real axis, 
  and that $\MobR\subset\HomeoR$ consists of the affine maps
  $z\mapsto az+b$, $a>0$, $b\in\R$. If $\psi\in\HomeoR$ is quasiconformal, we call $\psi$ a
  \emph{real-quasiconformal homeomorphism}. 

 We denote Euclidean distance by $\dist$ and spherical distance by
  $\dist^{\#}$. If $z_0\in\C$ and $\eps>0$, then we denote by
    \[ B_{\eps}(z_0) := \{z\in\C: |z-z_0|<\eps \} \]
  the Euclidean ball of radius $\eps$ around $z_0$. We also
  denote the unit disk by $\D := B_1(0)$.

\subsection*{Quasiconformal maps and invariant line fields}
 Throughout the article, we assume familiarity with the
  theory of quasiconformal mappings of the plane;
  compare e.g.\,\cite{MR2241787}. 

 We also use the notion of \emph{invariant line fields}. This is
  a standard concept in holomorphic dynamics, but notation
  sometimes varies, so we give a concise summary here. 
 A \emph{measurable line field} on a measurable set $A\subset\C$ is
 a measurable function $\ell$ from $A$ to the projective plane.
 (More precisely, $\ell$ takes each point $z\in A$ to a point in
  the projective tangent bundle at $z$; i.e.\ it represents
  a measurable choice of a real line in the tangent bundle.) 

A line field is \emph{invariant} under $f$ if, for almost every $z$,
the pushforward of the tangent line $\ell(z)$ is given by
$\ell(f(z))$. In other words, for almost every $z$, 
 \[ \ell(f(z)) = f'(z)\cdot\ell(z)  \]
 (note that the derivative acts on tangent lines by multiplication as long as $z$ is not a critical point). 

Invariant line fields are related to invariant Beltrami differentials
 (ellipse fields): if $\mu$ is an invariant Beltrami differential with
 $\mu(z)\neq 0$ almost everywhere on $A$, then e.g.\ the direction of the
 major axes of the ellipses described by $\mu$ will provide an invariant
 line field. 
 Similarly, if $\ell$ is an invariant
 line field, we can find a corresponding non-zero  
 invariant Beltrami differential on $A$. (See also 
 \cite[Section 3.5]{mcmullenrenormalization}.)

In particular, we have the following fact: If $f$ and $g$
 are quasiconformally but not conformally conjugate, then
 there is an $f$-invariant 
 line field supported on some set of positive measure. 

\subsection*{The Koebe Distortion Theorem}
We 
frequently use
  the following classical theorem 
 in our proofs. 
\begin{thm}[Koebe Distortion Theorem]
\label{thm:koebe}
For any univalent map $f\colon \D\to \C$ and any $z\in \D$,
 \begin{align*}
 &|f'(0)|\dfrac{|z|}{(1+|z|)^2}\le |f(z)-f(0)| \le 
 |f'(0)|\dfrac{|z|}{(1-|z|)^2} \quad\text{and} \\
    &|f'(0)|\dfrac{1-|z|}{(1+|z|)^3}\le |f'(z)| \le 
 |f'(0)|\dfrac{1+|z|}{(1-|z|)^3}.\end{align*}
 In particular, 
 $f(\D)\supset B_{|f'(0)|/4}(f(0))$. 
\end{thm}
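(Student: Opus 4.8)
The plan is to follow the classical route via the Bieberbach coefficient estimate $|a_2| \le 2$ for normalized univalent functions, then transfer the resulting bounds back to the given map $f$ by an affine renormalization. First I would reduce to the normalized case: given the univalent $f \colon \D \to \C$, set
\[
 g(w) := \frac{f(w) - f(0)}{f'(0)},
\]
which is univalent on $\D$ with $g(0) = 0$ and $g'(0) = 1$, so $g(w) = w + a_2 w^2 + a_3 w^3 + \cdots$. It suffices to prove the inequalities for $g$ (with $|g'(0)| = 1$), since multiplying through by $|f'(0)|$ and translating recovers the general statement; note the derivative bounds are unaffected by the translation and scale by $|f'(0)|$ exactly as claimed.

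Next I would establish $|a_2| \le 2$. The standard argument: the square-root transform $h(w) := \sqrt{g(w^2)} = w + \tfrac{a_2}{2}w^3 + \cdots$ is a well-defined odd univalent function on $\D$ (univalence uses that $g$ is injective and $g(w)=0$ only at $w=0$), and applying the area theorem to $1/h(1/w)$ on the exterior disk yields $|a_2/2| \le 1$ after collecting the relevant coefficient, hence $|a_2| \le 2$. From this, the sharp distortion bounds follow by the usual trick of applying the $|a_2| \le 2$ estimate not to $g$ itself but to the re-centered, re-normalized family
\[
 g_\zeta(w) := \frac{g\!\left(\dfrac{w+\zeta}{1+\bar\zeta w}\right) - g(\zeta)}{(1-|\zeta|^2)\,g'(\zeta)},
\]
which is again normalized univalent on $\D$; computing its second Taylor coefficient at $0$ and imposing $|a_2(g_\zeta)| \le 2$ gives, for each $\zeta \in \D$, the differential inequality
\[
 \left| \frac{\zeta\, g''(\zeta)}{g'(\zeta)} - \frac{2|\zeta|^2}{1-|\zeta|^2} \right| \le \frac{4|\zeta|}{1-|\zeta|^2}.
\]
Integrating the real part of this along the radius from $0$ to $z$ yields the bound on $|g'(z)|$; integrating $|g'|$ along the same segment and using the lower derivative bound yields the bound on $|g(z) - g(0)|$. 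The final inclusion $f(\D) \supset B_{|f'(0)|/4}(f(0))$ is then immediate from the lower bound $|g(z) - g(0)| \ge |z|/(1+|z|)^2$ by letting $|z| \to 1$ and invoking connectedness of $g(\D)$: any point omitted by $g$ on the image side must lie at distance $\ge 1/4$ from $g(0) = 0$.

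The main obstacle, and the only genuinely delicate point, is the verification that the square-root transform $h$ is single-valued and univalent on $\D$ and that the exterior function $1/h(1/w)$ falls within the scope of the area theorem; everything afterward is bookkeeping with Taylor coefficients and a one-variable integration. Since this is entirely classical, in the actual write-up I would likely just cite a standard reference (e.g.\ the cited text on quasiconformal maps, or any text on univalent functions) rather than reproduce the area-theorem computation in full, and include only the renormalization reductions that make the transfer to general $f$ transparent.
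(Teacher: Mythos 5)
The paper itself gives no proof of this statement at all --- it simply cites \cite[Theorem 1.3]{pommerenke} --- so your proposal is supplying the standard textbook argument behind that citation. Most of it is sound: the normalization, the area-theorem proof of $|a_2|\le 2$ via the odd square-root transform, the Koebe transform $g_\zeta$ and the resulting differential inequality, the radial integration giving the two-sided bound on $|g'|$, and the upper growth bound $|g(z)|\le \int_0^{|z|}|g'|\,dt \le |z|/(1-|z|)^2$ are all correct as sketched.

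The genuine gap is in the lower growth bound, and it propagates into your $1/4$-covering claim. You say the bound on $|g(z)-g(0)|$ follows by ``integrating $|g'|$ along the same segment and using the lower derivative bound,'' but the inequality $\bigl|\int_\gamma g'(\zeta)\,d\zeta\bigr| \ge \int_\gamma \frac{1-|\zeta|}{(1+|\zeta|)^3}\,|d\zeta|$ is false in general: along the radius the direction of $g'$ can rotate and the contributions can cancel, so the pointwise lower bound for $|g'|$ gives no lower bound for the modulus of the integral. The standard repair is to integrate in the image plane: either note that if $|g(z)|\ge 1/4$ the claimed bound is trivial (since $|z|/(1+|z|)^2<1/4$), and otherwise pull back the straight segment $[0,g(z)]$ (or the segment up to the nearest omitted point) under $g$ and integrate $|g'|$ along that preimage arc, using $|d\zeta|\ge d|\zeta|$ and the derivative lower bound; this yields $|g(z)|\ge |z|/(1+|z|)^2$ and, applied to an omitted point, simultaneously yields the $1/4$-theorem. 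Alternatively, and more simply, derive the $1/4$-theorem directly from $|a_2|\le 2$ applied to $w\mapsto c\,g(w)/(c-g(w))$ for an omitted value $c$ (giving $|a_2+1/c|\le 2$, hence $|c|\ge 1/4$), so that neither the covering statement nor the lower growth bound rests on the flawed radial integration. Your Jordan-curve argument deducing the covering from the lower growth bound is fine once that bound is correctly established, but as written both statements hang on the invalid step.
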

 For a proof, see for example \cite[Theorem 1.3]{pommerenke}.
 
  \subsection*{Functions with finitely many singularities and the
  sector condition} We note two standard facts regarding entire
  functions with finitely many singularities (compare
  \cite{elfving}),
 which show that Corollary \ref{cor:thm1point7} indeed follows from Theorem \ref{thm:densityunbounded}. 
 
\begin{lem}[Functions with finitely many singularities] \label{lem:finitesingularitiesform}
 Suppose that $f$ is a real transcendental entire function. Then $f$
  has only finitely
  many singularities if and only if there are real polynomials $P$ and $Q$
  with $P\not\equiv 0$ and $\deg Q \geq 1$ such that
    \[ f'(z) = P(z)e^{Q(z)}. \] 
\end{lem}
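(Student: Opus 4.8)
The statement is a classical fact (attributed to Elfving and others); the task is to show that ``finitely many singularities'' is equivalent to $f' = Pe^{Q}$. The plan is to prove both directions using the covering structure of $f$ away from its singular values.

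\emph{The easy direction.} Suppose $f' = Pe^{Q}$ with $P,Q$ real polynomials, $P\not\equiv 0$, $\deg Q\geq 1$. Then the critical points of $f$ are exactly the (finitely many) zeros of $P$, so $f$ has only finitely many critical points, hence finitely many critical values. It remains to control the asymptotic/logarithmic singularities. Writing $f' = Pe^{Q}$, one sees by a standard saddle-point or direct integration estimate along rays that $f$ has the same number of ``tracts'' over $\infty$ as $Pe^Q$, and that along each of the $\deg Q$ sectors where $\operatorname{Re} Q\to -\infty$ the integral $\int P e^{Q}$ converges to a finite asymptotic value, while along the sectors where $\operatorname{Re}Q\to+\infty$ one gets a logarithmic tract over $\infty$. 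In all cases the number of singularities of $f$ over each point of $\widehat{\C}$ is finite, and bounded in terms of $\deg P$ and $\deg Q$; in particular $f^{-1}(W)$ (for $W$ a union of small disks around $S(f)\cup\{\infty\}$) has only finitely many components on which $f$ fails to be injective.

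\emph{The hard direction.} This is the main obstacle. Assume $f$ has only finitely many singularities. First, $f$ has only finitely many critical points, so $f' = P\cdot g$ where $P$ collects the zeros (with multiplicity) and $g$ is entire and zero-free; hence $g = e^{h}$ for some entire $h$, and we must show $h$ is a polynomial (then $Q := h$, after absorbing nothing, works, and $\deg Q\geq 1$ follows because $f$ is transcendental, so $f'$ cannot be a polynomial). The standard argument: consider the logarithmic change of variable over $\infty$. Since $f$ has finitely many singular values, pick $R$ large so that all of $S(f)$ lies in $\{|w|<R\}$; then over $\{|w|>R\}$ the map $f$ is an unbranched covering, so each component (``tract'') $T$ of $f^{-1}(\{|w|>R\})$ on which $f$ is not injective is mapped as a universal covering $f\colon T\to\{|w|>R\}$, i.e.\ $\log f$ maps $T$ conformally onto a right half-plane. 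The number of such tracts is finite, say $N$. One then shows using an Ahlfors-type argument (or directly, via the Denjoy--Carleman--Ahlfors theorem bounding the number of asymptotic tracts by the order of growth, combined with the fact that the order is finite once the number of tracts is finite) that $f$, and hence $f'/f$ near each tract, behaves like a function of polynomial type, forcing $h' = f''/f' - P'/P$ to be a polynomial. Concretely: in each tract $T_j$, choosing the conformal map $\phi_j\colon T_j\to\HH_{\mathrm{right}}$ with $f = R\exp\circ\phi_j$, one computes $f'/f = \phi_j'$; the finitely many tracts exhaust a neighbourhood of $\infty$ outside a ``thin'' set, and matching these local descriptions shows $f'/f$ extends to a rational function of $z$ with a pole of some degree $d\geq 1$ at $\infty$; integrating, $\log f$ is a polynomial plus lower order terms, giving $f = (\text{polynomial-exponential})$, and then $f' = Pe^Q$.

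\emph{Remark on what to cite.} Since the excerpt explicitly says ``compare \cite{elfving}'' and calls these ``two standard facts,'' the cleanest route in the paper is to keep the hard direction short: invoke the classification of entire functions with finitely many singular values (the logarithmic change of variable of Eremenko--Lyubich, or Elfving's original result) to conclude that over $\infty$ there are finitely many logarithmic tracts and finitely many critical points, deduce that the order of $f$ is finite and that $f'/f$ is a rational function, and integrate. The one genuinely delicate point to get right is ensuring $\deg Q\geq 1$ and $P\not\equiv 0$: $P\not\equiv 0$ because $f'\not\equiv 0$, and $\deg Q\geq 1$ because otherwise $f'$ would be a polynomial and $f$ would not be transcendental.
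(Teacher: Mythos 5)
Your easy direction is fine and is essentially what the paper does (it reduces to the asymptotics of $\int P e^{Q}$), and your setup for the hard direction — factor $f'=Pe^{h}$ with $P$ collecting the finitely many zeros of $f'$ and then show $h$ is a polynomial — is exactly the paper's (which, to be fair, only sketches this step by asserting the contrapositive and pointing to Elfving). But the argument you then sketch for ``$h$ is a polynomial'' is broken at its final step. You work with the logarithmic derivative of $f$ rather than of $f'$: the claim that ``$f'/f$ extends to a rational function'' and hence ``$\log f$ is a polynomial plus lower order terms, giving $f=(\text{polynomial-exponential})$'' is false for typical members of the class you are trying to characterize. Take the error function $f(z)=\int_{0}^{z}e^{-w^{2}}\,dw$: it has finitely many singularities and $f'=e^{-z^{2}}$ is exactly of the form $Pe^{Q}$, yet $f$ has infinitely many zeros, so $f'/f$ has infinitely many poles (all outside the tracts over $\infty$, which is where your local identification $f'/f=\phi_j'$ lives) and $f$ is not a polynomial times an exponential. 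The class described by the lemma is $\{\int Pe^{Q}\}$, not $\{Pe^{Q}\}$; the rational function one should aim for is $f''/f'=P'/P+h'$ (your own formula $h'=f''/f'-P'/P$ is the right object), e.g.\ by first proving $f$ has finite order and then applying Hadamard factorization to the zero-free entire function $f'/P$, or by estimating $h$ directly in the tracts of $f'$.

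There is a second, earlier gap: you justify ``the number of tracts over $\infty$ is finite'' merely from $\#S(f)<\infty$, and finiteness of the order from finiteness of the number of tracts. Neither implication holds: $e^{e^{z}}$ lies in the Speiser class (two singular values, no critical points) but has infinite order and infinitely many logarithmic tracts over $\infty$ (and over $0$), and there are class-$\mathcal B$ functions with a single tract over $\infty$ and arbitrarily fast growth, so Denjoy--Carleman--Ahlfors cannot be reversed in the way you indicate. The hypothesis you must use is the stronger one in the lemma — only finitely many non-injective components of $f^{-1}(W)$ over the \emph{finite} singular values — and deducing from it that $f$ has finite order (equivalently, finitely many tracts over $\infty$) is precisely the nontrivial content hidden in the paper's phrase ``if $g$ is transcendental, then $f$ must have infinitely many singularities''; it is the step your proposal assumes rather than proves. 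Your suggested fallback of simply citing Elfving (or the classification of entire functions whose inverse has finitely many singularities over the whole sphere) is legitimate and is in effect what the paper does, but as written your detailed argument neither supplies that step nor arrives at the correct form of $f$.
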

\begin{sketch}
 First suppose that $f$ has only finitely many singularities. Then
  $f'$ has only finitely many zeroes. So if we let $P$ be  
  a real polynomial having the same zeroes as $f'$ (counting
  multiplicities), we can write 
   \[ f'(z) = P(z)e^{g(z)}  \] 
  for some nonconstant 
  real entire function $g$. 
The function $g$ cannot be transcendental, as otherwise one could show
  that the function $f$ has infinitely many
  singularities. So $g$ must be a real polynomial. 

 The converse is trivial, as one can check by hand that any function
  of the stated form has only finitely many singularities; compare
  (\ref{eqn:asymptoticsfinitesingularities}).
\end{sketch}

\begin{lem}[Sector condition]\label{lem:finitesingularities}
Let $f\in\SR$ be a function of the form
  \[ f(z) = \int P(w)e^{Q(w)}dw, \]
where $P$ and $Q$ are real polynomials with $P\not\equiv 0$ and
$\deg Q \geq 1$. 

Then $f$ satisfies the sector condition (Definition \ref{def:sector}).
\end{lem}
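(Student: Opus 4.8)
The plan is to analyze the asymptotics of $f(z) = \int P(w)e^{Q(w)}\,dw$ along sectors containing the positive and negative real axes, and to verify the growth estimate $|f(\sigma x + iy)| > M$ directly. Write $Q(w) = c_d w^d + \dots + c_0$ with $c_d \neq 0$ real and $d = \deg Q \geq 1$; the key geometric fact is that the set where $\operatorname{Re} Q$ is large near a given direction $\arg w = \phi$ forms a union of sectors (the ``tracts'') whose opening angles are governed by $d$. For $f$ to satisfy the sector condition, we only care about those $\sigma \in \Sigma$, i.e.\ those half-lines $\R_{\sigma} = \{\sigma x : x > 0\}$ along which some real orbit accumulates on $\sigma\infty$. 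The first step is to observe that if $\sigma \in \Sigma$, then in particular $f(\sigma x) \to \sigma' \infty$ for one of the signs $\sigma'$ as $x \to +\infty$ along the real axis (an escaping real orbit forces $f$ to be eventually monotone and unbounded in that direction, which one can extract from the structure of $f' = Pe^Q$ having finitely many real zeros). This means $\operatorname{Re} Q$ does not tend to $-\infty$ along $\R_\sigma$; combined with $Q$ real on $\R$, one gets that either $Q(x) \to +\infty$ along $\R_\sigma$ (the ``exponential'' case), or $Q$ stays bounded there, which (since $Q$ is a polynomial of degree $\geq 1$) can only happen if $d$ is such that... — actually a cleaner split: since $Q(\sigma x)$ is a real polynomial in $x$ of degree $d$, either it $\to +\infty$ or it $\to -\infty$; the latter would make $f'(\sigma x) \to 0$ super-exponentially and $f(\sigma x)$ converge to a finite asymptotic value, contradicting $\sigma \in \Sigma$. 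So along each relevant half-line, $Q(\sigma x) \to +\infty$.

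With that reduction in hand, the second step is the actual sector estimate. Fix $\sigma \in \Sigma$ and $M > 0$. Since $Q(\sigma x)$ is a real polynomial tending to $+\infty$, its leading behavior near the direction $\arg w = \sigma$ is $Q(w) \approx a w^d$ (after identifying $\sigma \in \{+,-\}$ with the direction $0$ or $\pi$) with $\operatorname{Re}(a w^d) > 0$ along $\R_\sigma$; hence there is $\theta_0 > 0$ so that $\operatorname{Re} Q(w) \geq c|w|^d$ for $w$ in the truncated sector $\{|\arg w - \arg(\sigma)| \leq \theta_0, |w| \geq R_0\}$, for suitable constants $c, R_0 > 0$. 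For such $w$ we have $|f'(w)| = |P(w)| e^{\operatorname{Re} Q(w)} \geq |P(w)| e^{c|w|^d}$, which dwarfs any polynomial. The third step is to integrate: for $w = \sigma x + iy$ with $|y| \leq \theta x$ and $\theta$ small enough that the straight segment from $\sigma x$ (or from a fixed basepoint on $\R_\sigma$) to $w$ stays inside the sector, write $f(\sigma x + iy) = f(\sigma x) + \int_0^{y} f'(\sigma x + it)\, i\,dt$ — but here one must be careful, because $f(\sigma x)$ may be large and the integral could in principle cancel it. Better: estimate $|f(w)|$ by comparing with $f$ at a point on the real axis where we control the value, or observe that on the sector $f$ maps onto a region that omits only a bounded set, so $|f|$ is large there. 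The cleanest route is the standard one for functions in the Eremenko--Lyubich / Speiser class with a logarithmic tract: in the tract $\{ \operatorname{Re} Q(w) > \text{const}\}$, the function $\exp \circ Q$-type behavior makes $\log|f(w)| \gtrsim \operatorname{Re} Q(w) \gtrsim |w|^d$ after accounting for the polynomial factor via a Phragmén--Lindström / Koebe argument; I would cite or adapt the asymptotic expansion of $\int P e^Q$, namely $f(z) \sim \frac{P(z)}{Q'(z)} e^{Q(z)}$ as $z \to \infty$ within any sector where $\operatorname{Re} Q(z) \to +\infty$ avoiding a neighborhood of the zeros of $Q'$. This asymptotic is classical (integration by parts / Laplace's method for the incomplete-gamma-type integral) and is presumably the content of the reference to equation (\ref{eqn:asymptoticsfinitesingularities}) and Lemma~\ref{lem:finitesingularitiesform}.

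So the concrete plan is: (1) use $\sigma \in \Sigma$ to force $\operatorname{Re} Q \to +\infty$ along $\R_\sigma$; (2) invoke the asymptotic expansion $f(z) \sim \dfrac{P(z)}{Q'(z)}\, e^{Q(z)}$, valid uniformly in a sector $|\arg(z/\sigma)| \leq \theta_0$, $|z|$ large, away from zeros of $Q'$ — and note $\R_\sigma$ itself is eventually free of such zeros since those zeros are finite in number; (3) from this, $\log |f(z)| = \operatorname{Re} Q(z) + O(\log|z|) \to +\infty$ uniformly in a possibly smaller sector $|\arg(z/\sigma)| \leq \theta \leq \theta_0$, so given $M$ choose $x_0$ large; this yields $|f(\sigma x + iy)| > M$ for $x \geq x_0$, $|y| \leq \theta x$, which is exactly the sector condition. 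One should separately note the (easy) degenerate possibility that $\Sigma = \emptyset$, in which case there is nothing to prove. I expect the main obstacle to be step (2): establishing (or correctly citing) the uniform-in-sector asymptotic for $f = \int P e^Q$, including the bookkeeping near the zeros of $Q'$ and the verification that the relevant sector can be taken symmetric about $\R_\sigma$ with positive opening — this is where one must be precise about which directions have $\operatorname{Re} Q \to +\infty$ and reconcile that with the polynomial $P/Q'$ prefactor. Once the asymptotic is in hand, the sector condition is immediate. An alternative, softer argument avoiding the precise asymptotic: since $f' = P e^Q$ grows faster than any polynomial in the sector, and $f$ is a primitive, $f$ itself is unbounded on every subsector; combined with the fact that $f$ has only finitely many singular values one can run a normal-families / Koebe argument to upgrade ``unbounded'' to ``$\to\infty$ uniformly,'' but the asymptotic-expansion route is cleaner and I would adopt it.
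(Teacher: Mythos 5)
Your proposal is correct and takes essentially the same route as the paper: the paper's (sketched) proof simply invokes the asymptotic expansion (\ref{eqn:asymptoticsfinitesingularities}), i.e.\ $f(z)=\bigl(P(z)/Q'(z)+O(|z|^{\deg P-\deg Q})\bigr)e^{Q(z)}+O(1)$, which is your step (2), and observes that the sector estimate follows. Your preliminary reduction that $\sigma\in\Sigma$ forces $Q(\sigma x)\to+\infty$ along the corresponding half-axis is precisely the bookkeeping the paper leaves implicit in ``the claim follows easily from this estimate.''
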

\begin{sketch}
This can be checked by direct calculation. Indeed, the
 function $f$ satisfies
   \begin{equation}
   \label{eqn:asymptoticsfinitesingularities}
   f(z) = \left(\frac{P(z)}{Q'(z)}+O\bigl(|z|^{\deg(P)-\deg(Q)}\bigr)\right)
               e^{Q(z)} + O(1) \end{equation}
 as $z\to\infty$. (See  \cite[Lemma 4.1]{martinthesis}) 
 The claim follows easily from this estimate.  
\end{sketch}

 \subsection*{Explicit families}
 To conclude this section, we collect some simple facts that are needed to 
  deduce Corollaries \ref{cor:trigopolreal} and \ref{cor:trigopol}, concerning
  explicit families of trigonometric polynomials and circle maps, from the
  more general Theorems \ref{thm:densitybounded}, \ref{thm:densitycircle} and \ref{thm:qcrigcircle}.
  These results are all well-known and easy to prove, but we include
  the short arguments for completeness.

 \begin{lem}[Cosine maps and standard maps] \label{lem:cos}
  Let $(a,b),(c,d)\in\R^2\setminus\{(0,0)\}$ with
    $(a,b)\neq (c,d)$. Then the cosine maps
   $C_{a,b}$ and $C_{c,d}$ are not conjugate by an affine map $M\in\MobR$. 

  The analogous statement holds for the family $S_{a,b}$. 
 \end{lem}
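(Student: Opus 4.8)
The plan is to prove both non-conjugacy statements by direct computation, using the fact that an affine conjugacy $M\in\MobR$, being a map $z\mapsto \alpha z+\beta$ with $\alpha>0$, $\beta\in\R$, is rigidly determined by finitely many normalizing conditions, so that $C_{c,d}=M\circ C_{a,b}\circ M^{-1}$ becomes an identity of entire functions and hence of power-series (or Fourier) coefficients. First I would treat the cosine family. Writing $C_{a,b}(x)=a\sin x+b\cos x=R\cos(x-\varphi)$ with $R=\sqrt{a^2+b^2}>0$, the relation $C_{c,d}(z)=\alpha\,C_{a,b}\!\left(\tfrac{z-\beta}{\alpha}\right)+\beta$ forces, by comparing the (spatial) period of the two sides, that $\alpha=1$; comparing amplitudes then gives $\sqrt{c^2+d^2}=\sqrt{a^2+d^2}$... more precisely $\sqrt{c^2+d^2}=\sqrt{a^2+b^2}$, and comparing the additive constants gives $\beta=0$ (since $C_{a,b}$ and $C_{c,d}$ have zero mean). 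Thus $M=\id$ and $(a,b)=(c,d)$, a contradiction. I would phrase this cleanly by noting that $C_{a,b}$ is $2\pi$-periodic with mean value $0$, so any affine conjugacy must fix the period and the mean, hence be the identity.

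Next I would handle the family $S_{a,b}(x)=axe^x+b$. Here the relevant invariant is the behaviour at $\pm\infty$: as $x\to+\infty$ (if $a>0$) or the asymptotics of the single asymptotic value and the location/value of the unique critical point. Concretely, $S_{a,b}$ has exactly one critical point, at $x=-1$, with critical value $S_{a,b}(-1)=-a/e+b$, and exactly one finite asymptotic value, namely $b$ (since $axe^x\to 0$ as $x\to-\infty$). Suppose $S_{c,d}=M\circ S_{a,b}\circ M^{-1}$ with $M(z)=\alpha z+\beta$, $\alpha>0$. Matching the asymptotic value on the two sides gives $d=\alpha b+\beta$; matching the location of the critical point gives $-1=\alpha\cdot(-1)+\beta$, i.e. $\beta=\alpha-1$. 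Finally, I would match a growth invariant: conjugating $S_{a,b}$ by $M$ replaces the leading exponential-polynomial term $a x e^{x}$ by $\alpha a\,\tfrac{z-\beta}{\alpha}e^{(z-\beta)/\alpha}=\dots$, whose comparison with $cze^{z}$ forces $1/\alpha=1$ (equality of the exponential rates), hence $\alpha=1$, $\beta=0$, $M=\id$, and then $a=c$, $b=d$.

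The key steps in order are therefore: (1) reduce a purported affine conjugacy to an identity of entire functions; (2) extract the scaling factor $\alpha$ from a ``rate'' invariant (the period in the cosine case, the exponential growth rate $e^{x/\alpha}$ vs.\ $e^x$ in the standard-map case), concluding $\alpha=1$; (3) extract the translation $\beta$ from a canonically located feature (mean value resp.\ critical point), concluding $\beta=0$; (4) conclude $M=\id$ and read off $(a,b)=(c,d)$. The main obstacle — really the only subtle point — is step (2) for $S_{a,b}$: one must argue carefully that no positive $\alpha\ne 1$ can make $\alpha a\,\tfrac{1}{\alpha}(z-\beta)e^{(z-\beta)/\alpha}+\beta = a(z-\beta)e^{(z-\beta)/\alpha}+\beta$ agree with $cze^z+d$ as entire functions, which follows since the function $z\mapsto S_{a,b}(z)-b$ has a single asymptotic tract whose ``direction speed'' along $\R^+$ is $\lim_{x\to\infty}\tfrac{1}{x}\log\log|S_{a,b}(x)| = 1$, an affine-conjugacy invariant that equals $1/\alpha$ for the conjugated map; alternatively one simply compares the coefficient of $z$ inside the exponential. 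Everything else is immediate.
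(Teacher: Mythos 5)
Your argument is correct, and the cosine half is essentially the paper's proof: the period $2\pi$ forces $\alpha=1$, and a symmetry/normalization of the image (the paper uses that $C_{a,b}(\R)$ is an interval symmetric about $0$, you use the zero mean value) forces $\beta=0$. For the family $S_{a,b}$ you take a slightly different route: the paper observes that $z=0$ is the \emph{unique preimage} of the asymptotic value $b$, so a conjugacy $M\in\MobR$ must fix both $0$ and the critical point $-1$, and an affine map with two finite fixed points is the identity; you instead combine the critical point with a growth comparison, expanding $M\circ S_{a,b}\circ M^{-1}(z)=a(z-\beta)e^{(z-\beta)/\alpha}+\alpha b+\beta$ and matching the exponential rate against $cze^{z}+d$ to force $\alpha=1$. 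Both work; the paper's version avoids any asymptotic estimate by using only finite marked data (asymptotic value, its unique preimage, critical point), while yours is a direct coefficient comparison of entire functions, equally elementary but a bit more computational. One small correction to your ``alternative'' invariant: as written, $\lim_{x\to\infty}\tfrac1x\log\log|S_{a,b}(x)|=0$, not $1$ (you want $\lim_{x\to\infty}\tfrac1x\log|S_{a,b}(x)|=1$, i.e.\ the exponential rate itself); this does not affect the main line of your argument, since comparing the coefficient of $z$ in the exponential already gives $\alpha=1$.
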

 \begin{proof}
  We prove the contrapositive, so suppose that 
   $C_{c,d}= M\circ C_{a,b} \circ M^{-1}$ for some affine map
   $M(x)=\alpha x + \beta$, $\alpha>0$. Since both maps have period
   $2\pi$, we must have $\alpha=1$. Furthermore, we note
 that 
    the image of the real axis under a map $C_{a,b}$ is an interval that
    is symmetric around the origin. This implies that $\beta=0$, and hence
    $M=\id$ and $(a,b)=(c,d)$. 

  We note that $S_{a,b}=axe^x+b$ has a critical point at $-1$ and an asymptotic value
   at $b$, and no other critical or asymptotic values. Furthermore, $z=0$ is the
   unique preimage of the asymptotic value $b$. If we have a conjugacy $M\in\MobR$ between
   $S_{a,b}$ and $S_{c,d}$, it follows that $M$ fixes $0$, $-1$ and $\infty$. Hence
   $M=\id$ and $(a,b)=(c,d)$. 
 \end{proof}

\begin{lem}[Number of critical points of Arnol'd-type maps]
\label{lem:classical}Let $F_\mu$ be a (generalized) 
trigonometric polynomial as in (\ref{trigonrm}).
Then the corresponding circle map 
$f_\mu$ has exactly $2m$ critical points in $\CS$, counted with multiplicities. 
Moreover, $f_\mu$ has no asymptotic values in $\CS$. 
\end{lem}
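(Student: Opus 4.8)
The plan is to make the induced map $f_\mu$ completely explicit and then read off both statements from elementary properties of Laurent polynomials. Putting $z=e^{2\pi i t}$ and using $\sin(2\pi jt)=(z^j-z^{-j})/(2i)$ and $\cos(2\pi jt)=(z^j+z^{-j})/2$, the trigonometric part of $F_\mu$ becomes a Laurent polynomial $g(z)=\sum_{j=1}^m(c_jz^j+c_{-j}z^{-j})$, and hence
\[ f_\mu(z)=\lambda\,z^D e^{2\pi i g(z)},\qquad \lambda:=e^{2\pi i\mu_1}\in S^1, \]
a holomorphic self-map of $\CS$ whose only essential singularities are at $0$ and $\infty$ (so $f_\mu$ is transcendental). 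The only feature of the coefficients I will use is that $\mu_{2m}>0$ forces $c_m=-i\mu_{2m}/2\neq0$ and $c_{-m}=i\mu_{2m}/2\neq0$, these arising only from the $\mu_{2m}\sin(2\pi mt)$ term.

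First I would treat the critical points. Differentiating gives
\[ f_\mu'(z)=\lambda\,z^{D-1}e^{2\pi i g(z)}\bigl(D+2\pi i\,zg'(z)\bigr), \]
and since $z^{D-1}e^{2\pi i g(z)}$ is zero-free on $\CS$, the critical points of $f_\mu$ in $\CS$, counted with multiplicity, are exactly the zeros in $\CS$ of the Laurent polynomial $h(z):=D+2\pi i\,zg'(z)$. Now $z^m h(z)$ is a genuine polynomial whose leading coefficient $2\pi i\,mc_m$ is nonzero (so its degree is exactly $2m$) and whose constant term $-2\pi i\,mc_{-m}$ is nonzero (so $0$ is not a root); hence it has exactly $2m$ roots, all in $\CS$, and these are precisely the zeros of $h$, i.e.\ the critical points of $f_\mu$. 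This gives the first assertion. (When $f_\mu$ moreover has $2m$ critical points on $S^1$, e.g.\ when it is $2m$-multimodal, these then exhaust the critical set in $\CS$, so all critical values lie on $S^1$; combined with the absence of asymptotic values this yields $f_\mu\in\SC$.)

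For the asymptotic values it suffices to rule out a finite nonzero asymptotic value at each of the two essential singularities, and since $z\mapsto1/z$ conjugates $f_\mu$ to a map of exactly the same form (with the top and bottom coefficients of the Laurent polynomial still nonzero), I only need the singularity at $\infty$. Suppose $\gamma(t)\to\infty$ in $\CS$ with $f_\mu(\gamma(t))\to a$, $0<|a|<\infty$, and write $\gamma(t)=\rho(t)e^{i\phi(t)}$ with $\rho(t)\to\infty$ and $\phi$ a continuous branch of the argument. Boundedness of $\log|f_\mu(\gamma(t))|=D\log\rho(t)-2\pi\operatorname{Im}g(\gamma(t))$ gives $\operatorname{Im}g(\gamma(t))=\tfrac{D}{2\pi}\log\rho(t)+O(1)=o(\rho(t)^m)$, so $\operatorname{Im}(c_m\gamma(t)^m)=o(\rho(t)^m)$ and $\sin(m\phi(t)+\arg c_m)\to0$; by continuity and connectedness of the parameter tail, $m\phi(t)+\arg c_m$ then converges to a point of $\pi\Z$, whence $\phi(t)$ converges and $\cos(m\phi(t)+\arg c_m)\to\pm1$. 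Consequently $|\operatorname{Re}g(\gamma(t))|\sim|c_m|\rho(t)^m\to\infty$, and therefore the continuous branch of $\arg f_\mu(\gamma(t))=\arg\lambda+D\phi(t)+2\pi\operatorname{Re}g(\gamma(t))$ tends to $\pm\infty$; this contradicts $f_\mu(\gamma(t))\to a\neq0$, which would force that branch to converge.

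The one delicate point, and the main obstacle, is this last step: a priori $\gamma$ might spiral towards $\infty$, and one must exclude that it tracks one of the $2m$ ``Julia directions'' $\{m\phi+\arg c_m\in\pi\Z\}$ towards a finite nonzero limit. The computation above handles exactly this case, exploiting that along such a direction $|f_\mu|$ is governed by $\rho^{\pm m}$ while $\arg f_\mu$ runs off to infinity. Everything else is routine manipulation of the explicit formula for $f_\mu$.
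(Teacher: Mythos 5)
Your proof is correct. For the critical-point count you do essentially what the paper does: the paper observes that $F_\mu'$ is a trigonometric polynomial of degree $m$, hence of the form $R(e^{2\pi i t})$ with $R$ rational of degree $2m$, which is the same computation as your observation that $z^m\bigl(D+2\pi i\,zg'(z)\bigr)$ is a polynomial of degree exactly $2m$ with nonzero constant term (both arguments implicitly use $\mu_{2m}\neq 0$, which is the intended reading of the lemma). The asymptotic-value part is where you take a genuinely different route. The paper argues on the lift: it shows an asymptotic path for $F_\mu$ must have $|\im\gamma|,|\re\gamma|\to\infty$ and then uses the asymptotics $F_\mu(\zeta)=D\zeta+\mu_{2m}e^{\pm 2\pi i m\zeta}/2i+o(\cdot)$, with the exponential term ``spiralling'', to force $\limsup|F_\mu(\gamma(t))|=\infty$ (and it notes Denjoy--Carleman--Ahlfors as an alternative). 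You instead work directly with the factorization $f_\mu(z)=\lambda z^D e^{2\pi i g(z)}$ on $\CS$, use boundedness of $\log|f_\mu(\gamma)|$ to pin the path asymptotically to one of the finitely many directions where $\im(c_m z^m)$ is of lower order, and then derive the contradiction from the divergence of a continuous branch of $\arg f_\mu(\gamma)$. Your version has a small advantage in completeness: deducing ``$f_\mu$ has no asymptotic values in $\CS$'' from ``$F_\mu$ has no finite asymptotic values'' is not entirely immediate, since along a lifted path $F_\mu$ could a priori drift off to infinity inside a bounded horizontal strip while $e^{2\pi i F_\mu}$ converges, and your argument-winding step is exactly what excludes this scenario; both proofs ultimately rest on the dominance of the top-degree exponential term.
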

\begin{proof}
This is a classical fact. Indeed,
note that $F'_{\mu}$ is a trigonometric polynomial of degree $m$, and hence
 \[ F'_{\mu}(z) = R(e^{2\pi i z}), \]
  where $R$ is a rational function of degree $2m$. Thus $F_{\mu}$ has exactly $2m$ 
  critical points in every vertical strip of width $1$ (counting multiplicities). The claim follows.
  
  It is also elementary to see that $F_{\mu}$ has no finite
    asymptotic values (and hence $f_{\mu}$ has
   no asymptotic values in $\CS$). Let us sketch the argument.
 Suppose by contradiction that $\gamma:[0,\infty)\to\C$ was a curve to
   infinity with $F_{\mu}(\gamma(t))\to a\in\C$. We must have $|\im\gamma(t)|\to\infty$.
   If $D=0$,  this follows from periodicity and otherwise from the fact that
     $F_{\mu}(z)=Dz + O(1)$ when restricted to any horizontal strip.
     Similarly, we must have $|\re\gamma(t)|\to\infty$, as 
   $|F_{\mu}(z)|$ grows like $|\mu_{2m}|\cdot e^{2\pi m|\im z|}/2$ in any vertical strip. 
 
  For $\zeta\in\gamma$, we can write
      \[ F_{\mu}(\zeta) = D\zeta + \mu_{2m}e^{\pm 2\pi i m\zeta}/2i + o(e^{2\pi i m\zeta}). \]
    For sufficiently large $\zeta$, the argument of $\zeta$ will be contained in a fixed interval
      of length $\pi/2$, while the second term keeps ``spiralling'' to 
  infinity. It follows that we must have
       $\limsup |F_{\mu}(\gamma(t))|=\infty$, a contradiction.
       
    (The claim can also be deduced directly from the celebrated Denjoy-Carleman-Ahlfors theorem.)
\end{proof}

  \begin{lem}[Conformal conjugacy classes]
\label{lem:trigoaffinerigid}
   Let $D\ge 0$, $m\ge 1$ be integers. 
  Consider  trigonometric polynomials as in  (\ref{trigonrm})
and let $f_\mu$ be the corresponding map of the circle $S^1$. 
Suppose that $M(z)=e^{2\pi i \beta}z$ 
  is a rotation. Then $M$ conjugates the map  
  $f_\mu$ to some map $f_{\mu'}$ in the same family if and only if 
 $\beta=p/m$ where $p\in \Z$.
 In particular, each affine conjugacy class (via rotations)
 consists of at most $m$ maps.
 \end{lem}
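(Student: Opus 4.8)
The plan is to exploit the structure $F'_{\mu}(z) = R(e^{2\pi i z})$, where $R$ is a rational function of degree $2m$, together with the fact that $M(z) = e^{2\pi i \beta}z$ is a rotation of the circle, which in the universal cover $\P(t) = e^{2\pi i t}$ lifts to a translation $\widetilde M(t) = t + \beta$ (mod $\Z$). If $M$ conjugates $f_{\mu}$ to $f_{\mu'}$, then on the level of the lifts we get $F_{\mu'}(t) = F_{\mu}(t + \beta) - \beta + k$ for some integer $k$ (the integer arising because $F_{\mu'}$ and $F_{\mu}(\cdot + \beta) - \beta$ are two lifts of the same circle map, hence differ by an integer constant). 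Differentiating, $F'_{\mu'}(t) = F'_{\mu}(t+\beta)$, so $R'(e^{2\pi i t}) = R(e^{2\pi i (t+\beta)}) = R(e^{2\pi i \beta}\cdot e^{2\pi i t})$ where $R'$ is the rational function attached to $\mu'$. Thus the two degree-$2m$ rational functions $R$ and $R'$ satisfy $R'(w) = R(e^{2\pi i\beta}w)$ for all $w$ on the unit circle, and hence (by analytic continuation) for all $w\in\C$.

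The key step is then to show that $e^{2\pi i\beta}$ must be an $m$-th root of unity. Here I would use the explicit form of $F'_{\mu}$: since $F_{\mu}$ is a trigonometric polynomial whose highest-frequency term is $\mu_{2m}\sin(2\pi m t)$ with $\mu_{2m}>0$, its derivative $F'_{\mu}(t)$ has highest-frequency part $2\pi m\,\mu_{2m}\cos(2\pi m t)$. Written in terms of $w = e^{2\pi i t}$, this means $R(w) = c\,(w^m + w^{-m}) + (\text{lower order terms})$ with $c = \pi m\,\mu_{2m} > 0$; equivalently, $w^m R(w)$ is a polynomial in $w$ of degree exactly $2m$, with leading and constant coefficients both equal to $c$. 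The relation $R'(w) = R(e^{2\pi i\beta}w)$ forces $R'$ to have the same structure, and comparing the top-degree coefficient of $w^{2m}R'(w) = (e^{2\pi i\beta})^{m}\cdot(\text{leading coeff of }w^{2m}R(w))\cdot(\ldots)$ — more precisely, the coefficient of $w^m$ in $R$, which is zero, maps to the coefficient of $w^m$ in $R'$ up to the factor $e^{2\pi i m \beta}$, and the coefficients of $w^{\pm m}$ which are both $c$ must be multiplied by $e^{\pm 2\pi i m\beta}$ and stay equal to the corresponding (real, positive, equal) coefficients of $R'$ — yields $e^{2\pi i m\beta} = 1$, i.e.\ $\beta = p/m$ for some $p\in\Z$. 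I expect this coefficient-matching to be the main obstacle: one has to be careful about which normalization of $R$ is canonical and to track enough coefficients (not just the extreme ones) to pin down $e^{2\pi i m\beta}=1$ rather than merely $e^{2\pi i m\beta} = \pm 1$ or some other root of unity; using that $\mu_{2m}>0$ is essential to rule out the sign ambiguity coming from $\sin$ versus $-\sin$.

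For the converse, I would simply observe that if $\beta = p/m$ then rotation by $e^{2\pi i p/m}$ sends the frequency-$j$ terms $\cos(2\pi j t), \sin(2\pi j t)$ to linear combinations of $\cos(2\pi j t), \sin(2\pi j t)$ for each $j$, and fixes the linear term $D t$ up to an additive constant absorbed into $\mu_1$; hence $F_{\mu}(t + p/m) - p/m$ is again of the form (\ref{trigonrm}) with the same $D$ and $m$ and with a (possibly) new parameter $\mu'\in\R/\Z\times\R^{2m-1}$, and one checks $\mu'_{2m} = \mu_{2m} > 0$ since rotation by a multiple of $2\pi/m$ acts on the top frequency $e^{\pm 2\pi i m t}$ merely by the scalar $e^{\pm 2\pi i p}= 1$, preserving the coefficient of $\sin(2\pi m t)$ exactly. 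Finally, the count: the conjugating rotations form the subgroup $\{p/m : p = 0,1,\dots,m-1\}$ of $\R/\Z$, which has exactly $m$ elements, and distinct such rotations can produce at most $m$ distinct images $f_{\mu'}$, giving the stated bound on the size of each affine (rotational) conjugacy class.
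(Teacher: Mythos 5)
Your proposal is correct and is essentially the paper's own argument: the paper simply expands the lift $F_{\mu}(t-\beta)+\beta$ via the addition theorems and observes that the top-frequency term stays a positive multiple of $\sin(2\pi m t)$ exactly when $m\beta\in\Z$, which is the same coefficient comparison you carry out after differentiating and passing to the Laurent expansion of $F_{\mu}'$ in $w=e^{2\pi i t}$. One slip to correct in your write-up: the coefficient of $w^{m}$ in $R$ is $c=\pi m\mu_{2m}>0$, not zero (you state the correct values immediately afterwards, and comparing the $w^{\pm m}$-coefficients together with $\mu_{2m},\mu'_{2m}>0$ does force $e^{2\pi i m\beta}=1$), so the argument stands as you intended.
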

\begin{proof}
Assume that $f_\mu,f_{\mu'}$ are conjugate by a rotation $M(z)=e^{2\pi i\beta}z$.
Note that the lift of 
$M\circ f_\mu\circ M^{-1}(t)$ is equal to 
\begin{align*}
 F_{\mu}(t-\beta) + \beta =
  Dt - (D-1)\beta &+ \mu_1 + \mu_{2m}\sin(2 \pi m(t-\beta)) 
\\ &+ 
\sum_{j=1}^{m-1}\left( \mu_{2j}\sin(2\pi j(t- \beta)) + \mu_{2j+1}\cos(2\pi j(t-\beta))\right).
\end{align*}
Using the addition theorems for sine and cosine, we see that
 this map is in the form  (\ref{trigonrm}) if and only if $m\beta=0\ \mod 1$.  
The lemma follows.
\end{proof}

\begin{remark}[Remark 1]
Consider $t\mapsto 3t+\sin(4\pi t)+\epsilon \sin(2\pi t)$.
Conjugating this with $t\mapsto t+1/2$
gives the map $t\mapsto 3t+1+\sin(4\pi t)- \epsilon \sin(2\pi t)$. 
These maps are both close
 to $t\mapsto 3t+\sin(4\pi t)$  as circle maps,
so taking the quotient of the set $\Delta$ defined in~\eqref{eqn:Delta} by conjugacy classes results in a space with
an orbifold structure, not a manifold structure. 
\end{remark}

\begin{remark}[Remark 2]
If $D\ne 1$, then for each map $F_\mu$ as in (\ref{trigonrm}),
one can find $M\in \MobR$ so that $M\circ F_\mu \circ M^{-1}$ is equal to 
\begin{equation}
t\mapsto Dt+ \sum_{j=1}^{m}( \mu'_{2j}\sin(2\pi
jt) + \mu'_{2j-1}\cos(2\pi jt))\label{eq:trigo-alter}
\end{equation}
by taking $M(t)=t+\mu_1/(D-1)$. (And, vice versa,  each map 
as in (\ref{eq:trigo-alter})  can be affinely conjugated to one with as in  (\ref{trigonrm}) 
by a translation $M(t)=t+\beta$ with $\beta$ chosen 
so that $\mu_{2m-1}\cos(2\pi \beta)+\mu_{2m}\sin(2\pi \beta)=0$.) 
\end{remark}

 \begin{lem}[Trigonometric polynomials] \label{lem:trig}
  Suppose $f$ is a trigonometric polynomial of degree $n$
   as in (\ref{eqn:trig}), and let
   $\phi_n,\psi_n\in\HomeoR$ 
  with
  $\phi_n,\psi_n\to \id$ such that
   $g_n := \psi_n\circ f \circ \phi_n^{-1}$ are entire functions for all $n$. 

  Then there is a sequence $\alpha_n>0$ with $\alpha_n\to 1$ such that
   $f_n=g_n(\alpha_n z)/\alpha_n$ is a trigonometric polynomial for every $n$.
   (Furthermore, since
    $f_n\to f$, the Fourier coefficients of $f_n$ converge to those of $f$.)
 \end{lem}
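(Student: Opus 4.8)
The plan is to reconstruct $f_n$ from $g_n$ in two steps: first observe that $g_n$ must be a (possibly rescaled) trigonometric polynomial because it is conformally conjugate to one, and then fix the rescaling so that the period is exactly $2\pi$ again. Concretely, $f$ is a trigonometric polynomial of degree $n$, so $f$ has exactly $2n$ critical points (counted with multiplicity) in each period strip, no asymptotic values, and $f(z)=O(e^{n|\operatorname{im} z|})$ in vertical strips; equivalently, $f(z)=R(e^{iz})$ where $R$ is a rational map with poles only at $0$ and $\infty$ of orders at most $n$. The first step is to note that $g_n=\psi_n\circ f\circ\phi_n^{-1}$, being entire and (via Lemma~\ref{lem:trig}'s hypotheses, which should be read alongside the surrounding discussion — in particular Lemma~\ref{lem:trigoaffinerigid} and the quasiconformal-conjugacy machinery) quasiconformally conjugate to $f$, inherits these structural properties: it has finitely many singular values, exactly $2n$ critical points per ``period'', and the right growth at infinity. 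Since $\phi_n,\psi_n\to\id$, the function $g_n$ is close to $f$, so in particular it has a well-defined translation quasi-period close to $2\pi$.

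The second step makes this precise. I would argue that $g_n$ commutes with a translation $z\mapsto z+p_n$ with $p_n\to 2\pi$: this follows because $f$ commutes with $z\mapsto z+2\pi$ and the conjugating homeomorphisms converge to the identity, so the ``period'' of $g_n$ (detected, say, by the spacing of its critical points, or by the fact that $g_n$ is a limit of maps conjugate to a $2\pi$-periodic map) must converge to $2\pi$. Granting that $g_n(z+p_n)=g_n(z)$ (up to adjusting by the additive structure — more carefully, $g_n-\ell z$ is $p_n$-periodic for the appropriate integer $\ell=D$ coming from the degree of $f$, but for a genuine trigonometric polynomial as in~\eqref{eqn:trig} we have $\ell=0$), we set $\alpha_n := p_n/(2\pi)$, so $\alpha_n\to 1$. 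Then $f_n(z):=g_n(\alpha_n z)/\alpha_n$ satisfies $f_n(z+2\pi)=g_n(\alpha_n z+p_n)/\alpha_n=g_n(\alpha_n z)/\alpha_n=f_n(z)$, so $f_n$ is a genuine $2\pi$-periodic entire function. Combined with the bound $f_n(z)=O(e^{n|\operatorname{im}z|})$ in vertical strips (inherited from $f$, since rescaling by $\alpha_n\to 1$ only perturbs the exponential rate slightly and the degree is an integer that cannot jump under a small perturbation), $f_n$ is a trigonometric polynomial of degree exactly $n$ by the standard characterization (write $f_n(z)=R_n(e^{iz})$ with $R_n$ rational, and the growth bound forces $\deg R_n\le 2n$, while the critical-point count forces equality). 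Finally, $f_n\to f$ locally uniformly (as $g_n\to f$ and $\alpha_n\to 1$), and since the Fourier coefficients of a trigonometric polynomial of bounded degree depend continuously on the function in the topology of locally uniform convergence (they are given by integrals over a period, or by finitely many interpolation values), the coefficients of $f_n$ converge to those of $f$.

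The main obstacle is the first step: verifying rigorously that $g_n$, which is only assumed entire (and close to $f$ via $\phi_n,\psi_n$), is actually conformally conjugate to a trigonometric polynomial, and in particular that it is semiconjugate to a translation. The issue is that pre- and post-composition by homeomorphisms does not obviously preserve the property ``$f(z+2\pi)=f(z)$''; one needs to use that $g_n$ is entire to upgrade the topological information. The cleanest route is presumably to invoke the quasiconformal rigidity / classification of the relevant natural parameter space (the maps in $\SR$ with $2n$ singular values of trigonometric type form a finite-dimensional family, as discussed in Section~\ref{sec:parameter}), so that any entire $g_n$ of the form $\psi_n\circ f\circ\phi_n^{-1}$ with $\phi_n,\psi_n$ near the identity must itself lie, up to affine conjugacy, in that family — i.e.\ be affinely conjugate to a trigonometric polynomial of degree $n$ with coefficients near those of $f$. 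Once that structural fact is in hand, the rescaling argument of the previous paragraph is routine. I expect the paper to handle this by citing the description of $\MfR$ and the fact that $g_n$, being entire and combinatorially equivalent to $f$, is realized by a unique point of that finite-dimensional family close to $f$.
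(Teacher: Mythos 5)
The decisive step of your argument is the sentence ``Granting that $g_n(z+p_n)=g_n(z)$\dots'', and this exact periodicity of $g_n$ is precisely what has to be proved; it is the one point where the hypothesis that $g_n$ is \emph{entire} must be used. Your proposed justifications do not deliver it: neither closeness of $g_n$ to the $2\pi$-periodic map $f$, nor near-$2\pi$ spacing of its critical points, nor being a limit of maps equivalent to a periodic map gives a functional equation $g_n(z+p_n)=g_n(z)$ --- a function can be arbitrarily close to a periodic one without being periodic. You acknowledge this gap yourself in the final paragraph, but the suggested repair (invoking the classification of $\MfR$ and quasiconformal rigidity) is not carried out, and it is also not available in the form you use it: $g_n=\psi_n\circ f\circ\phi_n^{-1}$ is only topologically \emph{equivalent} to $f$, not quasiconformally \emph{conjugate} to it, so the structural transfer in your first step (same critical points ``per period'', same growth) is exactly the thing in question rather than a given. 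Moreover this lemma is a preparatory statement proved before, and independently of, the parameter-space machinery, so an argument routed through $\MfR$ would be both heavy and out of order.

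The missing step has a short elementary proof, and it is the entire content of the paper's argument. Define $\theta_n(z):=\phi_n(\phi_n^{-1}(z)+2\pi)$, a homeomorphism of $\C$ preserving $\R$ with its orientation. Using only $f(w+2\pi)=f(w)$ and $g_n=\psi_n\circ f\circ\phi_n^{-1}$ one computes $g_n(\theta_n(z))=\psi_n(f(\phi_n^{-1}(z)+2\pi))=\psi_n(f(\phi_n^{-1}(z)))=g_n(z)$. Since $g_n$ is entire and nonconstant and $g_n\circ\theta_n=g_n$, the map $\theta_n$ is, away from the discrete critical set of $g_n$, locally an inverse branch of $g_n$ composed with $g_n$, hence holomorphic; by continuity it is holomorphic everywhere, hence an affine automorphism $\theta_n(z)=a_nz+b_n$ with $a_n>0$. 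If $a_n\neq 1$ then iterating $\theta_n$ (or $\theta_n^{-1}$) towards its fixed point and using $g_n\circ\theta_n^k=g_n$ would force $g_n$ to be constant; so $\theta_n(z)=z+\beta_n$ with $\beta_n=\theta_n(0)=\phi_n(\phi_n^{-1}(0)+2\pi)\to 2\pi$. Thus $g_n$ is genuinely $\beta_n$-periodic, and with $\alpha_n=\beta_n/(2\pi)$ your rescaling and the concluding identification of $f_n$ as a trigonometric polynomial (via $f_n(z)=R_n(e^{iz})$ with $R_n$ having poles rather than essential singularities at $0,\infty$, since $\phi_n^{-1}$ descends to a homeomorphism of the quotient cylinders and so $g_n\to\infty$ at the two ends) together with the convergence of Fourier coefficients go through as you wrote them.
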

\begin{proof}
Let $n\in\N$ and define $\theta_n(z)=\phi_n(\phi_n^{-1}(z)+2\pi)$. Then
  $\theta_n$ is a homeomorphism. 
 For purposes of legibility we suppress the subscript $n$ in the following.
 Note that 
  \[ g(\theta(z))=g(\phi(\phi^{-1}(z)+2\pi))=
     \psi(f(\phi^{-1}(z)+2\pi))=\psi(f(\phi^{-1}(z)))=g(z). \]
   It follows that $\theta$ is holomorphic, and hence an affine map
   $\theta(z)=z+\beta$, where $\beta=\beta_n=\theta_n(0)\to 2\pi$. 

   So each $g_n$ is periodic with period $\beta_n$, and we are done if we set
   $\alpha_n := \beta_n/2\pi$. 
\end{proof}

\section{Quasisymmetric rigidity on the bounded part of the real dynamics}
\label{sec:qsrigidity}
In this section we consider the following class of functions, which is more general than those considered in the introduction. 

\begin{defn}[The class $\BR$]
 We denote by $\BR$ the set of all real transcendental entire functions
  with bounded singular sets. (Note that we do \emph{not} require 
  that all singular values are real.)
\end{defn}

If $f\in \BR$, then either $\lim_{x\to +\infty} |f(x)|\to \infty$ or $\sup_{x\ge 0}|f(x)|<\infty$
(and similarly either $\lim_{x\to -\infty} |f(x)|=\infty$  or $\sup_{x\le 0}|f(x)|<\infty$). 
For this class of functions one has the following:

\begin{lem}\label{lem:ahlfors}
Let $f\in\BR$, and let $\sigma\in\{+,-\}$. Suppose that
  $\lim_{x\to\sigma\infty} |f(x)|=\infty$. Then
  \[ \liminf_{x\to\sigma\infty} \frac{\log \log |f(x)|}{\log|x|} \geq \frac{1}{2}. \]
\end{lem}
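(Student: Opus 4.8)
The statement is an Ahlfors-type lower bound on the growth of a transcendental entire function with bounded singular set, restricted to a real half-axis on which it tends to infinity. The key structural input is the classical Denjoy–Carleman–Ahlfors theorem (or, more precisely, the Ahlfors theory of asymptotic tracts / the Eremenko–Lyubich estimates for the class $\mathcal B$): a function with bounded singular set has only finitely many ``logarithmic tracts'' near infinity, and on each such tract $f$ behaves, after a logarithmic change of coordinates, like the exponential. The plan is to exploit the fact that the half-axis $\{x>0\}$ (say $\sigma=+$) on which $|f(x)|\to\infty$ must eventually be contained in one such tract $T$, and then to transfer a lower growth bound from the universal covering/logarithmic coordinate to the real axis.

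First I would fix $R$ larger than $\max(1,\sup S(f))$ and consider a component $U$ of $f^{-1}(\{|w|>R\})$ that contains a tail of the positive real axis; since $f\in\BR$, $f\colon U\to\{|w|>R\}$ is a universal covering (no singular values over $\{|w|>R\}$), so it factors through the exponential: writing $\tilde U$ for the component of $\exp^{-1}$ of the logarithmic coordinate, there is a conformal $F\colon \tilde U\to\HH_{>\log R}$ (right half-plane) with $\exp\circ\, (\text{stuff}) = f$. The standard Eremenko–Lyubich inequality then gives that the inverse branch is a contraction in the hyperbolic metric, which yields: if $\xi\in\tilde U$ and $\operatorname{dist}(\xi,\partial\tilde U)$ is comparable to $|\xi|$, then $\operatorname{Re}F(\xi)\gtrsim |\xi|$, i.e.\ $\log|f|\gtrsim$ (something growing). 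The crucial point is to control the width of the tract: the positive real axis has Euclidean distance to $\partial U$ that is at least linear in $x$ along a subsequence — or one uses an area/length argument (Wiman–Valiron, or the Cauchy–Schwarz ``$\log$-$\log$'' estimate behind the classical result that a function with $p$ finite asymptotic values has order $\ge p/2$) to conclude that $\log|f(x)|$ cannot grow slower than a power of $x$ with exponent at least $1/2$. Concretely, the exponent $1/2$ comes precisely from the Denjoy–Carleman–Ahlfors mechanism: were $\log\log|f(x)|/\log x < 1/2 - \eps$ along a sequence $x\to\infty$, one could (by an $L^2$ / harmonic-measure estimate on the tract $U$, using that $U$ has at most ``half a plane'' worth of room near the real axis) derive a contradiction with the boundedness of $S(f)$, i.e.\ one would be forced to have an additional tract.

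I would then assemble the pieces: (i) boundedness of $S(f)$ gives finiteness and the logarithmic covering structure of tracts over $\{|w|>R\}$; (ii) the real half-axis lies eventually in one tract $U$; (iii) a lower estimate for harmonic measure / extremal length in $U$, combined with the two-constants theorem applied to the harmonic function $\log|f|$ in $U$ (which has boundary value $\log R$ on $\partial U$), forces $\log|f(x)|$ to grow at least like $x^{1/2-o(1)}$, hence $\log\log|f(x)| \ge (1/2-o(1))\log x$. The main obstacle — and the only genuinely delicate step — is step (iii): making the extremal-length/harmonic-measure estimate in the tract $U$ precise enough to extract exactly the constant $1/2$, rather than some non-sharp constant. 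This is exactly the content of the classical Ahlfors distortion theorem for strip-like domains, so I expect the cleanest route is to invoke the Ahlfors distortion theorem directly (bounding the harmonic measure of a cross-cut of $U$ at ``radius'' $x$ in terms of $\int \frac{dt}{\theta(t)}$ where $\theta(t)$ is the angular width of $U$ at radius $t$), and then use that the total angular width available near the positive real axis is finite — in fact at most $2\pi$, and effectively at most $\pi$ from one side — to get the exponent $\tfrac12$.
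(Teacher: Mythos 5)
Your proposal is essentially the paper's own route: the proof given there is a one-line reduction to the Ahlfors distortion theorem (the corollary to Theorem 4.8 of Ahlfors' \emph{Conformal invariants}), which is precisely your step (iii) applied to the tract of $f$ over $\{|w|>R\}$, $R>\sup_{s\in S(f)}|s|$, containing a tail of the real half-axis, whose angular width $\theta(t)\le 2\pi$ yields $\pi\int \frac{dt}{t\,\theta(t)}\ge \frac{1}{2}\log t - O(1)$ and hence the exponent $\frac{1}{2}$. Note only that the constant comes from the full bound $\theta(t)\le 2\pi$ (so $\pi/\theta\ge \frac{1}{2}$), not from ``at most $\pi$ from one side'', and that the Eremenko--Lyubich expansion detour and the unproved claim that $\operatorname{dist}(x,\partial U)$ grows linearly along a subsequence are unnecessary once one argues via crosscuts and the distortion theorem.
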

\begin{proof}
This is a standard consequence of the Ahlfors distortion theorem as stated in
  \cite[Corollary to Theorem 4.8]{ahlforsconformal}. Indeed, let $M>0$ be chosen sufficiently large to ensure that 
   $S(f)\cup\{f(0)\}\subset B_M(0)$, and let $V\subset\C\setminus\{0\}$ be the component of the preimage of
   $E(M) := \C\setminus \overline{B_M(0)}$ that contains $\sigma x$ for sufficiently large $x$. 
   Then $f:V\to E(M)$ is a universal covering map, and by the Ahlfors distortion 
   theorem, we have 
    \[ \log | \log f(z)| \geq \frac{1}{2}\log|z| + O(1) \]
    for $z\in V$, where $\log f$ is a branch of the logarithm of $f|_V$. 
    The claim follows, since
    $|\log f(x)|=\log |f(x)| +O(1)$ as $x\to \sigma\infty$.
  (Compare \cite[Formula (1.2)]{MR2885574} for further discussion.)
\end{proof}


 Hence we see that, if $f(\sigma x)$ is unbounded as $x\to+\infty$, then
   $|f(\sigma x)|>2x$ for sufficiently large $x$. In particular, either 
   \begin{itemize}
     \item $|f^2(\sigma x)|$ is bounded
     as $x\to + \infty$ (which can occur either if $|f(\sigma x)|$ remains bounded, or if
     $f(\sigma x)\to -\sigma\infty$ and $f(-\sigma x)$ remains bounded); or
    \item $|f^n(\sigma x)|\geq 2^n|x|$, and hence $\sigma x\in I(f)$, for sufficiently large $x$.
  \end{itemize}

\subsection*{Combinatorial conjugacy}

\begin{defn}[The partition $\Part(f)$] \label{def:partition}
 Let $f\in\BR$.  We denote by $\Part(f)\subset\R$ the set consisting of
  \begin{enumerate}
   \item the real critical points  $\Crit_\R(f)$ of $f$,
   \item the real hyperbolic attracting periodic points of $f$ and 
   \item the real  parabolic  periodic points of $f$.
  \end{enumerate}
\end{defn}

\begin{defn}[Combinatorial conjugacy on the real line]
\label{def:Rcomb-equiv}
Two functions $f,g\in\BR$ are called \emph{combinatorially conjugate on the real line} if
 there is an order-preserving bijection
  \[ h: \Part(f)\cup O_f^+(S_{\R}(f)) \to             \Part(g)\cup O_g^+(S_{\R}(g))\]
 that satisfies $h \circ f = g\circ h$, maps points as in (a)-(c) above to corresponding
 points and    preserves the degree of critical points. Furthermore, asymptotic
 values in $S_{\R}(f)$ and $S_{\R}(g)$ should correspond to each other, in the following
 sense: for $\sigma\in\{+,-\}$, we have $\lim_{x\to\sigma\infty} f(x)=a\in \R$ if and only if 
  $\lim_{x\to\sigma\infty}g(x) = h(a)\in \R$. 
\end{defn}

\begin{lem}[Combinatorial conjugacy and topological conjugacy on the real line]
\label{lem:comb-conj}
If $f,g\in \BR$ are combinatorially conjugate on the real line then they are topologically conjugate on the real line.
Moreover, the topological conjugacy $h$ extends the combinatorial conjugacy and hencesatisfies the following properties:
\begin{enumerate}
\item for each $n\ge 1$ and each $x\in \R$, $x$ is a critical point of $f$ of order $n$ 
iff $h(x)$ is a critical point of $g$ of order $n$
and 
\item $x\in \R$ is a parabolic periodic point of $f$ iff $h(x)$ is a parabolic periodic point of $g$.
\end{enumerate}
 Furthermore, the extension $h$ is uniquely determined outside of
  the union of real attracting and parabolic basins. 
\end{lem}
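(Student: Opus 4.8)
The statement to be proved is Lemma~\ref{lem:comb-conj}: a combinatorial conjugacy on the real line between $f,g\in\BR$ upgrades to a topological conjugacy $h\colon\R\to\R$ respecting critical orders and parabolic points, with $h$ uniquely determined off the attracting/parabolic basins. The natural strategy is to build $h$ by successively refining the partition of $\R$ determined by $\Part(f)\cup O_f^+(S_\R(f))$, i.e.\ by the classical ``pulling back the Markov partition'' argument familiar from interval dynamics, adapted here to the (possibly unbounded, possibly asymptotic-value-carrying) transcendental setting. Concretely, I would let $\mathcal{P}_0 := \Part(f)\cup O_f^+(S_\R(f))$ and $\mathcal{P}_k := f^{-k}(\mathcal{P}_0)\cup\mathcal{P}_0$, with the analogous sets for $g$. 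The given order-preserving bijection on $\mathcal{P}_0$ extends, by pulling back through $f$ and $g$ respectively, to order-preserving bijections $h_k\colon\mathcal{P}_k\to\mathcal{Q}_k$; one must check this pullback is well-defined and order-preserving, which is where the combinatorial data (order of critical points, correspondence of asymptotic values, the semiconjugacy relation) gets used: on each complementary interval $J$ of $\mathcal{P}_k$, $f|_J$ is a homeomorphism onto a complementary interval of $\mathcal{P}_{k-1}$ (a branch of $f$), or $J$ is an unbounded interval on which $f$ tends to an asymptotic value, and the corresponding branch of $g$ must match up combinatorially by induction.

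\textbf{Key steps.} First, verify the pullback step: each branch of $f$ over a gap of $\mathcal{P}_{k-1}$ corresponds bijectively and order-compatibly to a branch of $g$ over the matching gap, using that the endpoints of gaps (critical points, periodic points, postsingular points) are paired up by $h_{k-1}$ and that critical order and asymptotic-value data are preserved; monotone branches between consecutive elements of $\mathcal{P}_{k}$ are matched one-to-one. Second, show the nested sequence $h_k$ is consistent ($h_{k+1}$ extends $h_k$) and that $\bigcup_k\mathcal{P}_k$ is dense in $\R$ \emph{outside the union of attracting and parabolic basins}: here one invokes the standard fact (ultimately relying on the absence of wandering domains / the structure theory of non-escaping dynamics for real analytic maps, together with the hyperbolicity or parabolicity of the periodic points placed in the partition) that a complementary interval of $\bigcup_k\mathcal{P}_k$ of positive length must lie in an attracting or parabolic basin, or in the escaping set $I(f)$ --- and on $I(f)$ one uses that the remark following Lemma~\ref{lem:ahlfors} gives an interval $(b,\infty)\subset I(f)$ on which $f$ (or $f^2$) is eventually monotone to $+\infty$, so the conjugacy extends there by monotone interpolation matching $f$-dynamics to $g$-dynamics. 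Third, extend $h$ continuously (and monotonically) over the remaining gaps --- attracting basins, parabolic basins, escaping intervals --- by transporting the dynamics: on an attracting basin one uses the fundamental-domain / linearization structure to define $h$ explicitly and checks it conjugates $f$ to $g$; on a parabolic basin one does the same using the parabolic flower/fundamental domains. Finally, read off properties (1) and (2): critical order is preserved because it was encoded in $\mathcal{P}_0$ and preserved at every pullback stage; parabolic periodic points are preserved because they were explicitly included in $\Part(f)$ and paired by hypothesis, while no other point can become parabolic under $h$ since $h$ is a genuine topological conjugacy. Uniqueness outside the attracting/parabolic basins is immediate: on $\cl{\bigcup_k\mathcal{P}_k}$ the conjugacy is forced by the semiconjugacy relation and order-preservation, since any two conjugacies agreeing on $\mathcal{P}_0$ agree on every $\mathcal{P}_k$ and hence on the closure.

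\textbf{Main obstacle.} The genuinely delicate point is the density claim --- that the complement of $\bigcup_k\mathcal{P}_k$ is exactly (a subset of) the attracting basins, parabolic basins, and escaping intervals, with no other ``pathological'' gaps. For polynomial or bounded interval maps this is classical, but in the transcendental unbounded setting one must rule out, e.g., a nondegenerate interval $J$ whose forward orbit avoids the partition forever without converging to an attracting or parabolic cycle and without escaping; this is where one leans on the no-wandering-domains type results for $\BR$ (the motivation behind introducing the sector condition elsewhere in the paper) and on the hyperbolicity assumption built into the choice of $\Part(f)$, which prevents the orbit from accumulating on a non-parabolic, non-attracting cycle within a gap. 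Handling the boundary between the ``bounded part'' of the dynamics and the escaping intervals $(b,\infty)$ supplied by Lemma~\ref{lem:ahlfors}, so that $h$ is continuous across that boundary, also requires care but is essentially bookkeeping once the density statement is in hand.
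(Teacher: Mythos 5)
Your proposal is correct in outline, but it re-proves by hand what the paper obtains by reduction and citation. You run the classical pullback-of-partitions construction: extend the given bijection on $\Part(f)\cup O_f^+(S_{\R}(f))$ to its $f^{-k}$-preimages, show the resulting set is dense off the attracting/parabolic basins and escaping intervals, and interpolate monotonically over the remaining gaps. The paper instead compactifies the ends: by Lemma~\ref{lem:ahlfors}, an unbounded end forces $|f|$ to grow so fast that $\pm\infty$ becomes an attracting fixed point (or attracting two-cycle) of a continuous extension $\hat f$ of $f|_{\R}$, so $f$ is effectively a multimodal map with possibly infinitely many turning points; after checking the hypotheses of the classical rigidity theorem of de Melo and van Strien (Theorem II.3.1 of their book) --- no wandering intervals, no intervals of periodic points, periodic turning points attracting (all automatic for real-analytic maps), finitely many real periodic attractors (their Theorem B$'$ of Chapter IV), and a conjugacy pre-arranged on the finitely many immediate basins matching the singular orbits --- it simply invokes that theorem, whose proof is essentially your construction. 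So the two routes coincide mathematically: yours buys self-containedness at the price of redoing the delicate density/classification step, the paper buys brevity, and your separate handling of escaping intervals is subsumed in the paper's picture by viewing them as part of the basin of the attracting point at $\pm\infty$ (which is also how the stated uniqueness ``outside attracting and parabolic basins'' should be read). One caution: your parenthetical suggestion that the density step leans on results tied to the sector condition is off the mark --- the lemma is stated and used for all of $\BR$ with no sector condition, and the fact you actually need is absence of wandering \emph{intervals} for real-analytic maps (critical points are automatically non-flat), which is unconditional; the sector condition enters the paper only for invariant line fields on $L_I(f)$, a different matter entirely.
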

\begin{proof} 
Since $f\in \BR$,  Lemma~\ref{lem:ahlfors} implies that if 
$\lim_{x\to \infty}f(x)=\infty$ then 
$f\colon \R\to \R$ can be extended 
to a continuous map $\hat f\colon (-\infty,\infty]\to (-\infty,\infty]$ having $\infty$ as an attracting fixed point.
More generally, if $f^2|\R$ is unbounded, then either $\lim_{x\to \infty} f^2(x)=\infty$, 
$\lim_{x\to -\infty} f^2(x)=-\infty$ or both. So in the latter case,
we can extend $f$ to a continuous map
$\hat f\colon(-\infty,\infty]\to (-\infty,\infty]$, to $\hat f\colon [-\infty,\infty)\to [-\infty,\infty)$
or to $\hat f\colon [-\infty,\infty]\to [-\infty,\infty]$ having respectively, $\infty$, $-\infty$
or $-\infty,\infty$ as attracting fixed points or attracting periodic two points. 
It follows that the only difference between a map $f\in \BR$ and a multimodal map
on a compact interval is that $f$ can have infinitely many turning points. 
Recall that a point $c$ is called a {\em turning point} of an interval map $f\colon I\to I$
if the map has a local extremum at $c$ and $c$ is in the interior of $I$.
The assumption in Definition \ref{def:Rcomb-equiv}
 about the way asymptotic values are mapped by
 $h$ ensures, furthermore, that two combinatorially conjugate maps extend in the
 same manner. 

So assume that $f,g\in \BR$ are combinatorially conjugate on the real line. 
Since  $f$ and $g$ are real analytic,  
\begin{enumerate}
 \item[(i)] $f$ and $g$ have no wandering intervals (see \cite[Theorem IV.A]{demelovanstrien});
 \item[(ii)] $f$ and $g$ have at most finitely many real periodic points that
 are not hyperbolic and repelling  (see \cite[Theorem IV.B']{demelovanstrien});
in particular, $f$ and $g$ have  no 
     intervals consisting entirely of periodic points;
 \item[(iii)] each periodic turning point is attracting. 
\end{enumerate}
(Note that point (i) implies that any extension of $h$ as in the statement of the lemma
 is
 uniquely determined outside of attracting and parabolic basins.)

Let us denote the union of the
immediate basins of the finitely many real periodic attractors of $f$ and $g$ by $B_0(f)$
and $B_0(g)$, respectively. Since the combinatorial conjugacy $h$ sends periodic (parabolic) attractors
to periodic (parabolic) attractors, we can extend $h$ to a conjugacy 
between $f\colon B_0(f)\to B_0(f)$ and $g\colon B_0(g)\to B_0(g)$
(mapping iterates of singular values to corresponding iterates of singular values).
This implies that assumption (iv) of  \cite[Theorem II.3.1]{demelovanstrien} 
is also satisfied, and one can easily check that the proof of that theorem
goes through verbatim in our context.
  (Alternatively, we could apply the latter theorem directly to
   a restriction of $f$ or a modification of such a restriction as in
   the proof of Theorem \ref{thm:realrigidity} below.) 
\end{proof}

\subsection*{Quasisymmetric rigidity}

One of the main technical ingredients in this paper is the following:
\begin{thm}[Quasisymmetric rigidity on the bounded part of the real dynamics] \label{thm:realrigidity}
Let $f\in\BR$.
Then there exists a compact interval $J\subset\R$ (possibly empty or consisting of only one point)
 with the following properties.
 \begin{enumerate}
 \item If $x\in J$ and $f(x)\notin J$, then $x\in \IR(f)$.
  \item For every $x\in\R$, either $x\in \IR(f)$ or 
    $f^j(x)\in J$ for all $j\geq 2$.
  \item The set of points $z\in\C$ whose $\omega$-limit set is contained in
   $J$ and which do not belong to an attracting or parabolic basin has empty interior
   and does not
   support any invariant line fields.
  \item If $g\in\BR$ is combinatorially conjugate on the real line to
     $f$, then there is an interval $\tilde J$, which has
     the corresponding properties for $g$,  and a
     quasisymmetric conjugacy between $f|_{J}$ and $g|_{\tilde J}$ that agrees with the
     combinatorial conjugacy. 
 \end{enumerate}
\end{thm}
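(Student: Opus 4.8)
The plan is to capture the non-escaping part of the real dynamics of $f$ inside a compact interval $J$ on which $f$, after a harmless surgery, becomes a real-analytic multimodal self-map, and then to invoke the quasisymmetric rigidity theorem for real-analytic interval maps (Theorem~\ref{thm:realgidityvS}, due to the second author in \cite{strien-qsrigidity}) together with the complex-analytic bounds underlying its proof.

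\emph{The interval $J$ and properties (1), (2).} Set $A:=\R\setminus\IR(f)$, the set of non-escaping real points; this is closed and forward invariant, and $\R=A\sqcup\IR(f)$. On each half-line $f$ is either bounded --- in which case it maps a neighbourhood of the corresponding end into a bounded set (around an asymptotic value) --- or $|f(x)|\to\infty$, in which case Lemma~\ref{lem:ahlfors} applied to $f^2$ gives $f^2(x)/x\to\infty$, so that a half-line near that end is contained in $\IR(f)$. Either way $f(A)$ is bounded, and I would take $J$ to be its closed convex hull (empty, or a point, when $A$ is). Then $f(A)\subset J$, and since $A$ is forward invariant the sets $f^j(A)$ decrease with $j$, so $f^j(A)\subset f(A)\subset J$ for all $j\ge 1$, which gives (2); while (1) holds because a point $x\in J$ with $f(x)\notin J$ has $f(x)\notin f(A)$, hence $x\notin A$, i.e.\ $x\in\IR(f)$. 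Finally, $f'$ is a non-zero entire function, so its zeros have no finite accumulation point; thus $f$ has only finitely many critical points on the compact interval $J$, and $f|_J$ is a real-analytic map with finitely many turning points that fails to be a genuine self-map of $J$ only through an ``overflow'' $f(J)\setminus J$ which, by (1), lies in $\IR(f)$.

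\emph{Reduction to an interval map; (3) and (4).} The overflow occurs only over the finitely many relevant complementary intervals of $A$ inside $J$, which are disjoint from a neighbourhood of the non-escaping post-singular dynamics (whose iterates lie in $J$ by construction). I would modify $f$ on the interiors of these escaping intervals --- a standard surgery away from $\P(f)$, in the spirit of \cite{KSS1, densityaxiomadimensionone} --- to obtain a real-analytic multimodal self-map $F\colon J\to J$ agreeing with $f$ near the bounded post-singular dynamics and on $A\cap J$, with the same critical points and orders, the same attracting and parabolic periodic points, and topologically conjugate to $f$ on the non-escaping dynamics. Property (3) is then the complex-analytic conclusion of the machinery behind Theorem~\ref{thm:realgidityvS}: since the set in question is governed by $F$ near $J$, the complex bounds give it empty interior (no wandering domains, no rotation domains, as real periodic points have real multipliers) and no measurable invariant line field. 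For (4), if $g\in\BR$ is combinatorially conjugate to $f$ on $\R$, then by Lemma~\ref{lem:comb-conj} the two are topologically conjugate on $\R$ by a conjugacy respecting orders of critical points and parabolic periodic points and unique outside the attracting and parabolic basins; performing the analogous construction for $g$ --- with the surgery chosen to correspond under this conjugacy --- yields an interval $\tilde J$ with the stated properties and a topological conjugacy between $F$ and the corresponding $G\colon\tilde J\to\tilde J$ that matches critical points, their orders, and hyperbolic (resp.\ parabolic) periodic points. Theorem~\ref{thm:realgidityvS} upgrades this to a quasisymmetric conjugacy, whose restriction to $J$ is the desired quasisymmetric conjugacy between $f|_J$ and $g|_{\tilde J}$; it agrees with the combinatorial conjugacy by the uniqueness in Lemma~\ref{lem:comb-conj}.

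\emph{Main obstacle.} The delicate point is to perform the surgery so that it is simultaneously compatible with the ambient complex dynamics --- so that the line-field statement (3), which concerns $f$ on $\C$, genuinely follows from the corresponding statement for $F$; this forces one to control how orbits of $f$ whose $\omega$-limit set lies in $J$ can approach the ``escaping holes'' from the complex plane --- and natural with respect to combinatorial conjugacy, so that the conjugacy between $F$ and $G$ in (4) is induced by the given one between $f$ and $g$ rather than an unrelated one. Checking that all the hypotheses of the interval rigidity theorem survive the surgery (non-flatness of the critical points and the precise correspondence of periodic and kneading data) is where most of the technical work lies.
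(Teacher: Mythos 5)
Your overall strategy (trap the non-escaping real dynamics in a compact interval, reduce to the interval rigidity theorem and the line-field theorem, modifying $f$ where it fails to be a self-map) is the same as the paper's, but two of your key steps do not survive scrutiny.

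First, your interval $J$ need not exist as you build it: the claim that $f(A)$ is bounded rests on the assertion that if $|f(x)|\to\infty$ at an end then a half-line near that end escapes, and this is false. Take $f(x)=c-e^{x}$ (which lies in $\SR\subset\BR$): here $f(x)\to-\infty$ as $x\to+\infty$, yet no real point escapes, so $A=\R$ and $f(A)=(-\infty,c)$ is unbounded, and your ``closed convex hull of $f(A)$'' is not compact. The correct dichotomy (stated in the paper after Lemma~\ref{lem:ahlfors}) is for the \emph{second} iterate: on each half-line either $f^{2}(x)/x\to\infty$ or $f^{2}$ is bounded there; this is exactly why item (2) of the theorem only asserts $f^{j}(x)\in J$ for $j\geq 2$, and why the paper's proof has a separate Case 4 in which one passes to $f^{2}$ or performs a one-sided modification. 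Repairing your construction (e.g.\ taking the hull of $f^{2}(A)$) is not automatic either, since property (1) and the anchoring condition $f(\{a,b\})\subset\{a,b\}$ required by the ARAIM rigidity theorem \ref{thm:realgidityvS} then have to be re-established; in the paper the anchoring comes from a careful choice of endpoints (extreme non-escaping points, which are shown to form an invariant pair) or from the surgery, and you never address it.

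Second, the surgery as you describe it is impossible: you ask for a real-analytic map $F$ on $J$ that agrees with $f$ on $A\cap J$ (a set which, outside the trivial cases, has accumulation points) but differs from $f$ on the escaping gaps inside $J$ --- by the identity theorem such an $F$ equals $f$. More fundamentally, to transfer both the quasisymmetric conjugacy and statement (3) (which concerns $f$ acting on $\C$) from the modified map back to $f$, the modified map must be obtained from $f$ by a quasiconformal change of variable that is \emph{conformal} on the dynamically relevant region. The paper achieves this by modifying $f$ quasiregularly in the complex plane \emph{outside} an interval $[A,B]$ containing $\overline{f(\R)}$, so that every orbit meets the non-holomorphic region at most once; the pulled-back Beltrami coefficient is then invariant, the Measurable Riemann Mapping Theorem straightens the map to a holomorphic one whose restriction is an ARAIM, and the straightening is conformal on the strip around $[A,B]$, which is what lets Theorems~\ref{thm:realgidityvS} and \ref{thm:realrigidity-linefield} be applied and carried back to $f$. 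Placing the modification \emph{inside} $J$, on the escaping gaps, breaks this mechanism: complex orbits whose $\omega$-limit set lies in $J$ can return to neighbourhoods of those gaps infinitely often, so no once-visited region, invariant Beltrami field, or conformal straightening is available there, and your assertion that (3) ``is governed by $F$ near $J$'' is exactly the point that remains unproved.
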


This theorem is essentially proved in \cite{CS}, but the setting there is slightly
different from ours (in \cite{CS}, the functions have compact domains).
Hence the remainder of this section is devoted to showing
how to obtain the required intervals $J$ and $\tilde J$ under the assumptions of Theorem \ref{thm:realrigidity}, using the
 results of \cite{CS}.

\subsection*{Anchored interval maps}\label{subsec:anchored}

\begin{defn}[The class ARAIM of anchored maps]
Let $a,b\in\R$, $a<b$, and let $f:[a,b]\to\R$ be real-analytic (by which we mean that
 $f$ is real-analytic on an open interval containing $[a,b]$). If
 $f(\{a,b\})\subset\{a,b\}$, then (following \cite{milnortresser}) we say that $f\colon [a,b]\to \R$ is an
 \emph{anchored real-analytic interval map} (ARAIM).
\end{defn}

An ARAIM $f\colon [a,b]\to \R$ and an ARAIM $g\colon [\tilde a,\tilde b]\to\R$ 
are said to be {\em topologically conjugate} if there exists an order-preserving homeomorphism
$h\colon [a,b]\to [\tilde a,\tilde b]$ so that 
$h\circ f(x)=g\circ h(x)$ for each $x\in [a,b]$ for which $f(x)\in [a,b]$ or
$g(h(x))\in [\tilde a,\tilde b]$.
In \cite{CS}, the following rigidity result is established.

\begin{thm}[Quasisymmetric rigidity]\label{thm:realgidityvS}
Suppose that $f$ and $g$ are ARAIM, and that $f$ and $g$ are  topologically conjugate
via a conjugacy $h$. 
Assume moreover that \begin{enumerate}
\item for each $n\ge 1$ and each $x\in \R$, $x$ is a critical point of $f$ of order $n$ 
iff $h(x)$ is a critical point of $g$ of order $n$
and 
\item $x\in \R$ is a parabolic periodic point of $f$ iff $h(x)$ is a parabolic periodic point of $g$.
\end{enumerate}
 Then the topological conjugacy between $f$ and $g$ extends to a quasisymmetric homeomorphism
 on the real line.
 \end{thm}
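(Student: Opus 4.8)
The plan is to follow the strategy of \cite{KSS1} and \cite{densityaxiomadimensionone}, reducing the statement to three ingredients: \emph{real a priori bounds} (Koebe-type distortion control for suitable first-return maps), \emph{complex bounds} (uniform lower bounds on the moduli of annuli in a nest of complex puzzle pieces around each critical point), and the \emph{spreading principle} (which promotes a quasiconformal conjugacy defined near the postcritical set to a global quasisymmetric one). Since an ARAIM is a multimodal map with finitely many turning points and periodic boundary, and since $h$ respects the combinatorics, the orders of critical points and the parabolic orbits by hypothesis, one may reduce the problem as follows. Let $\Lambda$ denote the set of points whose orbit never leaves $[a,b]$; by real-analyticity ($f$ and $g$ have no wandering intervals and no intervals of periodic points, \cite{demelovanstrien}), the core $K\subset\Lambda$ consisting of points not lying in an (immediate) attracting or parabolic basin is nowhere dense and forward invariant, and each of its points is either preperiodic to a repelling cycle or eventually enters the domain of a deep return map associated to a recurrent critical point. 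It suffices to show that $h$ is quasisymmetric, since it then extends quasisymmetrically to all of $\mathbb R$.

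First I would dispose of the ``tame'' parts. Near each hyperbolic attracting cycle $f$ is real-analytically linearizable, near each parabolic cycle it has a Leau--Fatou normal form, and near each repelling cycle it has a real-analytic linearizing coordinate; hence $h$ is automatically quasisymmetric, uniformly up to the boundary, on the attracting and parabolic basins (this is where matching parabolic points with parabolic points is used) and bi-Lipschitz in suitable coordinates at the preperiodic points of $K$. On the (open) set of points that eventually escape $[a,b]$, the gaps of $\Lambda$ are expanded out by the dynamics to definite size in bounded time, so $h$ is quasisymmetric there as well by a standard pullback argument. The heart of the matter is therefore to upgrade $h$ near the recurrent critical points.

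For the core I would build, around each recurrent critical point $c$ of $f$, the \emph{enhanced nest} of real puzzle pieces of \cite{KSS1} --- a sequence $I_0\supset I_1\supset\cdots\ni c$ produced by alternating return and filling operations --- together with complex thickenings of the $I_k$ obtained from the complex-analytic extension of $f$, so that (unlike in the polynomial case) the puzzle is constructed purely from the real dynamics, as in \cite{densityaxiomadimensionone}. The real bounds give, at deep levels, uniformly bounded geometry for the $I_k$ and bounded distortion for the associated first-return maps, while the complex bounds give a definite lower bound for $\operatorname{mod}(I_k\setminus\overline{I_{k+1}})$ in $\mathbb C$, uniform in $k$. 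Using these together with the combinatorial equivalence supplied by $h$, one constructs for each deep level $k$ a quasiconformal homeomorphism between the complex puzzle piece of $f$ at level $k$ and the corresponding piece for $g$, with dilatation bounded independently of $k$ and respecting the boundary markings. Carrying this out simultaneously over all critical nests and then invoking the spreading principle --- which uses the absence of wandering intervals and the bounded geometry to pull these maps back along the dynamics until they cover a full complex neighbourhood of $K$ --- yields a quasiconformal map on a neighbourhood of $K$ conjugating $f$ to $g$, whose restriction to the real line agrees with $h$; since a planar quasiconformal map restricts to a quasisymmetric map of an invariant line, this finishes the argument.

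The hard part will be establishing the complex bounds with a constant uniform over the whole nest in the real-analytic setting, where one has neither the global external structure of polynomials nor the Schwarzian/negative-curvature tools of the smooth unimodal theory: one must control how analytic return maps distort the moduli of Euclidean-like puzzle pieces, and this degenerates for critical points of high multiplicity and for combinatorics approaching infinite renormalizability. The resolution, as in \cite{KSS1} and its refinement in \cite{densityaxiomadimensionone}, is to work with the enhanced (rather than the principal) nest and with an asymmetric choice of pieces, so that the shapes do not collapse at central cascades, to keep everything under analytic control via the real bounds, and to handle the infinitely renormalizable case separately using the complex bounds of renormalization. Once the uniform complex bounds are in hand, the quasiconformal-conjugacy construction and the spreading principle proceed as usual, and the passage from a planar quasiconformal conjugacy to a quasisymmetric conjugacy of the invariant real line is immediate.
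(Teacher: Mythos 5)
You should first be aware that the paper does not prove this theorem at all: it is imported as the main result of \cite{strien-qsrigidity}, with only the remark that it builds on \cite{KSS1} and \cite{densityaxiomadimensionone}. Your outline is, in substance, a reconstruction of the strategy of that cited work --- enhanced nest of real puzzle pieces around recurrent critical points, real a priori bounds, complex bounds for box mappings obtained by complexifying the real return maps, quasiconformal maps between corresponding puzzle pieces with dilatation bounded uniformly in the depth, and a spreading principle producing a quasiconformal conjugacy on a neighbourhood of the non-escaping set whose restriction to $\R$ is quasisymmetric. So at the level of architecture there is nothing to object to, and no genuinely different route is being taken.

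However, as a proof your text is a roadmap rather than an argument, and the gap is that every step carrying the actual weight of the theorem is invoked rather than established: the uniform complex bounds for real-analytic multimodal maps (where one has neither the polynomial-like external structure of \cite{KSS1} nor negative-Schwarzian tools), the construction of complex box mappings from purely real data, the QC criterion and spreading principle in that setting, and the persistently recurrent and infinitely renormalizable cases. Since \cite{KSS1} and \cite{densityaxiomadimensionone} prove these ingredients for polynomials, saying the ARAIM case goes ``as in'' those papers is precisely the content of \cite{strien-qsrigidity}; in effect the proposal reduces the theorem to itself. Two smaller points are also stated too strongly: a topological conjugacy is highly non-unique on attracting and parabolic basins, so $h$ is \emph{not} automatically quasisymmetric there (one must replace it on basins by a conjugacy built from linearizing, B\"ottcher or Fatou coordinates, which is where hypotheses (a) and (b) on orders of critical points and on parabolic points really enter, and which is why the conclusion speaks of an extension of the conjugacy determined outside the basins); similarly the claimed bi-Lipschitz behaviour at preperiodic points requires the same kind of modification. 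None of this makes the plan wrong, but to count as a proof the complex bounds and the spreading argument for real-analytic maps would have to be carried out, and these are exactly what is missing.
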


This theorem was announced by the second author in 2009. Since  then,
   a significantly stronger result than Theorem~\ref{thm:realgidityvS} was established in joint work with Trevor Clark, 
   and partly with Sofia Trejo, 
  and hence the original manuscript remains unpublished. In particular, one of the key ingredients in the argument, 
  the proof of \emph{complex bounds} (even for $C^3$ maps),  has been 
  substantially unified and simplified and appears in \cite{CST}. The work on rigidity has also been 
  extended to the setting of $C^3$ mappings, and only this more
  general result \cite{CS} will be submitted for publication (in the near future).
   
 Since the latter manuscript is not yet available, let us comment briefly on the proof of Theorem
   \ref{thm:realgidityvS}. We emphasize that, under the additional assumption that $f$ and $g$ are polynomials without parabolic periodic points whose
   critical points all have even order and are all real, this theorem already appears in
  \cite{KSS1} (see the \emph{Rigidity Theorem} and \emph{Rigidity Theorem'} on page 751 of that paper).
  The proof there uses the following steps: 
\begin{enumerate}
  \item associate to both $f$ and $g$ an induced first return map, called a \emph{complex box mapping}, near iterates of the critical points;
  \item the main step in the proof is then to show that for a certain subsequence (the {\em enhanced nest})  these complex box mappings satisfy some a priori \emph{complex bounds}
   (these are uniform estimates on the moduli of certain annuli);
  \item use the \emph{qc-criterion} from \cite[Appendix]{KSS1} 
   to show that these a priori bounds imply
   that these first return maps are quasiconformally conjugate;
  \item  spread this quasiconformal conjugacy to the entire complex plane. 
\end{enumerate}
In \cite{CST} it is shown that such complex bounds hold
 for arbitrary real analytic maps (and even more generally), a fact which was 
 known previously in a large class of special cases, see \cite{KSS1} and \cite{Shen}.
  Exactly as in \cite{KSS1}, we can then use the qc-criterion 
which states that bounded geometry implies quasiconformal rigidity
  to    deduce that the complex box mappings  are quasiconformally conjugate. 
 (The proof of this criterion in \cite{KSS1} exploits recent results on quasiconformal maps due to Heinonen and Koskela \cite{heinonenkoskela}.
We note that, prior to \cite{KSS1}, their theorem and its variations were used to prove rigidity results in \cite{Przytycki-Rohde}, \cite{Haissinsky}, \cite{Graczyk-Smirnov}, \cite{LevStr:invent} and \cite{smania}, where in the last work, the author explicitly 
stated that a bounded shape property of puzzle pieces implies rigidity for non-renormalizable unicritical maps.)

Once this is done, we can extend the conjugacy to the real line in a quasisymmetric manner. Here the fact that the map is non-polynomial means that one no longer can use 
 B\"ottcher coordinates, and so one needs to paste together  (essentially by hand)
 the conjugacies which are constructed near different critical points.  (This final step is carried out in detail in \cite{CS}, where additional arguments are required to deal also with the $C^3$ setting.)

\medskip

We shall also require a result on the absence of invariant line fields. 
As mentioned above, the existence for complex box mappings for real-analytic maps is proved in \cite{CST}
(and in many cases in previous papers).  The fact that such a complex box mapping does not support invariant line fields is by now well-known,  see for example \cite{KSS1,MR2533666,CST} and also \cite{CS}.
Thus we immediately obtain the following theorem.

\begin{thm}[Absence of invariant line fields]\label{thm:realrigidity-linefield}
Let $f:[a,b]\to\R$ be an ARAIM, and let $U\supset[a,b]$ be an open subset of $\C$ on which
 $f$ is analytic. Then the set of points $z\in U$ for which $\dist(f^n(z),[a,b])\to 0$
 as $n\to\infty$ and that do not belong to attracting or parabolic basins has empty
  interior and does not support
 any invariant line fields. 
\end{thm}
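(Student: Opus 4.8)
The plan is to obtain the statement from the interval rigidity results of Kozlovski--Shen--van Strien \cite{KSS1} and van Strien \cite{strien-qsrigidity}, of which it is essentially a reformulation in the ``anchored'' setting; the work lies in organising the cases that can occur for an ARAIM and recalling why the pull--back argument succeeds. Write $X$ for the set in the statement. Shrinking $U$, we may assume that $U$ is symmetric about $\R$, so that $f(\bar z)=\overline{f(z)}$ on $U$ by Schwarz reflection; then $X$ is invariant under complex conjugation, $f$ has only finitely many critical points in $\overline{[a,b]}$, and every orbit defining $X$ eventually stays in a fixed bounded part of $U$. Recall from real one--dimensional theory (as used in the proof of Lemma~\ref{lem:comb-conj}) that $f$ has no wandering intervals near $[a,b]$, has only finitely many periodic attractors (hyperbolic or parabolic), and that every periodic turning point of $f$ is attracting. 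Hence for $z\in X$ the set $\omega(z)\subseteq[a,b]$ cannot be a hyperbolic repelling cycle, nor a parabolic cycle approached through a repelling petal --- a complex orbit cannot converge to such a set, its local stable set being trivial --- so $\omega(z)$ is a cycle of intervals on which $f$ acts transitively, a solenoidal Cantor attractor produced by an infinite sequence of renormalisations, or a hyperbolic set together with orbits homoclinic to it; in particular $z$ is, after finitely many iterates, absorbed into one of finitely many bounded invariant ``boxes'' around the critical orbits.

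\emph{Empty interior.} Suppose $D\subseteq X$ were open. All iterates $f^n|_D$ are defined and form a normal family, and since $\dist(f^n(z),[a,b])\to 0$ pointwise, every subsequential limit is a holomorphic map from $D$ into $[a,b]$, hence constant; consequently $\operatorname{diam}f^n(K)\to 0$ for every compact $K\subset D$, and so $(f^n)'\to 0$ on $D$. If $D$ meets $\R$, then $I:=D\cap\R$ is a non--degenerate interval with $\operatorname{diam}f^n(I)\to 0$, so by the absence of wandering intervals the orbit of $I$ converges to a periodic cycle, necessarily attracting or parabolic, contradicting $D\subseteq X$. If $D$ lies off $\R$, then the set of subsequential limit constants of $\{f^n|_D\}$ is a forward--invariant compact subset of $[a,b]$ which is finite (there being no wandering domains for maps of this class), hence a union of periodic cycles; each is attracting or parabolic, since a neighbourhood converges to it, so $D$ lies in a basin --- again a contradiction. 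Thus $X$ has empty interior.

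\emph{Absence of line fields.} Suppose $\mu$ were a non--trivial $f$--invariant measurable line field (equivalently, a non--trivial invariant Beltrami coefficient, $\|\mu\|_\infty\le k<1$) carried on $X$. Using the enhanced--nest construction of \cite{KSS1} together with the complex \emph{a priori} bounds for real--analytic maps --- directly in the finitely renormalisable and Misiurewicz cases, and via the complex bounds for renormalisation of real--analytic unimodal and multimodal maps in the solenoidal case --- one produces around the critical orbits a complex box mapping $F\colon\bigsqcup_i V_i\to\bigsqcup_j W_j$ with finitely many pieces, nested, shrinking to points along each critical orbit, with definite moduli in the separating collars, and with $\mu$ invariant under $F$. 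By the Koebe distortion theorem (Theorem~\ref{thm:koebe}), every point of $X$ admits a sequence of topological disks $P_n$ of uniformly bounded shape with $\operatorname{diam}P_n\to 0$ on which suitable iterates are univalent of uniformly bounded distortion onto one of the pieces $W_j$. Transporting $\mu$ through these maps and using approximate continuity of $\mu$ at almost every point of its support together with the bounded distortion, one concludes that $\mu$ is, up to a set of zero measure, a constant line field on each $W_j$ (invariance of $\mu$ being used to propagate this among the pieces). But $F$ is a branched covering, branched precisely at the critical points of $f$ lying in the $V_i$, and its dynamics connects all of the pieces; near a branch point $F'$ vanishes and its argument runs over a full circle, so a piecewise--constant line field can be $F$--invariant only if all the constants vanish. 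Hence $\mu\equiv 0$, i.e.\ $X$ supports no invariant line field.

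The decisive input --- and the step I expect to be the main obstacle --- is the construction of the complex box mappings with definite geometry: the complex \emph{a priori} bounds, and, in the infinitely renormalisable case, the complex bounds for renormalisation; this is precisely what is imported from \cite{KSS1} and \cite{strien-qsrigidity}. The remaining effort is organisational: checking that the possibly non--real critical points and the parabolic periodic points permitted for an ARAIM introduce no cases beyond those treated in those references, and carrying through the reduction of $X$ to finitely many invariant boxes sketched above.
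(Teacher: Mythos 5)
Your proposal matches the paper's treatment: Theorem~\ref{thm:realrigidity-linefield} is not proved in the paper at all, but is imported verbatim from \cite{strien-qsrigidity} and \cite{KSS1} (see also \cite{MR2533666}), which are exactly the sources to which your sketch defers the decisive input (enhanced nest, complex bounds, box mappings). Your additional outline of how those references yield the statement is a plausible gloss, so there is no internal proof in the paper to compare it against.
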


A function $f\in\BR$ which is unbounded both for $x\to+\infty$ and $x\to-\infty$ has
a restriction that is an ARAIM, and hence in this situation Theorem \ref{thm:realrigidity} follows from
the statements above. 
In the case where $f$ is bounded to the left or to the right,
this is not necessarily the case, so we will need to be slightly more careful in showing how to
deduce Theorem \ref{thm:realrigidity}.
However, there are no new dynamical 
phenomena in this setting, and we will show that we can modify $f$ outside an interval
that contains all relevant dynamics to obtain an ARAIM. (Instead, we could also observe that
the proof from \cite{CS} goes through  in this slightly modified setting.)

\begin{proof}[Proof of Theorem \ref{thm:realrigidity} from Theorems~\ref{thm:realgidityvS} and \ref{thm:realrigidity-linefield}]
By Lemma~\ref{lem:comb-conj}, the maps $f$ and $g$ from assumption (c)
in the statement of the theorem are in fact topologically conjugate.
Let us distinguish a few cases.
\begin{claim}[Case 1]
$\R\setminus \IR(f)$ contains at most one point.
\end{claim}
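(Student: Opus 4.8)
The plan is to handle Case 1 --- where $\R\setminus\IR(f)$ has at most one point --- essentially by observing that in this situation the bounded part of the real dynamics is trivial, so we may take $J$ to be empty or a single point and verify the four properties directly. First I would split into the subcase where $\R\setminus\IR(f)=\emptyset$ and the subcase where it is a single point $\{p\}$; in the latter, $p$ must be a fixed point of $f$ (its forward orbit cannot escape, and any other point does), and moreover $f^j(x)\to\infty$ (in $\pm\infty$) for every $x\neq p$. Set $J:=\{p\}$ (resp.\ $J:=\emptyset$). Property (a) is then immediate: if $x\in J=\{p\}$ then $f(x)=p\in J$, so the implication is vacuous. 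Property (b) says every $x\in\R$ either escapes or has $f^j(x)\in J$ for all $j\ge 2$; since the only non-escaping point is $p$ and $f(p)=p\in J$, this holds. (In the empty case both are vacuous.)

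Next I would address properties (c) and (d). For (c): the set of $z\in\C$ whose $\omega$-limit set is contained in $J=\{p\}$ --- i.e.\ $f^n(z)\to p$ --- and which lie in no attracting or parabolic basin. If $p$ is an attracting or parabolic fixed point there is nothing left, so assume $p$ is repelling or neutral-irrational. Then $\{z:f^n(z)\to p\}$ is contained in the Julia set, and I would argue it has empty interior and carries no invariant line field --- this is a local statement near a fixed point, and one can either invoke Theorem~\ref{thm:realrigidity-linefield} applied to a trivial ARAIM on a tiny interval around $p$ (after the modification trick used later in the proof to make $f|_J$ into an ARAIM on a compact interval), or observe directly that for a repelling fixed point the set of points converging to it under a holomorphic map has measure zero and supports no line field by a standard linearization/Koebe argument. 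For (d): if $g\in\BR$ is combinatorially conjugate to $f$ on $\R$, then by Lemma~\ref{lem:comb-conj} it is topologically conjugate; the topological conjugacy forces $\R\setminus\IR(g)$ to also contain at most one point (escaping dynamics is combinatorial data --- or rather, follows from the order-preserving topological conjugacy together with the extension to $\pm\infty$), with the corresponding point $\tilde p=h(p)$ of the same type. So $\tilde J:=\{\tilde p\}$ works, and the required quasisymmetric conjugacy between $f|_J$ and $g|_{\tilde J}$ is just the trivial map $p\mapsto\tilde p$, which agrees with the combinatorial conjugacy.

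The main obstacle I anticipate is property (c) in the case where $p$ is a repelling or Cremer fixed point: one needs that $\{z:f^n(z)\to p\}$ has empty interior and no invariant line field. Empty interior is clear (such a set lies in the Julia set, which for a transcendental entire function has empty interior unless it is all of $\C$, and here it cannot be all of $\C$ since there are escaping intervals on $\R$ whose iterates avoid any neighbourhood of $p$). The line-field statement is the genuinely delicate point, and the cleanest route is to reduce it to Theorem~\ref{thm:realrigidity-linefield}: replace $f$ outside a small neighbourhood of $p$ by something that makes it an honest ARAIM $\hat f:[a,b]\to\R$ with $p$ in the interior, chosen so that $f$ and $\hat f$ agree near $p$; then any $z$ with $f^n(z)\to p$ has $\hat f^n(z)\to p\in[a,b]$ eventually (since the dynamics near $p$ is unchanged), so the conclusion of Theorem~\ref{thm:realrigidity-linefield} applies and transfers back to $f$. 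I would make precise that this modification exists and respects analyticity on a complex neighbourhood, deferring the general form of this ``modification outside a compact interval'' construction to the treatment of the later cases where it is used more substantially.
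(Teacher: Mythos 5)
Your proposal is correct and takes essentially the same route as the paper, which disposes of this case in a single line by asserting that $J=\R\setminus\IR(f)$ (empty or the one fixed point $p$) satisfies all the requirements; your checks of (a), (b) and (d) are exactly the routine verifications intended. The one thing worth noting is that the ``delicate point'' you anticipate in (c) is in fact vacuous: since $p\in\R$ and $f$ is real, the multiplier $f'(p)$ is real, so $p$ cannot be a Cremer point, and if $p$ is repelling (or parabolic with no real attracting direction) then any orbit converging to $p$ outside the attracting and parabolic basins must eventually land on $p$ exactly (by linearization, resp.\ the Leau--Fatou flower theorem), so the set in (c) is just the countable backward orbit of $p$ and no appeal to Theorem~\ref{thm:realrigidity-linefield} or to any modification construction is needed.
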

In this case the set $J=\R\setminus\IR(f)$ satisfies all the
 requirements of the theorem. 
So from now on we assume in the proof that $\R\setminus\IR(f)$ 
 contains several points.

\begin{claim}[Case 2]
 $f$ is unbounded in both directions on the real line.
\end{claim}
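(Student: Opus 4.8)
The plan is to reduce each case to Theorems~\ref{thm:realgidityvS} and~\ref{thm:realrigidity-linefield} by cutting off the (dynamically irrelevant) escaping tails of $\R$ and, where necessary, modifying $f$ outside a compact interval to obtain an ARAIM. In Case~2, where $f$ is unbounded in both directions, Lemma~\ref{lem:ahlfors} gives $\lim_{x\to\pm\infty}|f^2(x)|=\infty$, so in fact $f^2(x)/x\to+\infty$ as $x\to+\infty$ and $f^2(x)/x\to+\infty$ as $x\to-\infty$ (after possibly passing to $f^2$ on each end). Hence there is a point $b^+>0$ with $f^2(x)>x$ for all $x\geq b^+$, so that every such $x$ lies in $\IR(f)$; similarly there is $b^-<0$ with $(-\infty,b^-)\subset\IR(f)$. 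Choosing $a=b^-$ and $b=b^+$ large enough that $\{a,b\}$ maps into $\IR(f)$ and that all real critical points, all real attracting and parabolic cycles, and all the forward orbits $O_f^+(S_\R(f))$ that are \emph{not} escaping are contained in $(a,b)$, I can further arrange (enlarging the interval if needed so that it is forward invariant up to the escaping set) that $f^j(x)\in J:=[a,b]$ for all $j\geq 2$ whenever $x\notin\IR(f)$; this is possible because outside $J$ the map is monotone with $|f|$ increasing to infinity, so once a non-escaping point enters $J$ it cannot leave without escaping. This gives properties (a) and (b).

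Next I would make $f|_J$ into an ARAIM. After the choice above, $f(\{a,b\})$ need not lie in $\{a,b\}$, but $f(a),f(b)$ lie in the escaping tails, i.e. outside $J$. I modify $f$ on a neighbourhood of $\{a,b\}$ within $\IR(f)$, replacing it by a real-analytic map $\hat f\colon[a,b]\to\R$ with $\hat f=f$ on a slightly smaller interval $[a',b']$ containing all non-escaping dynamics, and with $\hat f(\{a,b\})\subset\{a,b\}$ (for instance sending the escaping endpoints to the nearest endpoint); since the modification takes place entirely inside the escaping set, it does not affect the $\omega$-limit sets of non-escaping points, the critical points, or the attracting/parabolic basins that meet $J$. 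Thus $\hat f$ is an ARAIM with the same ``interesting'' dynamics as $f|_J$, and properties (a), (b) are unchanged (the modified points were already escaping). Property (c) now follows immediately from Theorem~\ref{thm:realrigidity-linefield} applied to $\hat f$ on an open complex neighbourhood of $J$: the set of $z$ whose orbit $\omega$-limit set lies in $J$ and which avoid attracting/parabolic basins has empty interior and carries no invariant line field — and for $f$ itself, a point with $\omega$-limit set in $J$ is eventually trapped in $[a',b']$ where $f=\hat f$, so the statement transfers back to $f$ verbatim.

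Finally, for property (d): if $g\in\BR$ is combinatorially conjugate on the real line to $f$, then by Lemma~\ref{lem:comb-conj} it is topologically conjugate to $f$ on $\R$, via a homeomorphism $h$ respecting the order of critical points, their orders, and parabolicity, and mapping asymptotic behaviour at $\pm\infty$ consistently. In Case~2, $g$ is then also unbounded in both directions, so the same construction produces an interval $\tilde J=[\tilde a,\tilde b]$ and an ARAIM $\hat g$ for $g$; moreover I choose $\tilde a,\tilde b$ so that $h$ maps the non-escaping part of $J$ onto the non-escaping part of $\tilde J$ (this is automatic since $h$ conjugates the dynamics and hence $\IR(f)$ to $\IR(g)$). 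The topological conjugacy $h$ restricts to a topological conjugacy between the ARAIMs $\hat f$ and $\hat g$ on the non-escaping core, and after adjusting it arbitrarily (say, affinely) on the escaping tails it becomes a topological conjugacy of the ARAIMs satisfying hypotheses (1) and (2) of Theorem~\ref{thm:realgidityvS}; that theorem then upgrades it to a quasisymmetric homeomorphism of $\R$, which restricts to the desired quasisymmetric conjugacy between $f|_J$ and $g|_{\tilde J}$ agreeing with the combinatorial conjugacy. The main obstacle here is purely bookkeeping: verifying that the interval $J$ can be chosen simultaneously to (i) swallow all non-escaping dynamics, (ii) be ``forward invariant modulo $\IR(f)$'', and (iii) have endpoints that can be pushed into $\{a,b\}$ by a modification confined to the escaping set — and that all of these choices can be made compatibly with $h$ on the $g$ side. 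Once the geometry of the two escaping tails is pinned down via Lemma~\ref{lem:ahlfors}, none of this is delicate, but it is the step that requires care.
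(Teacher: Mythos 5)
Your reduction strategy is the right one in outline, but you have missed the observation that makes Case 2 easy, and the substitute construction you propose contains a genuine error. In Case 2 the paper takes $a$ and $b$ to be the smallest and largest \emph{non-escaping} real points (these exist: by Lemma~\ref{lem:ahlfors} the map grows super-linearly in both directions, so all real points of large modulus lie in $\IR(f)$ and the non-escaping set is compact). For this choice one gets $f(\{a,b\})\subset\{a,b\}$ for free: $f(a)$ and $f(b)$ are non-escaping, hence lie in $[a,b]$, and if, say, $f(b)$ lay in the open interval $(a,b)$, then since $|f(x)|\to\infty$ on $[b,\infty)$ the intermediate value theorem would produce a point $x_0>b$ with $f(x_0)\in\{a,b\}$, i.e.\ an escaping point mapping onto a non-escaping one --- a contradiction. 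Thus $f|_{[a,b]}$ is \emph{already} an ARAIM and Theorems~\ref{thm:realgidityvS} and~\ref{thm:realrigidity-linefield} apply directly; no modification of $f$ is needed in this case (the cut-and-paste machinery is what the paper reserves for Cases 3 and 4, where one side is bounded).

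The step in your argument that actually fails is the modification itself: you ask for a \emph{real-analytic} map $\hat f$ on $[a,b]$ that agrees with $f$ on a smaller interval $[a',b']$ but is changed near the endpoints so that $\hat f(\{a,b\})\subset\{a,b\}$. By the identity principle, a real-analytic function agreeing with $f$ on an interval with nonempty interior agrees with $f$ on its entire connected domain of definition, so no such $\hat f$ exists unless $\hat f=f$. Any honest modification must proceed as in the paper's Case 3: interpolate quasiregularly in a complex strip, observe that orbits pass through the non-holomorphic region at most once, straighten via the Measurable Riemann Mapping Theorem, and then use that the straightening map is conformal on the region carrying the relevant dynamics in order to transfer the line-field statement. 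Your claim that property (c) ``transfers back to $f$ verbatim'' is also not automatic even granting some modification: the set in (c) consists of \emph{complex} points $z$ whose $\omega$-limit set lies in $J$, and such orbits need not eventually avoid the neighbourhoods of the endpoints where the map was altered. In Case 2 all of these difficulties evaporate once the endpoints are chosen as the extreme non-escaping points, which is precisely the content of the paper's (one-line) argument.
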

In this case, let $a$ and $b$ be the smallest resp.\ largest
 non-escaping points under $f$. Then clearly $f(\{a,b\})\subset\{a,b\}$, so
 if we set $J := [a,b]$, then the restriction $f|_{J}$ is a an ARAIM. 
 So the theorem follows  from Theorems~\ref{thm:realgidityvS} and
 \ref{thm:realrigidity-linefield}.

\smallskip

It remains to deal with the cases where at least one of $f|_{(-\infty,0]}$ 
  and $f|_{[0,+\infty)}$ is bounded.

\begin{claim}[Case 3]
$f|_{\R}$ is bounded.
\end{claim}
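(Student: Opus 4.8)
The plan is to reduce Case~3 to the anchored-interval rigidity results (Theorems~\ref{thm:realgidityvS} and \ref{thm:realrigidity-linefield}) by restricting $f$ to a forward-invariant compact interval that carries all the recurrent dynamics and, since that restriction need not be anchored, modifying $f$ on a ``collar'' outside it.

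First the reductions. Since $f|_{\R}$ is bounded, no real point escapes, so $\IR(f)=\emptyset$; hence in conclusions (a) and (b) the alternative $x\in\IR(f)$ never arises, and these just ask that $J$ be forward invariant and that $f^{j}(x)\in J$ for all $x\in\R$ and $j\ge 2$. Set $[c,d]:=\overline{f(\R)}$, a non-degenerate compact interval (non-degenerate since $f$ is not constant on $\R$), and take $J:=[c,d]$. As $f(x)\in f(\R)\subset[c,d]$ for every $x$, we have $f([c,d])\subset[c,d]$ and $f^{j}(\R)\subset[c,d]$ for all $j\ge 1$, so (a) and (b) hold. I would also fix an auxiliary interval $[a',b']$ with $[c,d]\subset(a',b')$; since $f([a',b'])\subset f(\R)\subset[c,d]$, a point $z\in\C$ has $\omega$-limit set contained in $[a',b']$ exactly when it has $\omega$-limit set contained in $[c,d]$, which I will use for (c).

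The restriction $f|_{[c,d]}$ need not be an ARAIM: it maps $[c,d]$ into itself but may send neither endpoint to an endpoint (for instance $\sup_{\R}f$ can be a critical value attained outside $[c,d]$, as for $x\mapsto\frac34\sin(\pi x)$). So I would enlarge and modify: choose $a<a'$ and $b>b'$ and let $\tilde f\colon[a,b]\to\R$ agree with $f$ on $[a',b']$ and extend monotonically over the collars $[a,a']$ and $[b',b]$ so that $\tilde f(a)=a$ and $\tilde f(b)=b$, with $|\tilde f'|>1$ at $a$ and $b$ and no critical points in the collars (so $\tilde f(x)\ne x$ on the collars except at $a,b$). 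Then $\tilde f|_{[a,b]}$ is an ARAIM having exactly the critical points, critical orders, parabolic and attracting cycles and forward postsingular orbit of $f$ (all inside $(a',b')$) together with the two new repelling fixed points $a,b$, and $\tilde f=f$ on a complex neighbourhood $U$ of $[a',b']$. Applying Theorem~\ref{thm:realrigidity-linefield} to $\tilde f$ on $U$, the set of $z\in U$ whose $\tilde f$-orbit converges to $[a',b']$ and which lie in no attracting or parabolic basin has empty interior and carries no invariant line field. To obtain (c) for $f$, observe that if $z\in\C$ has $\omega$-limit set in $J$ and lies in no attracting or parabolic basin of $f$, then $\dist(f^{n}(z),[a',b'])\to 0$, so $f^{N}(z)\in U$ for some $N$; then $f^{N}(z)$ lies in the exceptional set above (using $f=\tilde f$ on $U$ and that, the orbit of $z$ remaining in $U$ from time $N$ on, $z$ lies in a basin of $f$ iff $f^{N}(z)$ lies in one of $\tilde f$); since $f$ is entire, hence open and discrete, ``empty interior'' and ``no invariant line field'' propagate back from $f^{N}(z)$ to $z$. (An open set on which every orbit converged to $J$ would, after finitely many iterates, violate the empty-interior assertion for $\tilde f$.)

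Finally (d). If $g\in\BR$ is combinatorially — hence, by Lemma~\ref{lem:comb-conj}, topologically — conjugate to $f$ on $\R$, then $\IR(g)=\emptyset$, and since $g\in\BR$ the dichotomy recorded around Lemma~\ref{lem:ahlfors} forces $g|_{\R}$ to be bounded, so $g$ is in the same case; put $\tilde J:=\overline{g(\R)}$, which has properties (a)--(c) for $g$ by the argument just given. Carrying out the same construction for $g$, with the collars chosen so as to extend the combinatorial conjugacy (harmless, each collar being monotone with a repelling fixed endpoint), produces an ARAIM $\tilde g$ on an interval $[\hat a,\hat b]$ topologically conjugate to $\tilde f$ with matching critical orders and parabolic points. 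Theorem~\ref{thm:realgidityvS} then yields a quasisymmetric conjugacy $H$ of $\tilde f$ and $\tilde g$ extending this conjugacy; its restriction to $[c,d]$ is the combinatorial extension, which carries $f(\R)$ onto $g(\R)$ and hence $J=[c,d]$ onto $\tilde J$, and as $\tilde f=f$ on $[c,d]$ and $\tilde g=g$ on $\tilde J$ this restriction is the required quasisymmetric conjugacy between $f|_{J}$ and $g|_{\tilde J}$. The step I expect to be the main obstacle is the one already flagged: $\tilde f$ is not real-analytic across $a'$ and $b'$, so Theorems~\ref{thm:realgidityvS} and \ref{thm:realrigidity-linefield} do not literally apply to it. One must instead use that the proofs in \cite{strien-qsrigidity} and \cite{KSS1} require analyticity only on a neighbourhood of the postsingular set — which lies in $(a',b')$, where $\tilde f=f$ is genuinely analytic — and carefully match, for $f$ versus $\tilde f$ on $\C$, the relevant ``bad'' sets of complex points together with the attracting and parabolic basins.
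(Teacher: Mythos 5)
Your overall strategy --- take a forward-invariant interval containing all the bounded real dynamics and modify $f$ outside it to force an anchored map --- is exactly the paper's strategy, but your execution has a genuine gap, and it is precisely the one you flag at the end. Your modified map $\tilde f$ is only a monotone interpolation on the real collars: it is not real-analytic across $a'$ and $b'$, and (worse for part (c)) it is not defined as an analytic map on any complex neighbourhood of the collars at all. So Theorem~\ref{thm:realgidityvS} (stated for ARAIMs, i.e.\ real-analytic maps) and Theorem~\ref{thm:realrigidity-linefield} (which requires $f$ analytic on an open set $U\supset[a,b]$, since the conclusion is about complex orbits and line fields) simply do not apply to $\tilde f$, and the proposed remedy --- ``the proofs in \cite{strien-qsrigidity} and \cite{KSS1} only need analyticity near the postsingular set'' --- is an unverified claim about the internals of other papers, not an argument; those proofs use complex-analytic tools (complex bounds, quasiconformal pullbacks) on the whole interval, so this cannot be waved through.

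The paper closes exactly this gap with a concrete device you are missing: the interpolation is carried out as a \emph{quasiregular} extension $\tilde f$ on a complex strip $V$ around $[C-\eps,D+\eps]$, agreeing with $f$ on the inner strip $U$, monotone and endpoint-anchored on the collar pieces. Because $f(U)$ has real part in $[A+1,B-1]$, every orbit meets the non-holomorphic region at most once, so pulling back the standard structure gives an invariant Beltrami field with bounded dilatation; the Measurable Riemann Mapping Theorem then straightens $\tilde f$ to a genuinely holomorphic map $F=h_\mu^{-1}\circ\tilde f\circ h_\mu$ whose restriction to a suitable interval \emph{is} an honest ARAIM, to which Theorems~\ref{thm:realgidityvS} and \ref{thm:realrigidity-linefield} literally apply. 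Since the straightening map $h_\mu$ is conformal on $U$ (where all the relevant real dynamics and the postsingular set live) and quasisymmetric on $\R$, the conclusions --- absence of invariant line fields on the set of points whose $\omega$-limit set lies in $J$, and a quasisymmetric conjugacy agreeing with the combinatorial one --- transfer back to $f$ and to the conjugate map $g$ (which, as you correctly note, is also bounded and is treated by the same construction). Your reductions for (a), (b) and the choice of $J$ are fine; replace the smooth real collar extension by this quasiregular-extension-plus-straightening step and the argument matches the paper's.
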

In this case, there may not exist suitable points $a,b\in \R$
so that $f|_{[a,b]}$ 
becomes an ARAIM. Therefore we will modify
$f$ as follows.
Set
  \[ \alpha := \inf_{x\in\R} f(x)\quad\text{and}\quad \beta := \sup_{x\in\R} f(x) \]
and choose numbers $A<\alpha-1$ and $B>\beta+1$ that are not critical
points of $f$.

Choose $\eps>0$ such that
  \[ A+1<\re f(z) < B-1 \]
whenever
 \[ z\in U := \{x+iy: x\in [A,B], |y|<\eps\}. \]
We may also assume that $\eps>0$ is chosen sufficiently small that
$f$ is injective on the boundary segments $[A-i\eps,A+i\eps]$ and
$[B-i\eps,B+i\eps]$. 

Set $C := A-1$ and $D:= B+1$. We now define a quasiregular
extension $\tilde{f}:V\to \C$ of the restriction $f|_U$, where
  \[ V := \{x+iy: x\in [C-\eps,D+\eps], |y|<\eps\}. \]

This extension will be chosen to have the following properties:
\begin{enumerate}
 \item $\tilde{f}$ commutes with complex conjugation;
 \item $\tilde{f}(\{C,D\}) \subset  \{C,D\}$; \label{item:endpoints}
 \item $\tilde{f}$ is monotone (without critical points)
         on $[C-\eps,A]$ and $[B,D+\eps]$; \label{item:monotonicity}
 \item If $\tilde{f}$ is not holomorphic at $z\in V$, then
        $\re f(z)\in [A,B]$. 
\end{enumerate}

Such an extension is simple to construct. Indeed, we first determine
$\tilde{f}(C)$ and $\tilde{f}(D)$ according to 
(\ref{item:endpoints}) and (\ref{item:monotonicity}). Then we choose
$\tilde{f}$ to be a linear map on
$[C-\eps,C+\eps]\times [-\eps,\eps]$ whose image is
$[\tilde{f}(C)-1,\tilde{f}(C)+1]\times [-1,1]$, and similarly for
$D$. Finally, we use a diffeomorphism to interpolate between
this map and $f|_U$. 

Note that $f([A,B])\subset [A,B]$ so that the orbit of any $z\in V$ enters the region where
$\tilde{f}$ is not holomorphic at most once under iteration of
$\tilde{f}\colon V\to \C$. This means that $\tilde{f}$ has an invariant Beltrami
field on $V$. Extend this Beltrami field $\mu$ to $\C$ by setting it to zero outside $V$.
Now use 
 the Measurable Riemann Mapping Theorem
 to straighten $\tilde f$ to an analytic map $F$; i.e.\  
let $F=h^{-1}_\mu\circ \tilde f \circ h_\mu$ where $h_\mu$
is so that $\bar \partial  h / \partial h=\mu$. 
Then $F$ is holomorphic and an ARAIM, when restricted to a 
suitable interval $[a,b]$. Furthermore, the conjugacy 
 $h_{\mu}$ 
between
$F$ and $\tilde{f}$ (and hence $f$) is conformal on $U$, so it follows
from Theorem \ref{thm:realrigidity-linefield} that $f$ supports no invariant line fields
on the set of points whose $\omega$-limit set is contained in
$[\alpha,\beta]$. 

It is also clear that we can apply
 the same procedure to a function that
 is topologically conjugate on the real line to $f$ to obtain an 
 ARAIM that is
 topologically  conjugate on the real line to $F$. 
Hence we can apply Theorem \ref{thm:realgidityvS}.
This completes the proof of the theorem in the case where $f$ is bounded.

\begin{claim}[Case 4]
$f$ is unbounded in one direction, and bounded in the other.
\end{claim}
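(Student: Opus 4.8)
The plan is to combine the constructions of Cases~2 and~3: on the unbounded side the escaping dynamics is genuine and plays the role it did in Case~2, while on the bounded side we perform the surgery of Case~3. After replacing $f$ by $x\mapsto -f(-x)$ — which again lies in $\BR$, and under which conjugacy, combinatorial conjugacy and all the quantities in the statement are preserved — we may assume that $f$ is bounded on $(-\infty,0]$ and unbounded as $x\to+\infty$. By the dichotomy recorded after Lemma~\ref{lem:ahlfors}, $f^2$ is then bounded on $(-\infty,0]$ while $f^2(x)/x\to\infty$ as $x\to+\infty$; since $f$ sends $(-\infty,0]$ into a bounded set, the only way for $f^2$ to be unbounded to the right is that $f(x)\to+\infty$ (rather than $-\infty$) as $x\to+\infty$. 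Consequently $f$ is bounded below on all of $\R$, say $f(\R)\subset[c_*,\infty)$, so that $f^j(x)\ge c_*$ for every $x\in\R$ and every $j\ge1$: there is no escape to $-\infty$, and no orbit re-enters $(-\infty,c_*)$ after the first step.

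For the right-hand endpoint I would argue exactly as in Case~2. The set $\IR(f)$ is open, being $\bigcup_n(f^n)^{-1}\bigl((x_1,\infty)\bigr)$ for any $x_1$ with $f^2>\id$ on $[x_1,\infty)$, so $\R\setminus\IR(f)$ is closed and bounded; let $b$ be its largest element, so that $(b,\infty)\subset\IR(f)$ and, as $c_*\le f(b)\le b$, also $b\ge c_*$. As in Case~2, $f(b)=b$: indeed $f(b)$ is non-escaping, hence $\le b$, and if it were strictly less then — there being no escape to $-\infty$ here — points just to the right of $b$ would themselves fail to escape, contradicting the maximality of $b$. So $b$ is a genuine, fixed right endpoint.

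On the left I would run the Case~3 surgery. Choose $A<c_*$ that is not a critical point of $f$, put $C:=A-1$, and over the strip $V=\{x+iy:\ C-\eps\le x\le A+\eps,\ |y|<\eps\}$ extend $f$ to a quasiregular map $\tilde f$ that commutes with complex conjugation, equals $f$ where $\re z\ge A$, is affine (hence holomorphic) near $C$ with $\tilde f(C)=C$, is monotone and critical-point-free on $[C-\eps,A]$, and is non-holomorphic only on the interpolating region contained in $\{C+\eps<\re z<A\}$. Since $f$ maps $[A,\infty)$ into $[c_*,\infty)\subset(A,\infty)$ and $\tilde f$ maps $[C+\eps,A]$ into $[A,\infty)$, while near $C$ the affine map $\tilde f$ is expanding and pushes points towards $[C+\eps,A]$, every $\tilde f$-orbit meets the non-holomorphic region at most once. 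Hence $\tilde f$ carries an invariant Beltrami field on $V$, which (extended by $0$) we straighten by the Measurable Riemann Mapping Theorem to a holomorphic $F=h_\mu^{-1}\circ\tilde f\circ h_\mu$. Restricted to $[a',b']:=h_\mu([C,b])$, $F$ is an ARAIM, because $h_\mu$ is conformal near $C$ and on a neighbourhood of $\{\re z\ge A\}$ (where $\mu\equiv0$), so $F(a')=a'$ and $F(b')=h_\mu^{-1}(f(b))=b'$. Taking $J:=[A,b]$, properties (1) and (2) of Theorem~\ref{thm:realrigidity} are immediate from $f(\R)\subset[c_*,\infty)$ together with $c_*>A$ and $(b,\infty)\subset\IR(f)$, while (3) and (4) follow from Theorems~\ref{thm:realrigidity-linefield} and~\ref{thm:realgidityvS} applied to $F$ — and to the ARAIM produced in the same way from a map $g$ that is combinatorially conjugate to $f$ — transported back through the conjugacies $h_\mu$, which are conformal on a neighbourhood of $J$.

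The step I expect to need the most care is the claim $f(b)=b$ — equivalently the Case~2 assertion that the endpoints arising from the genuine escaping dynamics are periodic — since this is exactly what makes $[C,b]$ a bona fide ARAIM domain. The rest is bookkeeping: arranging $\eps$ and $V$ so that every orbit relevant to (3)–(4) (those $z$ with $\omega(z)\subset J$ that lie outside the attracting and parabolic basins) eventually enters the region where $h_\mu$ is conformal, so that the absence of invariant line fields passes from $F$ back to $f$.
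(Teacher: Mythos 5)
There is a genuine gap at the very first reduction. After normalising so that $f$ is bounded on $(-\infty,0]$ and unbounded as $x\to+\infty$, you assert that the dichotomy following Lemma~\ref{lem:ahlfors} gives $f^2(x)/x\to\infty$ as $x\to+\infty$, and from this you ``conclude'' that $f(x)\to+\infty$ (rather than $-\infty$). But the dichotomy only says that \emph{either} $f^2(x)/x\to\infty$ \emph{or} $\sup_{x\ge 0}|f^2(x)|<\infty$; you have silently picked the first horn, which is precisely what fails when $f(x)\to-\infty$. The case you discard is perfectly possible: for $f(z)=-e^{z}$ (which lies in $\SR\subset\BR$) one has $f$ bounded on the left, $f(x)\to-\infty$ on the right, and $f^2$ bounded on $[0,\infty)$. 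The paper does not exclude this case but disposes of it differently: if $f(x)\to-\infty$ as $x\to+\infty$ then $f^2$ is bounded on all of $\R$, and one applies the Case~3 argument to the iterate $f^2$. Your proposal, as written, simply has no argument for these maps, so Case~4 is not covered in full.

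Two further points, less serious. First, your justification of $f(b)=b$ does not work as stated: if $f(b)<b$, points just to the right of $b$ map near $f(b)$, but nothing prevents such points from escaping, so no contradiction with the maximality of $b$ arises this way. The correct (and short) argument is the intermediate value theorem: $f(x)-x$ is negative at $b$ and positive for large $x$, so $f(b)<b$ would produce a fixed point in $(b,\infty)$, i.e.\ a non-escaping point above $b$ --- this is presumably what the paper has in mind when it asserts that $b$ is a repelling or parabolic fixed point. Second, in the surgery you cannot simultaneously have $\tilde f(C)=C$ with $\tilde f$ monotone on $[C-\eps,A]$ and $\tilde f([C+\eps,A])\subset[A,\infty)$ (points just right of $C$ map near $C<A$), so the ``each orbit meets the non-holomorphic set at most once'' claim needs to be rearranged as in the paper's Case~3 (the non-holomorphic interpolation region is made to map into the strip where the map is holomorphic and forward-invariant); also your strip $V$ stops at $\re z=A+\eps$, whereas the invariant Beltrami field must be defined on a strip covering all of $[C,b]$ for the straightened map to be holomorphic near the whole interval. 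These are repairable and in the spirit of the paper's construction; note also that the paper only performs surgery in its subcase~(iii), using a genuine preimage of $b$ as the left endpoint in subcases~(i) and~(ii), whereas your uniform surgery (once fixed) would streamline that case distinction. The missing $f(x)\to-\infty$ case, however, is a real omission.
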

Let us assume without loss of generality that
$|f(x)|\to \infty$ as $x\to+\infty$ and that $\limsup_{x\to -\infty}|f(x)|<\infty$. If $f(x)\to-\infty$ as $x\to+\infty$,
then $f^2$ is bounded, and we can apply the previous argument to this iterate.
Hence we may suppose that $f(x)\to+\infty$ as 
$x\to +\infty$, in which case we see as above by 
Lemma~\ref{lem:ahlfors} that  $f(x)\in I(f)$ for sufficiently large $x$. Let $b\in\R$ be the largest
real nonescaping point of $f$; then $b$ is a (repelling or parabolic)
fixed point. We now distinguish three further subcases.
\begin{enumerate}
\item[(i)] If $\liminf_{x\to-\infty} f(x) > b$, we can choose $a$ as the
smallest preimage of $b$. Then $f|_{[a,b]}$ is an ARAIM, and $J:=[a,b]$ has the desired properties. 
\item[(ii)] If $f$ has
infinitely many preimages of $b$ on the real axis, we can pick $a$ as such
a preimage chosen small enough that $a<\alpha := \inf_{x\in\R} f(x)$. Again,
we can set $J:=[a,b]$ and $f|_{J}$ is an ARAIM.
\item[(iii)] In the remaining case, $f(x)<b$ whenever $x$ is sufficiently negative. We can choose
$A < \alpha - 1$ in such a way that $A$ is not a critical point and modify the 
function $f$ to the left of $A$, exactly as above, to obtain a quasiregular map that
straightens to a holomorphic map whose restriction to a suitable interval is an ARAIM.
So we are done also in this case, setting $J:=[A,b]$. \qedhere
\end{enumerate}
\end{proof}

\section{Gluing and extending quasiconformal homeomorphisms dynamically}
\label{sec:qcextension}

\subsection*{Topological equivalence} To ensure that not only the order relation
of the critical points and critical values of $f$ and $g$ on the real line are the same,
but that they are also compatible in the complex plane we 
use the notion of topological equivalence from \cite{alexmisha}.

\begin{defn}[Real-topological equivalence]\label{def:MfR}
 Two maps $f,g\in\SR$ are called \emph{real-topologically equivalent}
  if there are functions $\phi,\psi\in\HomeoR$ such that 
   \[ \psi(f(z)) = g(\phi(z)) \]
  for all $z\in\C$. 

 The set of all functions $g$ that are real-topologically equivalent to $f$ is denoted by
    $\MfR$. 
\end{defn} 
\begin{remark}[Remark 1]
 If $f$ and $g$ are real-topologically equivalent, then they are in fact
  \emph{real-quasicon\-for\-mally equivalent}; i.e.\ the maps $\psi$ and $\phi$
  can be chosen to be quasiconformal. Indeed, suppose that 
  maps $\phi,\psi\in\HomeoR$ as in the definition are given. Because $S(f)$ is finite,
  we can find a quasiconformal homeomorphism $\tilde{\psi}\in\HomeoR$ such that
  $\psi$ and $\tilde{\psi}$ are isotopic relative $S(f)\cup\infty$. We can lift the
  homotopy to a homotopy between $\phi$ and a map $\tilde{\phi}$ such that
  $\tilde{\psi}\circ f = g\circ \tilde{\phi}$. Because $f$ and $g$ are holomorphic, it follows
  that $\tilde{\phi}$ is also quasiconformal. 
\end{remark}
\begin{remark}[Remark 2]
 The set $\MfR$ can naturally be given the structure of a $q+2$-dimensional real-analytic
  manifold, where $q=\# S(f)$, as we discuss in Section \ref{sec:parameter}. For now,
  we only consider $\MfR$ as a set of entire functions. 
\end{remark}

Note that the maps $\phi$ and $\psi$ might not be uniquely determined.
When we speak of two real-topologically equivalent functions, we always implicitly
assume that a specific choice of $\phi$ and $\psi$ is given. Another way of saying this is
that we \emph{mark} the singular values and the critical points.

One important  consequence of $f,g$ being real-topologically equivalent
is that  if $c$ is a critical point of $f$ then $\phi(c)$ is a critical point of $g$
of the {\em same order}. 

\subsection*{Several notions of conjugacy}

Let $f, g\in \SR$ be real-topologically equivalent, with a suitable choice of
$\phi$ and $\psi$ as above. 
We will now discuss a number of different important notions of
conjugacies: \emph{combinatorial},
\emph{topological}, \emph{quasiconformal} and \emph{conformal}.

First we modify the definition of combinatorial conjugacy on the real line 
 (recall Definition~\ref{def:Rcomb-equiv}). The point of this
 modification is that, when we look at functions in the complex plane,
 we should restrict to those that are real-topologically equivalent. 
 Given such a real-topological equivalence, represented by maps
 $\phi$ and $\psi$, we have a natural correspondence between the
 critical points of $f$ and $g$ (via $\phi$) and the singular values
 of $f$ and $g$ (via $\psi$). Our combinatorial conjugacy should respect
 this information; i.e.\ map corresponding critical points and singular
 values to each other. Furthermore, if we wish to relate our maps $f$ and $g$ also
  in the complex plane, then we must consider not only
 the behaviour of points in $\SR(f)$, but include 
 \emph{all} singular values in the definition.

\begin{defn}[Combinatorial conjugacy for maps in $\SR$]
 \label{defn:combinatorialconjugacyinC}
Two functions $f,g\in\SR$ are called 
 \emph{combinatorially conjugate (in $\C$)} 
 if
 they are real-topologically equivalent, 
  say
 $\psi\circ f = g\circ\phi$, and 
 there exists an order-preserving
 bijection
 $$h\colon \Part(f) \cup O^+_f(S(f)) \to \Part(g) \cup O^+_f(S(g))$$
such that
\begin{enumerate}
 \item $h\circ f = g\circ h$, 
 \item $h|_{\Crit_{\R}(f)} = \phi|_{\Crit_{\R}(f)}$,
 \item $h|_{S(f)} = \psi|_{S(f)}$ and
 \item $h$ maps each nonrepelling periodic
    point to a nonrepelling periodic point of the same
   type (i.e. hyperbolic to hyperbolic, and parabolic to parabolic). 
\end{enumerate}
\end{defn}

The reason we say that $f$ and $g$ are combinatorially conjugate in $\C$
(rather than combinatorially conjugate on the real line) is that
the assumption that $f,g\in\SR$ are real-topologically equivalent
implies that $f,g$ are topologically conjugate on the complex plane
whenever the combinatorial conjugacy $h\colon \R\to \R$ is quasisymmetric, see
Theorem~\ref{thm:pullback}.

\begin{prop}[Combinatorial conjugacy in $\C$ implies topological conjugacy in $\R$] \label{prop:comb-top-conj}
If  $f,g\in\SR$ are combinatorially conjugate in $\C$, then these maps
are combinatorially conjugate on the real line (in the sense
of Definition~\ref{def:Rcomb-equiv}) 
and therefore topologically conjugate on the real line. Furthermore,
the topological conjugacy can be chosen to agree with the combinatorial
conjugacy from Definition \ref{defn:combinatorialconjugacyinC}.
\end{prop}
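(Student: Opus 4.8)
The plan is to unwind the definition of combinatorial conjugacy in $\C$ (Definition~\ref{defn:combinatorialconjugacyinC}) and show that the data it provides is exactly what is needed to invoke Definition~\ref{def:Rcomb-equiv} and then Lemma~\ref{lem:comb-conj}. So suppose $f,g\in\SR$ are combinatorially conjugate in $\C$, with real-topological equivalence $\psi\circ f=g\circ\phi$ and order-preserving bijection $h\colon \Part(f)\cup O_f^+(S(f))\to\Part(g)\cup O_g^+(S(g))$ satisfying conditions (a)--(d). The first step is to note that $h$ automatically restricts to a bijection between $\Part(f)\cup O_f^+(S_\R(f))$ and $\Part(g)\cup O_g^+(S_\R(g))$: the set $S_\R(f)$ is precisely the intersection of $S(f)$ with the real-singular data of $f|\R$, and since $h=\psi$ on $S(f)$ and $\psi\in\HomeoR$ preserves the real line, $h$ carries $\CV_\R(f)$ to $\CV_\R(g)$ and real-asymptotic values to real-asymptotic values. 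Moreover orbits of singular values are carried to orbits of singular values by (a), so $h$ does map $\Part(f)\cup O_f^+(S_\R(f))$ onto $\Part(g)\cup O_g^+(S_\R(g))$. The restriction of $h$ to this smaller set is still order-preserving and still conjugates $f$ to $g$.

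Next I would verify the three remaining clauses of Definition~\ref{def:Rcomb-equiv} for this restricted $h$. The requirement that $h$ map points of types (a)--(c) (real critical points, real hyperbolic attracting periodic points, real parabolic periodic points) to corresponding points follows directly: clause (b) of Definition~\ref{defn:combinatorialconjugacyinC} says $h$ agrees with $\phi$ on $\Crit_\R(f)$, and $\phi\in\HomeoR$ sends real critical points of $f$ to real critical points of $g$ of the same order (the consequence of real-topological equivalence noted in the text), so in particular $h$ preserves the order of critical points; clause (d) says nonrepelling periodic points go to nonrepelling periodic points of the same type, which together with the fact that periodic turning points are automatically attracting (by real-analyticity, as recalled in the proof of Lemma~\ref{lem:comb-conj}) handles the attracting and parabolic periodic points. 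Finally, the compatibility of asymptotic values in the sense of Definition~\ref{def:Rcomb-equiv} — that $\lim_{x\to\sigma\infty}f(x)=a\in\R$ iff $\lim_{x\to\sigma\infty}g(x)=h(a)\in\R$ — follows because a real-topological equivalence, via $\phi$ and $\psi$, sends the asymptotic tract over $\sigma\infty$ of $f$ to an asymptotic tract over $\sigma\infty$ of $g$ (as $\phi$ preserves orientation and the ends of $\R$), and the associated asymptotic value $a$ is a singular value, on which $h=\psi$; thus $h(a)$ is the corresponding asymptotic value of $g$.

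Having established that $f$ and $g$ are combinatorially conjugate on the real line in the sense of Definition~\ref{def:Rcomb-equiv}, Lemma~\ref{lem:comb-conj} immediately yields a topological conjugacy $H\colon\R\to\R$ between $f$ and $g$ preserving the order and type of critical points and the parabolic periodic points, and uniquely determined outside the real attracting and parabolic basins. The last thing to check is that $H$ can be chosen to agree with the combinatorial conjugacy $h$ of Definition~\ref{defn:combinatorialconjugacyinC}. On the complement of the attracting/parabolic basins, $H$ is forced and in particular agrees with $h$ on $\Part(f)\cup O_f^+(S(f))$ outside those basins; inside a basin, $h$ has already been extended (in the proof of Lemma~\ref{lem:comb-conj}, via the conjugacy on immediate basins $B_0(f)\to B_0(g)$) in a manner compatible with the combinatorial data, and any remaining freedom in defining $H$ on the rest of the basins can be resolved to match $h$ by the standard interpolation argument, since $h$ on a full basin is determined by its behaviour on the immediate basin together with the dynamics.

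The main obstacle here is bookkeeping rather than a genuine difficulty: the delicate point is making sure that passing from the ``large'' set $O_f^+(S(f))$ appearing in Definition~\ref{defn:combinatorialconjugacyinC} to the ``real'' set $O_f^+(S_\R(f))$ of Definition~\ref{def:Rcomb-equiv} loses no information and introduces no inconsistency — in particular that a singular value that is complex but whose forward orbit meets $\R$ does not cause trouble — and that the asymptotic-value compatibility condition, which is phrased topologically in Definition~\ref{def:Rcomb-equiv}, really is implied by the existence of the maps $\phi,\psi$ witnessing real-topological equivalence. Once that is carefully laid out, everything else is a direct appeal to Lemma~\ref{lem:comb-conj}.
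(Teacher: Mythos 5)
Your argument is correct and takes essentially the same route as the paper: verify the conditions of Definition~\ref{def:Rcomb-equiv} using clause (b) together with real-topological equivalence for the orders of critical points, and the relation $\psi\circ f=g\circ\phi$ with $\phi$ preserving the ends of $\R$ for the asymptotic-value condition, then apply Lemma~\ref{lem:comb-conj} and note that the resulting conjugacy is forced to agree with $h$ outside the attracting/parabolic basins and can be adjusted on the finitely many remaining singular orbits. (Your worry about complex singular values whose orbits meet $\R$ is vacuous here, since $S(f)\subset\R$ for $f\in\SR$.)
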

\begin{proof} Property (b) and the fact that $f,g$ are real-topologically equivalent 
imply that $h$ sends critical points of $f$ to critical points of $g$
of the same order. Also, the condition on asymptotic values is automatically satisfied: if
$\lim_{x\to\sigma\infty} f(x)=a$, then 
  \[ \lim_{x\to\sigma\infty}g(x)=\lim_{x\to\sigma\infty}g(\phi(x)) = \psi\bigl(\lim_{x\to\sigma\infty}f(x)\bigr)= \psi(a). \]
  The proposition therefore follows from 
Lemma~\ref{lem:comb-conj}. We note that, a priori, the
 topological conjugacy provided by this lemma is an extension of
 the real combinatorial conjugacy, which in general is a 
 \emph{restriction} of our original map $h$.
 However,
 the extension will automatically agree with our original map on points
 that do not belong to attracting or parabolic basins
 (due to absence of wandering intervals),
 and can easily be arranged to respect the finitely many remaining
 orbits.
\end{proof}

Combinatorial conjugacy can also be expressed alternatively in terms of \emph{kneading sequences},  which is an idea that we use later.

\begin{defn}[Topological and QC conjugacy]
Two maps $f,g\in\SR$ are called \emph{real-topologically conjugate} if
  there is a homeomorphism $\theta\in\HomeoR$ 
such that 
  $\theta\circ f = g\circ \theta$ on $\C$. 
  (The prefix ``real'' in this notation is to 
  express that $\theta$ preserves the real line.)

 If this homeomorphism $\theta$ is quasiconformal, we say that
  $f$ and $g$ are \emph{real-quasi\-con\-formally conjugate}. 
\end{defn}

Finally, let us turn to a notion of conjugacy on escaping sets.
 Recall that $\IR(f) = \{x\in \R\colon |f^n(x)|\to \infty\}$. 

\begin{defn}[Escaping conjugacy]
\label{def:escapingconj}
 Let $f,g\in\SR$ be real-topologically equivalent. We say that $f$ and $g$ are
  \emph{escaping conjugate} if there is an order-preserving homeomorphism
    $j: \IR(f)\to \IR(g)$ such that: 
  \begin{enumerate}
    \item $j\circ f = g\circ j$ on $\IR(f)$;
    \item $j$ agrees with $\phi$ on $\Crit_{\R}(f)\cap \IR(f)$ and with $\psi$ on
         $S(f)\cap \IR(f)$, and 
    \item  for every closed and forward-invariant set $K\subset \IR(f)$, $j|_K$ extends to a quasisymmetric
   homeomorphism of the real line.
  \end{enumerate}
\end{defn}

The article \cite{boettcher} provides a simple way of 
 encoding when two maps are
escaping conjugate. We discuss this below. For now, we only need the following fact.

\begin{thm}[Escaping rigidity]\label{thm:boettcher}
 If $f,g\in\SR$ are real-topologically conjugate, then they are escaping conjugate. 
\end{thm}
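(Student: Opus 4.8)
The plan is to reduce Theorem~\ref{thm:boettcher} to the main rigidity result of \cite{boettcher}, which says (in the form needed here) that if two transcendental entire functions with bounded singular set are topologically conjugate near infinity via a map that is compatible with the marking of the singular values, then there is a quasiconformal such conjugacy on a suitable absorbing domain, which can moreover be taken to respect the \emph{logarithmic coordinates} parametrising the escaping set. The first step is therefore to take the given real-topological conjugacy $\theta\in\HomeoR$ with $\theta\circ f=g\circ\theta$ and observe that it restricts to an order-preserving homeomorphism of $\R$ which conjugates $f|_{\R}$ to $g|_{\R}$; in particular it maps $\IR(f)$ bijectively to $\IR(g)$, maps $\Crit_\R(f)$ to $\Crit_\R(g)$ (of the same order, by the topological equivalence built into $\SR$) and $S(f)$ to $S(g)$. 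I would then fix an arbitrary closed set $K\subset\IR(f)$ with $f(K)\subset K$, so that the task is to produce a \emph{quasisymmetric} $j\colon\R\to\R$ with $j\circ f=g\circ j$ on $K$, agreeing with $\phi$ on $\Crit_\R(f)$ and with $\psi$ on $S(f)$.

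The heart of the argument is to upgrade the merely topological conjugacy on $\IR(f)$ to a quasisymmetric one. Here one invokes \cite{boettcher}: the conjugacy $\theta$ determines the same combinatorics of the escaping set (the same ``external address'' data in logarithmic coordinates), and by the rigidity theorem of that paper the escaping dynamics of $f$ and $g$ are then quasiconformally conjugate on a tract neighbourhood of $\infty$, via a conjugacy that is the identity on external addresses and hence agrees with $\theta$ combinatorially. Restricting this quasiconformal conjugacy to the real line, where by Lemma~\ref{lem:ahlfors} and the subsequent remark the relevant rays are actual real intervals $(b,\infty)$ (or $(-\infty,-b)$), yields a quasisymmetric map on the ``far'' part of $\IR(f)$. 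The remaining finitely many bounded pieces of orbits in $K$ (those that have not yet entered the absorbing domain, together with the finitely many singular values and real critical points) are handled by hand: a point of $K$ enters the absorbing region after finitely many steps, so one extends the far-field conjugacy backwards along these finitely many orbit segments, using quasisymmetry of the inverse branches of $f$ away from critical values, and arranges along the way that $j=\phi$ on $\Crit_\R(f)$ and $j=\psi$ on $S(f)$ — consistency being guaranteed because $\phi,\psi$ come from the fixed topological equivalence and $\theta$ respects the same marking.

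The main obstacle, I expect, is precisely the interface between the two regimes: ensuring that the quasisymmetric conjugacy coming from \cite{boettcher} on the absorbing domain can be glued to the finite backward-extension on $\R$ without destroying quasisymmetry, and that this is possible \emph{uniformly} over all admissible $K$ (the definition of escaping conjugacy quantifies over every forward-invariant closed $K\subset\IR(f)$). This requires the logarithmic-coordinate description to give a conjugacy with dilatation bounded independently of how deep into the tract one goes, which is exactly what the rigidity statement in \cite{boettcher} provides, so the work is in translating that statement into the real one-dimensional picture and checking compatibility of the markings. A secondary, more routine point is the case analysis (bounded left, bounded right, or unbounded on both sides) governing which ends of $\R$ actually meet $\IR(f)$; by the dichotomy recorded after Lemma~\ref{lem:ahlfors} these are the only possibilities, and in each the above scheme applies verbatim to the relevant end(s).
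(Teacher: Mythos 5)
Your proposal follows essentially the same route as the paper: invoke the rigidity of escaping dynamics from \cite{boettcher} to obtain a quasisymmetric conjugacy on the part of $K$ lying far out near infinity, and then pull back finitely many iterates, using that corresponding critical points have the same order. The one point the paper makes more carefully is that the number of pullback steps is \emph{uniform} over $K$ (for every $R$ there is a single $n_0$ with $f^n(K)\cap[-R,R]=\emptyset$ for all $n\geq n_0$, since $\IR(f)$ is open), whereas your sketch only records that each individual point of $K$ enters the far region after finitely many steps.
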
 
\begin{proof}
  Let $h$ be the real-topological conjugacy between $f$ and $g$; we set $j=h$.
  The first two conditions in the definition of escaping conjugacy are trivially 
   satisfied (recall that we can take $\phi=\psi=h$ in the definition of topological
   equivalence). So let $K\subset \IR(f)$ be a closed and forward-invariant
   set; we must show that $h|_K$ extends to a quasisymmetric homeomorphism of the
   real line. This is stated explicitly in \cite[Theorem 1.3]{boettcher} for the case where
  the set $K$ is the union of
  finitely many escaping orbits (which is in fact
  sufficient for the purposes of this paper). 

 In general, we can deduce this claim from the results of 
    \cite{boettcher} as follows. First observe that, for every $R>0$ there is 
   $n_0\in\N$ such that $f^n(K)\cap [-R,R] = \emptyset$ for all $n\geq n_0$. 
   Indeed, by the disussion following Lemma \ref{lem:ahlfors}, 
   we may assume that $R$ is sufficiently large to ensure that
   $|f^n(x)|\geq R$ whenever $|f^2(x)|\geq R$. Hence we need to prove the claim only 
   for the set $K\cap [-R,R]$, where it is trivial by compactness.

Now \cite[Theorem 1.1.\ and Corollary 4.2]{boettcher} imply that
  $h|_{f^n(K)}$ 
   extends to a real-quasi\-conformal homeomorphism $\psi$. We may choose $\psi$ in such
  a way that it agrees with $h$ on the set of singular values of $f^n$. Hence $\psi$ and $h$
  are isotopic relative to $S(f^n)$, and we may lift the homotopy to obtain a map
   $\phi$ such that $\psi\circ f^n = g^n\circ \phi$. By construction,
   the real-quasiconformal map $\phi$ extends
   $h|_K$, as desired.
\end{proof}

\subsection*{Promoting conjugacies: the pullback argument} The following
  is a version of a well-known argument of promoting combinatorial conjugacies
  to quasiconformal ones, provided that one has control on the 
  postsingular set. 

\begin{thm}[The pullback argument]\label{thm:pullback}
Suppose that $f,g\in\SR$ are combinatorially conjugate (in $\C$) and that the
 combinatorial conjugacy $h$ extends to a quasisymmetric homeomorphism
 $h:\R\to\R$. 

 Then $f$ and $g$ are real-quasiconformally conjugate, where the conjugacy $\theta$ can
   be chosen to agree with $h$ on 
  $\Part(f)\cup O^+_f(S(f))$.
\end{thm}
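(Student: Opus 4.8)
The plan is to run the classical pullback argument, but carefully keeping track of the fact that our maps are transcendental, so the postsingular set could be unbounded, and of the requirement that everything stay inside $\HomeoR$ and commute with complex conjugation. First I would use the real-topological equivalence $\psi\circ f = g\circ\phi$ (with $\phi,\psi\in\HomeoR$, which by Remark 1 after Definition~\ref{def:MfR} may be taken quasiconformal) together with the hypothesis that the combinatorial conjugacy $h$ extends to a quasisymmetric homeomorphism of $\R$. I would build an initial quasiconformal homeomorphism $\Phi_0\in\HomeoR$ of $\C$ that (i) restricts to $h$ on $\R$ (using the Beurling--Ahlfors extension, which is quasiconformal and can be taken to commute with conjugation and to preserve orientation), (ii) agrees with $\psi$ on a neighborhood of $S(f)$ in $\C$, and (iii) is isotopic to $\psi$ relative to $S(f)\cup\{\infty\}$; this is possible because $S(f)$ is finite and $h|_{S(f)}=\psi|_{S(f)}$, so $\Phi_0$ and $\psi$ agree on $S(f)$ and one can interpolate. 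Then lift through the covering $f:f^{-1}(\C\setminus S(f))\to\C\setminus S(f)$ and $g:g^{-1}(\C\setminus S(g))\to\C\setminus S(g)$: since $\Phi_0$ sends $S(f)$ to $S(g)$, there is a unique homeomorphism $\Phi_1$ with $\Phi_0\circ f = g\circ\Phi_1$ and $\Phi_1$ homotopic to $\phi$ rel the relevant preimage data; because $f,g$ are holomorphic, $\Phi_1$ is quasiconformal with the same maximal dilatation as $\Phi_0$, it lies in $\HomeoR$ (by uniqueness of the lift and the fact that $f,g,\Phi_0$ all respect $\R$ and conjugation), and it again restricts to $h$ on $\R$ (here is where one uses that $h$ is an honest conjugacy $h\circ f = g\circ h$ on $\R$, together with Lemma~\ref{lem:comb-conj} and Proposition~\ref{prop:comb-top-conj} to know $h$ extends to a genuine topological conjugacy of the real dynamics, so the lift of $h|_\R$ is $h|_\R$). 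Iterating gives a sequence $\Phi_n\in\HomeoR$, all $K$-quasiconformal for a fixed $K$, all equal to $h$ on $\R$, all commuting with conjugation, and with $\Phi_{n-1}\circ f = g\circ\Phi_n$.

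The next step is to extract a limit. The family $\{\Phi_n\}$ is a normal family of $K$-quasiconformal maps fixing, say, three points (one can normalize using $\infty$ and two real postsingular points, or simply note they all agree with $h$ on $\R$, which already pins down a limit); hence a subsequence converges locally uniformly to a $K$-quasiconformal map $\theta$, and $\theta\circ f = g\circ\theta$ passes to the limit, so $\theta$ is a quasiconformal conjugacy. By construction $\theta|_\R = h$, $\theta(\bar z)=\overline{\theta(z)}$, and $\theta\in\HomeoR$; and $\theta$ agrees with $h$ (hence with $\phi$ on $\Crit_\R(f)$ and $\psi$ on $S(f)$) on $\Part(f)\cup O_f^+(S(f))$ because every $\Phi_n$ does on $\R$ and these sets lie on the real axis. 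The standard subtlety — that pointwise limits of the pullback sequence need not converge unless one knows the orbits of marked points stay in a compact set — is handled here precisely by the quasisymmetry hypothesis on $h|_\R$ and the real rigidity input: on the non-escaping part the dynamics is that of an ARAIM and the conjugacy is controlled by Theorem~\ref{thm:realrigidity}, while on $\IR(f)$ escaping rigidity (Theorem~\ref{thm:boettcher}, via Definition~\ref{def:escapingconj}) guarantees $h$ is quasisymmetric on any forward-invariant closed escaping set, so the real-line conjugacy is uniformly quasisymmetric throughout and the lifts do not degenerate.

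The main obstacle, and the place where I expect the real work to be, is controlling the pullback near infinity: since $f$ is transcendental, $\Phi_0$ is not a homeomorphism of a compact sphere, and the escaping set $I(f)$ is a genuinely infinite object intersecting $\R$ in an open set of unbounded components. One must verify that the successive lifts $\Phi_n$ remain uniformly quasiconformal and do not ``escape to infinity'' as maps — i.e., that the normalization survives. This is exactly what Theorem~\ref{thm:boettcher} (escaping rigidity) and its underlying results from \cite{boettcher} are for: they provide the quasisymmetric control of $h$ on escaping orbits, hence a uniform modulus-of-continuity bound that makes the family $\{\Phi_n\}$ normal on all of $\C$ (not just on bounded sets where the ARAIM theory suffices). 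Once that uniform control is in hand, the rest is the routine pullback-and-limit machinery described above, so I would present it briskly and spend the detail budget on the setup of $\Phi_0$ (matching $\psi$ near $S(f)$ while restricting to $h$ on $\R$) and on the normality argument near $\infty$.
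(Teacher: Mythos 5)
Your overall shape---extend $h$ to a quasiconformal map isotopic to $\psi$ relative $S(f)$, lift repeatedly through $f$ and $g$, pass to a limit---is indeed the paper's pullback argument, but two of your key steps have genuine gaps. First, you claim each lift $\Phi_n$ restricts to $h$ on $\R$ because ``the lift of $h|_\R$ is $h|_\R$''. The hypothesis, however, is only that the combinatorial conjugacy, defined on the countable set $\Part(f)\cup O^+_f(S(f))$, extends to \emph{some} quasisymmetric homeomorphism of $\R$; this extension is not assumed to conjugate $f|_\R$ to $g|_\R$ on all of $\R$ (and in the application in Theorem~\ref{thm:realqcconj} it is built by interpolation, linear on certain intervals, so it is certainly not a global conjugacy). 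Consequently $\Phi_1|_\R$, which is forced by $\Phi_0\circ f=g\circ\Phi_1$, differs from $h$ wherever $h$ fails to conjugate. What is true---and needs an argument---is that the lifts agree with $h$ on the postsingular set: the paper shows $g(\theta_1(v))=\theta_0(f(v))=h(f(v))=g(h(v))$ and then uses that $\theta_1=\phi=h$ on $\Crit_\R(f)$, so $\theta_1(v)$ and $h(v)$ lie in the same complementary interval of $\Crit(g)$, on which $g$ is injective.

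Second, your limit step does not work as stated. Normality gives a subsequence $\Phi_{n_k}\to\theta$, but the functional equation links \emph{consecutive} maps, $\Phi_{n-1}\circ f=g\circ\Phi_n$, so a subsequential limit only yields $\theta'\circ f=g\circ\theta$ for some possibly different limit $\theta'$; and even if all $\Phi_n$ did equal $h$ on $\R$, that would not pin down the maps off the axis, so it does not force a unique limit. The paper obtains genuine convergence of the whole sequence by an ingredient you omit: before lifting, $\theta_0$ is modified so that it is already a conjugacy between $f$ and $g$ on linearizing neighborhoods of all attracting cycles and on attracting petals of parabolic cycles (possible because the combinatorial conjugacy matches hyperbolic with hyperbolic and parabolic with parabolic, and in $\SR$ the attracting directions at parabolic points lie along $\R$). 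With that choice, $\theta_j$ and $\theta_{j+1}$ agree on the $j$-th preimages of the postsingular set together with these neighborhoods and petals, the maximal dilatation does not increase, and the sequence converges to the desired conjugacy. Finally, your appeal to Theorem~\ref{thm:boettcher} and Theorem~\ref{thm:realrigidity} inside this proof is misplaced: those results are used elsewhere (in Theorem~\ref{thm:realqcconj}) to \emph{produce} the quasisymmetric extension $h$, which here is a hypothesis; the difficulty in the present theorem is not normality near infinity but the stabilization just described.
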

\begin{proof}
 Since the map $h$ is quasisymmetric, it extends to a real-quasiconformal map
 $\theta_0:\C\to\C$. 
 Let $\phi$ and $\psi$ be the maps from the definition of 
 real-topological equivalence. 
 By the definition of a combinatorial conjugacy, the map $\theta_0$
 is isotopic to $\psi$
 relative $S(f)$. 

Furthermore, it
 follows from the assumption that $f$ and $g$ are combinatorially conjugate, or alternatively from the quasisymmetry of $h$, that every attracting cycle
 of $f$ maps to an attracting cycle of $g$, and every parabolic cycle of
 $f$ maps to a parabolic cycle of $g$ under $h$. 

Also note that, in the class $\SR$, every attracting direction of a 
 parabolic point must be aligned with the real axis, so
 there are only three possibilities
 for parabolic points: a parabolic point with one fixed attracting petal
 (corresponding to a saddle-node $z\mapsto z+z^2$)
    a parabolic point with two fixed attracting petals 
  (as for $z\mapsto z-z^3$), or one with
  a $2$-cycle of attracting petals (corresponding to a 
  fixed point with eigenvalue $-1$
  as in the period-doubling bifurcation). Since each attracting petal must contain some critical point, the combinatorial
  conjugacy must map each parabolic point to one of the same type. 

It is then easy to see that we can choose the map $\theta_0$ in such
 a way that $\theta_0$ is a conjugacy between $f$ and $g$ in some
 linearizing neighbourhood or attracting petal for every attracting 
 periodic point or parabolic attracting direction. This can be done as in Section  
 5 of   \cite{CS}.

By the covering homotopy theorem, we can find a map 
 $\theta_1$, isotopic to $\phi$ relative $f^{-1}(S(f))$, such that
 $\theta_0\circ f = g\circ \theta_1$. Here we use that $\phi$ agrees
 with $h$ on $\P(f)$ and that $h$ maps critical points of $f$ to 
 critical points of $g$ of the same order.
    Since $\phi$ preserves the real line,
 and $f$ and $g$ are real, we also get that 
 $\theta_1(\R)=\R$. 

We claim that
 $\theta_1$ agrees with the original map $h$ on the postsingular set.
 Indeed, let $v\in \P(f)$. By construction,
    \[ g(\theta_1(v)) = \theta_0(f(v)) = h(f(v)) = g(h(v)), \]
  so $\theta_1(v)$ and $h(v)$ both have the same image. Since
  $\theta_1 = \phi_1 = h$ on the set of critical points of $f$, 
  we see that $\theta_1(v)$ and $h(v)$ belong to the same
  interval of $\R\setminus\Crit(g)$, and since $g$ is injective on each
  of these intervals, we have $\theta_1(v) = h(v)$ as desired. 

In particular, $\theta_1$ is also isotopic to $\psi$, and we can repeat the
 above procedure to obtain maps $\theta_j$ with
    \[ \theta_j\circ f = g\circ \theta_{j+1}, \]
 and such that $\theta_{j}$ is isotopic to $\psi$ relative
to the postsingular set and isotopic to $\phi$ relative 
  $\Crit_{\R}(f)$. 

 Note that the maps $\theta_j$ and $\theta_{j+1}$ agree on the $j$-th
  preimages of the union of the postsingular set with the originally chosen
  linearizing neighbourhoods and parabolic petals. 
  Also note that their maximal
  dilatation does not increase with $j$. Hence $\theta_j$ converges
  to a suitable quasiconformal function $h$, which is the desired
  conjugacy.    
\end{proof}

\section{Rigidity}
\label{sec:rigidity}
In this section, we establish our main rigidity theorem.

\begin{thm}[From combinatorial to quasiconformal conjugacy]
\label{thm:realqcconj}
Let $f,g\in\SR$ be combinatorially conjugate (in $\C$) 
 and escaping conjugate.
Then $f$ and $g$ are real-quasiconformally conjugate
 (and 
  the conjugacy can be chosen to be an extension of the original combinatorial
  conjugacy).
  \end{thm}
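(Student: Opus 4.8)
\textbf{Proof proposal for Theorem \ref{thm:realqcconj}.}

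The plan is to combine the real-line quasisymmetric rigidity from Theorem~\ref{thm:realrigidity} with the escaping-set rigidity hypothesis to produce a single quasisymmetric homeomorphism $h\colon\R\to\R$ that realizes the given combinatorial conjugacy, and then to invoke the pullback argument of Theorem~\ref{thm:pullback} to upgrade it to a genuine real-quasiconformal conjugacy of the plane. First I would apply Theorem~\ref{thm:realrigidity} to $f$ to obtain the compact interval $J\subset\R$; by properties~(1) and~(2) of that theorem, every point of $\R$ is either escaping or has all its forward iterates (from time $2$ on) inside $J$, so $\R$ is partitioned, up to finitely many initial iterates, into the ``bounded part'' governed by $J$ and the escaping part $\IR(f)$. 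On the bounded part, Theorem~\ref{thm:realrigidity}(4) gives a quasisymmetric conjugacy between $f|_J$ and $g|_{\tilde J}$ agreeing with the combinatorial conjugacy; on the escaping part, since $f$ and $g$ are combinatorially conjugate in $\C$, hence (by Proposition~\ref{prop:comb-top-conj}) topologically conjugate on the real line, and since we are additionally given that they are escaping conjugate, Definition~\ref{def:escapingconj} supplies, for each closed forward-invariant $K\subset\IR(f)$, a quasisymmetric map conjugating $f|_K$ to $g$ and agreeing with $\phi,\psi$ on critical points and singular values.

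The main technical work is to glue these two quasisymmetric maps into a single quasisymmetric homeomorphism of $\R$. I would argue as follows: $\R\setminus J$ (together with the preimages of $J$ that fail to land in $J$ immediately, finite in number by property~(1)) decomposes into countably many complementary intervals, on each of which the dynamics is essentially escaping; on the closures of these intervals one can apply the escaping-conjugacy map, taking $K$ to be the (closed, forward-invariant) intersection of $\IR(f)$ with the relevant region. One then checks that the boundary points of $J$ and $\tilde J$ are matched consistently by both the interval-map conjugacy and the escaping conjugacy — this is where the compatibility built into Definitions~\ref{def:escapingconj} and \ref{defn:combinatorialconjugacyinC} (agreement on $\Crit_\R(f)$ via $\phi$ and on $S(f)$ via $\psi$) is essential, since it forces the two partial conjugacies to agree on the overlap, namely the iterated preimages of singular values and the marked critical points. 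Quasisymmetry of the glued map follows because a homeomorphism of $\R$ obtained by gluing finitely many (and then countably many, shrinking) quasisymmetric pieces across common endpoints, with uniformly bounded quasisymmetry constants, is again quasisymmetric; the uniform bound on the escaping pieces comes from the single constant in Definition~\ref{def:escapingconj}, and the bound on the $J$-piece from Theorem~\ref{thm:realrigidity}(4).

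Having produced the quasisymmetric $h\colon\R\to\R$ realizing the combinatorial conjugacy, the proof concludes by applying Theorem~\ref{thm:pullback}: since $f$ and $g$ are combinatorially conjugate in $\C$ and the combinatorial conjugacy now extends to a quasisymmetric homeomorphism of $\R$, we conclude that $f$ and $g$ are real-quasiconformally conjugate, with a conjugacy $\theta$ commuting with complex conjugation and extending $h$ on $\Part(f)\cup O_f^+(S(f))$ — which is exactly the asserted statement (the typo $\overline(\theta(z))$ being $\overline{\theta(z)}$). The step I expect to be the genuine obstacle is the gluing: one must be careful that the countably many complementary intervals of $J$ do not accumulate pathologically, that the escaping-conjugacy map can be chosen consistently across all of them (ideally by a single application of Definition~\ref{def:escapingconj} to the full closed invariant set $K=\overline{\IR(f)\setminus(\text{neighborhood of }J)}$, or by a diagonal argument over exhausting compact pieces), and that the resulting map is not merely a bijection but genuinely quasisymmetric with a uniform constant; property~(1) of Theorem~\ref{thm:realrigidity}, which guarantees that points of $J$ escaping under one step of $f$ are already escaping, is precisely what prevents the two regimes from interleaving in a way that would destroy uniform control.
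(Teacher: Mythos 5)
Your overall skeleton matches the paper's proof: quasisymmetric rigidity on the bounded part via Theorem~\ref{thm:realrigidity}, the escaping-conjugacy hypothesis for the unbounded singular orbits, a glued quasisymmetric homeomorphism of $\R$, and then Theorem~\ref{thm:pullback}. But the step you yourself flag as ``the genuine obstacle'' is where your proposal goes wrong, and it is a real gap as written. You try to build a quasisymmetric map that \emph{conjugates} $f$ to $g$ on essentially all of $\R$, glued from countably many quasisymmetric pieces over ``complementary intervals'' with uniformly bounded constants. This is both more than is needed and not actually carried out: Theorem~\ref{thm:pullback} only requires a quasisymmetric homeomorphism of $\R$ that \emph{agrees with the combinatorial conjugacy on its domain} $\Part(f)\cup O_f^+(S(f))$; it need not be a conjugacy outside $J$. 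Moreover $J$ is a single compact interval, so $\R\setminus J$ has exactly two components $I_\pm$ (there is no countable family of complementary intervals), and on each $I_\sigma$ the set $\dom(h)\cap I_\sigma$ is closed and \emph{discrete}. Finally, Definition~\ref{def:escapingconj} already hands you a single global quasisymmetric map $j\colon\R\to\R$ for a suitable closed invariant $K\subset\IR(f)$, so no piecewise gluing over escaping intervals, uniform-constant control, or accumulation analysis is required. The paper's gluing therefore involves only three quasisymmetric maps ($h_1$ on $J$ and one map $h_\sigma$ for each ray), interpolated linearly in between; once you see that only agreement on the countable set $\dom(h)$ matters, your ``obstacle'' dissolves.

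There is also a case you do not address: $f$ may be bounded in one direction $\sigma$ while $I_\sigma$ contains \emph{infinitely many critical points} of $f$ (all of which lie in $\Part(f)$, hence in the domain of $h$), with no escaping dynamics there to invoke. Your argument only covers the finite case and the escaping case. The paper handles this third case using the remark after Definition~\ref{def:MfR}: real-topological equivalence upgrades to real-quasiconformal equivalence, so the restriction of $\phi$ to $\Crit_{\R}(f)$ extends to a quasisymmetric homeomorphism of $\R$, which supplies the needed quasisymmetric control on that infinite discrete set. With these two corrections (extend only on $\dom(h)$, and treat the three cases for $\dom(h)\cap I_\sigma$ separately), your argument becomes the paper's proof.
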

\begin{remark}
 For functions that are bounded on the real line, that have no asymptotic values and for which all critical points are real, the statement
  simplifies as follows: If $f$ and $g$ are real-topologically combinatorially equivalent and topologically conjugate on the real line, then they
   are real-quasiconformally conjugate in the complex plane.
\end{remark}
\begin{proof}
Let 
 $$h\colon \Part(f) \cup O^+_f(S(f)) \to \Part(g)\cup O^+_f(S(g))$$
 be the combinatorial conjugacy between $f$ and $g$. 
 We write $\dom(h) := \Part(f)\cup O^+_f(S(f))$ for the domain of $h$.
 
\begin{claim}
 There exists a quasisymmetric extension of  $h$ to $h:\R\to\R$.
\end{claim}
\begin{subproof}
 Theorem~\ref{thm:realrigidity} asserts that  there exists a compact interval $J\subset\R$ (possibly empty or consisting of only one point)    with the following properties.
 \begin{enumerate}
  \item For every $x\in\R$, either $x\in \IR(f)$ or 
    $f^j(x)\in J$ for all $j\geq 2$.
  \item The set of points $z\in\C$ whose $\omega$-limit set is contained in
   $J$ and which do not belong to an attracting or parabolic basin does not
   support any invariant line fields.
  \item If $g\in\BR$ is  combinatorially conjugate on the real line to
     $f$, then there exists an interval $\tilde J$ with corresponding properties and an
     order-preserving
     quasisymmetric homeomorphism $h_1\colon \R\to \R$ which maps $J$ onto $\tilde J$
     and so that $h_1\circ f=g\circ h_1$ on $J$ and such that $h_1=h$ on
     $\dom(h)\cap J$.
 \end{enumerate}
 Let $I_+$ and $I_-$ denote the two components of $\R\setminus J$, and 
  let $\tilde{I}_+$ and $\tilde{I}_-$ be the corresponding components of $\R\setminus \tilde{J}$,
  i.e. $\tilde{I}_{\sigma} = h_1(I_{\sigma})$. Fix $\sigma\in\{+,-\}$.

\begin{claim}[Subclaim] 
 The restriction of $h$ to $\dom(h)\cap I_{\sigma}$ can
 be extended to an order-preserving quasisymmetric
  homeomorphism $h_{\sigma}:\R\to\R$. 
\end{claim}
  
  To see this, note that $\dom(h)\cap I_{\sigma}$ is a closed and discrete subset
  of the real line. We distinguish three cases:
 \begin{itemize}
  \item If $\dom(h)\cap I_{\sigma}$ is finite, then the subclaim is trivial.
  \item If $I_{\sigma}$ contains infinitely many postsingular points, then $|f|$ is unbounded
   as $x\to\sigma\infty$, and in particular $\dom(h)\cap I_{\sigma}$ consists of finitely many 
   escaping singular orbits (possibly together with finitely many additional points). The
   subclaim follows from the assumption that $f$ and $g$ are escaping conjugate. 
  \item If $\dom(h)\cap I_{\sigma}$ is infinite but contains only finitely many
    postsingular points, it must contain infinitely many critical points. The subclaim follows
    from the fact, remarked after Definition \ref{def:MfR},  that the restriction of
    $\phi$ to the set of critical points of $f$ extends to a quasisymmetric homeomorphism. 
    This completes the proof of the subclaim.
 \end{itemize}
 Because $\dom(h)\cap I_{\sigma}$ is a closed set, we can construct the desired extension
  $h:\R\to\R$ by interpolating between $h_-$, $h_1$ and $h_+$. 
 (E.g., $h$ agrees with 
  $h_1$ on $J$ and with each $h_{\sigma}$ on a closed subinterval of $I_{\sigma}$ which contains
  $\dom(h)\cap I_{\sigma}$ and is linear on the complement of these intervals.) This completes the proof of the claim.
\end{subproof}

  The assertion in the theorem now follows from the pullback argument (Theorem~\ref{thm:pullback}). 
 \end{proof}

\begin{proof}[Proof of Theorem~\ref{thm:qcrigidity}]
 Suppose that $f,g\in\SR$ are topologically conjugate by a conjugacy $h$
  that preserves the real axis. We may assume that $h$ commutes with complex conjugation
  (otherwise, replace $h$ on the lower half-plane by the map $h(z):=\overline{h(\bar{z})}$). 
   Hence either $f$ and $g$ are real-topologically
  conjugate (if $h|_{\R}$ is order-preserving) or
  $f$ and $\tilde{g}(z) := -g(-z)$ are real-topologically conjugate 
  (otherwise). So the claim follows from 
  the previous theorem and Theorem~\ref{thm:boettcher}.
\end{proof}

\section{Absence of Invariant Line Fields}
\label{sec:absenceline}

\subsection*{Absence of invariant line fields in $\SR$}
In this section, we are concerned with showing that the functions
 $f\in\SR$ we consider do not support any invariant line fields on their
 Julia sets. (Recall the definitions from Section \ref{sec:prep}.) As
 mentioned in the introduction, we will do so by decomposing 
 the Julia set in a number of dynamically distinct sets and treat each
 separately.

 So let $f\in \SR$ and define
 \begin{align*}
   \P_B(f) &:= \{z\in\P(f): O^+(z)\text{ is bounded}\} \quad\text{and}\\
   \P_I(f) &:= \P(f)\cap \IR(f) = \P(f)\setminus \P_B(f).
 \end{align*} 
We consider the following subsets of the complex plane:
\begin{enumerate}
 \item The radial Julia set $J_r(f)$ (by definition this is the set of all points $z\in J(f)$ 
   with the following property:     there is some $\delta>0$ such that, for infinitely many $n\in\N$, the 
   disk $\D_{\delta}^{\#}(f^n(z))$ can be pulled back univalently along the
   orbit of $z$).
 \item The escaping
   set $I(f)=\{z\in\C: |f^n(z)|\to \infty\mbox{ as } n\to \infty\}$.
 \item The set $L_B(f)$ of points $z\in J(f) \setminus J_r(f)$ with $\dist(f^n(z),\P_B(f))\to 0$.
 \item The set $L_I(f)$ of points $z\in J(f)\setminus (J_r(f)\cup I(f))$ 
    with $\dist^{\#}(f^n(z),\P_I(f))\to 0$. 
\end{enumerate}

\begin{lem}[Partition of the Julia set]\label{lem:decomp}
For any $f\in\SR$, we have $J(f) = J_r(f) \cup I(f) \cup L_B(f) \cup L_I(f)$.
\end{lem}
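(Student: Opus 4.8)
The plan is to show that any point $z$ in the Julia set $J(f)$ that does \emph{not} belong to the radial Julia set $J_r(f)$ and does \emph{not} belong to the escaping set $I(f)$ must have its forward orbit accumulate on the postsingular set $\P(f)$, and then to separate this accumulation according to whether the accumulated part of $\P(f)$ is bounded (giving $L_B(f)$) or escaping (giving $L_I(f)$). The key input is the classical fact (due to ideas going back to Fatou, and in the transcendental setting to Eremenko--Lyubich and others) that if $z\in J(f)$ does not return close to the singular values along its orbit, then a suitable spherical disk around $f^n(z)$ can be pulled back univalently; so failure of this univalent-pullback property forces recurrence of the orbit towards $S(f)$, hence towards $\P(f)$.

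First I would fix $z\in J(f)\setminus(J_r(f)\cup I(f))$ and argue that $\dist^{\#}(f^n(z),\P(f))\to 0$. The argument is the standard dichotomy: since $z\notin J_r(f)$, for \emph{every} $\delta>0$ and all sufficiently large $n$, the spherical disk $\D_\delta^{\#}(f^n(z))$ cannot be pulled back univalently along the orbit. By the usual covering/normality argument (using that $S(f)$ is finite, so outside any neighborhood $W$ of $\P(f)$ the map $f$ is an unbranched covering onto its image and one has definite room to pull back using Koebe-type distortion control), this obstruction can only arise because some earlier iterate $f^k(z)$, $k\le n$, lies within spherical distance comparable to $\delta$ of $\P(f)$. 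Letting $\delta\to 0$ and $n\to\infty$ shows the full forward orbit of $z$ must come arbitrarily close to $\P(f)$ infinitely often; combined with the fact that $z\notin J_r(f)$ one upgrades ``infinitely often close'' to ``$\dist^{\#}(f^n(z),\P(f))\to 0$''. (Here one must be a little careful near $\infty$, which is why the spherical metric is used; points whose orbit accumulates only at $\infty$ are handled by the escaping-set case and are excluded.)

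Next I would split according to $\P(f)=\P_B(f)\cup\P_I(f)$ (with $\P_I(f)=\P(f)\cap\IR(f)$). Given that $\dist^{\#}(f^n(z),\P(f))\to 0$, I want to show that either $\dist(f^n(z),\P_B(f))\to 0$ (so $z\in L_B(f)$) or $\dist^{\#}(f^n(z),\P_I(f))\to 0$ (so $z\in L_I(f)$). The point is that $\P_B(f)$ is a compact (bounded, closed) subset of $\C$, while $\P_I(f)$ is the part of $\P(f)$ lying in the open set $\IR(f)\subset\R$ together with its accumulation at $\infty$; these two pieces are ``asymptotically separated'' along orbits because $\IR(f)$ is forward invariant and its iterates eventually leave every compact set. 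Concretely: if the orbit $f^n(z)$ did \emph{not} converge to $\P_B(f)$, then infinitely many $f^{n}(z)$ stay bounded away (in Euclidean metric) from the compact set $\P_B(f)$ while still being spherically close to $\P(f)$; the only way to reconcile this is that those $f^n(z)$ are spherically close to $\P_I(f)$ (either close to a point of $\P_I(f)\cap\R$ large in modulus, or spherically close to $\infty$). A short compactness/covering argument then shows that in fact \emph{all} sufficiently large $n$ fall into the $\P_I(f)$-side, giving $\dist^{\#}(f^n(z),\P_I(f))\to 0$ and hence $z\in L_I(f)$. Thus every $z\in J(f)$ lies in $J_r(f)\cup I(f)\cup L_B(f)\cup L_I(f)$; the reverse inclusion is immediate since each of the four sets is by definition a subset of $J(f)$ (for $I(f)$ one uses that the escaping set of a transcendental entire function is contained in the Julia set, by Eremenko).

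The main obstacle I anticipate is the bookkeeping at $\infty$ and the precise ``asymptotic separation'' of $\P_B(f)$ from $\P_I(f)$ along orbits: one needs to make sure that an orbit cannot oscillate forever between a neighborhood of the bounded postsingular set and a neighborhood of the escaping/unbounded part, and the cleanest way to rule this out is to use that $f$ is a covering map (no singular values) away from a fixed neighborhood of $\P(f)$, so that once an orbit is trapped near $\P_B(f)$ it can only escape that neighborhood by first passing close to $\P(f)$ again, and a Koebe/normal-families argument then forces $z$ into the radial Julia set --- a contradiction. Making this oscillation-exclusion rigorous, while correctly treating the spherical-versus-Euclidean distinction near $\infty$, is the technical heart of the proof; everything else is the standard radial-Julia-set dichotomy.
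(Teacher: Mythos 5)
Your reduction is the same as the paper's: for $z\in J(f)\setminus(J_r(f)\cup I(f))$ one first shows $\dist^{\#}(f^n(z),\P(f))\to 0$ (this step is fine --- it is just the contrapositive of ``definite spherical distance from $\P(f)$ infinitely often gives univalent pullbacks, hence $z\in J_r(f)$'', which is also how the paper disposes of it in one line), and the whole lemma then rests on showing that such an orbit cannot accumulate on both $\P_B(f)$ and $\P_I(f)$. It is exactly at this step, which you yourself call the technical heart, that your argument has a genuine gap. The mechanism you propose (``$f$ is a covering map away from a fixed neighbourhood of $\P(f)$, so to leave a neighbourhood of $\P_B(f)$ the orbit must first pass close to $\P(f)$ again, and a Koebe/normal-families argument forces $z\in J_r(f)$'') does not do the job: for all large $n$ the orbit already lies inside the $\varepsilon$-neighbourhood of $\P(f)$, where no covering property or univalent pullback is available, and since $f^{n+1}(z)$ need not be anywhere near $f^n(z)$, nothing you have said prevents the orbit from jumping in a single step from the $\varepsilon$-neighbourhood of $\P_B(f)$ directly into the $\varepsilon$-neighbourhood of $\P_I(f)\cup\{\infty\}$ without ever being at definite spherical distance from $\P(f)$. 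So no contradiction with $z\notin J_r(f)$ can be extracted, and your assertion that ``a short compactness/covering argument shows all sufficiently large $n$ fall into the $\P_I(f)$-side'' is precisely what remains unproved.

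The missing ingredient is elementary and is how the paper closes the argument: forward invariance of $\P_B(f)$ plus uniform continuity, not pullbacks. Since $S(f)$ is finite, $\P_B(f)$ and $\P_I(f)\cup\{\infty\}$ are disjoint compact subsets of the sphere, so $\delta:=\dist^{\#}\bigl(\P_B(f),\P_I(f)\cup\{\infty\}\bigr)>0$. Because $\P_B(f)$ is compact and forward invariant, continuity of $f$ yields $\varepsilon\in(0,\delta/2)$ such that $\dist(w,\P_B(f))<\varepsilon$ implies $\dist^{\#}(f(w),\P_B(f))<\delta/2$. Now take $n$ large enough that $\dist^{\#}(f^m(z),\P(f))<\varepsilon$ for all $m\geq n$. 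If at some such time the orbit is $\varepsilon$-close to $\P_B(f)$, then the next point is $\delta/2$-close to $\P_B(f)$, and being $\varepsilon$-close to $\P(f)$ it cannot be $\varepsilon$-close to $\P_I(f)\cup\{\infty\}$ (triangle inequality against $\delta$), hence is again $\varepsilon$-close to $\P_B(f)$; inductively the orbit is trapped near $\P_B(f)$ and $z\in L_B(f)$. Otherwise the orbit is eventually always $\varepsilon$-close to $\P_I(f)\cup\{\infty\}$, so $z\in I(f)\cup L_I(f)$. No Koebe distortion or normal-families input is needed here; what your sketch lacks is precisely the use of the uniform spherical separation of the two compact pieces of $\P(f)$ together with the forward invariance of $\P_B(f)$.
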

\begin{proof}
Any point with $\limsup \dist^{\#}(f^n(z),\P(f)) > 0$ belongs to $J_r(f)$. So
 it remains to show that an orbit cannot accumulate both on bounded and
 on escaping singular orbits. 

This follows from continuity of $f$. Indeed, consider the spherical
 distance 
 $\delta := \dist^{\#}(\P_B(f) , \P_I(f)\cup\{\infty\})$. Since the set of
 singular values is finite, the sets $\P_B(f)$ and $\P_I(f)\cup\{\infty\}$ are
 both compact  subsets of $\bar \C$, hence we have 
 $\delta>0$. 

 Then there exists 
 $\eps\in (0,\delta/2)$ such that $\dist^{\#}(\P_B(f),f(z))< \delta/2$ 
 for any point $z\in\C$ with
  $\dist(\P_B(f),z) < \eps$. If $z\in \C\setminus J_r(f)$, then
 $\dist^{\#}(f^n(z),\P(f)) < \eps$ for sufficiently large $n$. We then have either
 $\dist^{\#}(f^n(z),\P_B(f)) \geq \eps$ for all such $n$, or 
 $\dist^{\#}(f^n(z),\P_B(f)) < \eps$ for all sufficiently large $n$. In the
 former case, we must have $z\in I(f)\cup L_I(f)$, while in the latter,
 $z\in L_B(f)$. \end{proof}

\begin{thm}[No invariant line fields on radial, escaping and bounded orbits]\label{thm:nolinefieldJr}
 Suppose that $f\in \SR$. 
Then the sets $J_r(f)$, $I(f)$ and $L_B(f)$ support no invariant line fields. Furthermore,
  if $f$ has bounded criticality, then the set $L_I(f)$ has zero Lebesgue measure, and hence
  also does not support invariant line fields.
  \end{thm}
\begin{proof}
The set $J_r(f)$ (of any transcendental meromorphic function) 
 does not support any invariant line field by \cite[Corollary 7.1]{linefields}. (Compare also
  \cite{conicalrigidity}.)
The set $I(f)$ does not support any invariant line fields; in fact,
  this is true for all transcendental entire functions for which 
  $S(f)$ is bounded
  \cite{boettcher}.
The set $L_B(f)$ supports no invariant line fields by 
 Theorem \ref{thm:realrigidity} above. 

Finally, let $z\in L_I(f)$, and let $v\in \P_I(f)$ be a limit point of the orbit of $z$; say
   $f^{n_i}(z)\to v$. If $D$ is a small disk
   around $v$, and $D_i$ is the component of $f^{-n_i}(D)$ containing $z$, then the
   assumption of bounded criticality implies that the restrictions
   $f^{n_i}:D_i\to D$ are proper maps of bounded degree (independent of $i$). By
   Lemma \cite[Lemma 3.6]{linefields}, the set of such points has Lebesgue measure
   zero, as claimed. (Compare also the proof of Claim 1 in the proof of Theorem 
   \ref{thm:nolinefieldsLI} below.)
\end{proof} 

\subsection*{Absence of invariant line fields on points asymptotic to singular orbits}
We now come to the main new result of this section.

\begin{thm}[Absence of invariant line fields on $L_I(f)$]
\label{thm:nolinefieldsLI}
Suppose that $f\in\SR$ satisfies the sector condition
 (Definition \ref{def:sector}).
 Then $L_I(f)$ supports no invariant line fields. 
\end{thm}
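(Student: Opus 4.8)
The plan is to show that any invariant line field supported on $L_I(f)$ would force a geometric contradiction, by transporting the line field to a neighbourhood of infinity (inside a sector, which is exactly what the sector condition guarantees is mapped far out by $f$) and then using the rigidity of escaping dynamics established in \cite{boettcher}, together with the fact that $I(f)$ supports no invariant line fields (Theorem~\ref{thm:nolinefieldJr}). Concretely, let $\ell$ be an $f$-invariant line field on a set $X\subset L_I(f)$ of positive Lebesgue measure. By definition of $L_I(f)$, for every $z\in X$ we have $\dist^{\#}(f^n(z),\P_I(f))\to 0$, and $\P_I(f)$ is a finite union of escaping singular orbits together with $\infty$. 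So the orbit of $z$ shadows, for all large $n$, one of these escaping orbits; in particular $|f^n(z)|\to\infty$ is \emph{not} assumed, but the orbit comes spherically close to escaping points whose moduli do tend to infinity.

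First I would pass to a density point $z_0\in X$ and a sequence $n_k\to\infty$ along which $f^{n_k}(z_0)$ is within a small spherical neighbourhood $W_k$ of a point $p_k = \sigma_k(f^{n_k}(s))$ on some escaping singular orbit $O_f^+(s)$, $s\in S(f)$, with $|p_k|\to\infty$ and $\sigma_k\in\{+,-\}$. The key point is that, because $s$ escapes under iteration along the real axis and $f\in\SR$, the relevant real orbit eventually enters $\IR(f)$; the sector condition then tells us that near $p_k$ the map $f$ behaves, up to bounded distortion, like a large multiple of a fixed conformal map — more precisely, the sector condition guarantees (via the logarithmic-derivative estimate \eqref{eqn:log_der}) that we may pull back a definite-size round disk around $f^{n_k+1}(z_0)$ univalently one step, with distortion controlled by Koebe (Theorem~\ref{thm:koebe}). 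Iterating this, the point $z_0$ lies in the radial Julia set unless the pullbacks eventually fail to be univalent, i.e.\ unless the orbit is repeatedly forced through critical points. But there are only finitely many critical points, so this can happen only finitely often along each orbit; after discarding an initial segment the pullbacks become univalent with uniformly bounded distortion. Hence $z_0\in J_r(f)$, contradicting $z_0\in L_I(f)\subset J(f)\setminus(J_r(f)\cup I(f))$.

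The step I expect to be the main obstacle is making the transition from ``the orbit of $z_0$ comes spherically close to a point $p_k$ of large modulus on an escaping singular orbit'' to ``a definite-size round neighbourhood can be pulled back univalently with bounded distortion.'' The subtlety is twofold: spherical closeness to a point of huge modulus is a very weak form of proximity in the Euclidean metric, and the escaping singular orbit itself may move between the two ends $\pm\infty$ or interact with the finitely many critical points in complicated ways. The sector condition is precisely the tool that rescues us: it says that on a whole Euclidean sector $\{|\im w|\le\theta\re w,\ \re w\ge x_0\}$ around $\sigma\infty$ the function $f$ is enormous, so $f^{-1}$ of a bounded-size disk around a large value $f(p_k)$ lands inside such a sector and is tiny, giving — after rescaling — a univalent branch with Koebe distortion. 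Quantifying ``how many'' pullback steps stay inside the good sectors, and organising the bookkeeping when the escaping orbit's real part is not monotone, is where the real work lies; I would handle it by fixing, once and for all, $M$ large enough that the finitely many singular orbits are eventually confined to the union of the two sectors associated with $\Sigma$, and then running the pullback argument only along that eventual tail.

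Finally, I would assemble these pieces: since $X$ has positive measure and $z_0$ was an arbitrary density point of $X$, the conclusion $X\subset J_r(f)$ contradicts $X\subset L_I(f)$, which is disjoint from $J_r(f)$ by definition. Therefore no such line field exists, which is the assertion of Theorem~\ref{thm:nolinefieldsLI}. (One could alternatively phrase the contradiction through Theorem~\ref{thm:nolinefieldJr}: transport $\ell$ forward to obtain an invariant line field on $I(f)$ and invoke the result of \cite{boettcher} that $I(f)$ carries none — but the cleaner route is the radial-Julia-set dichotomy above, which uses only Koebe and the sector condition.)
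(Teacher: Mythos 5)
Your central step fails. You conclude that a density point $z_0$ of the support of the line field must lie in $J_r(f)$ because ``there are only finitely many critical points, so the pullbacks eventually become univalent with uniformly bounded distortion.'' The obstruction to pulling back a definite-size spherical disk $B^{\#}_{\delta}(f^{n}(z_0))$ univalently along the orbit of $z_0$ is not the critical points of $f$ lying on the orbit; it is the set of singular values of the \emph{iterates} $f^{n}$, i.e.\ the postsingular set, meeting that disk. Since $z_0\in L_I(f)$ means precisely $\dist^{\#}(f^{n}(z_0),\P_I(f))\to 0$, every disk of fixed spherical radius around $f^{n}(z_0)$ contains points of the (infinite) escaping singular orbits for all large $n$, so univalence of the pullback can be destroyed at every step, forever; finiteness of $S(f)$ or of $\Crit(f)$ does not help. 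Indeed $z_0\notin J_r(f)$ is part of the definition of $L_I(f)$, and the measure-theoretic fact that the paper actually uses (citing \cite[Lemma 3.6]{linefields}) is the exact opposite of your claim: for almost every such $z_0$ the degrees $d_i$ of the maps $f^{n_i}\colon U_i\to U$ onto a fixed neighbourhood of the accumulation point $v\in\P_I(f)$ tend to \emph{infinity}. A further warning sign is that your argument, if correct, would show $L_I(f)$ has measure zero (is contained in $J_r(f)$ up to a null set) with the sector condition playing essentially no role, which is far stronger than the statement and not what is true in general. Your alternative closing remark is also flawed: orbits of points in $L_I(f)$ never enter $I(f)$, so pushing the line field forward does not produce an invariant line field supported on $I(f)$, and Theorem~\ref{thm:nolinefieldJr} cannot be invoked that way.

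The paper's proof goes in the opposite direction and uses the high degrees rather than fighting them: because $d_i\to\infty$, transporting an almost-constant line field (at a point of almost continuity) through the infinite-degree covering $z\mapsto v+re^{-z}$ shows that near the accumulation point $v$ the line field is almost \emph{radial} at a sequence of scales (Claim 1). The sector condition is then used where it really matters: it guarantees that in the Eremenko--Lyubich logarithmic coordinates the tract contains a horizontal half-strip, so the expansion estimate $|F'(z)|\geq \tfrac{1}{4\pi}(\re F(z)-r)$ lets one blow this almost-radial field up univalently to a disk of radius at least $4\pi$ (Claim 2). But in logarithmic coordinates the field is $2\pi i$-periodic, and a radial line field on such a large disk cannot be $2\pi i$-periodic --- that is the contradiction. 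If you want to repair your write-up, you would need to abandon the ``$z_0\in J_r(f)$'' route entirely and reproduce an argument of this blow-up-and-periodicity type.
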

\begin{proof}
Suppose by contradiction that $L_I(f)$ supports a measurable 
invariant line field $\mu$.
As mentioned, this means that there exists a set $A\subset L_I(f)$ of 
positive Lebesgue
measure so that $A\ni z\mapsto \mu(z)$ is a measurable choice of a (real) 
line 
through $z$ (i.e. it is a measurable 
map from $A$ into the projective plane). 

The rough idea of the proof is as follows. First of all, we let $z$ be
a 
point of continuity of the line field $\mu$, and will observe that
(unless $z$ belongs to a set of measure zero)
its orbit
must accumulate at some point $v\in \P_I(f)$, passing
either through 
transcendental singularities or through neighbourhoods of critical points
of high degree. This will allow us to conclude that $v$ has circular
neighbourhoods
in which the line field $\mu$ looks almost like a
\emph{radial line field}   $\theta(z)= \rho z/|z|$, 
 where $\rho\in \C$ with $|\rho|=1$. (See Figure
 \ref{fig:arnold}.) More precisely, we show:

\begin{claim}[Claim 1]
For almost every $z\in A$, the following holds. Let $v\in \P_I(f)$ be
 an accumulation point of the orbit of $z$. Then there exists a sequence
 $\delta_i\to 0$ of radii such that the rescalings 
   \[ \tilde{\mu}_i(z) := \mu(\delta_i z + v), \quad z\in \D = B_1(0), \]
 converge to a radial line field $\theta(z)=\rho z/|z|$ on $\D$. 
 (Here, convergence means that for any $\epsilon>0$  
  there exists a set $X_\epsilon\subset \D$ so that the Lebesgue
 measure of $\D\setminus X_\epsilon$ is less than $\epsilon$
 and so that  $\tilde \mu_i$ is defined on $X_\epsilon$
 and converges uniformly to $\theta$ on $X_\epsilon$.)
\end{claim}

\begin{figure}[ht]
\begin{center}
\includegraphics[width=4cm]{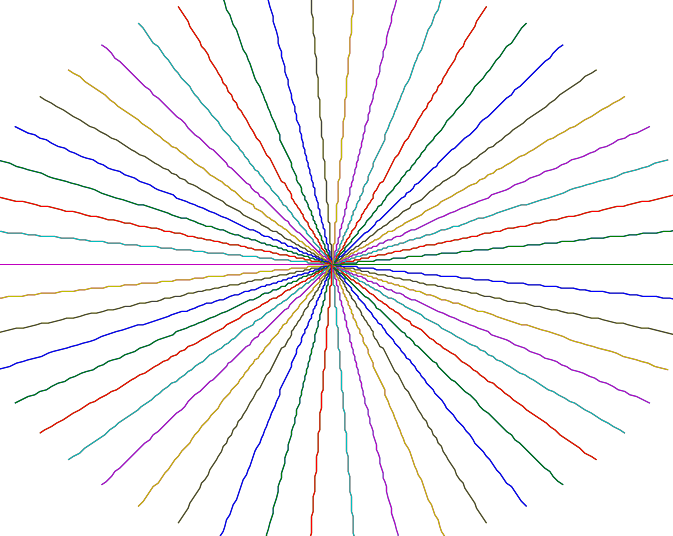}
\end{center}
\caption{Near the value $v$ the line field $\mu$ is almost radial.}
\label{fig:prong}
\end{figure}

Once this claim is established, 
 we take
 forward iterates of the disk $B_{\delta_i}(v)$, 
 until it stretches many times over 
 some large annulus. Given what we know about the line field on $A_i$, we
 can derive a contradiction.

To make this idea more precise, we use the
\emph{logarithmic change of variable} \cite[Section 2]{alexmisha}.
If $v$ is a limit point as in Claim 1, then 
$|f^n(v)|\to \infty$ as $n\to \infty$. Because the set of singular values
of $f$ is finite (and hence bounded), there is 
$\sigma\in \{+,-\}$
so that $f^n(v)\to \sigma \infty$ or so that 
$(-1)^n f^n(v)\to \sigma \infty$.
In the first situation
$\lim_{x\to \sigma \infty} f(x)=\sigma \infty$ and in the second case
$\lim_{x\to \sigma \infty} f(x)=-\sigma \infty$. To fix our ideas, let us
suppose that we are in the former case; the arguments in the latter are 
analogous. (Note, however,
that the sector condition is not preserved under iteration,
so we cannot simply reduce the second case to the first by considering
$f^2$ instead of $f$.)
Thus we assume that  $\lim_{x\to +\infty} f(x)=+\infty$ and that
there exists a point $v$ as in Claim 1 such that 
$f^n(v)\to +\infty$. (In particular, we have $+\in\Sigma$, where
$\Sigma$ is the set from the sector condition.)

Choose $M>0$ large enough such that $M>|f(0)|$, such that
$f(x)>x$ for $x\geq M$ and such that
\[ E(M):=\{z\in \C; |z|>M\} \]
 contains no
singular values of $f$. Let $V$ be the component of
$f^{-1}(E(M))$ that contains $[M,\infty)$. 

Since $E(M)$ contains no singular values,  
 $f\colon V\to E(M)$ is a covering map.
 Since $f$ is transcendental, $V$ is simply-connected and 
 $f\colon V\to E(M)$ is a universal covering. Set $r := \log M$, 
 $\HH_{r} := \{z\in\C: \re z > r\}$ and
 let $W$ be the component of $\exp^{-1}(V)$ that contains
 $[r,\infty)$. Because 
 $\exp:\HH_{r}\to E(M)$ is also a universal covering map, and
 $\exp:W\to V$ is univalent, there is a 
 conformal isomorphism $F\colon W\to \HH_{r}$ such that 
 $\exp\circ F=f\circ \exp$ and $F(W\cap\R)\subset \R$.
\[ \xymatrixcolsep{5pc} 
\xymatrix{ W \ar[r]^F\ar[d]_{\exp}& \HH_r\ar[d]^{\exp}   \\ 
   V\ar[r]^f  & E(M)  }
\]

It is well-known that the map $F$ is strongly expanding, see  equation (\ref{eqn:eremenkolyubich}) below, 
 and we will use this,
 together with the sector condition, to blow up the almost radial line field
 from Claim 1 to a large scale (in logarithmic coordinates). More precisely,
 we use the following.

\begin{claim}[Claim 2]
There exist constants $r_1>r$ and $c<1$ such that, for every $K\ge c>0$, there
 is $\delta_0=\delta_0(K)$ with the following property. 

Let $w\geq r_1$ and
  $\delta \leq \delta_0$. 
 Then there exist
 $\tilde{\delta}\leq\delta$ with $\tilde{\delta}\geq c\cdot \delta/K$ and
  a number $n\geq 0$ such that
 $F^n$ is defined and univalent on $B_{\tilde{\delta}}(w)$ and
   \[ F^n(B_{\tilde{\delta}}(w))\supset B_{K}(F^n(w)). \]
\end{claim}

To show how these two claims, together, yield the theorem,
 let $v\in \P_I(f)$ be a point as in Claim 1 such that
 $f^n(v)\to +\infty$. By passing to a forward iterate, if necessary,
 we can assume that $f^n(v)>e^{r_1}$ for all $n\geq 0$, where $r_1$
 is as in Claim 2. So $f^n(v)\in E(M)$ for all $n\ge 0$. Set
 $w:=\log v\in \R$. 

Let $\mu'$ be the line field on 
 $\HH_{r}$ defined by pulling back $\mu$ under $\exp\colon \HH_r\to E(M)$. Then
 $\mu'$ is $2\pi i$-periodic by definition. It follows from
 Claim 1 that there is a sequence 
 $\delta_i\to 0$ of radii such that the rescalings of $\mu'$ on
 the disks $B_{\delta_i}(w)$ converge to a radial line field.  

If we let $K_i$ be a sequence that tends to infinity sufficiently slowly,
 then for any choice of $\eps_i$ between 
 $4\pi \cdot c\cdot\delta_i/K_i^2$ and 
  $\delta_i$, the rescalings of $\mu'$ on the
 disks $B_{\eps_i}(w)$ will also converge to a radial line field. 
 (Here $c$ is the constant from Claim 2.) We may also assume that 
 $K_i>4\pi$ and $\delta_0(K_i)>\delta_i$ for all $i$. 

Now we apply Claim 2 to obtain numbers $\tilde{\delta}_i$ with
 $c\cdot \delta_i/K_i \leq \tilde{\delta}_i \leq \delta_i$ as well as numbers
 $n_i$ such that $F^{n_i}$ is defined and univalent on 
 $B_{\tilde{\delta}_i}(w)$ and covers 
 $B_{K_i}(F^{n_i}(w))$.
 If we set $\eps_i := 4\pi\cdot \tilde{\delta}_i/K_i$, then 
 $F^{n_i}(B_{\eps_i}(w))\supset B_{4\pi}(F^{n_i}(w))$. To see this, 
 apply the Schwarz Lemma to the branch of $F^{-n_i}|B_{K_i}(F^{n_i}(w))$
 mapping into $B_{\tilde{\delta}_i}(w)$.

Since $F^{n_i}(B_{\eps_i}(w))$ can be much larger than $B_{4\pi}(F^{n_i}(w))$,
we define $\kappa_i>0$ to be the largest integer so that $\phi_i(\D)\supset 
B_{4\pi}(0)$ where 
  \[ \phi_i:\D\to\C;\quad
       \zeta \mapsto \frac{F^{n_i}(w+\eps_i\cdot \zeta) - F^{n_i}(w)}{\kappa_i}.\]
Passing to a subsequence again if necessary, we can assume that the $\phi_i$ converge uniformly to a non-constant linear map. 
  (Recall that each $\phi_i$ extends to a conformal map on a disk whose radius tends to $\infty$ as $i\to\infty$.) 
 Now consider the sequence of line fields on the disk $D=B_{4\pi}(0)$ obtained by first rescaling the line field $\mu'$ on the disk
   $D:=B_{\eps_i}(w)$ as above, 
   and then pushing forward under the map $\phi_i$. By construction, these push-forwards converge to the radial line field
   on $D$. On the other hand, by invariance of $\mu'$, these push-forwards 
   are each obtained from $\mu'$ by a translation 
  and a rescaling by a factor of $1/\kappa_i$, and hence are all $2\pi i$-periodic. (Here we use  
   that $\kappa_i$ is an integer, and consequently $\mu'$ is $2\pi \kappa_i$-periodic).
   But then we obtain that
  the radial line field on the disk $D$ is also
  $2\pi i$-periodic, which is absurd. 
\smallskip 

It remains to establish Claims 1 and 2. To prove the former, let 
 $z$ be a Lebesgue density point of $A$ 
 which is also a point of continuity of $\mu$. This means that for 
 each $\epsilon>0$ there exists $\delta>0$ and a fixed line 
 $\mu_0$ so that
 $\dens(A, B_{\delta}(z))\geq 1-\epsilon$ and so that 
 $|\{z\in B_\delta(z)\cap A; |\mu(z)-\mu_0|\le \epsilon\}|/|B_\delta(z)|\ge 1-\epsilon$. 
Here ,
  \[ \dens(A,B) := \frac{\meas(A\cap B)}{\meas(B)} \]
 denotes the density of $A$ in $B$ and 
$|\mu(z)-\mu_0|$ denotes the angle between the lines $\mu(z)$
 and $\mu_0$.

Let $v\in \P_I(f)$ be a limit point of the orbit of $z$; say
  $f^{n_i}(z)\to v$. 
 Since the set of singular
 values of $f$ is finite, we can take $r>0$ so small that the set 
 $U:=B_r(v)$ does not intersect
 $\P(f)\setminus \{v\}$. We may assume that $f^{n_i}(z)\in U$ for all $i$. 
 Let $U_i$ be the component of $f^{-n_i}(U)$ that contains
 $z$. Let us also denote by 
 $U_i^*$ the component of $f^{-n_i}(U\setminus\{v\})$ contained in
 $U_i$. 

Then $U_i$ is simply connected, and since
 $z\in J(f)$, we have 
 \begin{equation}
   \dist(z,\partial U_i)\to 0. 
 \end{equation}
Furthermore,
 $f^{n_i}:U_i\to U$ is either a finite-to-one
 covering map of
 some degree $d_i<\infty$ (branched only over $v$) or 
 $f^{n_i}:U_i\to U\setminus\{v\}$ is a universal covering (of degree
 $d_i=\infty$). 
 Note that $U_i^*=U_i$ when $d_i=\infty$, whereas otherwise
 $U_i\setminus U_i^*$ consists of a single iterated preimage $v_i$ of $v$. 
 The set of points $z\in L_I(f)$ for which the sequence $d_i$ does not
 tend to infinity has Lebesgue measure zero by \cite[Lemma 3.6]{linefields}. So we may
 assume that $z$ was chosen such that $d_i\to\infty$. 

Let  $\HH=\{z\in \C; \re(z)>0\}$ denote the right half plane, and define
  \[ E: \HH \to U\setminus\{v\};\quad z\mapsto v + r\cdot e^{-z}. \]
Since $E$ is a universal covering, there exists a covering map 
 $\psi_i:\HH\to U_i^*$ with $f^{n_i}\circ \psi_i = E$. Since
 $f^{n_i}\colon U_i^*\to U\setminus \{v\}$ has degree $d_i$, 
  $\psi_i$ will be injective when restricted to any horizontal strip
 of height $2\pi d_i$. 

\[\xymatrixcolsep{5pc} 
 \xymatrix{ 
     U_i^*\ar[rd]^{f^{n_i}} &\HH\ar[l]_{\psi_i}\ar[d]^E \\ 
   & U\setminus\{v\}} \]

Let $\zeta_i$ be a 
 preimage of $f^{n_i}(z)$ under $E$, and define
   \[ R_i := \re \zeta_i. \]
 Since $f^{n_i}(z)\to v$, we have $R_i\to \infty$.

\begin{subproof}[Proof of Claim 1]
Let $\Delta_i < 2\pi d_i$ be a sequence 
 that tends to infinity
 sufficiently slowly (to be fixed below). 
 For simplicity let us also require that
 $\Delta_i$ is a multiple of $2\pi$. Consider the squares 
 \[ Q_i := \zeta_i + 
  \left[\frac{-\Delta_i}{2},\frac{\Delta_i}{2}\right]\times 
             \left[-i\frac{-\Delta_i}{2},i\frac{\Delta_i}{2}\right]
      \]
 with sides of length $\Delta_i$ and centre $\zeta_i$. Note that
 $\psi_i$ is injective on $Q_i$. Indeed, if 
   \[ S_i := \{a + ib: a> 0, |b-\im\zeta_i|< \pi d_i\} \]
 is the horizontal strip of
 height $2\pi d_i$ centered at $\zeta_i$, then $\psi_i$ is injective on
 $S_i$, as mentioned above. Furthermore, 
 if $\Delta_i$ grows sufficiently slowly, then 
 $\mod(S_i\setminus Q_i)\to\infty$, and hence
   \begin{equation} \label{eqn:bigmod}
     \mod(\psi_i(S_i)\setminus \psi_i(Q_i))\to\infty. \end{equation} 
 Let $\nu$ be the line field on $Q_i$ that is obtained by pulling back
  $\mu$ under $\psi_i$. Using (\ref{eqn:bigmod}), 
  the Koebe Distortion Theorem and that $z$ is a 
  point of continuity of the line field $\mu$, we see that $\nu$
  is an almost constant line field on $Q_i$.
 More precisely,  there is a sequence $\eta_i\to 0$ such that,
 for each $i$, there are a subset 
 $\hat Q_i$ of $Q_i$
 and a constant line field $\nu_0$ so that $\dens(\hat Q_i,Q_i)\ge 1-\eta_i$
 and $|\nu(z)-\nu_0|\le \eta_i$  for each $z\in \hat Q_i$. Moreover,
 if we decrease $\Delta_i$, then the bound for $\eta_i$ from the Koebe
 Theorem improves. This means that we may assume that $\Delta_i$ tends to
 infinity sufficiently slowly to ensure that
   \[ \Delta_i\cdot \eta_i \to 0. \]

Let us determine $\mu$ on $A_i=E(Q_i)=f^{n_i}(\psi_i(Q_i))$, using 
 the fact that $\mu|_{A_i} = E_*(\nu|_{Q_i})$.
 Note that
 $A_i$ is a round 
 annulus 
 centered around $v$ with $\mod(A_i)\to \infty$; let $r_i$ denote its
 outer radius. Also note that 
 $\theta=E_*(\nu_0|Q_i)$ is the radial line field 
 $z\mapsto \rho z/|z|$
 where $\rho\in \C$ with $|\rho|=1$ is constant; see Figure
 \ref{fig:prong}. Furthermore, 
   \[ \dens(E(Q_i\setminus \hat{Q}_i),A_i) 
          \leq  \dfrac{1}{1-e^{-\Delta_i}}
         \Delta_i\cdot \eta_i \to 0. \]
Indeed,  considering the map $E\colon Q_i\to E(Q_i)$  on a horizontal segment $L$,
and writing $T_i:=(Q_i\setminus \hat Q_i)\cap L$, 
we have
$$\dfrac{\mbox{length}(E(T_i))}{\mbox{length}E(Q_i\cap L)}
\le \dfrac{\mbox{length}(T_i)}{1-e^{-\Delta_i}} =\mbox{dens}(T_i,Q_i\cap L)\cdot \dfrac{\Delta_i}{1-e^{-\Delta_i}}.$$
 This completes the proof of Claim 1.
\end{subproof}

Now let us turn to the proof of Claim 2. We begin by reformulating the
sector condition in logarithmic coordinates.

\begin{claim}[Claim 3]
There exists $\epsilon_0\in (0,1)$ and $r_2>r+1$ so that 
$W\supset H(\epsilon_0,r_2)$  where 
\[H(\epsilon_0,r_2):=\{z\in \C; z=x+iy, x,y\in \R\mbox{ with } x> r_2\mbox{ and }|y|<\epsilon_0\} .\]
\end{claim}
\begin{subproof}
Note that $f$ satisfies the sector condition, and therefore $V$ contains
a sector of the form
\[ S(\theta,M_2):=\{z\in \C; z=x+iy, x,y\in \R \mbox{ with }|y|<\theta|x|, x\ge M_2\} \]
where $\theta>0$ and $M_2>0$ is some large number.
There exist $\epsilon_0\in (0,1)$ and $r_2>0$ so that
$S(\theta,r_2)\supset \exp(H(\epsilon_0,M_2))$
concluding the proof of the claim.
\end{subproof}

\begin{subproof}[Proof of Claim 2]
By
 \cite[Lemma 1]{alexmisha} (which is an application of Koebe's theorem), 
  the map $F$ is expanding: 
  \begin{equation} \label{eqn:eremenkolyubich}
      |F'(z)| \geq \frac{1}{4\pi}(\re F(z) - r) 
  \end{equation}
 for all $z\in W$. (Recall that $r=\log M$.)
 In particular, if $r_1>r_2+\eps_0$ 
 is sufficiently large, then
 for every $x\geq r_2$ and all $j\geq 0$, there exists a branch of
 $F^{-j}$ that takes $F^j(x)$ to $x$ and is defined on the disk
 of radius $3\eps_0$ around $F^j(x)$. 

Now let $w\geq r_1$, $K>0$ and $\delta>0$. We set $w_j := F^j(w)$, 
 $D := B_{\delta}(w)$ and $D_j := F^j(D)$. 
 Let $m\geq 0$
 be minimal such that $D_m$ is not contained in the strip
 $H(\eps_0/4,r_2)$. Then $F^m:D\to D_m$ is a conformal isomorphism.

 We claim that there is a universal constant
 $C$ such that 
   \begin{equation} \label{eqn:koebe}
\frac{\max_{\zeta\in\partial D_m}|\zeta-w_m|}{\min_{\zeta\in\partial D_m}|\zeta-w_m|}
               \leq C. \end{equation}
 This is trivial if $m=0$. Otherwise, let $\phi$ be the branch of
  $F^{-(m-1)}$ that takes $w_{m-1}$ to $w$ and is defined on the disk of
  radius $3\eps_0$. By definition of $m$, there is some point 
  $\zeta\in\partial D_{m-1}$ with $|\zeta - w_{m-1}|\leq \eps_0/4$. If
  $\omega\in\partial B_{\eps_0/2}(w_{m-1})$, we see by the Koebe distortion
  Theorem~\ref{thm:koebe} that
   \[ |\phi(\omega) - w| \geq 
         \frac{\frac{1}{6}}{(1+\frac{1}{6})^2}\cdot
         \frac{(1-\frac{1}{12})^2}{\frac{1}{12}}\cdot 
         |\phi(\zeta)-w| > |\phi(\zeta)-w| = \delta. \]
  Thus it follows that $D_{m-1}\subset B_{\eps_0/2}(w_{m-1})$, and
   (\ref{eqn:koebe}) follows from the Koebe Distortion Theorem (using
   the fact that $F$ is univalent on the disk $B_{\eps_0}(w_{m-1})$). 

 If $B_{K}(w_m)\subset D_{m}$, then we set $n:= m$ and are done. Otherwise,
  define $R_1 := \max_{\zeta\in\partial D_m}|\zeta-w_m|$ and
  $R_2 := \min_{\zeta\in\partial D_m}|\zeta-w_m|$, so that
  $R_1/R_2\leq C$ and $R_2<K$. We set
    \[ \tilde{\delta} := \frac{\delta\cdot \eps_0}{R_1}, \]
  $\tilde{D}:= D_{\tilde{\delta}}(w)$ and $\tilde{D}_m := F^m(\tilde{D})$. 
  Note that
     \[ \tilde{\delta} > \frac{\eps_0}{C}\cdot \frac{\delta}{K} .\]
   To prove Claim 2, we define $c:= \epsilon_0/C$ and need to
   check that    $F^{m+1}(\tilde D)\supset B_K(F^{m+1}(w))$. 
   To see this, notice that $F^m(B_\delta(w))\supset B_{R_1}(F^m(w))$.
 Hence by Schwarz and by the choice of $\tilde \delta$, 
  $\tilde{D}_m=F^m(B_{\tilde \delta}(w))\subset H(\eps_0,r_2)$.
  It follows that $F^{m+1}$ is 
  defined and univalent on $\tilde{D}$. Furthermore, by
  Koebe's theorem, $\tilde{D}_m$ contains the disk $B_{C_1\cdot \eps_0}(w_m)$
  for a universal constant $C_1$. It follows, again using Koebe's
  theorem and the estimate (\ref{eqn:eremenkolyubich}) that 
    \[ F(\tilde{D}_m) \supset B_{K'}(w_{m+1}), \]
  where 
     \begin{align*}
          K' &= C_1\cdot \eps_0 \cdot |F'(w_m)|/4 \geq 
         \frac{C_1\cdot \eps_0}{16\pi}\cdot (\re w_{m+1}-r).
     \end{align*}
 Note that, as $\delta\to 0$, we have $m\to\infty$ and hence 
  $\re w_{m+1}\to\infty$. Thus we can choose $\delta_0$ sufficiently small
  that $\delta<\delta_0$ implies $K'\geq K$, which completes the proof.
\end{subproof}
\end{proof}

\section{Parameter spaces}
\label{sec:parameter}

Recall that, given $f\in\SR$, we denote by 
 $\MfR$ the set of functions real-topologically equivalent to $f$
  (Definition \ref{def:MfR}). 
 As we have already mentioned, this space can naturally be given the structure
 of a real-analytic manifold; this follows from 
 work of Eremenko and Lyubich
 \cite{alexmisha} (who treated the complex-analytic case). More precisely:

 \begin{prop}[Manifold structure]
  Let $f\in\SR$ and set $q:= \# S(f)$.
   Then the set $\MfR$ can be given the structure of a real-analytic manifold
   of dimension $q+2$ in such a way that:
  \begin{itemize}
   \item A sequence $f_n\in \MfR$ converges to $f$ in the manifold topology of $\MfR$ 
     if and only if there are sequences of homeomorphisms
     $\psi_n,\phi_n\in\HomeoR$
     converging to the
      identity as $n\to\infty$
     such that $f_n = \psi_n\circ f \circ \phi_n^{-1}$.
   \item The inclusion from $\MfR$ (as a real-analytic manifold) to the space
     of entire functions is real-analytic. 
  \end{itemize}

 In the following, we will always assume $\MfR$ to be equipped with this
  topology and real-analytic structure. If we wish to make the distinction, we will
  refer to this as the ``manifold topology'', and the induced topology from the space of
  entire functions as the ``locally uniform topology''. 
 \end{prop}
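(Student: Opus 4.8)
The plan is to adapt the construction of Eremenko and Lyubich \cite{alexmisha}, which provides the analogous statement in the complex-analytic category, and to keep track of the extra real structure throughout. Recall that $\MfR$ is the set of entire functions real-topologically equivalent to $f$; by Remark~1 after Definition~\ref{def:MfR} every such $g$ is in fact real-\emph{quasiconformally} equivalent to $f$, so we may restrict attention to pairs $(\phi,\psi)$ of quasiconformal homeomorphisms in $\HomeoR$ with $\psi\circ f=g\circ\phi$.

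First I would recall the Eremenko--Lyubich parametrisation of the (complex) space $M_f$ of entire functions topologically equivalent to $f$. Writing $q=\# S(f)$, one fixes a base point and shows that every $g\in M_f$ is determined, up to pre- and post-composition by affine maps, by the positions of the $q$ singular values together with the quasiconformal deformation that carries $S(f)$ to $S(g)$; deformations isotopic rel $S(f)\cup\{\infty\}$ give conformally conjugate maps, and hence $M_f$ is parametrised by (an open subset of) the configuration space of $q$ points in $\C$ together with the two extra dimensions coming from post-composition by $z\mapsto az+b$. This yields the dimension $q+2$. The heart of the matter is to see that the induced real-analytic structure is well defined and that the two topologies agree; this is exactly \cite[Section~3]{alexmisha}.

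Next I would impose the reality constraints. Since $f$ is real and we only allow $\phi,\psi\in\HomeoR$, the singular values of any $g\in\MfR$ are real and $g$ commutes with complex conjugation; the configuration space of $q$ points in $\C$ is replaced by the configuration space of $q$ ordered points on $\R$ (an open subset of $\R^q$, using the marking of the singular values discussed after Definition~\ref{def:MfR}), and the post-composition group shrinks from the $4$-real-dimensional affine group of $\C$ to $\MobR$, which is $2$-dimensional. Altogether we still obtain a real-analytic manifold, now genuinely of real dimension $q+2$. To upgrade the complex-analytic charts of \cite{alexmisha} to real-analytic charts on $\MfR$, I would note that the reality condition is cut out by the real-analytic (indeed anti-holomorphic) involution $g\mapsto \overline{g(\bar z)}$, so $\MfR$ is the fixed-point set of a real-analytic involution on $M_f$ and hence inherits a real-analytic structure; alternatively one rebuilds the charts directly from real Beltrami coefficients (those with $\mu(\bar z)=\overline{\mu(z)}$) and real point configurations.

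The two bullet points are then verified as in \cite{alexmisha}: the equivalence ``$f_n\to f$ in the manifold topology $\iff$ $f_n=\psi_n\circ f\circ\phi_n^{-1}$ with $\psi_n,\phi_n\to\id$'' follows from the fact that the charts are built out of solutions of the measurable Riemann mapping problem depending continuously (indeed real-analytically) on the Beltrami data, together with the normalisation fixing the base point; and real-analyticity of the inclusion $\MfR\hookrightarrow\{\text{entire functions}\}$ (with the topology of locally uniform convergence) follows from real-analytic dependence of the normalised solution $h_\mu$ of the Beltrami equation on $\mu$ and on the finitely many parameters, via $g=h_\mu\circ f\circ h_\mu^{-1}$ post-composed with the relevant element of $\MobR$. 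The main obstacle, and the step requiring the most care, is the reduction modulo isotopy: one must check that two real-quasiconformal equivalences $(\phi_1,\psi_1)$ and $(\phi_2,\psi_2)$ for the same $g$ differ by maps isotopic to the identity rel the (real) postsingular data, so that the chart is well defined; this is where one genuinely uses that $S(f)$ is finite (so that $\pi_1(\C\setminus S(f))$ is finitely generated and the relevant isotopy/homotopy lifting arguments apply) and that everything is compatible with complex conjugation. Once this is in place, the rest is bookkeeping already carried out in \cite{alexmisha}.
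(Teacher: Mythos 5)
Your overall plan (adapt Eremenko--Lyubich, keeping track of the real structure) is the right one, and the variant you mention only in passing --- rebuilding the charts directly from real Beltrami coefficients and real point configurations --- is essentially what the paper does: in Appendix~\ref{app:parameter} (Proposition~\ref{prop:manifoldstructure}) it constructs an explicit real-analytic covering map $\Theta\colon \Lambda\times(0,\infty)\times\R\to\MfR$, with $\Lambda\subset\R^q$ the set of ordered real $q$-tuples, by moving the singular values with an explicit real-analytic family $\psi_\lambda\in\HomeoR$ of quasiconformal maps, pulling back the dilatation under $f$, and solving the Beltrami equation with a two-point real normalisation $\phi_{\lambda,a,b}(0)=ab$, $\phi_{\lambda,a,b}(\kappa)=a(b+\kappa)$. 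Note that the two extra parameters thus come from normalising the \emph{pre}-composition (the affine freedom $g\mapsto g\circ A$, which does not move the singular values), not from post-composition as you write, and that the covering-map formulation is what correctly handles the periodic case, where $\MfR$ acquires circle factors.

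The route you actually develop, realising $\MfR$ as the fixed-point set of the anti-holomorphic involution $\iota(g)(z)=\overline{g(\bar z)}$ on the complex Eremenko--Lyubich space $M_f$, has a genuine gap: the fixed locus is in general strictly larger than $\MfR$, and even locally the identification is not automatic. Being fixed by $\iota$ only says that $g$ commutes with complex conjugation; it does not say that $g$ is equivalent to $f$ by maps in $\HomeoR$. For a concrete obstruction, $\sinh=\psi\circ\sin\circ\phi^{-1}$ with $\phi(z)=\psi(z)=iz$ lies in $M_{\sin}$ and is real, but its singular values $\pm i$ are not real, so it belongs to the fixed locus and not to $M_{\sin}^{\R}$; and even for a real $g\in M_f$ whose singular values are real and correctly ordered, one must still produce an equivalence $(\phi,\psi)$ with \emph{both} maps in $\HomeoR$ --- that is exactly the content of membership in $\MfR$ and does not follow formally from the symmetry of $g$. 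To complete your argument you would need to show that $\MfR$ is open in the real locus, i.e.\ that every real map in $M_f$ sufficiently close to a point of $\MfR$ (whose singular values are automatically real, by a conjugation-symmetry argument) admits a real-topological equivalence to $f$; the natural way to prove this is precisely the direct construction with real Beltrami data, so the involution shortcut does not in fact bypass the work. The isotopy/well-definedness issue you single out as the main obstacle is indeed needed, but it is the smaller of the two points.
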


 The fact that the dimension of $\MfR$ is $q+2$ (rather than $q$) reflects the 
  fact that the group $\MobR$ of order-preserving real affine maps acts on $\MfR$ by
  conjugacy. We can quotient $\MfR$ by the action of this group:

 \begin{prop}[The quotient $\MfRt$]
  Let $\MfRt$ be the quotient of $\MfR$ by $\MobR$.
   Then $\MfRt$ is a real-analytic manifold of dimension
   $q=\# S(f)$, and the projection 
   $\pi:\MfR\to \MfRt$ is real-analytic. 
 \end{prop}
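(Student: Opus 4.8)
The plan is to identify $\MfRt$ with the orbit space of a free and proper real-analytic action of $\MobR$ on $\MfR$ and then invoke the quotient manifold theorem. Recall from the previous proposition that $\MfR$ is a real-analytic manifold of dimension $q+2$, that convergence in it is characterised by the existence of $\phi_n,\psi_n\in\HomeoR$ tending to the identity with $f_n=\psi_n\circ f\circ\phi_n^{-1}$, and that the inclusion of $\MfR$ into the space of entire functions is real-analytic. Now $\MobR$, the group of affine maps $z\mapsto az+b$ with $a>0$ and $b\in\R$, is a $2$-dimensional real-analytic Lie group, and it acts on $\MfR$ by $M\cdot g:=M\circ g\circ M^{-1}$. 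This action is well-defined, since if $g$ is real-topologically equivalent to $f$ via $(\phi,\psi)$ then $M\cdot g$ is real-topologically equivalent to $f$ via $(M\circ\phi,\,M\circ\psi)$; and it is real-analytic, since in the charts on $\MfR$ the Taylor coefficients of $M\circ g\circ M^{-1}$ depend polynomially on $a,a^{-1},b$ and those of $g$, and the inclusion into the space of entire functions is real-analytic.

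First I would check that the action is \emph{free}. Suppose $M(z)=az+b$ fixes $g\in\MfR$. If $a=1$ and $b\neq 0$, then $g(z+b)=g(z)+b$, whence the finite nonempty set $S(g)$ would be invariant under a nontrivial translation, which is impossible; here we use that every transcendental entire function has at least one singular value. If $a\neq 1$, then $M$ has a (real) fixed point $p$, and evaluating the relation $M\circ g\circ M^{-1}=g$ at $p$ shows that $g(p)$ is fixed by $M$, hence $g(p)=p$; conjugating by the translation $z\mapsto z-p$ we may assume $p=0$, so that $g(az)=ag(z)$, and comparing Taylor coefficients at $0$ forces $g$ to be linear, contradicting that $g$ is transcendental. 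Hence the stabiliser of every point is trivial.

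The crux is to show that the action is \emph{proper}: if $g_n\to g$ in $\MfR$ and $M_n\cdot g_n\to h$ in $\MfR$, with $M_n(z)=a_nz+b_n$, then after passing to a subsequence $a_n$ converges to a positive limit and $b_n$ is bounded. One controls $a_n$ by means of a conformal quantity that transforms homogeneously under $\MobR$. When $q=\#S(f)\geq 2$ this is straightforward: the singular values of $g_n$ converge to those of $g$ and $S(M_n\cdot g_n)=M_n(S(g_n))$ converges to $S(h)$, so comparing two distinct singular values shows that $a_n$ is bounded; moreover $a_n\not\to 0$, since otherwise $S(h)$ would reduce to a single point, contradicting $\#S(h)=q\geq 2$; and then $b_n$ is controlled using one singular value. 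When $q=1$ one argues similarly but with the single singular value replaced by a suitable homogeneously-scaling invariant: the second derivative of $g_n$ at a fixed critical value when the singular value is a simple critical value, or the asymptotic value together with the exponential rate at the single logarithmic singularity otherwise; here one uses the structure of $g_n$ near infinity in logarithmic coordinates and the expansion estimate~(\ref{eqn:eremenkolyubich}). In each case the relevant quantity is squeezed between two nonzero convergent limits, pinning down $a_n$, after which $b_n$ follows. Granting freeness and properness of this real-analytic action of the $2$-dimensional group $\MobR$, the quotient manifold theorem shows that $\MfRt=\MfR/\MobR$ is a real-analytic manifold with $\dim\MfRt=(q+2)-2=q$ and that $\pi\colon\MfR\to\MfRt$ is a real-analytic submersion, in particular real-analytic, as claimed.

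The main obstacle is the properness argument, and within it the case $q=1$: there one cannot read off the scaling from the configuration of singular values and must instead extract it from the behaviour of the maps near infinity. An alternative that bypasses abstract properness altogether is to build explicit real-analytic slices for the $\MobR$-action — for instance, normalising the extreme singular values, or a marked critical value together with its second derivative — and to verify directly that these give an atlas for $\MfRt$; this is essentially the approach of the appendix.
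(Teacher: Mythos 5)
Your argument is correct in substance, but it takes a genuinely different route from the paper. The paper does not verify properness of the group action abstractly: in Appendix~\ref{app:parameter} (Proposition~\ref{prop:manifoldstructure}) it builds an explicit real-analytic covering map $\Theta:\Lambda\times(0,\infty)\times\R\to\MfR$ by prescribing the singular values through quasiconformal maps $\psi_\lambda$ and solving the Beltrami equation, with the last two coordinates $(a,b)$ chosen precisely to carry the affine degrees of freedom; item (4) of that proposition identifies exactly which parameters give $\MobR$-conjugate maps, so the quotient structure, its dimension $q$, and even its global topological type ($\R^q$, or $\R^{q-1}\times S^1$ in the periodic case) are read off directly. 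Your slice remark at the end is thus exactly the paper's method. Your alternative --- freeness plus properness plus the quotient manifold theorem in the analytic category --- is shorter and your freeness argument is complete and correct (including in the periodic case, where only the parametrization $\Theta$, not the conjugation action, fails to be injective); what it buys is economy, at the price of losing the explicit global description. Two points in your properness discussion deserve tightening. First, the case $q=1$ is much easier than you suggest: a transcendental $g$ with $\# S(g)=1$ is an unbranched covering of $\C\setminus S(g)$ off a discrete set, which forces $g(z)=s+ce^{\beta z}$ with $s,c,\beta\in\R$; in particular there are no critical values at all, so your ``second derivative at a critical value'' invariant is vacuous, while the rate $\beta$, which transforms as $\beta\mapsto\beta/a$ under conjugation by $z\mapsto az+b$, immediately pins down $a_n$ and then $b_n$. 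Second, real-analyticity of the conjugation action cannot be deduced merely from analyticity of the inclusion of $\MfR$ into the space of entire functions; one should verify it in actual charts, and the cleanest way is again the paper's coordinates (or Eremenko--Lyubich's), in which conjugation by an affine map acts essentially affinely on $(\lambda,a,b)$, so at that point the two approaches partially converge.
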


 To prove our main results, we shall first establish
  density of hyperbolicity in $\MfRt$ (provided $f$ satisfies the
  sector condition). The following fact then implies that the same
  is true under the (a priori) more general hypotheses given in the
  introduction.

 \begin{prop}[Continuous families and the manifold topology]%
 \label{prop:continuity}
 Let $n,k\in\N$ and suppose that $(f_t)_{t\in[-1,1]^k}$ is a continuous family of functions
  $f_t\in \SR$ such that $\# S(f_t)=n$ for all $t$. Then there exist
  continuous families $\phi_t,\psi_t\in\HomeoR$
  such that
    \[ f_t = \psi_t\circ f_0\circ \phi_t^{-1} \]
  for all $t\in [-1,1]^k$.

  In other words, $f_t\in M_{f_0}^{\R}$ for all $t\in [-1,1]^k$ and
    $f_t$ depends continuously on $t$ in the topology of $M_{f_0}^{\R}$. 
 \end{prop}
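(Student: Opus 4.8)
The plan is to construct the families $\phi_t,\psi_t$ by a local-to-global argument, using the manifold structure of $\MfR$ (and of the space of entire functions) together with the fact that $[-1,1]$ is a connected, compact, one-dimensional parameter space. First I would observe that it suffices to prove a local statement: for each $t_0\in[-1,1]$ there is a neighbourhood $I$ of $t_0$ and continuous families $(\phi_t)_{t\in I}$, $(\psi_t)_{t\in I}$ in $\HomeoR$, with $\phi_{t_0}=\psi_{t_0}=\id$, such that $f_t=\psi_t\circ f_{t_0}\circ\phi_t^{-1}$ for $t\in I$. Granting this, one covers $[-1,1]$ by finitely many such intervals $I_1,\dots,I_k$, ordered along the interval, and composes the resulting equivalences on the overlaps; since on each overlap the two representations differ by a continuous family of topological (hence, by the remark after Definition~\ref{def:MfR}, quasiconformal) self-equivalences of $f_{t_0}$ fixing the singular set, one can propagate a single basepoint $f_0$ across all of $[-1,1]$. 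Re-parametrising the pieces to agree at the endpoints of the $I_j$ yields global continuous families $\phi_t,\psi_t$ with $f_t=\psi_t\circ f_0\circ\phi_t^{-1}$, which is exactly the assertion, and continuity of $t\mapsto f_t$ in the manifold topology of $M_{f_0}^{\R}$ then follows from the first bullet of the Manifold Structure proposition.

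For the local statement I would use the standard fact (again due to Eremenko--Lyubich \cite{alexmisha}, in the real-analytic form quoted in this section) that the ``forgetful'' map sending an entire function $g$ to the combinatorial/topological type of the pair $(g,S(g))$ is, locally near any $g_0\in\SR$ with $\#S(g_0)=n$, a fibration whose fibres are the real-topological equivalence classes, i.e.\ copies of $M_{g_0}^{\R}$; concretely, any entire function sufficiently close to $g_0$ with the same number of singular values lies in $M_{g_0}^{\R}$, and one can choose the straightening homeomorphisms $\phi,\psi$ depending continuously (indeed real-analytically) on the function. Applying this with $g_0=f_{t_0}$ and using continuity of $t\mapsto f_t$ in the locally uniform topology gives, for $t$ near $t_0$, homeomorphisms $\phi_t,\psi_t\in\HomeoR$ with $f_t=\psi_t\circ f_{t_0}\circ\phi_t^{-1}$ and $\phi_{t_0}=\psi_{t_0}=\id$, varying continuously in $t$. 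Here one must check that $\#S(f_t)$ is genuinely constant equal to $n$ in a neighbourhood (it is, by hypothesis) and that the singular values cannot collide or escape, so that the marking is well-defined along the family; this is where the assumption $\#S(f_t)=n$ (rather than $\leq n$) is used.

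The main obstacle is the gluing step: while each local family of equivalences is unique up to pre-composition with a continuous family of quasiconformal automorphisms of $f_{t_0}$ fixing $S(f_{t_0})$ pointwise, such an automorphism need not be isotopic to the identity rel $S(f_{t_0})$, and one must verify that the discrepancies arising on the finitely many overlaps $I_j\cap I_{j+1}$ can be absorbed continuously. The clean way to handle this is to work not with $\MfR$ alone but with the associated bundle over the type space and invoke the homotopy lifting property: since $[-1,1]$ is contractible, the bundle is trivial over it, so a global continuous section (equivalently, a global continuous choice of $\phi_t,\psi_t$) exists. Thus the proof reduces to: (i) identifying the relevant bundle and its fibres with $\MfR$ via \cite{alexmisha} and the Manifold Structure proposition; (ii) checking the local triviality near each $t_0$ from the constancy of $\#S(f_t)$; and (iii) applying the homotopy lifting theorem over the contractible base $[-1,1]$. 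The one-dimensionality of the parameter $t$ is not essential for this argument, only contractibility; I would state and prove it for $[-1,1]$ as that is all that is needed in Section~\ref{sec:densityhyperbolicity}.
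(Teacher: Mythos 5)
There is a genuine gap, and it sits exactly where your argument puts all of its weight: the ``standard fact'' you attribute to Eremenko--Lyubich, namely that every entire function sufficiently close to $g_0$ in the locally uniform topology and with the same number of singular values lies in $M_{g_0}^{\R}$, with straightening homeomorphisms varying continuously. This is not proved in \cite{alexmisha}, and it is strictly stronger than Proposition~\ref{prop:continuity} itself: it is an openness statement about a full neighbourhood of $g_0$ in the space of entire functions, whereas the proposition only concerns a continuous path. Local uniform closeness controls the covering structure of $g$ over $\C\setminus S(g)$ only on compact sets; the combinatorics of the preimage structure ``far out'' (equivalently, all but a finite part of the line complex) can a priori change under an arbitrarily small perturbation that keeps $\#S$ fixed, so the topological type need not be locally constant in this sense. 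Since local triviality of your putative fibration is precisely this unproved claim, the homotopy-lifting/gluing machinery in the second half of your argument has nothing to act on; the gluing itself would be fine over $[-1,1]$, but it is moot.

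The paper's proof (Appendix~\ref{app:parameter}) avoids any neighbourhood statement and works directly along the path. Using Lemma~\ref{lem:singulardependence} and $\#S(f_t)\equiv n$, the singular values move continuously, so one chooses a continuous family $\psi_t\in\HomeoR$ carrying $S(f_0)$ to $S(f_t)$, solves the Beltrami equation to get $\phi_t$ with $g_t:=\psi_t^{-1}\circ f_t\circ\phi_t$ entire, and normalizes. The key step is then to show $g_t=f_0$ for all $t$ via line complexes: for each fixed $N$, the part of the line complex $LC(g_t)$ within distance $N$ of the base point depends continuously on $t$ (analytic continuation of $g_t^{-1}$ along finitely many fixed arcs varies continuously), hence is constant on the connected interval $[-1,1]$; since this holds for every $N$, the full line complexes coincide, and the normalized function is determined by its line complex. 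Note how this argument uses connectedness and compactness of the parameter interval to upgrade ``each finite ball of the graph is locally constant'' to ``the infinite graphs agree'' --- exactly the step that fails for a single nearby function, which is why your local statement cannot be extracted from it. To repair your proof you would either have to prove that neighbourhood statement (which is not available in the cited literature) or replace the fibration/local-triviality step by a path-wise rigidity argument of the line-complex type, at which point you have essentially reconstructed the paper's proof.
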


For completeness and future reference, we provide a proof of the preceding 
 propositions in Appendix \ref{app:parameter}. In fact, we will give a very explicit
 topological description of the spaces $\MfR$ and
 $\MfRt$.

\begin{cor}[Connected conjugacy classes]\label{cor:conjconnected2}
Let $f\in \SR$. Then the set of functions $g\in \SR$ that are 
 real-topologically conjugate
 to $f$ is a connected subset of $\MfR$ with the manifold topology. 
\end{cor}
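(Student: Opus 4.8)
The plan is to reduce the statement to the combination of two facts established earlier: the rigidity theorem (in the guise of Theorem~\ref{thm:realqcconj}), which says that real-topological conjugacy implies real-quasiconformal conjugacy, and the connectivity of quasiconformal conjugacy classes, which follows from the manifold structure of $\MfR$ together with the characterisation of the topology of $\MfR$ in terms of pre-\ and post-composition by homeomorphisms close to the identity (Proposition~[Manifold structure]). So the first step is to observe that if $g\in\SR$ is real-topologically conjugate to $f$, then $g\in\MfR$; indeed any real-topological conjugacy $\theta$ is in particular a real-topological equivalence, so $g$ is real-topologically equivalent to $f$, and by definition $g\in\MfR$. Thus the set in question is genuinely a subset of $\MfR$, and it makes sense to ask about connectivity in the manifold topology.

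The second step is to upgrade topological to quasiconformal conjugacy. By Theorem~\ref{thm:realqcconj} (together with Theorem~\ref{thm:boettcher} providing the escaping conjugacy, exactly as in the proof of Theorem~\ref{thm:qcrigidity}), any $g$ real-topologically conjugate to $f$ is in fact real-quasiconformally conjugate to $f$; write $g=\theta\circ f\circ\theta^{-1}$ with $\theta$ quasiconformal and preserving $\R$. Conversely, of course, any quasiconformal conjugate of $f$ is a topological conjugate. Hence the set of real-topological conjugates of $f$ in $\SR$ coincides with the set of real-\emph{quasiconformal} conjugates of $f$, and it suffices to prove that the latter is connected in $\MfR$.

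The third and main step is the connectivity of the quasiconformal conjugacy class, and this is where the Measurable Riemann Mapping Theorem enters in the standard way. Given a quasiconformal conjugacy $\theta$ between $f$ and $g=\theta\circ f\circ\theta^{-1}$, let $\mu=\mu_\theta$ be the Beltrami coefficient of $\theta$; it is $f$-invariant and fixed by complex conjugation. For $t\in[0,1]$ set $\mu_t:=t\mu$, which is still $f$-invariant, fixed by conjugation, and has $\|\mu_t\|_\infty\le\|\mu\|_\infty<1$; let $\theta_t$ be the quasiconformal homeomorphism of $\C$ with Beltrami coefficient $\mu_t$, normalised (by postcomposing with an affine map, i.e.\ an element of $\MobR$, which we may do since $\mu_t$ is real-symmetric) to preserve $\R$ and to fix, say, two marked points. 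Then $g_t:=\theta_t\circ f\circ\theta_t^{-1}$ is holomorphic by the invariance of $\mu_t$, lies in $\SR$, and is real-topologically equivalent to $f$ via $(\psi,\phi)=(\theta_t,\theta_t)$; so $g_t\in\MfR$, with $g_0=f$ and $g_1=g$ (up to the action of $\MobR$, which is harmless since $\MfR$ contains the whole $\MobR$-orbit of each of its elements). By the standard dependence of solutions of the Beltrami equation on parameters, $t\mapsto\theta_t$ depends continuously (indeed real-analytically) on $t$, hence so does $t\mapsto g_t$; and by the characterisation of the manifold topology in Proposition~[Manifold structure]---$g_n\to f$ in $\MfR$ iff $g_n=\psi_n\circ f\circ\phi_n^{-1}$ with $\psi_n,\phi_n\to\id$---continuity in the locally uniform topology along a path of the form $\theta_t\circ f\circ\theta_t^{-1}$ with $\theta_t$ varying continuously from $\id$ gives continuity in the manifold topology. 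Thus $t\mapsto g_t$ is a path in $\MfR$ from $f$ to $g$ lying entirely in the quasiconformal conjugacy class of $f$ (each $g_t$ is conjugate to $f$ via $\theta_t$), which establishes connectivity.

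The main obstacle is the bookkeeping in the final step: one must make sure the normalisation of $\theta_t$ can be chosen consistently so that (i) $\theta_t$ preserves the real line for all $t$, (ii) $g_t$ actually lands in $\MfR$ rather than merely being topologically equivalent to $f$ over $\C$ without respecting the real structure, and (iii) the path is continuous in the \emph{manifold} topology, not just locally uniformly---the last point being exactly what Proposition~[Manifold structure] is there to supply. Points (i) and (ii) are handled by exploiting that $\mu_t$ is symmetric under $z\mapsto\bar z$, which forces the (suitably normalised) solution $\theta_t$ to commute with complex conjugation and hence to preserve $\R$, so that $\theta_t\in\HomeoR$ and $g_t$ is real-topologically equivalent to $f$ in the sense of Definition~\ref{def:MfR}.
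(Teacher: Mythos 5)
Your proposal is correct and follows essentially the same route as the paper: upgrade the real-topological conjugacy to a quasiconformal one via the rigidity theorem (Theorem~\ref{thm:qcrigidity}, equivalently Theorem~\ref{thm:realqcconj} plus Theorem~\ref{thm:boettcher}), then interpolate the Beltrami coefficient $t\mu$ of the conjugacy and solve the Beltrami equation with a real-symmetric normalisation to obtain a continuous path $g_t=\theta_t\circ f\circ\theta_t^{-1}$ in $\MfR$ joining $f$ to $g$ (up to the connected $\MobR$-action). Your extra care about the normalisation, the real symmetry of $\mu_t$, and continuity in the manifold topology only fills in details the paper leaves implicit.
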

\begin{proof}
This follows from 
Theorem \ref{thm:qcrigidity}, 
by considering the Beltrami coefficient $\mu$
of the quasiconformal conjugacy $h$. Taking $h_t$ to be the quasiconformal 
map associated to $t\mu$ (normalized appropriately), we obtain
a family of maps $f_t=h^{-1}_t\circ f\circ h_t$ in $\SR$ that 
connects $f$ and $g$.
\end{proof}

\begin{remark} This implies Corollary~\ref{cor:conjconnected}.
\end{remark}

Finally, we require the fact that, within any given parameter
space $\MfR$, any parabolic point can be perturbed to an attracting one.

\begin{prop}[Perturbations of parabolic points]\label{prop:Shi}
Let $f\in\SR$, and suppose that $f$ has a parabolic periodic point
 $z_0$. Then there exists a function $f\in \MfR$, arbitrarily close to 
 $f$ in the manifold topology, such that $g$ has an attracting periodic point
 close to $z_0$.
\end{prop}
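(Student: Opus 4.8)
The plan is to reduce to a multiple fixed point of multiplier $1$, then to apply Shishikura's theory of parabolic perturbations, and finally to use the ``fullness'' of the parameter space $\MfR$ from Section~\ref{sec:parameter} in order to rule out the persistent case.

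First I would pass to an iterate. Let $k$ be the period of $z_0$ and let $\rho_0=(f^k)'(z_0)=e^{2\pi i p/q}$ (with $p/q$ in lowest terms) be its multiplier, so that $\rho_0=\pm 1$ when $z_0$ is real; set $N_0:=kq$ and, for $\lambda$ near the base parameter $\lambda_*$ representing $f$, put $h_\lambda:=f_\lambda^{N_0}$. This is a holomorphic family once one passes to the natural complexification of $\MfR$, namely the space $M_f$ of maps topologically equivalent to $f$ over $\C$, which by the work of Eremenko--Lyubich invoked in Section~\ref{sec:parameter} is a complex manifold of dimension $q+2$ having $\MfR$ as its real form. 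Since $z_0$ is parabolic, $h_{\lambda_*}(z)-z$ vanishes at $z_0$ to some order $N\geq 2$, and Weierstrass preparation yields a factorization $h_\lambda(z)-z=u(\lambda,z)\,P_\lambda(z)$ near $(\lambda_*,z_0)$ with $u$ holomorphic and nonvanishing and $P_\lambda(z)=(z-z_0)^N+c_{N-1}(\lambda)(z-z_0)^{N-1}+\cdots+c_0(\lambda)$ monic in $z-z_0$, the coefficients $c_i$ being holomorphic and vanishing at $\lambda_*$. All of this can be arranged to commute with complex conjugation, so that $\MfR$ is preserved; this also disposes of the case $z_0\notin\R$, in which $\bar z_0$ is simultaneously parabolic and is treated symmetrically.

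Next I would invoke the standard parabolic-perturbation dichotomy of Shishikura for holomorphic families over a connected complex manifold: either (i) there are parameters arbitrarily close to $\lambda_*$ at which $f_\lambda$ has an attracting cycle near the parabolic one, or (ii) the parabolic cycle \emph{persists}, i.e.\ $f_\lambda$ has a parabolic cycle of the same period and multiplier for every $\lambda$ in a full neighborhood of $\lambda_*$. In case (i) one is done, because the construction is conjugation-equivariant and hence such parameters occur inside the real form $\MfR$ as well; alternatively one may check directly that a suitable coefficient $c_i(\lambda)$ (for instance $c_0$ when $N=2$) is non-constant on $M_f$, hence, by the identity theorem for the totally real submanifold $\MfR$, non-constant on $\MfR$, and then appeal to the classical real normal forms of the saddle-node, the cusp $z\mapsto z-z^3$, and the period-doubling bifurcation to produce a real attracting cycle on the appropriate side of the parabolic locus.

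It remains to rule out case (ii), which is the heart of the matter, and this is where I would use that $\MfR$ is a full family. If the parabolic cycle persisted on a neighborhood $U$ of $\lambda_*$ in $M_f$, then for every $\lambda\in U$ its immediate basin would contain one of the finitely many singular values of $f_\lambda$, by the Fatou-type theorem for the Speiser class; choosing a continuous branch of attracting petals and using finiteness of $S(f_\lambda)$, a single singular value $a_j(\lambda)$ would be captured by the parabolic cycle throughout a (possibly smaller) neighborhood. Using the explicit parametrization of $\MfR$ from Section~\ref{sec:parameter}, in which, modulo the action of $\MobR$, the singular values serve as free local coordinates, I would then move $a_j$ while fixing the remaining coordinates so as to place it on a repelling periodic orbit of $f_\lambda$ (or simply out of the parabolic immediate basin), contradicting the persistence of that basin. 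The main obstacle is exactly this final step: one must simultaneously control the splitting of the parabolic cycle and the itinerary of the captured singular value along the perturbation, which is cleanest to carry out in the explicit coordinates on $\MfR$ built in Section~\ref{sec:parameter}. A secondary technical point is to ensure, in case (i), that the good parameter can be chosen in the real form $\MfR$; as indicated above, this follows from the conjugation-equivariance of Shishikura's construction, or from the direct normal-form analysis.
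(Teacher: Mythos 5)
Your overall strategy (complexify $\MfR$, apply a perturbation dichotomy for the parabolic cycle, rule out persistence using fullness of the family) is genuinely different from what the paper does, and it has a real gap at exactly the point you flag yourself. The dichotomy you invoke is not a theorem in the form you state it: when a parabolic cycle fails to yield nearby attracting cycles, what you get is not automatically ``the parabolic cycle persists with the same period and multiplier on a full neighborhood''---the multiple fixed point of $h_\lambda$ can split into several cycles which are again parabolic, or into repelling cycles along the real slice, and turning ``not persistently parabolic'' into ``attracting nearby \emph{in $\MfR$}'' already requires knowing that some multiplier (or Weierstrass coefficient) is non-constant on the family, which is precisely a transversality statement you have not established. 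More seriously, your proposed elimination of the persistent case does not close: if the parabolic cycle persisted on a neighborhood $U$ and you move the captured singular value $a_j(\lambda)$ onto a repelling orbit (or out of the basin), the new parameter still lies in $U$, so by your own hypothesis its parabolic cycle is still there, and the Fatou-type theorem only tells you that its immediate basin now contains \emph{some} singular value---possibly a different one, or $a_j$ through a different access after the basin has moved. There is no contradiction without simultaneously controlling the basin and the singular orbit along the deformation, i.e.\ exactly the kind of transversality result (\`a la Epstein) that the authors explicitly say they want to avoid. The same issue infects your case (i) in the real form: the ``real normal forms'' argument needs a non-degeneracy of the real family that is nowhere verified.

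The paper's own proof sidesteps all of this: it invokes Shishikura's quasiconformal surgery argument from \cite{MR892140}, which modifies $f$ directly in attracting petals and straightens, so that every nonrepelling cycle of the perturbed map becomes attracting. The surgered map is quasiconformally equivalent to $f$, hence lies in $\MfR$ and is close to $f$ in the manifold topology, and the only point to check is that the surgery can be performed symmetrically with respect to complex conjugation so that the perturbation stays real. You cite the right paper of Shishikura, but you are using the wrong part of it: the constructive surgery, not a family dichotomy, is what makes the statement a soft one-step perturbation with no persistence case to exclude. If you want to salvage your route, you would need to supply the missing transversality (non-persistence of parabolics in the full $(q+2)$-dimensional family), which is a substantially harder statement than the proposition itself.
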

\begin{sketch}
 We could prove this using Epstein's transversality results for finite-type maps. Since these are currently unpublished, we shall instead sketch how to prove the proposition along the lines of Shishikura's argument in  \cite{MR892140}, 
see also \cite[Theorem 5]{alexmisha}.
 
Let $w\in\R$ be a point which belongs to the parabolic basin of $f$. Let $\eps>0$ and let
  $\phi$ be a real-quasiconformal map such that:
\begin{enumerate}
  \item $|\phi(z)-z|\leq \eps$ for all $z$;
  \item the complex dilatation of $\phi$ is bounded by $\eps$;
  \item $\phi$ fixes the orbit of $z_0$, as well as the points $\infty$ and $w$;
  \item $\phi$ is conformal on $\{z\in\C: |z-w|>\eps\}$;
  \item $\phi'(z_0)\in (0,1)$, and $\phi'=1$ on other points of the orbit of $z_0$.
\end{enumerate}
To define this map, note that outside a small neighbourhood of $w$, we can let
   $\phi(z)=z + \delta\cdot R(z)$, where $R$ is a real rational function  with a single pole at 
   $w$, having zeros along the orbit of $z_0$ and at $\infty$, and such that $R'(z_0)<0$ and $R'(z)=0$ at other points of the orbit of $z_0$. Provided $\delta>0$ 
   is sufficiently small, $\phi$ is a conformal diffeomorphism from $\{z\in\C: |z-w|>\eps\}$ onto its image, and
   one can extend $\phi$ to a neighbourhood of $w$ as a quasiconformal homeomorphism of the Riemann sphere $\overline{ \C}$ 
    with small dilatation. (Compare  \cite[Lemma 2]{MR892140}.)

Next consider the quasiregular map $F := \phi\circ f$. Then $z_0$ is a periodic point of
 period $n$ for $F$, and $(F^n)'(z_0) = (f^n)'(z_0)\cdot \phi'(z_0)$, so $z_0$ is an attracting
  periodic point. Moreover, let $U$ be a small attracting petal for $z_0$ as a periodic point of $f$,
   with $w\notin \overline{U}$.
   Provided that
   $\delta>0$ is sufficiently small, and that $R$ is chosen to have zeros of sufficiently high order along the     
   forward iterates of $z_0$ (except $z_0$), the set  $U$ will be contained in the basin of attraction of $z_0$ for $F$.
 
If $\eps>0$ is sufficiently small, then $\overline{f^n(B_{\eps}(w))}\subset U$ for some suitable $n$.
  Decreasing $\eps>0$ further, if necessary, the same will be true for $F$. Hence no orbit of $F$
  passes
  through $B_{\eps}(w)$ more than once, and we can apply quasiconformal surgery
to obtain the desired function $g$,   compare  \cite[Lemma 1]{MR892140}.
\end{sketch}

\subsection*{Combinatorial and analytic data} We now introduce data  that will
 allow us to encode when two real-topologically equivalent maps in
 $\SR$ are conformally conjugate (using Theorem
 \ref{thm:qcrigidity}). The notions of \emph{kneading sequences},
 which essentially determine combinatorial equivalence classes and of
 coordinates to ensure conformal conjugacy on attracting and
 parabolic basins are standard tools from the polynomial setting;
 we define and review them here briefly for completeness. To deal with
 escaping singular orbits, we will also require a new tool: 
 \emph{escaping coordinates}, which are provided by the results of
 \cite{boettcher}. 

\begin{defn}[Itineraries and kneading sequences] 
Let $f\in\SR$, and let $\mathcal{I}$ denote the set of connected components of
$\R\setminus\Crit(f)$. 
The \emph{itinerary} of a point 
$x\in\R$ is the sequence $\s= s_0 s_1 s_2 \dots$, where
$s_m = I_j$ if $I_j\in \mathcal{I}$ with
 $f^m(x) \in I_j$, or $s_m = f^m(x)$ if $f^m(x)$ is
 a critical point of $f$. 

Let $v_1<v_2<\dots < v_{k}$ be the
 singular values of $f$; the \emph{kneading sequence} of $f$ is the
 collection $(\s^1, \s^2, \dots, \s^k)$ of the itineraries of the
 $v_j$, together with the information which $v_j$ converge to
 an attracting cycle or to infinity.
\end{defn}

If two maps $f$ and $g$ are real-topologically equivalent, the map
$\phi$ allows us to relate the itineraries of $f$ and $g$.
Hence it makes sense to speak of two such maps having
{\lq}the same kneading sequence{\rq}. More formally:

\begin{defn}[The notion of having the same kneading sequences]
Let $f\in\SR$, let $v_1,\dots,v_k$ be the singular values of $f$ and
 let $\s^1,\dots,\s^k$ be their itineraries.

Let $g=\psi\circ f \circ \phi^{-1}$ be
 real-topologically
 equivalent to $f$.
 Then we say that $f$ and $g$ \emph{have the same kneading sequence}
 if
 \[ g^m(\psi(v_j)) \in \phi(s^j_m) \]
 for all $m\geq 0$ and $1\leq j \leq k$.
\end{defn}
\begin{remark}
Note that the definition depends on the maps $\psi$ and $\phi$, not
 only on the function $g$. We suppress this in the notation, which
 should not cause any confusion. 
\end{remark}

As stated above, our goal is to use kneading sequences to identify maps that 
 are 
 conformally conjugate; however, to do so we need to augment these with
 some analytic data. Indeed, for example the conformal conjugacy class
 of a map with attracting periodic orbits is not determined by the
 kneading sequence, since there will be invariant line fields on the basins
 of attraction. Similar issues are associated with parabolic orbits and
 escaping singular orbits. This can be dealt with in a 
 straightforward manner by introducing \emph{attracting, parabolic and
 escaping coordinates}.

 More precisely, let $f\in\SR$, and suppose that $f$ has an attracting
  periodic point $p\in\R$. Let $\Psi$ denote the linearizing coordinates 
  for $p$ (defined on the entire basin of the periodic attractor by the obvious 
  functional relation), normalized such that $\Psi'(p)=1$. Then the \emph{attracting
  coordinates for $f$ at $p$} consist of the multiplier $\mu$ of $p$ 
  together with the point
     \[ [\Psi(s_1):\Psi(s_2):\dots:\Psi(s_k)] \in \mathbb{CP}^{k-1}, \]
   where $s_1<s_2<\dots<s_k$ are the singular values of $f$ that are
   attracted by the cycle of $p$. The \emph{attracting coordinates for $f$}
   consists of the attracting coordinates at all attracting cycles of
   $f$, together with the information of which singular values are
   attracted to which attracting orbit. 

 If $f$ belongs to a real-analytic family $f_{\lambda}$, 
  say $f=f_{\lambda_0}$, 
  then for every $\lambda$ near $\lambda_0$
     there will be an attracting periodic point $p(\lambda)$ of
  $f_{\lambda}$ with $p(\lambda_0)=p$, depending real-analytically on
  $\lambda$. The linearizing coordinates for $p(\lambda)$ also depend
  analytically on $\lambda$, which implies that the corresponding
  attracting coordinates depend analytically on $\lambda$ 
  (provided we ignore any additional singular values of $f_{\lambda}$
  that may be
  attracted to $p(\lambda)$). 

 Similarly, one can define \emph{parabolic coordinates} at parabolic points, 
  which consist of the attracting Fatou coordinates of singular values,
  up to a translation. These will actually not be used in our proof
  of density of hyperbolicity, but we include them for completeness,
  to state the theorem below in full generality.

Finally, we also need to introduce analytic coordinates for
 singular values that are contained in the real part $\IR(f)$ of the
 escaping set. Such coordinates are given by \cite{boettcher}:

\begin{thm}[Escaping coordinates]
 Let $M$ be a real-analytic manifold with base point 
  $\lambda_0\in M$.
Also let $(f_\lambda)_{\lambda\in M}$ be a continuous family of functions
in $\SR$, all of which are real-topologically equivalent,
i.e. $\psi_\lambda\circ f_\lambda=f\circ \phi_\lambda$ with $\psi_\lambda$ and $\phi_\lambda$ 
depending continuously on $\lambda$. 

 Let $K\subset M$ and let $R$ be sufficiently large
 (depending on $K$). Then for every $\lambda\in K$, there exists
 a quasisymmetric map $h_{\lambda}:\R\to\R$ such that 
   \[ h_{\lambda}(f_{\lambda_0}(x)) = f_{\lambda}(h_{\lambda}(x)) \]
 whenever $x\in \IR(f_{\lambda_0})$ has the property that
  $f_{\lambda_0}^k(x)\geq R$ for all $k\geq 0$. 

 Furthermore, $h_{\lambda}(x)$ depends real-analytically on $\lambda$
  for fixed $x$. 
\end{thm}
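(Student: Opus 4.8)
The plan is to obtain $h_\lambda$ from the canonical conjugacies on fast-escaping sets furnished by \cite{boettcher}, and to deduce the real-analytic dependence on $\lambda$ by realising $h_\lambda(x)$, for fixed $x$, as a limit of finite approximations that are manifestly real-analytic in $\lambda$ and that converge locally uniformly in $\lambda$. Throughout I would take $K$ to be compact (analyticity being a local statement in $\lambda$) and read the conclusion with respect to a real-analytic parametrisation of the family; the latter is automatic in the applications, where $M$ is an open subset of $\MfR$ and the inclusion of $\MfR$ into the space of entire functions is real-analytic.

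First I would reduce to a neighbourhood of $+\infty$. Since $R>0$ is large, the condition $f_{\lambda_0}^k(x)\ge R$ for all $k\ge 0$, together with $x\in\IR(f_{\lambda_0})$, forces $f_{\lambda_0}^k(x)\to+\infty$, so there is nothing to prove unless $\lim_{x\to+\infty}f_{\lambda_0}(x)=+\infty$, which I assume. Because $f_\lambda=\psi_\lambda^{-1}\circ f_{\lambda_0}\circ\phi_\lambda$ with $\phi_\lambda|_\R,\psi_\lambda|_\R$ order-preserving homeomorphisms of $\R$, every $f_\lambda$ also satisfies $\lim_{x\to+\infty}f_\lambda(x)=+\infty$, and by Lemma~\ref{lem:ahlfors} each $f_\lambda$ grows at least like $\exp(|x|^{1/2})$ there. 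I would then fix $T>0$ so large that $\{|z|>T\}$ contains no singular value of any $f_\lambda$, $\lambda\in K$, and $f_\lambda(x)>x$ for $x\ge T$; in particular $f_{\lambda_0}$ has no critical points on $[T,\infty)$, so the relevant set is simply $X:=\{x:f_{\lambda_0}^k(x)\ge R\ \forall k\}=[R,\infty)$ once $R\ge T$. Passing to the logarithmic coordinate exactly as in Section~\ref{sec:absenceline}, let $V_\lambda$ be the component of $f_\lambda^{-1}(\{|z|>T\})$ containing $[T,\infty)$ and $F_\lambda\colon W_\lambda\to\HH_r$ (with $r=\log T$) the conformal isomorphism with $\exp\circ F_\lambda=f_\lambda\circ\exp$ and $F_\lambda(W_\lambda\cap\R)\subset\R$. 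Then $W_\lambda\subset\HH_r$, the ray $[r,\infty)$ lies in $W_\lambda$ and is forward invariant under $F_\lambda$, and after enlarging $R$ uniformly over $K$ the inverse $G_\lambda:=F_\lambda^{-1}\colon\HH_r\to W_\lambda$ has $[\log R,\infty)$ in its domain for every $\lambda\in K$.

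Next I would invoke the results of \cite{boettcher}, which concern the escaping dynamics of topologically equivalent --- not merely conjugate --- entire functions with bounded singular set. Since the $f_\lambda$ are real-topologically equivalent, their tracts at infinity agree combinatorially, and \cite[Theorems 1.1 and 1.3, Corollary 4.2]{boettcher} provide, for each $\lambda\in K$, a quasisymmetric map $h_\lambda\colon\R\to\R$ conjugating $f_{\lambda_0}$ to $f_\lambda$ on $X=[R,\infty)$, agreeing with $\phi_\lambda$ and $\psi_\lambda$ on any marked singular points or values lying in $X$. That the quasisymmetry constant can be taken uniform over $K$ is exactly the argument already used for Theorem~\ref{thm:boettcher}: with $R$ large the orbits in $X$, and their images, remain in the region where the distortion estimates of \cite{boettcher} hold, with constants depending only on $T$. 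I would further use that this $h_\lambda$ is \emph{canonical}: \cite{boettcher} characterises it as the unique conjugacy compatible with the tract structure at infinity and, after the logarithmic change of variable, exhibits it as the limit of the standard pullback/telescoping construction built from $F_\lambda$, $F_{\lambda_0}$ and their inverses $G_\lambda$, $G_{\lambda_0}$.

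For the real-analytic dependence on $\lambda$ --- the genuinely new point --- I would fix $x\in X$, set $x_k=f_{\lambda_0}^k(x)$ and $w_k=\log x_k\ge\log R$, and let $\Xi_\lambda^{(N)}(x)$ denote the $N$-th stage of that pullback construction in the logarithmic coordinate, so that $\exp\circ\Xi_\lambda^{(N)}(x)\to h_\lambda(x)$ as $N\to\infty$. Two verifications would remain. First, each $\exp\circ\Xi_\lambda^{(N)}(x)$ is real-analytic in $\lambda$: for $T$ large the tract $V_\lambda$ depends real-analytically on $\lambda$ (a Hurwitz/implicit-function argument, using that $f_\lambda$ is real-analytic in $\lambda$ and that no singular values lie near $\{|z|=T\}$), hence so do $F_\lambda$ on the fixed half-plane $\HH_r$ and its inverse $G_\lambda$ on the fixed ray $[\log R,\infty)$, and $\Xi_\lambda^{(N)}(x)$ is a finite composition of these with the fixed points $w_0,\dots,w_N$. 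Second, the convergence $\exp\circ\Xi_\lambda^{(N)}(x)\to h_\lambda(x)$ is locally uniform in $\lambda$: by the strong expansion estimate (\ref{eqn:eremenkolyubich}) for $F_\lambda$ --- equivalently, strong contraction of $G_\lambda$ --- the pullback construction is exponentially Cauchy, with rate and constants depending only on $T$ and hence uniform over $K$. Since a locally uniform limit of real-analytic functions is real-analytic, $\lambda\mapsto h_\lambda(x)$ is real-analytic, which is the remaining claim. I expect the main obstacle to lie in the second verification together with the first half of the first: one must extract from \cite{boettcher} a description of $h_\lambda$ as a pullback limit with explicit, $K$-uniform convergence estimates, and establish the real-analytic motion of the tract $V_\lambda$ --- precisely the ingredient that lets ``continuous'' be strengthened to ``real-analytic'' in the statement.
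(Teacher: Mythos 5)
The paper itself contains no written proof of this theorem: it is stated as being supplied by \cite{boettcher}, and your overall strategy --- logarithmic change of variable, the Eremenko--Lyubich expansion estimate (\ref{eqn:eremenkolyubich}), and realising $h_\lambda$ on $[R,\infty)$ as a limit of pullbacks of the form $G_\lambda^{N}\circ F_{\lambda_0}^{N}$ with constants uniform over a compact parameter set --- is exactly the machinery of that reference, so the skeleton of your argument is consistent with the intended source.

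However, the step carrying the genuinely new content of the statement, the real-analytic dependence of $h_\lambda(x)$ on $\lambda$, rests on a false principle: a locally uniform limit of real-analytic functions need \emph{not} be real-analytic (by Weierstrass approximation, every continuous function on an interval is a uniform limit of polynomials). So exponential, $K$-uniform convergence of your finite-stage approximations $\exp\circ\Xi^{(N)}_\lambda(x)$, each real-analytic in $\lambda$, does not by itself give analyticity of the limit, and your concluding sentence does not follow as written. The standard repair is to complexify the parameter: extend $(f_\lambda)$ locally to a complex-analytic family over a complex neighbourhood of $K$ (possible since $\MfR$ is a real slice of the Eremenko--Lyubich parameter space of \cite{alexmisha}), check that the tracts, hence $F_\lambda$ and the inverse branches $G_\lambda$, depend holomorphically on the complexified parameter, and verify that the contraction estimates persist there with uniform constants; then each finite stage is holomorphic in the complexified parameter, the convergence is locally uniform, the limit is holomorphic, and restricting to real parameters yields real-analyticity. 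Your ``Hurwitz/implicit-function'' remark points in this direction, but since you keep the whole argument in the real-analytic category the final deduction is unsupported. A smaller point in the same spirit: the uniform choice of $T$ with $f_\lambda(x)>x$ for all $x\ge T$ and all $\lambda\in K$ (which you use to get $X=[R,\infty)$ and uniform expansion) also needs an argument, since locally uniform continuity in $\lambda$ controls nothing near infinity; it should be extracted from the uniform bound on $S(f_\lambda)$ over $K$ together with the estimates in logarithmic coordinates, as is done for the uniform statements in \cite{boettcher}.
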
 

Using this map $h_\lambda$, we can now also define what it means that
two functions $f$ and $g\in\MfR$ have the \emph{same escaping
coordinates}: sufficiently large iterates of escaping singular 
values of $f$ should be carried to the corresponding iterates
for $g$ using this conjugacy on the escaping set.

Using these concepts, we can state the following result.

\begin{thm}[QC rigidity and conformal rigidity on the Fatou set] 
\label{thm:QC+con-rigidity}
Suppose that $f,g\in\SR$ are real-topologically equivalent.

Suppose also that $f$ and $g$ have the same kneading sequence and
 the same attracting, parabolic and escaping coordinates.

Then $f$ and $g$ are quasiconformally conjugate via
 a real-quasiconformal map $\phi:\C\to\C$ which is conformal on the 
 Fatou set of $f$.
\end{thm}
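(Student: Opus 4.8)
The plan is to reduce the statement to the combinatorial rigidity theorem, Theorem~\ref{thm:realqcconj}, and then to inspect its proof in order to arrange that the conjugacy it produces is conformal on the Fatou set, using the prescribed analytic coordinates.

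First I would translate the hypotheses into those of Theorem~\ref{thm:realqcconj}, namely that $f$ and $g$ are combinatorially conjugate in $\C$ (Definition~\ref{defn:combinatorialconjugacyinC}) and escaping conjugate (Definition~\ref{def:escapingconj}). Write $\psi\circ f = g\circ\phi$ for the given real--topological equivalence. Since $f$ and $g$ have the same kneading sequence, the itineraries of corresponding singular values agree; combined with $\phi$ and $\psi$ this produces the order--preserving bijection $h\colon\Part(f)\cup O_f^+(S(f))\to\Part(g)\cup O_g^+(S(g))$ commuting with the dynamics, agreeing with $\phi$ on $\CritR(f)$ and with $\psi$ on $S(f)$; real--topological equivalence forces corresponding critical points to have the same order, and, because the kneading data records which singular values land in an attracting basin and which escape, $h$ sends each non--repelling cycle in the postsingular set to one of the same type. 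For escaping conjugacy I would combine the hypothesis of equal escaping coordinates with the escaping--coordinates result of \cite{boettcher} and finitely many pull--backs, exactly as in the proof of Theorem~\ref{thm:boettcher}: for every closed $K\subset\IR(f)$ with $f(K)\subset K$ this gives a quasisymmetric $j$ with $j\circ f = g\circ j$ on $K$ agreeing with $\phi,\psi$ on the marked points. Theorem~\ref{thm:realqcconj} then yields a real--quasiconformal conjugacy $\theta$ between $f$ and $g$, commuting with complex conjugation and extending $h$.

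To obtain conformality on the Fatou set I would reexamine the pullback construction behind Theorem~\ref{thm:pullback}: there $\theta$ arises as a locally uniform limit of quasiconformal maps $\theta_j$ with $\theta_j\circ f = g\circ\theta_{j+1}$, where $\theta_0$ is a conjugacy between $f$ and $g$ on a linearizing neighbourhood of each attracting periodic point and on a full attracting petal at each parabolic periodic point, and $\theta_{j+1}$ agrees with $\theta_j$ on the $j$-th iterated preimages of these neighbourhoods and petals. The extra hypotheses enter here: since the attracting coordinates of $f$ and $g$ agree, each attracting cycle of $f$ has the same multiplier as the corresponding cycle of $g$, so the two germs are \emph{conformally} conjugate near the cycle, and the matching of the projective points $[\phi(s_1):\dots:\phi(s_k)]\in\mathbb{CP}^{k-1}$ allows this conformal germ conjugacy to be chosen to agree with $h$ on the finitely many postsingular points inside the neighbourhood; the parabolic coordinates do the same for the attracting petals. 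Thus $\theta_0$, and hence $\theta$, may be taken \emph{conformal} on
  \[ U := \bigcup_{j\ge 0} f^{-j}\bigl(\text{linearizing neighbourhoods}\cup\text{attracting petals}\bigr), \]
which is exactly the union of the attracting and parabolic basins of $f$; that $\theta|_U$ is single--valued and a conjugacy is automatic since $\theta$ is already a global conjugacy carrying critical points of $f$ to critical points of $g$ of the same order. Finally $U = F(f)$: since $S(f)$ is bounded, $I(f)\subset J(f)$, so $f$ has no Baker domains; an entire function has no Herman rings; there are no wandering domains in the settings in which this theorem is applied (in the bounded case by the empty--interior assertion of Theorem~\ref{thm:realrigidity}, in the general case by the sector condition via \cite{wanderingdomains}); and we restrict to maps with no Siegel disks, so every Fatou component is eventually periodic with periodic part an attracting or parabolic basin. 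Hence $\theta$ is conformal on $F(f)$.

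The main obstacle is the middle step: one must verify that ``same attracting and parabolic coordinates'' is exactly the data needed to promote a conformal germ conjugacy near a periodic cycle to a conformal conjugacy on the entire basin, consistently and compatibly with the combinatorial conjugacy --- equivalently, that the conformal conjugacy class of $f$ on a (pre)periodic Fatou component is determined by the kneading sequence together with this analytic data. The first step is routine but requires careful bookkeeping of the non--repelling cycles, and the handling of wandering domains and Siegel disks in the last step is a genuine (if, in the intended applications, vacuous) point.
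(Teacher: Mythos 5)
Your argument is essentially the paper's own proof: the paper likewise observes that the hypotheses give a combinatorial and an escaping conjugacy which, thanks to the equal attracting, parabolic and escaping coordinates, can be chosen analytic near the part of the postsingular set lying in the Fatou set, and then runs the pullback argument of Theorem~\ref{thm:pullback} to produce a real-quasiconformal conjugacy that is conformal on the Fatou set. The one step you elaborate beyond the paper --- that attracting and parabolic basins exhaust the Fatou set --- is left implicit there; your caveats are legitimate, though Siegel disks can in fact be excluded unconditionally for $f\in\SR$ (an irrationally indifferent cycle of a real map cannot be real, while the boundary of a rotation domain lies in $\P(f)\subset\R$, forcing the disk to be a half-plane on which $f^p$ would be M\"obius), so only the wandering-domain point remains, and the paper, like you, handles it only in the settings where the theorem is actually applied.
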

\begin{proof}
The assumption implies that $f$ and $g$ are combinatorially 
 conjugate and escaping conjugate, and that the combinatorial
 conjugacy can be chosen to be analytic in a neighbourhood of
 the part of the postsingular set that belongs to the Fatou set.
 Now it follows as in Theorem \ref{thm:realqcconj} that this conjugacy 
 promotes to a quasiconformal conjugacy, and this conjugacy is 
 conformal on the Fatou set.
\end{proof}

\section{Density of Hyperbolicity}
\label{sec:densityhyperbolicity}

The basic idea in our proof of density of hyperbolicity is to create 
 more and more critical relations of a suitable type near a starting
 parameter, and restrict to a submanifold where this critical relation
 is persistent. To make this work, we need to know that we can carry out
 this process in such a way that 
 the dimension of the manifold is not reduced by more than one would expect.
 We could do so by applying deep transversality results due to Adam Epstein
 (though it is not entirely clear how to apply these e.g.\ to work
 with escaping coordinates). Instead, we use the following, much softer
 statement.
 
\begin{thm}[Finding submanifolds] \label{thm:lojasiewicz}
Let $U\subset\R^n$ be an open ball, and let 
  \[ \rho: U \to \R \]
be real-analytic. Suppose that $x_1,x_2\in U$ satisfy
 $\rho(x_1)\neq \rho(x_2)$, and let $\nu\in\R$ be a value between 
 $\rho(x_1)$ and $\rho(x_2)$.

Then there exists $w\in U$ with $\rho(w) = \nu$ such that
 $\rho^{-1}(\nu)$ is a real-analytic $(n-1)$-dimensional manifold near $w$. 
\end{thm}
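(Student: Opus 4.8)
The plan is to reduce the statement to two standard facts about real-analytic subsets of $\R^n$. First, after interchanging $z_0$ and $z_1$ if necessary, I would assume $\rho(z_0)<\nu<\rho(z_1)$ (the statement fails when $\nu$ is an extreme value of $\rho$ --- e.g.\ for $\rho(x)=|x|^2$ at its minimum --- so ``between'' must be read strictly). Put $g:=\rho-\nu$ and $A:=\rho^{-1}(\nu)=g^{-1}(0)$. Since $g$ is real-analytic and $g(z_0)\neq 0$, the function $g$ does not vanish identically on the connected open set $U$, so $A$ is a proper real-analytic subset of $U$; in particular $A$ has empty interior and $\dim A\leq n-1$.

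The crux is the second step: to show that in fact $\dim A=n-1$, i.e.\ that $A$ is genuinely a hypersurface and not something thinner. I would argue by contradiction. Suppose $\dim A\leq n-2$. Locally $A$ is contained in a finite union of real-analytic submanifolds of $U$ of dimension at most $n-2$, and hence globally in a countable such union. Given $z_0,z_1\in U\setminus A$, join them by a smooth path in $U$ and perturb it, keeping its endpoints fixed and staying inside $U$, so as to be transverse to all of these submanifolds; by the dimension count $1+(n-2)<n$, transversality forces the perturbed path to be \emph{disjoint} from them, hence from $A$. Thus $U\setminus A$ is path-connected. On the other hand $U\setminus A=\{g>0\}\sqcup\{g<0\}$ is a disjoint union of two open sets, each nonempty (they contain $z_1$ and $z_0$ respectively), so $U\setminus A$ is disconnected --- a contradiction. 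Therefore $\dim A=n-1$.

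For the final step I would invoke the basic structure theory of real-analytic sets: the set of points of $A$ near which $A$ is a real-analytic submanifold of $\R^n$ is open and dense in $A$, and its complement in $A$ is a real-analytic set of strictly smaller dimension. Since $\dim A=n-1$, this yields a point $w\in A$ and a neighbourhood $V$ of $w$ such that $A\cap V$ is a real-analytic submanifold of dimension $n-1$; as $\rho^{-1}(\nu)=A$, this $w$ is the point we want.

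The main obstacle is the second step --- more precisely, making precise the intuition that a real-analytic set of codimension at least two cannot separate a connected open set. The transversality/path-perturbation argument above handles this using only the entirely standard description of a real-analytic set as a locally finite union of real-analytic manifolds of dimension at most its own; everything else in the proof is routine bookkeeping.
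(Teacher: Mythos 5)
Your proof is correct, and its skeleton matches the paper's: both arguments exploit that (for $\nu$ strictly between the two values, which is indeed the intended reading and the only way the theorem is applied later) the level set $A=\rho^{-1}(\nu)$ separates $z_0$ from $z_1$ in $U$, and both then invoke the structure theory of real-analytic (semianalytic) sets --- a locally finite decomposition into real-analytic submanifolds --- to extract an $(n-1)$-dimensional piece. The genuine difference is in how ``separating'' is converted into ``codimension one''. The paper quotes dimension theory: a closed set separating a connected open subset of $\R^n$ has topological dimension at least $n-1$ (Hurewicz--Wallman), and then matches topological dimension against the stratification. You avoid dimension theory altogether and argue by contradiction via Thom transversality: if all strata had dimension at most $n-2$, a generic perturbation rel endpoints of a path from $z_0$ to $z_1$ would miss every stratum, hence miss $A$, contradicting the intermediate value theorem along the path. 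This is a standard and more self-contained route; the paper's citation buys brevity, yours buys independence from the dimension-theoretic separation theorem. Two small remarks: your assertion that the set of non-manifold points of $A$ is a \emph{real-analytic} set of smaller dimension is slightly off --- in general the singular locus of a real-analytic set is only semianalytic (Whitney/Cartan umbrella phenomena) --- but the part you actually use (existence of a point where $A$ is locally an $(n-1)$-manifold, obtained e.g.\ from a point of an $(n-1)$-dimensional stratum at which no other stratum accumulates) is correct; and your final step in fact delivers the literal conclusion that $A$ \emph{is} an $(n-1)$-manifold near $w$, whereas the paper's own proof only concludes that $A$ \emph{contains} such a manifold, which is all that is used in Theorem \ref{thm:density1}.
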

\begin{proof}
 By continuity of $\rho$, the set $A := \rho^{-1}(\nu)$ separates $U$. 
  This means that $A$ has topological dimension at least $n-1$     \cite{dimensiontheory}.

Now the zero set of a real analytic function is a subanalytic, and indeed
 semianalytic, set.
Subanalytic sets can be written as a locally finite 
union of real-analytic submanifolds, see 
\cite{MR972342}. So $A$ contains a real-analytic   manifold
 of the same dimension as its topological dimension.
 Compare also Lojasiewicz's structure theorem
  for real-analytic sets \cite[Theorem 6.3.3]{MR1916029}.
\end{proof}

\begin{prop}[Abundance of critical relations] \label{prop:density1}
 Let $f\in\SR$ and let 
   $M$ be a real-analytic submanifold of $\MfR$ of dimension $n$.
  Suppose that no two maps $f,g\in M$ have the same kneading sequence. 
 Then, given any $f_0\in M$, there exists some $f\in M$,
 arbitrarily close to $f_0$, such that $f$ satisfies
 $n$ non-persistent critical relations of the form
 $f(v)=c$, where $v$ is a singular value and $c$ is   a real critical point.
\end{prop}
\begin{remark}
 Strictly speaking, our statement is ambiguous, since ``having the same kneading sequence''
  depends on the choice of $\phi$ and $\psi$ from the definition of real-topological
  equivalence. Since our conclusion is local, 
  the assumption should also be understood locally:
  for $f_0\in M$ we can find a neighbourhood $U\subset M$ on which the maps 
  $\phi$ and $\psi$ can be chosen to depend continuously, and no two maps in $U$
  should have the same kneading sequence with respect to this choice.
\end{remark}
\begin{proof}
We will prove the theorem by induction on $n$. If $n=0$, then there is
 nothing to prove. So suppose the theorem holds when the dimension of 
 $M$ is $n-1$. Now assume $M$ has dimension $n$. Let 
 $f_0\in M$; by a small perturbation,
 we may assume that $f_0$ does not satisfy
 any non-persistent critical relation of the form $f(v)=c$. Let $U$ be
 a small neighbourhood of $f_0$ as in the above remark; in particular,
 the critical and asymptotic values of $f\in U$ depend continuously
 and real-analytically (with the respect to the manifold structure).
 We can choose $U$ to be real-analytically diffeomorphic to an open ball
 in $\R^n$. 

Since no two maps in $U$ have the same kneading sequence,
 there must be maps $f_1,f_2\in U$, as well as a critical point $c=c(f)$ and a critical value $v(f)$ such that 
 $f_1^n(v(f_1)) - c(f_1)$ and $f_2^n(v(f_2)) - c(f_2)$ have 
 opposite signs for some $n\in\N$.

Set 
   \[ \rho(f) := f^n(v(f)) - c(f); \]
 then $\rho$ is real-analytic, and we can apply Theorem \ref{thm:lojasiewicz}
 to $U$, $\rho$, $x_1:=f_1$, $x_2 := f_2$ and  $\nu = 0$.
 We obtain an $(n-1)$-dimensional analytic submanifold $N$ of $M$, contained
 in $U$, such that all maps $f\in N$
  satisfy $\rho(f)=0$; i.e., they satisfy a 
  critical relation of the desired form which is non-persistent in $M$,
  but persistent in $N$. 

Applying the induction hypothesis, we find a map $f\in N$
 that satisfies $n-1$ critical relations which are non-persistent in $N$,
 and hence $n$ critical relations which are non-persistent in $M$, as
 desired. 
\end{proof}

 Recall that $L_I(f)$ is the set of points $z\in J(f)\setminus (J_r(f)\cup I(f))$
whose orbits accumulate on escaping singular  orbits under iteration.
The following theorem shows density of hyperbolicity
provided this set does not support invariant line fields.

\begin{thm}[Density of hyperbolicity when $L_I$ has no invariant line fields]
Let $f\in\SR$. Let $U\subset\MfRt$ be open with the property that no
  $g\in U$ has an invariant line field on the set
  $L_I(g)$.

 Then $U$ contains a real-hyperbolic function.
\end{thm}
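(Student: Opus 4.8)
The plan is to argue by contradiction: assuming $U$ contains no real-hyperbolic map, I would create more and more critical relations via Theorem~\ref{thm:density1} until reaching a hyperbolic map, using the hypothesis on $U$ only to feed the rigidity results of the previous sections. First, the rigidity input. Since $J(f)=J_r(f)\cup I(f)\cup L_B(f)\cup L_I(f)$ for $f\in\SR$ (Lemma~\ref{lem:decomp}), Theorem~\ref{thm:nolinefieldJr} together with the assumption on $U$ shows that no $f\in U$ carries an invariant line field on its Julia set; combined with Theorem~\ref{thm:QC+con-rigidity} (and the fact that an orientation-preserving conformal automorphism of $\C$ fixing $\R$ setwise lies in $\MobR$) this gives that two real-topologically equivalent maps in $U$ with the same kneading sequence and the same attracting, parabolic and escaping coordinates coincide in $\MfRt$. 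Writing $q:=\#S(f_0)$ and using Proposition~\ref{prop:Shi} to remove parabolic cycles, it follows that the topological conjugacy class of any $f\in U$ is a real-analytic submanifold of $\MfRt$ whose codimension $\kappa(f)$ equals the number of singular values of $f$ that are neither attracted to an attracting periodic orbit nor escape to $\infty$ (its dimension being the total number of attracting and escaping coordinates); in particular $\kappa(f)=0$ exactly when $f$ is real-hyperbolic.

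Now suppose $U$ contains no real-hyperbolic map, so $\kappa(f)\ge1$ for every $f\in U$. Since $\kappa$ takes only finitely many values, I would pick $f_0\in U$ with $\kappa_0:=\kappa(f_0)$ \emph{minimal}, with no parabolic cycle (Proposition~\ref{prop:Shi}, using that $U$ is open), and, after a further small perturbation, with no critical relations. In a sufficiently small neighbourhood $W\subseteq U$ of $f_0$ the $q-\kappa_0$ non-recurrent singular values keep the same (relation-free) itineraries as for $f_0$, since non-recurrence and the position of the relevant iterates relative to the critical points are open conditions, while by minimality every conjugacy class met by $W$ has codimension $\ge\kappa_0$. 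Passing to the lift $\pi^{-1}(U)\subseteq\MfR$, I can then choose a real-analytic submanifold $M\subseteq W$ through $f_0$ of dimension $\kappa_0$ that is transverse to the conjugacy class of $f_0$ and to every conjugacy class met by a neighbourhood of $f_0$; two maps of $M$ near $f_0$ with the same kneading sequence are then combinatorially conjugate, hence topologically conjugate (Theorems~\ref{thm:realrigidity} and~\ref{thm:pullback}), hence affinely conjugate by the rigidity above, hence equal, so $M$ satisfies the no-shared-kneading hypothesis of Theorem~\ref{thm:density1}.

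Applying Theorem~\ref{thm:density1} to $M$ produces $f\in M\subseteq U$, arbitrarily close to $f_0$, satisfying $\kappa_0$ non-persistent critical relations $f(v_i)=c_i$ with $v_i$ a singular value and $c_i$ a real critical point. By the choice of $W$ the $v_i$ are distinct recurrent singular values, hence exactly the $\kappa_0$ recurrent singular values of $f$, while the remaining $q-\kappa_0$ singular values are attracted to periodic orbits or escape (an open condition preserved under the perturbation). The forward orbit of any singular value of $f$ then eventually enters the finite set consisting of the singular values together with $c_1,\dots,c_{\kappa_0}$ (each $c_i$ maps to a critical value, hence to a singular value), so this orbit is preperiodic; its terminal cycle either avoids all $c_i$ — in which case the singular value lies in an attracting basin or escapes to $\infty$ — or contains some $c_i$ and is therefore superattracting. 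In every case each singular value of $f$ lies in a basin of attraction or escapes to $\infty$, i.e.\ $f$ is real-hyperbolic, contradicting $f\in U$. Hence $U$ contains a real-hyperbolic map.

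The step I expect to be the main obstacle is the construction of $M$ in the third paragraph: following singular values and critical points real-analytically, and the local constancy of the relevant itineraries, are routine, but choosing a $\kappa_0$-dimensional transversal through $f_0$ that is simultaneously transverse to \emph{all} conjugacy classes met by a neighbourhood of $f_0$ — so that the no-shared-kneading hypothesis of Theorem~\ref{thm:density1} holds on $M$ — requires care, and it is exactly here that the minimality of $\kappa_0$ and the rigidity of the first paragraph (hence the standing assumption that $L_I(f)$ supports no invariant line field) enter essentially.
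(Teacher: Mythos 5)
Your overall architecture (locally maximise the number of singular values that are attracted or escape, pass to a submanifold on which no two maps share a kneading sequence, apply Theorem~\ref{thm:density1}, and read off real-hyperbolicity from the resulting critical relations) is the paper's strategy, and your combinatorial endgame is essentially the paper's. But the step you yourself flag as ``the main obstacle'' is a genuine gap, not a routine detail. You need (i) that the topological conjugacy class of every $f\in U$ is a real-analytic submanifold of $\MfRt$ of codimension exactly $\kappa(f)$, and (ii) a $\kappa_0$-dimensional $M$ through $f_0$ transverse simultaneously to \emph{every} conjugacy class met by a neighbourhood of $f_0$. Neither is available from the paper's toolkit: (i) is precisely the Epstein-type transversality statement that the paper explicitly declines to use (the rigidity results give \emph{injectivity} of the kneading-plus-coordinates data on conjugacy classes, not that each class is an embedded analytic manifold of the expected codimension; for escaping coordinates it is not even clear how such a statement would be set up), and (ii) cannot be produced by a Sard/genericity argument, since the classes form an uncountable and a priori badly behaved family. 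Moreover, even granting (i) and (ii), transversality of a $\kappa_0$-dimensional $M$ to classes of codimension $\geq\kappa_0$ only makes each intersection \emph{discrete}; it does not prevent $M$ from meeting a single class in two nearby points, so the no-shared-kneading hypothesis of Theorem~\ref{thm:density1} still does not follow.

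There is also a misapplication of rigidity in the chain ``same kneading $\Rightarrow$ topologically conjugate $\Rightarrow$ affinely conjugate'': absence of line fields on the Julia set together with Theorem~\ref{thm:QC+con-rigidity} yields conformal conjugacy only when the attracting, parabolic and \emph{escaping} coordinates also agree; topological conjugacy alone never forces affine conjugacy in the presence of attracting basins or escaping singular orbits, because the dilatation of the qc conjugacy may legitimately live on the Fatou set (multipliers and escaping positions are genuine moduli), and on a mere transversal these coordinates have no reason to coincide. The paper's fix is exactly to avoid transversality: inside a real-analytic section $U''\to\MfR$ (so that distinct points are not conformally conjugate) it applies the soft Theorem~\ref{thm:lojasiewicz} repeatedly to the real-analytic attracting and escaping coordinate functions to obtain a manifold $M$ of dimension $q-k_1-k_2$ on which these coordinates are \emph{constant}. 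Then two maps of $M$ with the same kneading sequence satisfy all hypotheses of Theorem~\ref{thm:QC+con-rigidity}, are conformally conjugate by Lemma~\ref{lem:decomp}, Theorem~\ref{thm:nolinefieldJr} and the standing hypothesis on $L_I(f)$, hence are equal. With your transversal replaced by this level-set construction, your final paragraph goes through as in the paper.
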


\begin{proof}
Let $f_1\in U$ be such that the number of singular values that
belong to attracting basins is locally maximal near $f_1$. Then
there is an open neighbourhood $U'\subset U$ of $f_1$ such that
all $g\in U'$ have the same number, say $k_1$, of such singular values.
By Proposition~\ref{prop:Shi},
 this implies that no function in $U'$ has any parabolic
periodic points.

Now, similarly, pick $f_2\in U'$ such that the number $k_2$ of singular
values that tend to infinity under iteration is locally maximal, 
and let $U''$ be an open neighbourhood of $f_2$ such that all
maps in $U''$ have $k_2$ such singular values. 

Set $q:= \# S(f)$; recall that $\MfRt$, and hence $U''$ has dimension
  $q$. 
  We may assume that $U''$ is chosen sufficiently small that
  there is a real-analytic section $U''\to \MfR$. (In fact, if $f$ is
  not periodic, then there is
  even a global section $\MfRt\to\MfR$, see Appendix \ref{app:parameter}.) 
  So we can identify $U''$ with a $q$-dimensional submanifold of $\MfR$
  in which no two maps are conformally conjugate. 

Applying Theorem~\ref{thm:lojasiewicz} repeatedly, we
 can find a manifold $M\subset U''$ of dimension
 $q' := q - k_1 - k_2$ on which the attracting and escaping
 coordinates are constant. 

By Theorem~\ref{thm:QC+con-rigidity}, 
any two maps in $M$  that have the same kneading sequence
 would be quasiconformally conjugate, and the dilatation 
 would be supported on the Julia set. However, by the assumption, 
Lemma~\ref{lem:decomp} and Theorems~\ref{thm:nolinefieldJr} and
\ref{thm:nolinefieldsLI},
  this means that the maps would be
 conformally conjugate, and hence equal. 
 Thus no two maps in $M$ have the same kneading
 sequence.  Note that within $M$ the 
$k_1+k_2$ singular values in attracting basins or tending to
infinity do not satisfy any non-persistent relations.
(Two singular values are said to have a relation if they have  the same grand orbit.)

Now we apply Proposition~\ref{prop:density1}, to obtain a function
 $g\in M$ that satisfies $q'$ non-persistent critical relations of the
 form $g(v)=c$, where $v$ is a singular value and $c$ is 
 a real critical point. This means that every singular
 value is eventually either mapped to a superattracting cycle or to
 one of the $k_1+k_2$ singular values that belong to
 attracting basins or to $I_{\R}(g)$ (and which, by assumption, do
 not satisfy any singular relations). Thus all singular values of
 $g$ belong to attracting basins or converge to infinity, as claimed.
\end{proof}

\begin{cor}[Density of hyperbolicity for bounded functions]\
\label{cor:denboundedfunction}
 Let $f\in\SR$ and suppose that one of the following holds:
  \begin{enumerate}
     \item $f|\R$ is bounded;
     \item $f$ satisfies the
  sector condition; or
    \item $f$ has bounded criticality.
   \end{enumerate}
  Then if (a) holds then hyperbolicity  is dense in
  $\MfRt$.
  If (b) or (c) hold then  real-hyperbolicity is dense in   $\MfRt$.
\end{cor}
\begin{proof}
 This is an immediate consequence of the previous result. (Note that each of the conditions 
 (a), (b) and (c) 
    is invariant under real-quasiconformal
    equivalence, and hence 
    no function $g\in \MfR$ supports an invariant line field on $L_I(g)$ by Theorems
\ref{thm:nolinefieldJr} and \ref{thm:nolinefieldsLI}.)
    \end{proof}
 
This proves Theorem \ref{thm:perturbbounded} (and the corresponding statement
 for maps satisfying the sector condition). We now deduce Theorem
 \ref{thm:densityunbounded} (of which Theorem \ref{thm:densitybounded} is a special case). 

\begin{proof}[Proof of Theorem \ref{thm:densityunbounded}]
 Real-hyperbolicity is an open property, so we need only prove that
  real-hyperbolicity is dense. Let $\lambda_0\in N$ and set $f:= f_{\lambda_0}$. 
  We may assume (perturbing $\lambda_0$ 
  if necessary) that the number $m := \# S(f)$ 
  is locally maximal. For $\lambda$
  sufficiently close to $\lambda_0$, $f_{\lambda}$ must have at least $m$ singular 
  values (see Lemma \ref{lem:singulardependence}). So we may assume, by shrinking $N$ 
  if necessary, that $\# S(f_{\lambda})= m$ for all $\lambda\in N$. 

 By Proposition \ref{prop:continuity}, all $f_{\lambda}$ belong to $\MfR$, and 
  $f_{\lambda}$ is a continuous family with respect to the topology of $\MfR$. Via the natural
  projection $\MfR\to \MfRt$, we obtain a continuous map
    \[ \Phi : N \to \MfRt \]
  such that $f_{\lambda}$ and (any representative of) $\Phi(f_{\lambda})$ are conformally conjugate. In particular,
  the map $\Phi$ is injective. Since $m\leq n$, we must in fact have
  $m=n$ and $\Phi$ is locally surjective 
  (by the Invariance of Domain Theorem). 
  The claim now follows
  from the preceding theorem. 
\end{proof}

\begin{proof}[Proof of Corollary~\ref{cor:thm1point7}]
This follows from Corollary~\ref{cor:denboundedfunction}
and the fact that the number of singularities is preserved under
topological equivalence.
\end{proof}

\section{Circle maps}
\label{sec:circlemaps}%
The adaptation of our results to circle maps is straightforward, and 
 essentially in complete analogy with the case of bounded functions
 $f\in\SR$. To formulate
 results for transcendental maps and Blaschke products at the same time,
 let us denote by $X$ the union of $\SC$ with the set of all
 Blaschke products of degree at least two which preserve
 $\{0,\infty\}$ and for which all critical values are
 contained in $S^1\cup \{0,\infty\}$.

\begin{lem}[Circle maps without critical points] \label{lem:circlemapcrit}
If $f\in X$ has no critical points in $S^1$, then 
$f(z)=z^d$ with $d\ne 0$.
\end{lem}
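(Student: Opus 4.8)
The plan is to reduce the statement to a standard Riemann--Hurwitz count. Suppose $f\in X$ has no critical points on $S^1$. First I would observe that $f$ preserves the unit circle, so in particular it maps $S^1$ to $S^1$ as a (branched) covering of some degree $d\ge 1$ (or $d\le -1$; we fix orientations so that $d\ge 1$, the other case being symmetric via $z\mapsto 1/z$). Since $f$ has no critical points on $S^1$, the restriction $f|_{S^1}\colon S^1\to S^1$ is in fact an (unbranched) covering map of degree $d$. By the reflection symmetry $f(1/\bar z)=1/\overline{f(z)}$ (which holds both for Blaschke products preserving $\{0,\infty\}$ and for maps in $\SC$ preserving $S^1$), the map $f$ is symmetric with respect to inversion in $S^1$.

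Next I would examine the two components of $\CS\setminus S^1$, namely $\D^*:=\D\setminus\{0\}$ and its exterior. By the symmetry, it suffices to understand $f$ on $\D^*$. The key point is that $f(S^1)\subset S^1$ forces $f(\D^*)$ to lie in one of the two complementary components of $S^1$ in $\CS$ (together possibly with $0$ or $\infty$ in its closure); composing with $z\mapsto 1/z$ if necessary, we may assume $f$ maps a punctured neighbourhood of $S^1$ inside $\D^*$ into $\D$. In the Blaschke case $f\colon \D\to\D$ is then proper of degree $d$, hence (by Riemann--Hurwitz for the disk) has exactly $d-1$ critical points in $\D$, counted with multiplicity; and by the hypothesis that all critical values lie on $S^1\cup\{0,\infty\}$ together with $f(0),f(\infty)\in\{0,\infty\}$, we can track where these go. If $d=1$ there are no critical points at all and $f$ is a conformal automorphism of $\CS$ fixing (or swapping) $0,\infty$ and preserving $S^1$, hence $f(z)=\zeta z$ or $f(z)=\zeta/z$ with $|\zeta|=1$; but $f$ is assumed not to be a rotation (it lies in $X$, which excludes degree-one Blaschke products), so in fact $d=1$ cannot occur unless we allow $f(z)=z^{\pm1}$, and $\zeta=1$ after the normalisation used elsewhere — giving $f(z)=z^d$ with $d=\pm1$. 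For $d\ge 2$ the same Riemann--Hurwitz bookkeeping shows that the $d-1$ critical points in $\D$ must all be mapped to $0$ (the only available critical value interior to $\D$), forcing $f$ to be $z\mapsto z^d$ up to pre- and post-composition with rotations, and again the normalisation of the statement kills the rotations.

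For the transcendental case $f\in\SC$ one argues similarly but more carefully: here $f$ restricted to $\D^*$ is no longer proper, because $0$ (or $\infty$) is an essential singularity. However, by Lemma~\ref{lem:classical}-type reasoning and the assumption $S(f)\subset S^1$, the map $f\colon \D^*\setminus f^{-1}(S^1)\to \D^*$ has no singular values, so it is a covering map; since $\D^*$ has fundamental group $\Z$, this covering has a well-defined degree, and the absence of critical points on $S^1$ together with $f(S^1)\subset S^1$ again pins down the local degree of $f|_{S^1}$ and forces $f$ to restrict to $z\mapsto z^d$ on $S^1$; but a transcendental map cannot agree with $z^d$ on $S^1$ without agreeing with it everywhere (two analytic functions on $\CS$ agreeing on a curve coincide), so there is in fact no transcendental $f\in\SC$ without critical points on the circle. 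In other words, this direction shows $\SC$-maps always have a circle critical point, which is exactly the content of the lemma in that case.

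The main obstacle I anticipate is the bookkeeping needed to rule out, in the $d\ge 2$ Blaschke case, the critical points of $f$ landing anywhere other than $\{0,\infty\}$: one must use both the symmetry across $S^1$ (so critical points come in pairs $z\leftrightarrow 1/\bar z$, except those on $S^1$, which we have excluded) and the hypothesis that critical values avoid $\D^*$ and its exterior, leaving only $0$ and $\infty$ as possible critical values in each complementary disk. Once this is set up the conclusion that $f$ is a monomial is forced by a uniqueness argument for the finite Blaschke products with all critical points at a single point. I would present the Blaschke and transcendental cases in parallel, emphasising that in both the punctured-plane covering-space argument does the real work.
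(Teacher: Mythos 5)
There is a genuine gap, and it sits exactly at the step you label ``the key point'': the assertion that $f(S^1)\subset S^1$ forces $f(\D\setminus\{0\})$ to lie in one of the two complementary components of $S^1$. This does not follow from circle preservation alone, and it is essentially the whole content of the lemma. Indeed, for a transcendental $f\in\SC$ the points $0$ and $\infty$ are essential singularities and the values $0,\infty$ are omitted, so by the great Picard theorem $f(\D\setminus\{0\})$ already covers all of $\C^*$; thus your key point is false for every transcendental candidate, and assuming it amounts to assuming the conclusion (namely that no transcendental member of $X$ is free of circle critical points). What actually has to be proved -- and what the paper proves -- is that $f^{-1}(S^1\cup\{0,\infty\})$ has no nontrivial component other than $S^1$. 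This is where the hypothesis $S(f)\subset S^1\cup\{0,\infty\}$ must enter: any component $V$ of the complement of this preimage is mapped by $f$ as a covering onto $\D\setminus\{0\}$ or onto $\C\setminus\overline{\D}$, which forces $V$ to be simply connected or a punctured disk and hence rules out extra components. Only after this step does one know that each side of $S^1$ maps into one side, so that $f$ is an unbranched covering of $\C^*$ and therefore a power map.

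Your transcendental paragraph does invoke $S(f)\subset S^1$, but it does not close this gap: the map you write, $f\colon \D\setminus(\{0\}\cup f^{-1}(S^1))\to \D\setminus\{0\}$, is not well defined as stated, because its image need not lie in $\D\setminus\{0\}$ -- precisely the possibility at issue -- and the subsequent claim that the covering structure ``forces $f$ to restrict to $z\mapsto z^d$ on $S^1$'' is false: a critical-point-free analytic degree-$d$ covering of the circle is only topologically (or conformally) conjugate to $z\mapsto z^d$, not equal to it, and pointwise equality on $S^1$ is not the mechanism that excludes transcendence. The correct finish is that once $f^{-1}(S^1)=S^1$ is known, the component of $f^{-1}(\D\setminus\{0\})$ attached to $S^1$ from inside must be all of $\D\setminus\{0\}$, and a finite-degree covering $\D\setminus\{0\}\to\D\setminus\{0\}$ would make $0$ a removable singularity or pole, contradicting transcendence; but this again presupposes the missing step. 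A smaller point: the paper's class of Blaschke products allows exponents $k_j\in\Z$ of either sign, so $f$ need not be a proper self-map of $\D$, and your Riemann--Hurwitz count (which is in any case a heavier route than the paper's one-line covering argument) assumes that it is.
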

\begin{proof}
Let $Z=f^{-1}(S^1\cup \{0,\infty\})$. Since the critical values of $f$ are on
the circle but $S^1$ contains no critical points, $S^1$ is one of the connected components
of $Z$. We claim that it is the only nontrivial component (i.e.\ consisting of more than one point) of $Z$. Indeed, otherwise there is at least one multiply-connected component $V$ of $\C\setminus Z$ that is not a punctured disk. But $f|_{V}$ is a covering
whose image is either $\D\setminus \{0\}$ or $\C\setminus \cl{D}$, which is impossible. 
It follows that $f$ has no singular values in $\C^*$, and hence $f(z)=z^d$ for some $d\in\Z\setminus\{0\}$.
\end{proof}
\begin{thm}[QC rigidity for circle maps]\label{thm:qsrigidcircle}
 Suppose that two maps $f,g\in X$ are $S^1$-topologically
  equivalent and combinatorially
  conjugate. Then the two maps are $S^1$-quasiconformally conjugate via a 
  conjugacy that agrees with the combinatorial conjugacy on the 
  postsingular set.
\end{thm}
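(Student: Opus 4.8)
The statement is the circle-map counterpart of Theorem~\ref{thm:realqcconj}, and the plan is to rerun that argument after transporting the dynamics to the real line. The essential simplification is that a circle map carries no escaping dynamics on its invariant curve: the relevant compact invariant set is just $S^1$, so we are in the situation of the \emph{bounded} maps of $\SR$ rather than the general one, and the role that ``escaping rigidity'' (Theorem~\ref{thm:boettcher}) played in the entire case is taken over by a direct --- and much easier --- analysis of the local dynamics at the two punctures $0$ and $\infty$. Note also that we may assume $f$ is not of the form $z\mapsto z^{d}$, since then $f$ is expanding and structurally stable and the conclusion is trivial; hence by Lemma~\ref{lem:circlemapcrit} both $f$ and $g$ have at least one critical point on $S^1$.

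The first step is to pass to the real line. Identifying $S^1=\R/\Z$ through $\P(t)=e^{2\pi i t}$ and lifting, the maps $f$ and $g$ become real-analytic maps $F,G\colon\R\to\R$ that commute with $t\mapsto t+1$ up to the common integer degree and that have only finitely many critical points per period. The $S^1$-topological equivalence between $f$ and $g$ and the combinatorial conjugacy $h$ on the postsingular set lift to the corresponding data for $F$ and $G$; arguing exactly as in Lemma~\ref{lem:comb-conj}, the maps $F$ and $G$ are then topologically conjugate on $\R$ by a $\Z$-equivariant homeomorphism that preserves the order of the critical points, their orders, and parabolicity. I would then invoke the quasisymmetric rigidity for real-analytic circle maps --- the circle-map version of Theorem~\ref{thm:realgidityvS}, which is exactly what is proved in \cite{densityaxiomadimensionone} and \cite{strien-qsrigidity} (applied to the lifted map, which has no wandering intervals, only finitely many periodic attractors, and only attracting periodic turning points). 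This upgrades the combinatorial conjugacy to a quasisymmetric conjugacy of $S^1$ that agrees with $h$ on the postsingular set and carries critical points to critical points of the same order.

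The second step is the pullback argument of Theorem~\ref{thm:pullback}, carried out on $\CS$. Every map in $X$ commutes with the reflection $z\mapsto 1/\bar{z}$, by the Schwarz reflection principle across the analytic curve $S^1$. The quasisymmetric conjugacy of $S^1$ therefore extends to a quasiconformal homeomorphism $\theta_0\colon\CS\to\CS$ with the same symmetry, fixing $0$ and $\infty$, which is isotopic rel $S(f)\cup\{0,\infty\}$ to the homeomorphism $\psi$ furnished by the $S^1$-topological equivalence. As in the proof of Theorem~\ref{thm:pullback}, one first modifies $\theta_0$ so that it is already a conjugacy in linearising neighbourhoods and attracting petals of the finitely many attracting and parabolic cycles --- which necessarily lie on $S^1$, since every such cycle attracts a singular value and $S^1$ is forward invariant --- and in suitable model neighbourhoods of $0$ and $\infty$. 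One then lifts $\theta_0$ through $f$ and $g$ repeatedly by the covering homotopy theorem to obtain quasiconformal maps $\theta_j$ with $\theta_j\circ f=g\circ\theta_{j+1}$, uniformly bounded dilatation, and agreeing with one another on increasing iterated preimages of the postsingular set together with the chosen local neighbourhoods. The limit is the required $S^1$-quasiconformal conjugacy, and it agrees with $h$ on the postsingular set.

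The one place where the adaptation genuinely requires care --- and the main obstacle --- is the behaviour at the punctures. When $f\in\SC$ the points $0$ and $\infty$ are essential singularities, and the pullback only closes up if $\theta_0$ is already a genuine conjugacy near them, so that no dilatation is created there under iteration. Here one uses that $\{0,\infty\}$ is forward invariant under both $f$ and $g$ and that neither point is a singular value, so that $f$ maps a small punctured neighbourhood of $0$ (resp.\ of $\infty$) as an unbranched covering onto a punctured neighbourhood of its image; passing to logarithmic coordinates there --- the half-plane picture already used in Section~\ref{sec:absenceline}, which is the analogue of the escaping coordinates of \cite{boettcher} --- one constructs the required local conjugacy, the combinatorics guaranteeing that the two local models match. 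For a Blaschke product $f\in X\setminus\SC$ this step is immediate: $0$ and $\infty$ are then superattracting periodic points of finite local degree, and the local conjugacy is given by B\"ottcher coordinates, after which the remainder is the standard rational rigidity machinery of \cite{KSS1}.
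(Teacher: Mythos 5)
Your overall route is the paper's: after excluding $z\mapsto z^d$ via Lemma~\ref{lem:circlemapcrit}, the quasisymmetric rigidity of \cite{strien-qsrigidity} applied to the circle dynamics upgrades the combinatorial conjugacy to a quasisymmetric conjugacy of $S^1$, and a pullback argument as in Theorem~\ref{thm:pullback} then yields the $S^1$-quasiconformal conjugacy agreeing with $h$ on the postsingular set. Your key observation --- that the postsingular set lies in the compact invariant circle, so there are no escaping singular orbits and the bounded-case machinery applies without any analogue of Theorem~\ref{thm:boettcher} --- is exactly the simplification the paper exploits.

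The paragraph you single out as the main obstacle, however, is both incorrectly justified and unnecessary. For $f\in\SC$ the points $0$ and $\infty$ are essential singularities, so it is false that $f$ maps a small punctured neighbourhood of $0$ as an unbranched covering onto a punctured neighbourhood of ``its image'': $f$ has no continuous extension to the punctures, and by Casorati--Weierstrass/Picard the image of any punctured neighbourhood of $0$ is all of $\C^*$ with at most one exception. Hence there is no soft ``local model'' there, and producing a genuine conjugacy between the dynamics of $f$ and $g$ near the essential singularities is essentially the $\C^*$ analogue of the rigidity of escaping dynamics of \cite{boettcher} (compare Theorem~\ref{thm:nolinefieldifcircle}) --- precisely the kind of result you claimed to be bypassing. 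Fortunately no such local conjugacy is needed, and your worry about dilatation is misplaced: since $f$ and $g$ are holomorphic, the dilatation of $\theta_{j+1}$ equals that of $\theta_j$ at corresponding points, so nothing is ever ``created'' at the punctures; the lifts exist because $\theta_0$ is isotopic to $\psi$ rel $S(f)\cup\{0,\infty\}$ (the one genuinely new point in $\C^*$, which you do note); and convergence follows exactly as in Theorem~\ref{thm:pullback} from the uniform dilatation bound together with stabilization on the iterated preimages of the postsingular set (a subset of $S^1$) and on the chosen linearizing neighbourhoods and petals. Deleting the puncture step (and, in the Blaschke case, simply treating $0,\infty$ as part of the ordinary rational-map Fatou dynamics) leaves a proof that coincides with the paper's.
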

(Here the notions of $S^1$-topological equivalence as well as combinatorial and
   $S^1$-qc conjugacy are defined in analogy to the real case. Note that by definition 
   a combinatorial conjugacy sends parabolic to parabolic points.)  
\begin{proof}
  Note that from the previous lemma $f$ and $g$ have at least one critical point
  (unless $f,g$ are of the form $z\mapsto z^d$). 
   It follows
   from \cite{CS} that the two maps are quasisymmetrically conjugate on the
   circle. 
 Applying a pullback argument yields a quasiconformal conjugacy
  on the entire complex plane. 
\end{proof}

The second ingredient is the absence of invariant line fields theorem:
\begin{thm}[No invariant line fields]\label{thm:nolinefieldcircle}
 A map $f\in X$ does not support any invariant line fields on its Julia set.
\end{thm}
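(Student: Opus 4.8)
The plan is to deduce this from the real statements already proved, separating two cases. If $f\in X$ is a finite Blaschke product, then $f$ is rational and symmetric with respect to $z\mapsto 1/\bar z$; by hypothesis $\{0,\infty\}$ is a (super)attracting cycle of period at most two, so its basin lies in the Fatou set and $\P(f)\cap J(f)\subset S^1$. Hence the absence of invariant line fields on $J(f)$ follows as in the rational case: the radial Julia set carries none by \cite{linefields}, and the points of $J(f)$ whose orbits converge to $\P(f)\cap S^1$ carry none by the rigidity results for real rational (equivalently, analytic circle) maps of \cite{KSS1} and \cite{strien-qsrigidity}. So from now on I would assume $f\in\SC$ is transcendental; by Lemma~\ref{lem:circlemapcrit} we may also assume $f$ has a critical point on $S^1$, since otherwise $f(z)=z^d$, $J(f)=S^1$ has measure zero, and there is nothing to prove.

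The key device is a logarithmic change of coordinates. Since $f\colon\CS\to\CS$ is holomorphic with $f(S^1)\subset S^1$, the reflection principle gives $f(1/\bar z)=1/\overline{f(z)}$. Lifting $f$ through the universal covering $\exp\colon\C\to\CS$ we obtain a $2\pi i$-periodic transcendental entire function $F$ with $\exp\circ F=f\circ\exp$ and $F(i\R)\subset i\R$; it is transcendental because $f$ has an essential singularity at $0$ or at $\infty$. Setting $G(t):=-i\,F(it)$, one checks (again using the reflection relation) that $G$ is a \emph{real} transcendental entire function, $2\pi$-periodic, and hence bounded on $\R$. I would then verify that $G\in\SR$: every critical value of $G$ corresponds to a critical value of $f$ and therefore lies in $S^1\cup\{0,\infty\}$, the former lifting to real values of $G$ and the latter contributing only asymptotic values of $G$ at $\pm\infty$; likewise each of the finitely many asymptotic values of $f$ gives either a real asymptotic value of $G$ or one at $\pm\infty$. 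Hence $S(G)$ is a finite subset of $\R$, so $G\in\SR$ (note that $G$ typically has infinitely many critical \emph{points}, but this is irrelevant, as $\SR$ constrains only the singular \emph{values}).

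Next I would transfer line fields through the semiconjugacy. Since $\exp$ is a holomorphic covering without critical points, $J(F)=\exp^{-1}(J(f))$, and any nontrivial $f$-invariant measurable line field on $J(f)$ pulls back to a nontrivial $2\pi i$-periodic $F$-invariant line field on $J(F)$ — here one uses that $\exp$ is a local diffeomorphism, so pullback preserves sets of positive measure — and hence, via $t\mapsto it$, to a nontrivial $G$-invariant line field on $J(G)$. But $G\in\SR$, so by Lemma~\ref{lem:decomp} we have $J(G)=J_r(G)\cup I(G)\cup L_B(G)\cup L_I(G)$; since $G|_\R$ is bounded, $\IR(G)=\emptyset$, whence $\P_I(G)=\emptyset$ and $L_I(G)=\emptyset$, and Theorem~\ref{thm:nolinefieldJr} rules out invariant line fields on each of $J_r(G)$, $I(G)$ and $L_B(G)$. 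This contradiction completes the proof. In particular, the sector condition plays no role here: in the real case it was needed only to handle $L_I$, which is automatically empty once the function is bounded on $\R$, as the logarithmic lift of a circle map always is.

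The step I expect to be the main obstacle is the bookkeeping underlying the previous two paragraphs: checking carefully that the logarithmic lift $G$ lands in the class $\SR$ — which uses the hypotheses $S(f)\subset S^1\cup\{0,\infty\}$ and $\#S(f)<\infty$ in an essential way — and that invariant line fields, Fatou and Julia sets all transfer faithfully under $\exp$, especially in neighbourhoods of the punctures $0$ and $\infty$, where the genuinely transcendental behaviour of $f$ is concentrated (and where, in the real analogue, one would have to invoke the sector condition). Once this is set up, the statement is an immediate consequence of Lemma~\ref{lem:decomp} and Theorem~\ref{thm:nolinefieldJr}.
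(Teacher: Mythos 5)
The Blaschke-product half of your argument is fine and essentially matches the paper, but the transcendental half has a genuine gap at its very first step: the logarithmic lift of $f\in\SC$ is not $2\pi i$-periodic in general. If $F$ is entire with $\exp\circ F=f\circ\exp$, then $F(z+2\pi i)=F(z)+2\pi i d$, where $d$ is the degree (winding number about $0$) of the circle map $f|_{S^1}$; $F$ is periodic only when $d=0$. For the paper's principal examples---the Arnol'd family and the maps $f_\mu$ of (\ref{trigonrm})---one has $d=D\geq 1$. In that case $G(t)=-iF(it)$ satisfies $G(t+2\pi)=G(t)+2\pi d$, so $G$ is unbounded on $\R$, and, worse, $G\notin\SR$ (nor even $\BR$, nor class $\B$): since $f$ has a critical point on $S^1$ (Lemma~\ref{lem:circlemapcrit}) and the critical values of $F$ are permuted by the translation $w\mapsto w+2\pi i d$, the set $S(G)\subset\R$ is an infinite, unbounded discrete set. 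Hence Lemma~\ref{lem:decomp} and Theorem~\ref{thm:nolinefieldJr} (and the result of \cite{boettcher} they rely on, which requires a bounded singular set) do not apply to $G$, and your claim that ``$L_I(G)=\emptyset$ because $G|_\R$ is bounded'' collapses. There is also a dynamical mismatch: points of $G$ whose orbits escape horizontally project to orbits of $f$ that stay on or near $S^1$ and merely wind around, i.e.\ to the bounded circle dynamics, not to orbits tending to $\{0,\infty\}$; so even where the lift exists, its escaping set does not reflect the decomposition you want. (Your reduction does work for degree-zero maps such as $z\mapsto\exp(ia(z+1/z))$, whose lift really is periodic, but that excludes precisely the cases the theorem is chiefly meant for.)

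The paper avoids the lift entirely and works directly in $\CS$: it splits $J(f)$ into the radial Julia set (where \cite{linefields} does not apply verbatim, since $f$ is not meromorphic on $\C$; one either repeats that argument or invokes its generalization to Ahlfors islands maps \cite{conicalrigidity}), the set of points whose orbits accumulate only on the bounded postsingular dynamics on $S^1$ (handled via \cite{strien-qsrigidity}, as in Theorem~\ref{thm:realrigidity}), and the set $I(f)=\{z:\omega(z)\subset\{0,\infty\}\}$, which is treated by Theorem~\ref{thm:nolinefieldifcircle}, a $\CS$-analogue of the escaping-set result of \cite{boettcher}. No analogue of $L_I$ is needed, but the correct reason is that $S(f)\subset S^1$ and $S^1$ is forward invariant and compact in $\CS$, so no singular orbit tends to $0$ or $\infty$---not the (false) boundedness of the logarithmic lift.
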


Here, once again, the absence of invariant line fields on the set of points with
  bounded orbits follows from \cite{CS}. The absence of
 invariant line fields on the radial Julia set does not follow directly from
  \cite{linefields}, since the function $f$ is not necessarily 
  meromorphic in the
  plane but can be proved in the same manner. Alternatively, the result
  of \cite{linefields} is generalized in \cite{conicalrigidity} to 
  arbitrary \emph{Ahlfors islands maps}, and this result can be applied 
  directly to $f$. 

Finally, for $f\in\SC$, absence of invariant line fields on the ``escaping set''
\[ I(f) := \{z\in\C: \omega(z)\subset \{0,\infty\}\} \]
 follows from the following theorem, which is proved completely analogously
 to the corresponding result from \cite{boettcher}.

\begin{thm}[No invariant line fields on the escaping set]\label{thm:nolinefieldifcircle}
 Let $f:\C^*\to\C^*$ be a transcendental self-map of the punctured plane, with essential
  singularities at $0$ and $\infty$. Suppose that the set
  $S(f)\setminus\{0,\infty\}$ is compactly contained in $\C^*$.

 Then the set $I(f)$ does not support invariant line fields.
\end{thm}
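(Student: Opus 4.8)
The plan is to adapt, essentially line by line, the argument of \cite{boettcher} showing that the escaping set of a transcendental entire function with bounded singular set carries no invariant line fields (the result quoted in Theorem~\ref{thm:nolinefieldJr}); the only structural novelty here is that $f$ has \emph{two} essential singularities, $0$ and $\infty$, rather than one. First I would set up the logarithmic change of variable near both ends of $\C^*$ simultaneously. Since $S(f)\setminus\{0,\infty\}$ is compactly contained in $\C^*$, choose $M>1$ so that $V:=\{z:|z|>M\}\cup\{z:0<|z|<1/M\}$ contains no finite singular value of $f$; then $f\colon f^{-1}(V)\to V$ is a covering. If $z\in I(f)$, i.e.\ $\omega(z)\subset\{0,\infty\}$, the forward orbit of $z$ has no accumulation point in the compact annulus $\C^*\setminus V$, so $f^n(z)\in V$ for all large $n$. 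Writing $\exp\colon\C\to\C^*$, we have $\exp^{-1}(V)=H:=\{w:|\re w|>\log M\}$, and since $\exp$ restricts to a universal covering onto each of the two components of $V$, the map $f$ lifts to a holomorphic $F\colon\widetilde{\mathcal{T}}\to H$ with $\exp\circ F=f\circ\exp$, where $\widetilde{\mathcal{T}}:=\exp^{-1}(f^{-1}(V))$; each component of $\widetilde{\mathcal{T}}$ is mapped conformally by $F$ onto one of the two half-planes comprising $H$, and $F$ commutes with $w\mapsto w+2\pi i$. This is exactly the Eremenko--Lyubich normal form of Section~\ref{sec:absenceline}, applied near $0$ and $\infty$ at once.

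Next I would record the expansion estimate and its standard consequence. The Koebe-theoretic argument behind \eqref{eqn:eremenkolyubich} gives $|F'(w)|\ge\frac{1}{4\pi}\bigl(|\re F(w)|-\log M\bigr)$ on $\widetilde{\mathcal{T}}$, so $F$ is uniformly expanding wherever $|\re F(\cdot)|$ is large, in particular eventually along every orbit contained in $I(f)$; moreover every branch of $F^{-1}$ is defined and contracting on all of $H$. This yields the magnification property: for every point $w_0$ lying sufficiently deep in the escaping part of $\widetilde{\mathcal{T}}$ and every $K>0$ there are an integer $n\ge0$ and a topological disk $D\ni w_0$ of arbitrarily small diameter such that $F^n$ is univalent with uniformly bounded distortion on $D$ and $F^n(D)\supset B_K\bigl(F^n(w_0)\bigr)$.

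Finally I would run the line-field argument. Suppose, for a contradiction, that there is an $f$-invariant measurable line field supported on a set $A\subset I(f)$ of positive measure. Since $I(f)$ is forward invariant and almost every orbit in it eventually enters $V$, we may assume $A\subset V$; pulling back by $\exp$ produces an $F$-invariant, $2\pi i$-periodic measurable line field $\tilde\mu$ on $\exp^{-1}(A)\subset H$. Fix $w_0\in\exp^{-1}(A)$ that is both a Lebesgue density point of $\exp^{-1}(A)$ and a point of approximate continuity of $\tilde\mu$, and --- after replacing $w_0$ by a forward iterate, which preserves both properties --- so deep in the escaping region that the expansion estimate applies along its whole orbit. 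Applying the magnification property with $K=K_n\to\infty$ and using Koebe (the relevant branch of $F^{-n}$ has distortion tending to $1$ on the sub-disk $B_{\sqrt{K_n}}(F^n(w_0))$), one finds that $\tilde\mu$ is, off a set of density tending to $0$, arbitrarily close to a constant line field on disks $B_{\sqrt{K_n}}(F^n(w_0))$ whose radii tend to infinity; by $2\pi i$-periodicity $\tilde\mu$ is then nearly constant on horizontal strips of width tending to infinity situated where $|\re w|$ is large. From here the contradiction is obtained exactly as in the one-ended case treated in \cite{boettcher}.

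The only step I expect to need real attention is the bookkeeping forced by the presence of two essential singularities: an orbit in $I(f)$ may now enter the neighbourhoods of $0$ and of $\infty$ infinitely often each, so before applying the magnification step one must pass to a subsequence of times along which the orbit stays deep inside a single one of the two tracts. Apart from this, every step is a transcription of the argument for entire functions of bounded type, and I do not anticipate a substantive obstacle; one could alternatively note, as with the radial Julia set in Theorem~\ref{thm:nolinefieldcircle}, that the $0$-end and the $\infty$-end may simply be treated separately.
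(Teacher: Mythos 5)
Your proposal follows essentially the same route as the paper: the paper proves this theorem simply by remarking that the argument of \cite{boettcher} for entire functions with bounded singular set carries over once the logarithmic change of variable is performed over neighbourhoods of both essential singularities $0$ and $\infty$, which is exactly the two-tract setup, expansion estimate \eqref{eqn:eremenkolyubich} and density-point blow-up you describe (including your observation that an escaping orbit must eventually remain in the union of the two tracts, with the two ends handled separately along a subsequence). Your deferral of the final contradiction to the one-ended case of \cite{boettcher} matches the paper's own level of detail, so the proposal is consistent with the paper's proof.
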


Again, analogously to the case of $\SR$, the set
 $\MfC$ of functions $S^1$-topologically equivalent to $f$ has the
 structure of a real-analytic manifold of dimension $q+1$, where $q=\# S(f)$. 
 As we saw in Remark 1 below
 Lemma~\ref{lem:trigoaffinerigid}, its quotient
 $\MfCt$ by conjugation by rotations is not a manifold.
 However, it is a $q$-dimensional orbifold. 
 We then obtain by the same proof as for functions
 in $\SR$:

\begin{thm}[Density of hyperbolicity for circle maps]
 Let $f\in X$. Then hyperbolicity is dense in $\MfCt$.
\end{thm}

 The theorems on circle maps stated in the introduction follow
 from the preceding result in the same manner as for real entire
 functions:

\begin{proof}[Proof of Theorem~\ref{thm:densitycircle}]
 This follows from the preceding theorem in the same
manner as for real entire function in $\SR$. 
\end{proof}

\begin{proof}[Proof of Theorem~\ref{thm:qcrigcircle}]
This  is a special case of Theorems~\ref{thm:qsrigidcircle} and \ref{thm:nolinefieldcircle}.
\end{proof}

\begin{proof}[Proof of Corollary~\ref{cor:trigopol}] The first statement 
of this corollary follows Theorem~\ref{thm:nolinefieldifcircle}.
Part (a) follows   from  Theorems  \ref{thm:qsrigidcircle} and
Lemma~\ref{lem:trigoaffinerigid} using the same argument as in
the proof of Corollary \ref{cor:conjconnected2}.
Part (b) follows from Theorems \ref{thm:nolinefieldcircle} and \ref{thm:nolinefieldifcircle}.
\end{proof}

\appendix

\section{More on parameter spaces} \label{app:parameter}

\begin{prop}[Real-analytic structure of parameter spaces]\label{prop:manifoldstructure}
 Let $f\in\SR$ and set $q:= \# S(f)$. 
  If $f$ is not periodic (i.e., there is no $\kappa\in\R\setminus\{0\}$ with
  $f(x+\kappa)=f(x)$ for all $x$), then
    \[ \MfR \raequiv \R^{q+2}\quad \text{and}\quad
       \MfRt \raequiv \R^q \]
  (where $\raequiv$ denotes real-analytic isomorphism). Otherwise, 
    \[ \MfR \raequiv \R^q\times S^1\times S^1 \quad\text{and}\quad
       \MfRt \raequiv \R^{q-1}\times S^1. \]

 More precisely, let us set 
 \[    \Lambda := \{(a_1,\dots,a_q)\in\R^q: a_1<a_2<\dots<a_q \}. \]
  Then there exists a real-analytic covering map
     \[ \Theta: \Lambda \times (0,\infty)\times \R \to \MfR \]
  with the following properties.
 \begin{enumerate}
  \item If $\lambda= (a_1,\dots,a_q)\in \Lambda$, $a>0$ and $b\in\R$, then
    the singular values of $g := \Theta(\lambda,a,b)$ are exactly $a_1,\dots, a_q$.
    Furthermore, if $f$ is periodic, then $g$ is also periodic of
    minimal period $a\cdot \kappa$, where $\kappa$ is the minimal period of $f$.
  \item Let $\lambda_0 = (s_1,\dots,s_q)$ be the singular values of $f$. Then
    $f = \Theta(\lambda_0 , 1 , 0)$.
  \item If $f$ is not periodic,  $\Theta$ is a diffeomorphism. Otherwise,
    $\Theta(\lambda,a,b)=\Theta(\lambda',a',b')$ if and only if 
    $\lambda=\lambda'$, $a=a'$ and $b$ and $b'$ differ by a multiple of $a\cdot\kappa$.
  \item Fix $a>0$ and $b\in\R$. If $f$ is not periodic, then $\Theta(\lambda,a,b)$ is not 
    conformally 
    conjugate (via a map from $\MobR$)
    to $\Theta(\lambda',a,b)$ for $\lambda\neq \lambda'$. Otherwise, these maps
    are conjugate if and only if there is $m\in\Z$ such that
    $\lambda'$ is obtained from $\lambda$ by adding $m\cdot a \cdot \kappa$ to all entries. 
 \end{enumerate}
\end{prop}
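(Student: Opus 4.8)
The plan is to construct the covering map $\Theta$ explicitly by quasiconformal surgery, to obtain its real-analyticity from the Ahlfors--Bers dependence of solutions of the Beltrami equation on parameters, and then to deduce everything else --- properties (a)--(d) and the homeomorphism types of $\MfR$ and $\MfRt$ --- from the deck transformation group of $\Theta$. The complex-analytic counterpart is the work of Eremenko and Lyubich \cite{alexmisha}; the points needing attention are that the construction be carried out equivariantly with respect to complex conjugation (so that one stays inside $\SR$ and $\HomeoR$), and that one keep track of the two affine directions arising from the conjugation action of $\MobR$. The main obstacle is the Eremenko--Lyubich rigidity itself: the statement that $\MfR$ is exhausted, without redundancy, by the surgered functions up to affine precomposition.

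For the construction, write $\lambda_0=(s_1,\dots,s_q)$ for the tuple of singular values of $f$. Given $\lambda=(a_1,\dots,a_q)\in\Lambda$, choose a quasiconformal homeomorphism $\Psi_\lambda\colon\C\to\C$ that commutes with $z\mapsto\bar z$, depends real-analytically on $\lambda$, equals the identity when $\lambda=\lambda_0$, and satisfies $\Psi_\lambda(s_j)=a_j$ for all $j$ (such a family is readily built, e.g.\ interpolating linearly near $\R$). Since $f$ is holomorphic, the quasiregular map $\Psi_\lambda\circ f$ has Beltrami coefficient $\mu_\lambda=(\mu_{\Psi_\lambda}\circ f)\cdot\overline{f'}/f'$, which is again symmetric under $z\mapsto\bar z$ and, when $f$ has period $\kappa$, is itself $\kappa$-periodic. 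Let $\phi_\lambda$ be the quasiconformal solution of $\bar\partial\phi_\lambda=\mu_\lambda\,\partial\phi_\lambda$ normalised by $\phi_\lambda(\bar z)=\overline{\phi_\lambda(z)}$ together with $\phi_\lambda(0)=0$, $\phi_\lambda(1)=1$ in the non-periodic case, or $\phi_\lambda(0)=0$ and $\phi_\lambda(z+\kappa)=\phi_\lambda(z)+\kappa$ in the periodic case. By Ahlfors--Bers, $\phi_\lambda$ depends real-analytically on $\lambda$, restricts to an order-preserving homeomorphism of $\R$, and $\phi_{\lambda_0}=\id$. Set $g_\lambda:=\Psi_\lambda\circ f\circ\phi_\lambda^{-1}$: this lies in $\MfR$, depends real-analytically on $\lambda$, has singular values precisely $a_1<\dots<a_q$, satisfies $g_{\lambda_0}=f$, and (when $f$ is periodic) is periodic of the same minimal period $\kappa$, since it commutes with $z\mapsto z+\kappa$ and minimal period is a topological-equivalence invariant. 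Finally define
\[ \Theta(\lambda,a,b)(z):=g_\lambda\!\left(\frac{z-b}{a}\right),\qquad (\lambda,a,b)\in\Lambda\times(0,\infty)\times\R. \]
Since $z\mapsto(z-b)/a\in\HomeoR$ and precomposition with a homeomorphism leaves the singular set unchanged, $\Theta(\lambda,a,b)\in\MfR$ has singular values $a_1,\dots,a_q$ and, in the periodic case, minimal period $a\kappa$, and $\Theta(\lambda_0,1,0)=f$. This gives (a) and (b).

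Next, $\Theta$ is a real-analytic covering map. It is a local diffeomorphism by the infinitesimal form of the Eremenko--Lyubich parametrisation (the positions of the singular values and the affine normalisation furnish local coordinates), together with Ahlfors--Bers and the inverse function theorem; and it is surjective by the fact --- proved in \cite{alexmisha} and adapted to the conjugation-symmetric setting --- that every $g\in\MfR$ equals $g_\lambda\circ A$ with $\lambda$ its tuple of singular values and $A\in\MobR$ a conformal automorphism of $\C$. To identify the fibres, suppose $\Theta(\lambda,a,b)=\Theta(\lambda',a',b')$. Comparing the (increasing) singular-value tuples forces $\lambda=\lambda'$, whence $g_\lambda=g_\lambda\circ B$ with $B(z)=\frac{a}{a'}z+\frac{b-b'}{a'}$; a non-constant entire function coincides with its precomposition by the affine map $B$ exactly when $B=\id$ (if $f$ is not periodic) or $B(z)=z+m\kappa$ for some $m\in\Z$ (if $f$ is periodic) --- any other affine map has an attracting or repelling fixed point, forcing the function to be constant. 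Thus $\Theta$ is injective (a diffeomorphism) in the non-periodic case, while in the periodic case $\Theta(\lambda,a,b)=\Theta(\lambda',a',b')$ iff $\lambda=\lambda'$, $a=a'$ and $b-b'\in a\kappa\Z$; this is (c), and the deck group is trivial, respectively the group $\Z$ acting by $(\lambda,a,b)\mapsto(\lambda,a,b+ma\kappa)$. For (d), if $M\in\MobR$ conjugates $\Theta(\lambda,a,b)$ to $\Theta(\lambda',a,b)$, then $M$ carries the ordered singular-value tuple of the former to that of the latter, so $M(\lambda)=\lambda'$; on the other hand conjugating an entire function $h$ by $M(z)=cz+d$ yields $z\mapsto c\,h((z-d)/c)+d$, which alters the scale of the domain recorded by $a$ (multiplying the period by $c$ when $h$ is periodic) and the phase recorded by $b$, so tracking this through the $\Theta$-coordinates and using the injectivity part of (c) forces $M=\id$ in the non-periodic case (whence $\lambda=\lambda'$), and $M(z)=z+ma\kappa$ with $m\in\Z$ in the periodic case (whence $\lambda'=\lambda+ma\kappa\cdot(1,\dots,1)$). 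This is (d).

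Finally, the homeomorphism types follow at once: $\Lambda$ is convex and open in $\R^q$, hence $\raequiv\R^q$, so $\Lambda\times(0,\infty)\times\R\raequiv\R^{q+2}$, and passing to the quotient by the deck group of $\Theta$ --- and then by the (free) conjugation action of $\MobR$ --- yields the stated descriptions of $\MfR$ and $\MfRt$; compatibility of this real-analytic structure with the one posited in the earlier ``Manifold structure'' proposition follows from the characterisation of convergence in the manifold topology. All of this is routine bookkeeping with the Beltrami equation and the group $\MobR$ once the Eremenko--Lyubich rigidity used in the covering step --- surjectivity of $\Theta$ and the classification of affine self-equivalences of $g_\lambda$, organised compatibly with complex conjugation --- is in hand, which is where the real work lies.
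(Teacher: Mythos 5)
Your construction follows the same route as the paper's: post-compose $f$ with a quasiconformal map moving the singular values, pull back its Beltrami coefficient under $f$, solve by the Measurable Riemann Mapping Theorem with a conjugation-symmetric normalisation, generate the two extra parameters by affine precomposition, get real-analyticity from Ahlfors--Bers, and identify the fibres via the fact that an entire function equal to its own precomposition with an order-preserving affine map forces that map to be a translation by a period. Parts (a)--(c), and the delegation of surjectivity of $\Theta$ to the Eremenko--Lyubich parametrisation adapted to the real-symmetric setting, match the paper.

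However, your proof of (d) has a genuine gap. Property (d) is \emph{not} a consequence of (a)--(c) together with injectivity of $\Theta$: it depends on how the parametrisation interacts with the conjugation action of $\MobR$, i.e.\ on an equivariance of the family $\Psi_\lambda$ that you never impose. The paper constructs its family $\psi_\lambda$ so that if $\lambda_2$ is obtained from $\lambda_1$ by applying a real-affine map $A$ entrywise, then $\psi_{\lambda_2}\circ\psi_{\lambda_1}^{-1}=A$; this is exactly what converts a conjugacy $M\circ\Theta(\lambda,a,b)\circ M^{-1}=\Theta(\lambda',a,b)$ into an identity of the form $f\circ C=f$ with $C$ affine. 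With only your hypotheses on $\Psi_\lambda$ (it sends $s_j$ to $a_j$, is symmetric, real-analytic in $\lambda$, and is the identity at $\lambda_0$), statement (d) can actually fail for the resulting $\Theta$. Concretely, take $f=\exp$, $q=1$, $s_1=0$, with your normalisation $\phi_\lambda(0)=0$, $\phi_\lambda(1)=1$. Then $g_\lambda(z)=\alpha_0(a_1)e^{\beta_0(a_1)z}+a_1$ with $\alpha_0(a_1)=\Psi_\lambda(1)-a_1$ and $e^{\beta_0(a_1)}=(\Psi_\lambda(e)-a_1)/\alpha_0(a_1)$; since $1$ and $e$ are not singular values, you may choose $\Psi_\lambda$ admissibly (real-analytic, identity at $a_1=0$) with $\Psi_\lambda(1)=1+2e^{-1/2}$ and $\Psi_\lambda(e)=3$ at $a_1=1$, so that $\alpha_0(1)=2e^{-1/2}$ and $\beta_0(1)=1/2$. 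Then $\Theta((0),1,0)(z)=e^z$ while $\Theta((1),1,0)(z)=2e^{-1/2}e^{z/2}+1=M\circ\exp\circ M^{-1}(z)$ with $M(z)=2z+1\in\MobR$; properties (a)--(c) and the covering property still hold, but (d) is violated. Hence your sentence ``tracking this through the $\Theta$-coordinates and using the injectivity part of (c) forces $M=\id$'' is not a proof: injectivity says nothing about how conjugation by $M$ reads in the $(\lambda,a,b)$-coordinates unless an equivariance of the form $M\circ\Theta(\lambda,a,b)\circ M^{-1}=\Theta(M(\lambda),a,b')$ has been built into the construction. The same gap propagates to your derivation of the homeomorphism type of $\MfRt$, which uses (d) (and your parenthetical claim that the $\MobR$-conjugation action is free also needs justification, though that is minor). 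The repair is the paper's: impose the affine compatibility on the family $\psi_\lambda$ from the outset; with it, your intended argument for (d) and the quotient description goes through.
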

\begin{remark} A somewhat related theorem appears in \cite[Theorem 4.1]{demelovanstrien}
where it is shown that one can parametrise the space of real polynomials  (as in Theorem~\ref{thm:realgidityvS} anchored at $0$ and $1$) with $d$ distinct critical points $c_1,\dots,c_d$ -- all of which are assumed to be real -- by its critical values $v_1,\dots,v_d$. The difference is that in that theorem we allow critical values
to coincide.
\end{remark}
\begin{proof}
 The idea is to start with a family of quasiconformal functions $\psi_{\lambda}\in\HomeoR$, 
  $\lambda\in\Lambda$, where
   $\psi_{\lambda}$ takes $\lambda_0$ to $\lambda$, and then solve the Beltrami equation 
   to obtain $\phi_{\lambda}$ such that $\psi_{\lambda}\circ f \circ \phi_{\lambda}^{-1}$ is
   an entire function. There is a choice of normalization of $\phi_\lambda$, which gives rise
    to 
   the additional two real parameters come from.

  If there are at least two real preimages of singular values, then it is easy to
   obtain a natural normalization of $\phi_{\lambda}$. In order to obtain a 
   construction that works in all cases, we will proceed in a slightly more ad-hoc
   manner. 

 If $f$ is not periodic, let us set $\kappa:=1$,
  otherwise $\kappa$ is the minimal period of $f$ as defined above.

We define a
 real-analytic family $\psi_{\lambda}:\C\to\C$ 
 with $\psi_{\lambda}(s_j)=a_j$, where
 $\lambda=(a_1,\dots,a_q)\in\Lambda$. If
 $a_1=s_1$ and $a_q=s_q$, let $h:\R\to\R$ be 
 the unique map with $h(s_j)=a_j$ that is linear
 on every component of $\R\setminus S(f)$ and asymptotic to the identity
 at $\infty$. We define
 $\psi_{\lambda}(x+iy) := h(x) + iy$. In particular,
 $\psi_{\lambda_0}=\id$. 

Otherwise, set 
  \[ A(z) := (z-s_1)\cdot \frac{a_q-a_1}{s_q-s_1} + a_1 \]
 and $\tilde{\lambda} := (A^{-1}(a_1),\dots,A^{-1}(a_q))$. We define
 $\psi_{\lambda}(z) :=
  A(\psi_{\tilde{\lambda}}(z))$.

By construction, the family $\psi_{\lambda}$ has the following property:
 \begin{center}
  Let $\lambda_1=(a_1,\dots,a_q)\in\Lambda$ and
   $A(z)=az+b$ be a real-affine map. If we set
   $\lambda_2 := (A(a_1),\dots,A(a_q))$, then 
   $\psi_{\lambda_2}\circ \psi_{\lambda_1}^{-1} = A$.
 \end{center}

Let $\mu_{\lambda}$ be the complex dilatation of $\psi_{\lambda}$. 
 We can pull back under $f$ to obtain
 a complex structure $\nu_{\lambda} := f^*(\mu_{\lambda})$. 
 By the Measurable Riemann Mapping theorem, see for example \cite{MR2241787}, we can find
 a quasiconformal homeomorphism $\phi_{\lambda,a,b}:\C\to\C$ whose complex
 dilatation is given by $\nu_{\lambda}$. This map is uniquely determined
 if we require that $\phi_{\lambda,a,b}(0)=a\cdot b$ and 
 $\phi_{\lambda,a,b}(\kappa)=a\cdot(b+\kappa)$.

The functions $\phi_{\lambda,a,b}$ depend real-analytically on $\nu_{\lambda}$, 
 and hence on $\lambda$, as well as on $a$ and $b$. 
  By a well-known argument, see for example
 \cite[Page 21]{buffcheritat}, the family
   \[ \Phi(\lambda,a,b) := f_{\lambda,a,b} := \psi_{\lambda}^{-1}\circ f \circ  \phi_{\lambda,a,b} \]
 also depends analytically on $(\lambda,a,b)$.
Clearly we have $f_{\lambda_0,1,0}=f$ and 
 $S(f_{(a_1,\dots,a_q),a,b})=\{a_1,\dots,a_q\}$.

Since $f$ is real, the Beltrami differential $\nu_{\lambda}$ is
 symmetric with respect to the real axis (i.e.\ 
 $\nu_{\lambda}(\bar{z})=\overline{\nu_{\lambda}(z)}$, and hence
 the normalization ensures that $\phi_{\lambda,a,b}$ restricts to an
 orientation-preserving homeomorphism of the real line. Thus
 $f_{\lambda,a,b}\in\MfR$ for all $\lambda$. 

Similarly, if $f$ is periodic, then $\nu_{\lambda}$ is periodic
 with period $\kappa$, and the normalization ensures that
 $\phi_{\lambda,a,b}(z+\kappa)=\phi_{\lambda,a,b}(z)+a\kappa$ for all $z$.
 Thus each $f_{\lambda,a,b}$ is periodic with period $b\kappa$. We can
 apply the same argument to see that $b\kappa$ is the minimal period of
 $f_{\lambda,a,b}$. Indeed, we write
 $f = \psi_{\lambda}\circ f_{\lambda,a,b}\circ \phi_{\lambda,a,b}^{-1}$.
 If $b\kappa' \leq b\kappa$ is a period of $f_{\lambda,a,b}$, then we see
 as above that $\phi_{\lambda,a,b}^{-1}(z+b\kappa')=\phi_{\lambda,a,b}^{-1}(z)+c$ for
 some $c>0$. Clearly $c\leq \kappa$, and by construction $c$ is
 a period of $f$. Thus $c=\kappa=\kappa'$, as claimed. 

The remaining claims follow from the construction. Indeed,
 suppose that $f_{\lambda,a,b}=f_{\lambda',a',b'}$. Then $\lambda=\lambda'$
 (because these are the singular values) and $a=a'$ (because this is the period).
 By construction, we have $f_{\lambda,a,b'}(z)=f_{\lambda,a,b}(z+ab-ab')$. Hence
  $a(b-b')$ is a period of $f_{\lambda,a,b}$, and hence $b-b'$ is a multiple
  of $\kappa$ (if $f$ is periodic) or $b=b'$ (otherwise). 

 Now fix $a$ and $b$ and 
  suppose now that $f_{\lambda} := f_{\lambda,a,b}$ and $f_{\lambda'} := f_{\lambda',a,b}$ 
 are conformally conjugate
 by some real-affine map $A(z)=\alpha z+\beta$, $\alpha>0$, $\beta\in\R$. Then it follows from
 the property stated above that
 $\psi_{\lambda'}\circ\psi_{\lambda}^{-1} = A$, and hence
 $A\circ f_{\lambda}\circ A^{-1} = f_{\lambda'} = A\circ f_{\lambda'}$. 
 In particular, we must have $\alpha=1$ and $\beta$ is a period of $f_{\lambda}$;
 i.e., $f$ is periodic and $\beta$ is an integer multiple of $a\cdot \kappa$.
\end{proof}

\begin{lem}[Dependence of singular values] \label{lem:singulardependence}
 Let $f:\C\to\C$ be an entire function, and let $f_n$ be entire functions with
  $f_n\to f$ locally uniformly. If $a\in S(f)$, then for sufficiently large $n$,
  there is $a_n\in S(f_n)$ such that $a_n\to a$.
\end{lem}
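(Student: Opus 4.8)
The plan is to argue by contradiction: I will show that if no singular values of $f_n$ approach $a$, then $a\notin S(f)$, contrary to hypothesis. So suppose the conclusion fails; then there is $\eps>0$ with $B_\eps(a)\cap S(f_n)=\emptyset$ for infinitely many $n$, and after relabelling we may assume this holds for all $n$. Then $B_\eps(a)\subset\C\setminus S(f_n)$, so $f_n\colon f_n^{-1}(B_\eps(a))\to B_\eps(a)$ is an unbranched covering; since $B_\eps(a)$ is a disk, every component of $f_n^{-1}(B_\eps(a))$ is mapped \emph{biholomorphically} onto $B_\eps(a)$. First I rule out critical values of $f$ near $a$: if $c$ were a critical point of $f$ with $f(c)\in B_{\eps/2}(a)$, then, since $f_n'\to f'$ locally uniformly, Hurwitz's theorem would give critical points $c_n\to c$ of $f_n$ with $f_n(c_n)\to f(c)$, so $f_n(c_n)$ would be a critical value of $f_n$ inside $B_\eps(a)$ for $n$ large --- impossible. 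Hence $f$ has no critical value in $B_{\eps/2}(a)$, and $f$ is a local biholomorphism on $f^{-1}(B_{\eps/2}(a))$.

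It remains to rule out asymptotic values of $f$ in, say, $B_{\eps/4}(a)$; this is the heart of the argument. Suppose $b^*\in B_{\eps/4}(a)$ is an asymptotic value, witnessed by a curve $c\colon[0,\infty)\to\C$ with $|c(t)|\to\infty$ and $f(c(t))\to b^*$; discarding an initial segment, assume $f(c(t))\in B_{\eps/2}(a)$ for all $t$, and put $w_0:=c(0)$, $b_0:=f(w_0)$. Since $b_0$ is a regular value of $f$ near $w_0$, for $n$ large there is $w_0^{(n)}\to w_0$ with $f_n(w_0^{(n)})=b_0$; let $V_n$ be the component of $f_n^{-1}(B_\eps(a))$ containing $w_0^{(n)}$, and $\Psi_n:=(f_n|_{V_n})^{-1}\colon B_\eps(a)\to V_n$, so $\Psi_n(b_0)=w_0^{(n)}$. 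Because $f$ has no critical points over $B_{\eps/2}(a)$, a standard continuation-of-lifts argument along the compact arcs $c([0,T])$ --- on neighbourhoods of which $f$ and $f_n$ have local inverse branches of a common definite size that converge to one another --- shows $\Psi_n\bigl(f(c(t))\bigr)\to c(t)$ uniformly for $t$ in bounded intervals. On the other hand, each $\Psi_n$ is univalent on the disk $B_\eps(a)$, and (by the same comparison of inverse branches near $w_0$) $\Psi_n(b_0)\to w_0$ while $\Psi_n'(b_0)\to 1/f'(w_0)$; so by the Koebe Distortion Theorem (Theorem~\ref{thm:koebe}) the images $\Psi_n(\overline{B_{\eps/2}(a)})$ all lie in a single bounded set. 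Since $f(c(t))\in B_{\eps/2}(a)$ for all $t$, letting $n\to\infty$ yields $|c(t)|\le C$ for all $t$, contradicting $|c(t)|\to\infty$.

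Thus $B_{\eps/4}(a)$ contains neither critical nor asymptotic values of $f$, so $f\colon f^{-1}(B_{\eps/4}(a))\to B_{\eps/4}(a)$ is an unbranched covering and $a\notin S(f)$ --- the required contradiction. The main obstacle is the asymptotic-value step: its point is that the tract through which $c$ escapes to infinity cannot, in the limit $f_n\to f$, become a biholomorphic sheet over $B_\eps(a)$. This is extracted by pairing the continuation argument --- which forces the normalised inverse maps $\Psi_n$ to shadow the escaping curve $c$ --- with the Koebe estimate, which, precisely because $\Psi_n$ is pinned down near $b_0$, confines each $\Psi_n$ to a fixed bounded set on $\overline{B_{\eps/2}(a)}$; the two facts are incompatible.
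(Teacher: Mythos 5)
Your argument is correct, but it is worth pointing out that the paper itself gives no proof of Lemma~\ref{lem:singulardependence} at all: it simply refers to the literature (the citation after the statement), where this lower semicontinuity of the singular set under locally uniform convergence is established. Your proposal is therefore a genuinely self-contained alternative. Its structure is natural: after passing to a subsequence with $B_\eps(a)\cap S(f_n)=\emptyset$, you exploit the defining covering property of $S(f_n)$ over the simply connected disk $B_\eps(a)$ to obtain globally defined univalent inverse branches $\Psi_n$; critical values of $f$ near $a$ are excluded by Hurwitz applied to $f_n'\to f'$, and asymptotic values are excluded by the nice tension you set up between (i) the continuation-of-lifts argument, which forces $\Psi_n\circ f\circ c$ to shadow the escaping curve $c$ on every bounded parameter interval, and (ii) the Koebe-type bound, which pins the whole image $\Psi_n\bigl(\overline{B_{\eps/2}(a)}\bigr)$ inside a fixed bounded set because $\Psi_n(b_0)$ and $\Psi_n'(b_0)$ converge. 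Two points deserve a sentence more of care in a written-up version, though neither is a gap: the final deduction ``no critical and no asymptotic values in $B_{\eps/4}(a)$ implies $a\notin S(f)$'' uses the classical identification of $S(f)$ with the closure of the set of critical and asymptotic values (equivalently, a direct path-lifting argument showing $f$ is a covering over $B_{\eps/4}(a)$), and the Koebe step needs the standard compact-subset form of the distortion theorem (a chain of disks inside $B_\eps(a)$, or normality of univalent maps normalized at $b_0$), since a single application of Theorem~\ref{thm:koebe} centred at $b_0$ does not quite reach all of $\overline{B_{\eps/2}(a)}$. What the paper's citation buys is brevity; what your argument buys is a self-contained proof using only Hurwitz, covering-space theory over a disk, and Koebe distortion, all of which are already tools of the paper.
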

\begin{proof}
 See e.g.\ \cite{MR1477036}.
\end{proof}

\begin{prop}[All continuous families arise from QC equivalence]\label{prop:appendixfamilies}
 Let $n,k\in\N$ and suppose that $(f_t)_{t\in[-1,1]^k}$ is a continuous family of functions
  $f_t\in \SR$ such that $\# S(f_t)=n$ for all $t$. Then there exist
  continuous families $\phi_t,\psi_t\in\HomeoR$ such that
    \[ f_t = \psi_t\circ f_0\circ \phi_t^{-1} \]
  for all $t\in [-1,1]$.
\end{prop}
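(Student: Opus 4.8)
The plan is to combine the local triviality of families of functions in the Speiser class --- which goes back to Eremenko and Lyubich \cite{alexmisha} --- with the explicit description of $\MfR$ from Proposition~\ref{prop:manifoldstructure} and the fact that the interval $[-1,1]$ is simply connected. First I would record that the singular values of $f_t$ vary continuously and stay pairwise distinct along the family: by Lemma~\ref{lem:singulardependence} the map $t\mapsto \# S(f_t)$ is lower semicontinuous, and since it is constantly equal to $n$, for each $t_0$ and each $a\in S(f_{t_0})$ there is, near $t_0$, a unique continuous branch $t\mapsto a(t)\in S(f_t)$ with $a(t_0)=a$, and the $n$ branches issuing from the $n$ distinct points of $S(f_{t_0})$ remain pairwise distinct for $t$ near $t_0$; recall also that every $f_t$ is real with real singular set by hypothesis.

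The main step is to establish \emph{local triviality}: every $t_0\in[-1,1]$ has a neighbourhood $I$ carrying continuous families $\phi_t,\psi_t\in\HomeoR$ with $\phi_{t_0}=\psi_{t_0}=\id$ and $f_t=\psi_t\circ f_{t_0}\circ\phi_t^{-1}$ for $t\in I$. This is essentially contained in \cite{alexmisha}. Choosing $\eps$ smaller than one half the minimal distance between singular values of $f_{t_0}$ and setting $W_t:=\{z:\dist(z,S(f_t))<\eps\}$, the combinatorial model of $f_t$ --- the arrangement of the components of $f_t^{-1}(W_t)$ together with the covering $f_t\colon f_t^{-1}(\C\setminus W_t)\to\C\setminus W_t$, encoded by the Speiser graph of $f_t$ --- can change only when two singular values collide, which the previous step excludes; hence it is constant for $t$ close to $t_0$. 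Using the continuous branches of the singular values one then constructs $\psi_t\in\HomeoR$, supported near $S(f_{t_0})$, isotopic to the identity, equal to the identity at $t_0$, and carrying $S(f_{t_0})$ onto $S(f_t)$; since $f_{t_0}$ and $f_t$ have the same branch data, $\psi_t$ lifts through the unbranched coverings $f_t\colon f_t^{-1}(\C\setminus S(f_t))\to\C\setminus S(f_t)$ and the lift extends over the preimages of $S(f_t)$, yielding $\phi_t$ with $\psi_t\circ f_{t_0}=f_t\circ\phi_t$. The construction can be carried out equivariantly with respect to complex conjugation, so that $\phi_t,\psi_t\in\HomeoR$, and it depends continuously on $t$. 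In the language of the Manifold Structure proposition of Section~\ref{sec:parameter}, this says precisely that $t\mapsto f_t$ is continuous into $\MfR$ with its manifold topology, relative to any base point of the family; in particular the real-topological-equivalence class of $f_t$ is a locally constant, hence constant, function of $t\in[-1,1]$, so that $f_t\in M_{f_0}^{\R}$ for all $t$.

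Finally I would globalise using Proposition~\ref{prop:manifoldstructure}, which provides a real-analytic covering map $\Theta\colon\Lambda\times(0,\infty)\times\R\to M_{f_0}^{\R}$ of the form $\Theta(\lambda,a,b)=\psi_\lambda^{-1}\circ f_0\circ\phi_{\lambda,a,b}$ with $\psi_\lambda,\phi_{\lambda,a,b}\in\HomeoR$ depending continuously on $(\lambda,a,b)$. Since $t\mapsto f_t$ is a continuous path in $M_{f_0}^{\R}$ and $[-1,1]$ is simply connected, it lifts through the covering $\Theta$ to a continuous path $t\mapsto(\lambda_t,a_t,b_t)$; setting $\psi_t:=\psi_{\lambda_t}^{-1}$ and $\phi_t:=\phi_{\lambda_t,a_t,b_t}^{-1}$ gives continuous families in $\HomeoR$ with $f_t=\psi_t\circ f_0\circ\phi_t^{-1}$ for all $t$ (and, by a fixed post- and pre-composition, one may further normalise $\phi_0=\psi_0=\id$). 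The only step carrying genuine content is the local triviality, and the crucial input there is the classical fact from \cite{alexmisha} that the combinatorial type of a Speiser-class map cannot jump under a perturbation preserving the number of singular values; the remaining arguments --- continuity of the singular values, the connectedness/path-lifting step --- are formal.
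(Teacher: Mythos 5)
Your overall strategy rests on the same pivot as the paper's proof: once the singular values move continuously and stay distinct (Lemma~\ref{lem:singulardependence}), the combinatorial structure of $f_t$ --- the Speiser graph, equivalently the line complex --- cannot change along the family. The execution, however, is genuinely different in its local-to-global organisation. The paper works globally in one stroke: it picks a continuous family $\psi_t\in\HomeoR$ carrying $S(f_0)$ to $S(f_t)$, solves the Beltrami equation to get a continuous family $\phi_t$ with $g_t:=\psi_t^{-1}\circ f_t\circ\phi_t$ entire and normalised at a base preimage $z_0\in f_0^{-1}(i)$ (namely $g_t(z_0)=i$, $g_t'(z_0)=f_0'(z_0)$), and then proves $g_t\equiv f_0$ by showing that the \emph{based} line complex (the preimage of fixed arcs joining $\pm i$) is locally constant in $t$, hence constant, and that a normalised entire function is determined by its line complex. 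You instead prove local triviality directly (isomorphism of the coverings over the complement of the singular values, lifted and extended over the preimages of $S$), and globalise by path-lifting through the covering $\Theta$ of Proposition~\ref{prop:manifoldstructure}; since that proposition is established independently of the present statement there is no circularity, and indeed once local triviality is in hand you could equally well chain finitely many local trivialisations over $[-1,1]$ and dispense with $\Theta$ altogether. What your route buys is a conceptual reduction to covering-space theory plus the already-built parameter space; what the paper's route buys is that the continuity and normalisation issues are handled once, explicitly, by the qc straightening and the based line complex.

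Two caveats, both at the step that carries all the content. First, your justification that the Speiser graph is locally constant --- ``it can change only when two singular values collide'' --- is a restatement of what must be proved, not an argument, and its attribution to \cite{alexmisha} is shaky: the paper does not take this from there but proves it, observing that the analytic continuation of $f_t^{-1}$ along any fixed finite concatenation of the arcs $\gamma_j$ depends continuously on $t$, so each finite base-pointed truncation of the line complex is locally constant, hence constant on the connected interval. Second, your local lift $\phi_t$ is determined only up to a deck transformation of the covering $f_t$; to make ``depends continuously on $t$'' meaningful you must pin it down by a continuously chosen base-point normalisation --- exactly what the paper's conditions $g_t(z_0)=i$, $g_t'(z_0)=f_0'(z_0)$ and the marked base vertex of the line complex accomplish. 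Both points are fixable, but as written they are where your sketch stops short of a proof.
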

\begin{sketch}
  We first note that the assumption implies that the
  singular values of $f_t$ move continuously by Lemma \ref{lem:singulardependence}. 
  That is, there 
  are continuous functions $s_1,\dots,s_n:[-1,1]^k\to \R$ with
   $s_1(t)<s_2(t)<\dots < s_n(t)$ for all $t$ and
   $S(f_t)=\{s_1(t),\dots,s_n(t)\}$. We set $s_j := s_j(0)$ (where $0$ denotes the origin in $[-1,1]^k$).

  Choose a continuous family $\psi_t$ of real-quasiconformal homeomorphisms 
   such that $\psi_t(s_j)=s_j(t)$ for all $t\in [-1,1]$ and $j\in\{1,\dots,n\}$, such that
   $\psi_0=\id$, and such that 
   $\psi_t(z)=z$ whenever $|\im z|\geq 1$. 
   (For example, we can define $\psi_t$ in a piecewise linear manner.) 

   By solving the Beltrami equation (similarly as above), we can also find a continuous
   family $\phi_t$ of real-quasiconformal homeomorphisms such that
   \begin{equation}\label{eqn:gt} g_t := \psi_t^{-1}\circ f_t \circ \phi_t \end{equation}
   is an entire function for every $t$. The function $\phi_t$ is uniquely determined up to 
   pre-composition by an element of $\MobR$. 

 The idea is to normalize $\phi_t$ in such a way as to ensure that $g_t$ agrees with $f_0$ at a chosen base point, and conclude that
  $g_t=f_0$ for all $t$. It is not a priori clear that such a normalization can always be carried out globally (that this is true shall follow
   \emph{a posteriori} from the proof), but locally this follows from the argument principle. 
   Hence the following claim (which requires a specific normalisation 
   for the quasiconformal maps $\phi_t$) 
   provides a local version of our proposition:

\begin{claim}
  Let $D\subset [-1,1]^k$ be a connected subset with $0\in D$, and suppose that there is a 
    continuous function $t\mapsto \zeta_t\in \C$, defined on $D$, such that $f_t(\zeta_t)=i$ 
  for all $t\in D$   (where $i$, as usual, denotes the imaginary unit). 
    Define $(g_t)_{t\in D}$ by~\eqref{eqn:gt}, where $\phi_t$ is normalized such that
    $\phi_t(\zeta_0)=\zeta_t$. Then $g_t=f_0$ for all $t\in D$.
\end{claim}   
\begin{subproof} We use the concept of 
    \emph{line complexes} from classical function theory. Fix $n+1$ pairwise disjoint 
    arcs 
    $\gamma_0,\dots,\gamma_n$ connecting $i$ and $-i$ and each intersecting the real axis in
    precisely one point, in such a way that different arcs intersect at $\pm i$, and such that the arcs intersect the real axis in different
    intervals of $\R\setminus\{s_1,\dots,s_n\}$. We can assume that the arcs are ordered such that their intersection points with $\R$
    are listed in increasing order; then
    $\gamma_j\cup \gamma_{j-1}$ is a Jordan curve separating $s_j$ from $\infty$ and all other
    $s_{j'}$. The \emph{line complex} $LC(g_t)$ is the preimage of
    $\bigcup \gamma_j$ under $g_t$.

   More precisely, we can think of $LC(g_t)$ as an abstract graph with a base point
    and colored edges. The vertices 
    are the elements of the set $g_t^{-1}(\{i,-i\})$, and the base point is the vertex
    represented by $\zeta_0$. Two vertices $z_1$ and $z_2$ are connected by an edge of color
    $j\in \{0,\dots,n\}$ if and only if there is a component of
    $g_t^{-1}(\gamma_j)$ that connects $z_1$ and $z_2$. The following two facts are
    classical:
   \begin{itemize}
    \item The line complex $LC(g_t)$ depends continuously on $t$ as a graph. 
      (By this we mean that, for any fixed $N$, the part of $LC(g_t)$ within distance
       at most $N$ of $\zeta_0$ is locally constant.) Hence, since $[-1,1]$ is connected,
       it follows that all the abstract graphs $LC(g_t)$ are isomorphic.
    \item With the above normalization, the function $g_t$ is uniquely determined by
      its line complex $LC(g_t)$.
   \end{itemize}
  The first of these is elementary: It follows from the fact that the analytic continuation
   of $f_t^{-1}$ along a 
   fixed composition of the curves $\gamma_j$ will depend continuously on
   $t$. To reconstruct the function $g_t$ from its line complex, we need only 
   build the Riemann surface of $g_t^{-1}$ by pasting together copies of the upper and
   lower half plane as specified by the line complex. The resulting entire function is 
   determined uniquely up to precomposition by a map in $\MobR$; in other words, the function
   is fixed once we require that the base point of its line-complex be placed at $\zeta_0$. For details, compare
   \cite{MR2435270}.
 \end{subproof}
 We observe that, in this article, we only ever  use a local version of Proposition \ref{prop:appendixfamilies} that
   asserts the existence of the maps $\psi_t$ and $\phi_t$ in a neighbourhood of the origin; this version follows immediately from the Claim. 

 To also deduce the global
  statement, note that we can apply our claim near any given base point $t_0\in [-1,1]^k$. This implies that the full set of preimages
  $f_t^{-1}(i)$ moves continuously near every point $t_0$. Since $[-1,1]^k$ is simply-connected, it follows that this set moves
   continuously thorughout the entire family. In particular, we can take $D=[-1,1]^k$ in the Claim, and are done.
\end{sketch}

\small

\newcommand{\noopsort}[1]{}\providecommand{\href}[2]{#2}\def\polhk#1{\setbox0=\hbox{#1}{\ooalign{\hidewidth
  \lower1.5ex\hbox{`}\hidewidth\crcr\unhbox0}}}
  \def\polhk#1{\setbox0=\hbox{#1}{\ooalign{\hidewidth\lower1.5ex\hbox{`}\hidewidth\crcr\unhbox0}}}
  \input{cyracc.def} \def\j{{\u i}} \def\J{{\u I}} \newfont{\cyrit}{wncyi10 at
  12pt}\def\cprime{$'$}
\providecommand{\bysame}{\leavevmode\hbox to3em{\hrulefill}\thinspace}
\providecommand{\href}[2]{#2}

\end{document}